\title{Scheme representation for first-order logic}
\date{\today}
\author{Spencer Breiner\\ Carnegie Mellon University\\\\ \begin{tabular}{cc} Advisor:& Steve Awodey \\ Committee:& Jeremy Avigad\\& Hans Halvorson\\ &Pieter Hofstra\\\end{tabular}}
\newtheorem{thm}[subsection]{Theorem}
\newtheorem{defn}[subsection]{Definition}
\newtheorem{cor}[subsection]{Corollary}
\newtheorem{lemma}[subsection]{Lemma}
\newtheorem{prop}[subsection]{Proposition}
\newtheorem*{thm*}{Theorem}
\renewcommand{\rm}{\mathrm}
\renewcommand{\bf}{\mathbf}
\newcommand{\ol}{\overline}
\let\AA\relax
\DeclareMathOperator{\AA}{\mathcal{A}}
\DeclareMathOperator{\PP}{\mathcal{P}}
\DeclareMathOperator{\LL}{\mathcal{L}}
\DeclareMathOperator{\NN}{\mathbb{N}}
\DeclareMathOperator{\ZZ}{\mathcal{Z}}
\DeclareMathOperator{\CC}{\mathcal{C}}
\DeclareMathOperator{\bCC}{\mathbb{C}}
\DeclareMathOperator{\DD}{\mathcal{D}}
\DeclareMathOperator{\JJ}{\mathcal{J}}
\DeclareMathOperator{\EE}{\mathcal{E}}
\DeclareMathOperator{\bEE}{\mathbb{E}}
\DeclareMathOperator{\FF}{\mathcal{F}}
\DeclareMathOperator{\bFF}{\mathbb{F}}
\DeclareMathOperator{\GG}{\mathcal{G}}
\DeclareMathOperator{\bGG}{\mathbb{G}}
\DeclareMathOperator{\bHH}{\mathbb{H}}
\DeclareMathOperator{\HH}{\mathcal{H}}
\DeclareMathOperator{\KK}{\mathcal{K}}
\DeclareMathOperator{\bTT}{\mathbb{T}}
\DeclareMathOperator{\TT}{\mathcal{T}}
\DeclareMathOperator{\rTT}{\mathring{\TT}}
\DeclareMathOperator{\OO}{\mathcal{O}}
\DeclareMathOperator{\UU}{\mathcal{U}}
\DeclareMathOperator{\VV}{\mathcal{V}}
\DeclareMathOperator{\MM}{\mathcal{M}}
\DeclareMathOperator{\WW}{\mathcal{W}}
\DeclareMathOperator{\XX}{\mathcal{X}}
\DeclareMathOperator{\YY}{\mathcal{Y}}
\DeclareMathOperator{\DS}{\rm{DS}}
\renewcommand{\tilde}{\widetilde}
\DeclareMathOperator{\2}{\mathbbm{2}}
\let\SS\relax
\DeclareMathOperator{\SS}{\mathcal{S}}
\DeclareMathOperator{\BB}{\mathbf{B}}
\DeclareMathOperator{\cc}{\mathbf{c}}
\DeclareMathOperator{\Bl}{\rm{Bl}}
\DeclareMathOperator{\Aut}{\rm{Aut}}
\DeclareMathOperator{\Sets}{\mathbf{Sets}}
\DeclareMathOperator{\Sh}{\mathbf{Sh}}
\DeclareMathOperator{\EqSh}{\mathbf{Sh}_{\rm{eq}}}
\DeclareMathOperator{\Stk}{\mathbf{Stk}}
\DeclareMathOperator{\Cat}{\mathbf{Cat}}
\DeclareMathOperator{\Grp}{\mathbf{Grp}}
\DeclareMathOperator{\Grpd}{\mathbf{Grpd}}
\DeclareMathOperator{\Top}{\mathbf{Top}}
\DeclareMathOperator{\Coh}{\mathbf{Coh}}
\DeclareMathOperator{\Ptop}{\mathbf{Ptop}}
\DeclareMathOperator{\BPtp}{\mathbf{BPtp}}
\DeclareMathOperator{\Desc}{\rm{Desc}}
\DeclareMathOperator{\El}{\rm{El}}
\DeclareMathOperator{\Spec}{\rm{Spec}}
\DeclareMathOperator{\Ob}{\rm{Ob}}
\DeclareMathOperator{\Mod}{\mbox{-}\mathbf{Mod}}
\DeclareMathOperator{\LSch}{\mathbf{L-Sch}}
\DeclareMathOperator{\eq}{\rm{eq}}
\let\exp\relax\DeclareMathOperator{\exp}{\mathbf{exp}}
\newcommand{\<}{\langle}
\renewcommand{\>}{\rangle}
\newcommand{\extl}{[\![}
\newcommand{\extr}{]\!]}
\newcommand{\ext}[1]{\extl #1\extr}
\newcommand{\bigextl}{\left[\!\!\left[}
\newcommand{\bigextr}{\right]\!\!\right]}
\newcommand{\bigext}[1]{\bigextl #1\bigextr}
\DeclareMathOperator{\Hom}{\mathbf{Hom}}
\DeclareMathOperator{\Diag}{\rm{Diag}}
\DeclareMathOperator{\uHom}{\underline{\mathrm{Hom}}}
\DeclareMathOperator{\dom}{\mathrm{dom}}
\DeclareMathOperator{\cod}{\mathrm{cod}}
\DeclareMathOperator{\id}{\mathrm{id}}
\DeclareMathOperator{\Ar}{\mathrm{Ar}}
\DeclareMathOperator{\Eq}{\mathrm{Eq}}
\let\eq\relax\DeclareMathOperator{\eq}{\mathrm{eq}}
\DeclareMathOperator{\op}{\mathrm{op}}
\let \lim\relax\DeclareMathOperator{\lim}{\varprojlim}
\DeclareMathOperator{\colim}{\varinjlim}
\DeclareMathOperator{\Sub}{\rm{Sub}}
\DeclareMathOperator{\stalk}{\rm{stalk}}
\DeclareMathOperator{\im}{\rm{im}}
\DeclareMathOperator{\Iff}{\Longleftrightarrow}
\DeclareMathOperator{\reduct}{\upharpoonright}
\DeclareMathOperator{\tri}{\vartriangleright}
\DeclareMathOperator{\rsim}{\rotatebox[origin=c]{90}{$\sim$}}
\newcommand{\overtimes}[1]{\underset{#1}{\times}}
\newcommand{\overplus}[1]{\underset{#1}{\oplus}}
\newcommand{\overprove}[1]{\underset{#1}{\vdash}}
\newcommand{\name}[1]{\ulcorner\! #1 \!\urcorner}
\newcommand{\vertsim}{\rm{\rotatebox[origin=c]{90}{$\sim$}}}
\newcommand{\vertsub}{\rm{\rotatebox[origin=c]{90}{$\subseteq$}}}
\newcommand{\vertleq}{\rm{\rotatebox[origin=c]{90}{$\leq$}}}
\newcommand{\vertiso}{\rm{\rotatebox[origin=c]{270}{$\cong$}}}
\newcommand{\auto}{\lto\hspace{-.45cm}\raisebox{.225cm}{\rotatebox{270}{\scalebox{.7}{$\curvearrowleft$}}}\hspace{.35cm}}
\newcommand{\mono}{\rightarrowtail}
\newcommand{\epi}{\twoheadrightarrow}
\newcommand{\iso}{\stackrel{\sim}{\lto}}
\newcommand{\lto}{\longrightarrow}
\renewcommand{\part}{\rightharpoonup}
\newcommand{\To}{\Rightarrow}
\newcommand{\from}{\leftarrow}
\newcommand{\lfrom}{\longleftarrow}
\newcommand{\labelarrow}[1]{\stackrel{#1}{\lto}}
\newcommand{\arproj}{\ar@{|..>}}
\newcommand{\pbcorner}[1][rd]{\save*!/#1-1.2pc/#1:(-1,1)@^{|-}\restore}
\newcommand{\pbigcorner}[1][rd]{\save*!/#1-1.7pc/#1:(-1,1)@^{|-}\restore}
\newcommand{\pbrcorner}[1][ld]{\save*!/#1+-1.2pc/#1:(-1,1)@^{|-}\restore}
\newcommand{\pbccorner}[1][r]{\save*!/#1-1pc/#1:(-1,1)@^{|-}\restore}
\newcommand{\pbdcorner}[1][ru]{\save*!/#1-1.2pc/#1:(-1,1)@^{|-}\restore}
\newcommand{\pushoutcorner}[1][ul]{\save*!/#1-1.4pc/#1:(-1,1)@^{|-}\restore}
\begin{document}

\maketitle

\setcounter{tocdepth}{1}
\tableofcontents

\chapter*{Introduction}

Although contemporary model theory has been called ``algebraic geometry minus fields'' \cite{hodges}, the formal methods of the two fields are radically different. This dissertation aims to shrink that gap by presenting a theory of ``logical schemes,'' geometric entities which relate to first-order logical theories in much the same way that algebraic schemes relate to commutative rings.
\vspace{.25cm}

Recall that the affine scheme associated with a commutative ring $R$ consists of two components: a topological space $\Spec(R)$ (the spectrum) and a sheaf of rings $\OO_R$ (the structure sheaf). Moreover, the scheme satisfies two important properties: its stalks are local rings and its global sections are isomorphic to $R$. In this work we replace $R$ by a first-order logical theory $\bTT$ (construed as a structured category) and associate it to a pair $(\Spec(\bTT),\OO_{\bTT})$, a topological spectrum and a sheaf of theories which stand in a similar relation to $\bTT$. These ``affine schemes'' allow us to import some familiar definitions and theorems from algebraic geometry.

\subsection*{Stone duality for first-order logic: the spectrum $\Spec(\bTT)$}

In the first chapter we construct $\MM=\Spec(\bTT)$, the \emph{spectrum} of a coherent first-order logical theory. $\MM$ is a topological groupoid constructed from the semantics (models and isomorphisms) of $\bTT$, and its construction can be regarded as a generalization of Stone duality for propositional logic. The construction is based upon an idea of Joyal \& Tierney \cite{JT} which was later developed by Joyal \& Moerdijk \cite{JM1} \cite{JM2}, Butz \& Moerdijk \cite{butz_thesis} and Awodey \& Forssell \cite{forssell_thesis, FOLD}.

Points of the object space $\MM_0$ are models supplemented by certain variable assignments or labellings. Satisfaction induces a topology on these points, much as in the classical Stone space construction. As in the algebraic case, the spectrum is \emph{not} a Hausdorff space. Instead the topology incorporates model theoretic information; notably, the closure of a point $M\in\MM$ (i.e., a labelled model) can be interpreted as the set of $\bTT$-model homomorphisms mapping into $M$.

Every formula $\varphi(x_1,\ldots,x_n)$ determines a ``definable sheaf'' $\ext{\varphi}$ over the spectrum. Over each model $M$, the fiber of $\ext{\varphi}$ is the definable set
$$\textrm{stalk}_M(\ext{\varphi})=\varphi^M=\{\overline{a}\in|M|^n\ |\ M\models\varphi(\overline{a})\}.$$
The space $\ext{\varphi}$ is topologized by the terms $t$ which satisfy $\varphi(t)$; each of these defines a section of the sheaf, sending $M\mapsto t^M\in\varphi^M$.

Although $\ext{\varphi}$ is nicely behaved its subsheaves, in general, are not. The problem is that these typically depend on details of the labellings which have no syntactic relevance. To ``cancel out'' this effect we appeal to $\bTT$-model isomorphisms. Specifically, we can topologize the isomorphisms between models, turning $\Spec(\bTT)$ into a topological groupoid. Each definable sheaf $\ext{\varphi}$ \emph{is} equivariant over this groupoid, which is just a fancy way of saying that an isomorphism $M\cong M'$ induces an isomorphism $\varphi^M \cong {\varphi^M}'$ for each definable set.

The pathological subsheaves, however, are not equivariant; any subsheaf $S\leq\ext{\varphi}$ which \emph{is} equivariant must be a union of definable pieces $\ext{\psi_i}$, where $\psi_i(\overline{x})\vdash\varphi(\overline{x})$. Moreover, $S$ itself is definable just in case it is compact with respect to such covers. This reflects a deeper fact: $\Spec(\bTT)$ gives a presentation of the classifying topos for $\bTT$. That is, there is a correspondence between $\bTT$-models inside a topos $\mathcal{S}$ (e.g., $\Sets$) and geometric morphisms from $\SS$ into the topos equivariant sheaves over $\Spec(\bTT)$.

Since any classical first-order theory can be regarded as a coherent theory with complements, this gives a spectral space construction for classical first-order logic. We close the chapter with an elementary presentation of this construction, and prove a few facts which are specific to this case.

\subsection*{Theories as algebras: the logic of pretoposes}

In the second chapter we recall and prove a number of well-known facts about pretoposes, most of which can be found in either the Elephant \cite{elephant} or Makkai \& Reyes \cite{MakkaiReyes}. We recall the definitions of coherent categories and pretoposes and the pretopos completion which relates them. Logically, a pretopos $\EE=\EE_{\bTT}$ can be regarded as a coherent first-order theory $\bTT$ which is extended by disjoint sums and definable quotients; formally, this corresponds to a conservative extension $\bTT\subseteq\bTT^{\eq}$ which is important in contemporary model theory.

We go on to discuss a number of constructions on pretoposes, including the quotient-conservative factorization and the machinery of slice categories and localizations. In particular, we define the elementary diagram, a logical theory associated with any $\bTT$-model, and its interpretation as a colimit of pretoposes. These are Henkin theories: they satisfy existence and disjunction properties which can be regarded as a sort of ``locality'' for theories.

We also demonstrate that the pretopos completion interacts well with complements, so that classical first-order theories may be regarded as Boolean pretoposes. Similarly, quotients and colimits of theories are well-behaved with respect to complementation, so that the entire machinery of pretoposes can be specialized to the classical case.

\subsection*{Sheaves of theories and logical schemes}

The third chapter is the heart of the disseration, where we introduce a sheaf representation for logical theories and explore the logical schemes which arise from this representation. The most familiar example of such a representation is Grothendieck's theorem that every commutative ring $R$ is isomorphic to the ring of global sections of a certain sheaf over the Zariski topology on $R$. Together with a locality condition, this is essentially the construction of an affine algebraic scheme. Later it was shown by Lambek \& Moerdijk \cite{LM}, Lambek \cite{lambek} and Awodey \cite{awodey_thesis, sheaf_rep} that toposes could also be represented as global sections of sheaves on certain (generalized) spaces. The structure sheaf $\OO_{\bTT}$ is especially close in spirit to the last example.

Formally, the structure sheaf is constructed from the codomain fibration $\EE^I\to\EE$, which can be regarded as a (pseudo-)functor $\EE^{\op}\to\Cat$ sending $A\mapsto\EE/A$; the functorial operation of a map $f:B\to A$ is given by pullback $f^*:\EE/A\to\EE/B$. The quotients ad coproducts in $\EE$ ensure that this map is a ``sheaf up to isomorphism'' over $\EE$ (i.e., a stack). In order to define $\OO_{\bTT}$ we first turn the stack into a (strict) sheaf and then pass across the equivalence $\Sh(\EE_{\bTT})\simeq\EqSh(\MM_{\bTT})$ discussed in chapter 1.

Given a model $M\in\Spec(\bTT)$, the stalk of $\OO_{\bTT}$ over $M$ is the elementary diagram discussed in chapter 2. An object in the diagram of $M$ is a defined by a triple $\<M,\varphi(\overline{x},\overline{y}),\overline{b})$ where $M$ is a model and $\overline{b}$ and $\overline{y}$ have the same arity. We can think of this triple as a definable set
$$\varphi(\overline{x},\overline{b})^M=\left\{\overline{a}\in|M|\ |\ M\models\varphi(\overline{a},\overline{b})\right\}.$$
The groupoid of isomorphisms acts on this sheaf in the obvious way: given $\alpha:M\to N$ we send $\varphi(\overline{x},\overline{b})^M\mapsto \varphi(\overline{x},\alpha(\overline{b}))^N$.

In particular, every formula $\varphi(\overline{x})$ determines a global section $\ulcorner\!\varphi\!\urcorner:M\mapsto \varphi^M$. These are stable with respect to the equivariant action and, together with formal sums and quotients, these are \emph{all} of the equivariant sections. This yields a representation theorem just as in the algebraic case:
$$\Gamma_{\eq}(\OO_{\bTT})\simeq\EE_{\bTT}.$$

The pair $(\MM_{\bTT},\OO_{\bTT})$ is an \emph{affine logical scheme}. A map of theories is an interpretation $I:\bTT\to\bTT'$ (e.g., adding an axiom or extending the language). This induces a forgetful functor $I_\flat:\Spec(\bTT')\to\Spec(\bTT)$, sending each $\bTT'$-model $N$ to the $\bTT$-model which is the interpretation of $I$ in $N$. If $I$ is a linguistic extension then $I_\flat N$ is the usual reduct of $N$ to $\mathcal{L}(\bTT)$.

Moreover, $I$ induces another morphism at the level of structure sheaves: $I^\sharp: I_\flat^*\OO_{\bTT}\to \OO_{\bTT'}.$
On fibers, this sends each $I^*N$-definable set $\varphi^{I^*N}$ to the $N$-definable set $(I\varphi)^N$ (the same set!). The pair $\<I_\flat,I^\sharp\>$ is a map of schemes.

Equivalently, we can represent $I^\sharp$ as a map $\OO_{\bTT}\to I_{\flat *}\OO_{\bTT'}$ and, since $\Gamma_{\bTT'}\circ I_{\flat *}\cong\Gamma_{\bTT}$, the global sections of $I^\sharp$ suffice to recover $I$:
$$\Gamma_{\bTT}(I^\sharp)\cong I\ :\ \bTT=\Gamma_{\bTT}(\OO_{\bTT})\longrightarrow \Gamma_{\bTT}(I_{\flat *}\OO_{\bTT'})\cong \Gamma_{\bTT'}(\OO_{\bTT'})\cong\bTT'.$$
Similarly, we can define a natural transformation of schemes, and these too can be recovered from global sections. This addresses a significant difficulty in Awodey \& Forssell's first-order logical duality \cite{FOLD}: identifying which homomorphisms between spectra originate syntactically. This problem is non-existent for schemes: without a syntactic map at the level of structure sheaves, there is no scheme morphism.

With this framework in place, algebraic geometry provides a methodology for studying this type of object. The necessary definitions to proceed from affine schemes to the general case follow the same rubric as algebraic geometry. There are analogs of locally ringed spaces, gluings (properly generalized to groupoidal spectra) and coverings by affine pieces. Importantly, the equivariant global sections functor presents (the opposite of) the 2-category of theories as a reflective subcategory of schemes. This allows us to construct limits of affine schemes using colimits of theories. This mirrors the algebraic situation, where the polynomial ring $\ZZ[x]$ represents the affine line and its coproduct $\ZZ[x,y]\cong \ZZ[x]+\ZZ[y]$ represents the plane. Via affine covers, one can use this to compute any finite 2-limits for any logical schemes.

\subsection*{Applications}

The scheme $(\MM_{\bTT},\OO_{\bTT})$ associated with a theory incorporates both the semantic and syntactic components of $\bTT$. As such, it is a nexus to study the connections between different branches of logic and other areas of mathematics. In the final chapter of the dissertation we discuss a few of these connections.

\begin{itemize}
\item \textbf{The structure sheaf as a site.\footnote{The author would like to thank Andr\'e Joyal for a helpful conversation in which he suggested the theorem presented in this section.}} $\OO_{\bTT}$ is itself a pretopos internal to the topos of equivariant sheaves over $\MM_{\bTT}$. We can regard this as a site (with the coherent topology, internalized) and consider its topos of sheaves $\Sh_{\MM_{\bTT}}(\OO_{\bTT})$. We prove that this topos classifies $\bTT$-model homomorphisms. We also show that it can be regarded as the (topos) exponential of $\Sh(\MM_{\bTT})$ by the Sierpinski topos $\Sets^I$.

\item \textbf{The structure sheaf as a universe.} Following on the results of chapters 1 and 3 we can show that every equivariant sheaf morphism $\ext{\varphi}\to\OO_{\bTT}$ corresponds to an object $E\in\EE/\varphi$. In this section we introduce an auxilliary sheaf $\El(\OO_{\bTT})\to\OO_{\bTT}$ allowing us to recover $E$ as a pullback:
$$\xymatrix{
\ext{E} \pbcorner \ar[d] \ar[rr] && \El(\OO_{\bTT}) \ar[d]\\
\ext{\varphi} \ar[rr] && \OO_{\bTT}.\\
}$$
This allows us to think of $\OO_{\bTT}$ as a universe of \emph{definably} or \emph{representably} small sets. Formally, we show that $\OO_{\bTT}$ is a coherent universe, a pretopos relativization of Streicher's notion of a universe in a topos \cite{streicher}.

\item \textbf{Isotropy.} In this section we demonstrate a tight connection between our logical schemes and a recently defined ``isotropy group'' \cite{isotropy} which is present in any topos. This allows us to interpret the isotropy group as a logical construction. We also compute the stalk of the isotropy group at a model $M$ and show that its elements can be regarded as parameter-definable automorphisms of $M$.

\item \textbf{Conceptual completeness.} In this section we reframe Makkai \& Reyes' conceptual completeness theorem \cite{MakkaiReyes} as a theorem about schemes. The original theorem says that if an interpretation $I:\bTT\to\bTT'$ induces an equivalence $I^*:\textbf{Mod}(\bTT')\iso\textbf{Mod}(\bTT)$ under reducts, then $I$ itself was already an equivalence (at the level of syntactic pretoposes). The theorem follows immediately from our scheme construction: if $I^*:\Spec(\bTT')\to\Spec(\bTT)$ is an equivalence of schemes, then its global sections define an equivalence $\bTT\simeq\bTT'$. From here we go on to unwind the Makkai \& Reyes proof, providing some insights into the ``Galois theory'' of logical schemes.

\end{itemize}



\noindent{\LARGE\textbf{A Note on Terminology}}
\vspace{.3cm}

Except in the first section we work in the doctrine of pretoposes, and use the following logical and categorical terminology interchangeably. For example, we take a liberal definition of formula to include sum and quotient ``connectives''.
\vspace{.3cm}
\label{dictionary}
\noindent{
\begin{tabular}{|c|c|}
\hline
\textbf{Categorical Definition} &\textbf{Logical shorthand} \\\hline
Classifying pretopos $\EE=\EE_{\bTT}$ & Theory $\bTT$\\
Set of generating objects $\DD\subseteq\Ob(\EE)$ & Set of basic sorts $\DD$\\\hline
An object $D=D^x\in\EE$& Context, sort $x$\\
Subobject $\varphi\leq D^x$ & (Open) Formula $\varphi(x)$ \\\hline
Morphisms $\sigma,\tau:D^x\to D^y$& Provably functional relation $\sigma(x,y)$,\\
& (Open) Term $\tau(x)=y$ \\\hline
Pretopos functor $I:\EE\to\FF$ & Interpretation of $\EE$ in $\FF$\\
\hspace{1cm} $M:\EE\to\Sets$ & An $\EE$-model, $M\models\EE$\\\hline
Evaluation $M(D^x)$ & Underlying set $|M|^x$\\
\hspace{1cm} $M(\varphi\leq D^x)$& Definable set $\varphi^M$\\\hline
\end{tabular}
}


\chapter{Logical spectra}

In this chapter we will construct a topological groupoid $\MM=\Spec(\bTT)$ called the \emph{spectrum} of a coherent first-order logical theory $\bTT$. We begin by defining the space of objects $\MM_0$ constructed from the models of $\bTT$. Next we consider the topos of sheaves over $\MM_0$ and, in particular, single out a class of $\bTT$-definable sheaves over this space. Then we will extend $\MM$ to a groupoid whose morphisms are $\bTT$-model isomorphisms; using this we characterize the definable sheaves as those which admit an equivariant action by the isomorphisms in $\MM$. As a corollary we obtain a topological/groupoidal characterization of definability in first-order logic. We close by considering the special case of classical first-order logic and prove a few facts specific to this case.

\section{The spectrum $\MM_0$}\label{sec_CovSp}

In this section we associate a topological spectral space $\MM_0(\bTT)$ with any coherent first-order theory $\bTT$. We build the spectrum from the semantics of $\bTT$, generalizing the Stone space construction from propositional logic. This is based upon an idea of Joyal \& Tierney \cite{JT} which was later developed by Joyal \& Moerdijk \cite{JM1} \cite{JM2}, Butz \& Moerdijk \cite{butz_thesis} \cite{BM_article} and Awodey \& Forssell \cite{forssell_thesis}. 

\begin{defn}
A (multi-sorted) \emph{coherent theory} is a triple $\<\AA,\LL,\bTT\>$ where
\begin{itemize}
\item $\AA=\{A\}$ is a set of basic sorts,
\item $\LL=\{R(\overline{x}), F(\overline{x})=y\}$ is a set of function\footnote{We write $F(\overline{x})=y$ for the function symbol because in multi-sorted logic we must specify the sort $y:B$ of the codomain of $F$ as well as its arity $\overline{x}:A_1\times\ldots\times A_n$.} and relation symbols with signatures in $\AA$ and
\item $\bTT=\left\{\varphi(\overline{x})\overprove{\overline{x}}\psi(\overline{x})\right\}$ is a set of coherent axioms, written in the language $\LL$ and involving variables $\overline{x}=\<x_1,\ldots,x_n\>$ (see below).
\end{itemize}
\end{defn}

We write $x:A$ to indicate that $x$ is a variable ranging over some sort $A\in\AA$. A \emph{context of variables} is a sequence of sorted variables, written ${\<x_1,\ldots,x_n\>:A_1\times\ldots\times A_n}$ or $\overline{x}:\prod_i A_i$. We will assume that $\AA$ is closed under tupleing, although this is not necessary for our arguments; in practice, this allows us to avoid subscript overload by focusing most of the time on the single-variable case $x:A$.

As usual in categorical logic, we will assume that every $\LL$-formula (in particular each basic function or relation) is written in a fixed context of variables $x:A$, indicated notationally by writing $\varphi(x)$. We may always weaken a formula $\varphi(x)$ to include dummy variables $y:B$ by conjunction with a tautology; the weakening $\varphi(x,y)$ is shorthand for the formula $\varphi(x)\wedge(y=y)$. It is important to note that we regard $\varphi(x)$ and $\varphi(x,y)$ as different formulas.

A first-order formula is \emph{coherent} if belongs to the fragment generated by $\{\bot,\top,=,\wedge,\vee,\exists\}$. Axioms in a coherent theory are presented as \emph{sequents}, ordered pairs of coherent formulas written in a common context $x:A$ and usually written $\varphi(x)\vdash_x\psi(x)$. 

Relative to this data, a $\bTT$-model $M$ is defined as usual. Each basic sort $A\in\AA$ defines an underlying set $A^M$; when $A=A_1\times\ldots\times A_n$ is a compound context $A^M=A_1^M\times\ldots\times A_n^M$. If $x:A$ is a context of variables, we may also write $|M|^x$ for the ``underlying set'' $A^M$. When $\LL$ is single-sorted and $x=\<x_1,\ldots,x_n\>$, this agrees with the usual notaion $|M|^x=|M|^{n}$.

For any coherent formula $\varphi(x)$ we may construct a \emph{definable set} $\varphi^M\subseteq |M|^x$ in the usual inductive fashion. $R^M\subseteq |M|^x$ is already defined and $(F(x)=y)^M$ is the graph of the function $F^M:|M|^x\to|M|^y$. If $\varphi^M\subseteq|M|^x$ is already defined, the interpretation of a weakened formula is given by $\varphi(x,y):=\varphi^M\times|M|^y$.

The interpretation of an existential formula is given by the image of the projection:
$$\big(\exists y.\varphi(x,y)\big)^M=\pi_x(\varphi^M)=\big\{a\in|M|^x\ |\ \<a,b\>\in\varphi^M\rm{\ for\ some\ }b\in|M|^y\big\}\\$$
To interpret conjunctions and disjunctions we first weaken the component formulas to a common context $x:A$ and then compute
$$\begin{array}{c}
\big(\varphi\wedge\psi\big)^M=\varphi^M\cap\psi^M\subseteq|M|^x,\\
\big(\varphi\vee\psi\big)^M=\varphi^M\cup\psi^M\subseteq|M|^x.\\
\end{array}$$

We begin by defining the spectrum $\MM_0=\MM_0(\LL)$ associated with a language $\LL$. Every theory $\bTT$ in the language $\LL$ will define a subspace $\MM_0(\bTT)\subseteq\MM_0(\LL)$. We let $\kappa=\rm{LS}(\bTT)=|\LL|+\aleph_0$ denote the Lowenheim-Skolem number of $\bTT$ (see e.g. \cite{hodges}, ch. 3.1). Since any model has an elementary substructure of size $\kappa$, $\bTT$ is already complete for $\kappa$-small models, a fact that we will use repeatedly.

\begin{defn}[Points of $\MM_0$, cf. Butz \& Moerdijk \cite{butz_thesis} \cite{BM_article}]\label{M0points}
A point $\mu\in |\MM_0|$ is a tuple $\<M_{\mu},K^A_{\mu}, v^A_{\mu}\>_{A\in\AA}$, where $M=M_{\mu}$ is an $\LL$-structure and, for each $A\in\AA$,
\begin{itemize}
\item $K^A_{\mu}$ is a subset of $\kappa$ and
\item $v_{\mu}^A:K_{\mu}^A\epi A^M$ is an infinite-to-one function (called a \emph{labelling}).
\end{itemize}
The latter condition means that, for each $a\in A^M$, the set $\{k\in\kappa\ |\ v_{\mu}^A(k)=a\}$ is infinite.
\end{defn}

We call the elements $k\in\kappa$ \emph{parameters} and the points $\mu\in|\MM_0|$ \emph{labelled models}. The motivation behind these parameters is similar to that behind the use of variable assignments in traditional first-order logic. \emph{A priori} we can only say whether a (labelled) model satisfies a sentence; the parameters allow us to evaluate the truth of any (parameterized) formula $\varphi(\overline{k})$ relative to $\mu$.

Fix a labelled model $\mu\in|\MM_0|$, a context of variables $x:A=\<x_i:A_i\>$ and a formula $\varphi(x)$. Given this data, we introduce the following notation and terminology:
\begin{itemize}
\item If $x:A$ is a context of variables we may write $K_{\mu}^x$ (resp. $v_{\mu}^x$) rather than $K_{\mu}^A$ (resp. $v_{\mu}^A$). We call $K_{\mu}^x$ the \emph{domain} of $\mu$ at $x$.
\item We write that $k\in \mu^x$ to indicate that $k\in K^x_{\mu}$, and say that \emph{$k$ is defined for $x$ at $\mu$}.
\item We write $\varphi^\mu$ in place of $\varphi^{M_{\mu}}$ to denote the definable set associated to $\varphi(x)$ in the model $M_{\mu}$.
\item When $k$ is defined for $x$ at $\mu$ we abuse notation by writing $\mu^x(k)$ for the element $v_{\mu}^x(k)\in|M_{\mu}|^x$.
\item Given a formula $\varphi(x)$, we write $\mu\models\varphi(k)$ when $k\in\mu^x$ and ${M_{\mu}\models \varphi[\mu^x(k)/x]}.$
\end{itemize}

\begin{defn}[Pre-basis for $\MM_0$]
\mbox{}
\begin{itemize}
\item For each context $x:A$ and each parameter $k\in\kappa$ there is a basic open set
$$V_{k,x}:=\big\{\mu\in|\MM_0|\ \big|\ k\in\mu^x\big\}.$$
\item Given a basic relation $R(x)$ and a parameter $k\in\kappa$ there is a basic open set
$$V_{R(k)}=\big\{\mu\in\MM_0\ \big|\ \mu\models R(k)\big\}\subseteq V_{k,x}.$$
\item Given a basic function $F(x)=y$ and parameters $k,l\in\kappa$ there is a basic open set
$$V_{F(k)=l}=\big\{\mu\in\MM_0\ \big|\ \mu\models F(k)=l\big\}\subseteq V_{\<k,l\>,\<x,y\>}.$$
\end{itemize}
\end{defn}

From these we can build up a richer collection of open sets corresponding to other parameterized formulas.
\begin{defn}[Basic opens of $\MM_0$]\label{M0opens}
\mbox{}
\begin{itemize}
\item Each prebasic open set above is a basic open set.
\item Given an inductively defined basic open set $V_{\varphi(k)}$ and a parameter $l^y$ there is a weakening
$$V_{\varphi^y(k,l)}=V_{\varphi(k)}\cap V_{l^y}.$$
\item Only when $V_{\varphi(k)}$ and $V_{\psi(k)}$ share the same parameters $k^x$
$$V_{\varphi\wedge\psi(k)}= V_{\varphi(k)}\cap V_{\psi(k)} \hspace{1cm} V_{\varphi\vee\psi(k)}= V_{\varphi(k)}\cup V_{\psi(k)}.$$
\item If $V_{\varphi(k,l)}$ is defined for all parameters $l^y$, then
$$V_{\exists y.\varphi(k)}=\displaystyle\bigcup_{l\in\kappa^y} V_{\varphi(k,l)}$$
\end{itemize}
\end{defn}
If the defining formula of a basic open set is complicated we may sometimes write it in brackets $V[\ldots]$, rather than as a subscript. We have the following easy lemma.

\begin{lemma}
Given a coherent formula $\varphi(x)$ and a parameter $k\in\mu^x$,
$$\mu\in V_{\varphi(k)}\Iff \mu\models\varphi(k).$$
\end{lemma}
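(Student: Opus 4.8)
The plan is to prove the biconditional by induction on the structure of the coherent formula $\varphi(x)$, matching the inductive clauses of Definition \ref{M0opens} (the construction of $V_{\varphi(k)}$) against the inductive clauses defining the interpretation $\varphi^M$, and hence the satisfaction relation $\mu\models\varphi(k)$. Throughout I carry the standing hypothesis $k\in\mu^x$, which is exactly the clause already built into the definition of $\mu\models\varphi(k)$, so that the two sides of the equivalence are genuinely comparable.

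For the base cases the equivalence is essentially definitional. The prebasic sets $V_{R(k)}$ and $V_{F(k)=l}$ are defined as $\{\mu\mid\mu\models R(k)\}$ and $\{\mu\mid\mu\models F(k)=l\}$, so there is nothing to check; the set $V_{k,x}$ corresponds to the trivially-true formula in context $x$, and $\mu\in V_{k,x}\iff k\in\mu^x$ matches $\mu\models\top(k)$ under our hypothesis.

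For the connectives I would proceed clause by clause. For weakening, conjunction and disjunction, the open-set operations are intersection and union, and these mirror exactly the set-theoretic operations $\varphi^M\cap\psi^M$, $\varphi^M\cup\psi^M$ and $\varphi^M\times|M|^y$ used to interpret $\wedge$, $\vee$ and dummy variables in $M_\mu$; combining these identities with the induction hypothesis yields each equivalence immediately. The side condition that $V_{\varphi(k)}$ and $V_{\psi(k)}$ share the same parameters is precisely what guarantees that we are comparing definable sets sitting in a common context.

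The one clause that carries real content is the existential quantifier, and this is where I expect the only genuine obstacle. By definition $V_{\exists y.\varphi(k)}=\bigcup_{l\in\kappa^y}V_{\varphi(k,l)}$, while $\mu\models(\exists y.\varphi)(k)$ unwinds to $\mu^x(k)\in\pi_x(\varphi^{M_\mu})$, i.e. the existence of some $b\in|M_\mu|^y$ with $\langle\mu^x(k),b\rangle\in\varphi^{M_\mu}$. The forward direction is routine: a witnessing parameter $l\in\mu^y$ supplies the semantic witness $b=\mu^y(l)$. The reverse direction is the crux: given a semantic witness $b\in|M_\mu|^y$, I must produce a parameter $l\in\kappa^y$ with $l\in\mu^y$ and $\mu^y(l)=b$. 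This is exactly what the surjectivity (indeed the infinite-to-one condition) of the labelling $v_\mu^y:K_\mu^y\epi|M_\mu|^y$ guarantees, namely that every semantic element is named by some parameter. With such an $l$ in hand, the induction hypothesis applied to $\varphi(k,l)$ places $\mu$ in $V_{\varphi(k,l)}$ and hence in the union. Thus the labellings serve precisely to let parameters witness existential quantifiers, in the spirit of a Tarski variable assignment or a Henkin constant, and once this point is isolated the lemma reduces to routine bookkeeping.
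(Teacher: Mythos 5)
Your proof is correct and follows essentially the same route as the paper's: induction on the structure of $\varphi$, with the base cases and the propositional connectives handled definitionally, and the existential clause resolved by invoking the surjectivity of the labelling $v_\mu^y$ to name the semantic witness by a parameter. The paper's own argument makes exactly this use of surjectivity in the existential case and treats the remaining clauses just as briefly.
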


\begin{proof}

The claim holds by definition for basic functions and relations.

Suppose $\varphi(x,y)\equiv \varphi(x)\wedge(y=y)$ is a weakening of a formula $\varphi(x)$. Then $\mu\models \varphi(k,l)$ just in case $\mu\models\varphi(k)$ and $l$ is defined for $y$ at $\mu$. But this is exactly the intersection $V_{\varphi(k)}\cap V_{l,y}$. For joins and meets in a common context, the claim follows immediately from the definition of satisfaction.

As for the existential, consider a formula $\varphi(x,y)$. If $\mu\models \exists y.\varphi(k)$ there must be some $b\in|M_{\mu}|^y$ such that $\mu\models \varphi(\mu^x(k),b)$. Because $\mu$ is a surjective labelling there is a parameter $l$ such that $\mu^y(l)=b$, and therefore $\mu\in V_{\varphi(k,l)}\subseteq\displaystyle\bigcup_{l\in\kappa} V_{\varphi(k,l)}$. 
\end{proof}

Properly speaking, $\MM_0$ as described above is not a $T_0$ space and, in fact, there is a proper class of models in the underlying set $|\MM_0|$. However, these models are all $\kappa$-small so they represent only a set of isomorphism-classes. For each isomorphism class there is only a set of labellings and this allows us to modify $\MM_0$ in an essentially trivial way to obtain an ordinary space.

\begin{prop}
Two labelled models $\mu$ and $\nu$ are topologically indistinguishable if and only if $K_\mu^A=K_\nu^A$ for every basic sort $A$ and these labellings induce a $\bTT$-model isomorphism $M_\mu\cong M_\nu$.
\end{prop}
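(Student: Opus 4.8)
The plan is to unwind ``topologically indistinguishable'' into the statement that $\mu$ and $\nu$ lie in exactly the same subbasic open sets, and then read the required data off those opens. Since the sets $V_{k,x}$ and $V_{\varphi(k)}$ generate the topology, $\mu$ and $\nu$ are topologically indistinguishable if and only if they agree on membership in every such $V$; by the easy lemma this is equivalent to saying that $k\in\mu^x \Iff k\in\nu^x$ and $\mu\models\varphi(k) \Iff \nu\models\varphi(k)$ for all contexts $x$, parameters $k$, and coherent $\varphi$. The whole proof then becomes a dictionary translating these satisfaction conditions into structure-theoretic facts about the labellings.

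For the ($\Leftarrow$) direction, suppose the labellings share a common domain $K^A:=K_\mu^A=K_\nu^A$ and induce an isomorphism $\theta\colon M_\mu\cong M_\nu$, by which I mean $\theta\circ v_\mu^A=v_\nu^A$ for each basic sort $A$, so that $\theta(\mu^x(k))=\nu^x(k)$. Agreement on the $V_{k,x}$ is immediate from $K_\mu=K_\nu$. For the remaining opens I would invoke the standard fact that an isomorphism of $\LL$-structures preserves and reflects satisfaction of every coherent formula: for $k\in\mu^x=\nu^x$ we get $M_\mu\models\varphi[\mu^x(k)/x] \Iff M_\nu\models\varphi[\theta(\mu^x(k))/x]=\varphi[\nu^x(k)/x]$, that is, $\mu\models\varphi(k)\Iff\nu\models\varphi(k)$, and indistinguishability follows.

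For the harder ($\Rightarrow$) direction, assume $\mu$ and $\nu$ are indistinguishable. First recover the domains: since $k\in\mu^A \Iff \mu\in V_{k,A} \Iff \nu\in V_{k,A} \Iff k\in\nu^A$, we obtain $K_\mu^A=K_\nu^A$ for every $A$. Next define $\theta^A\colon A^{M_\mu}\to A^{M_\nu}$ by choosing, for $a\in A^{M_\mu}$, a parameter $k$ with $\mu^A(k)=a$ (possible since $v_\mu^A$ is surjective) and setting $\theta^A(a)=\nu^A(k)$. I would then get surjectivity of $\theta^A$ from surjectivity of $v_\nu^A$ together with $K_\mu=K_\nu$, and verify the homomorphism conditions by translating through the relation and function opens: with $a=\mu^x(k)$ one has $a\in R^{M_\mu}\Iff\mu\models R(k)\Iff\nu\models R(k)\Iff\theta(a)\in R^{M_\nu}$, and likewise $\mu\models F(k)=l\Iff\nu\models F(k)=l$ forces $\theta(F^{M_\mu}(a))=F^{M_\nu}(\theta(a))$. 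A bijective $\LL$-homomorphism that also reflects the relations is an $\LL$-structure isomorphism, hence a $\bTT$-model isomorphism since both $M_\mu$ and $M_\nu$ satisfy $\bTT$; and by construction $\theta\circ v_\mu^A=v_\nu^A$.

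The crux — and the step I expect to be the main obstacle — is the well-definedness (equivalently, injectivity) of $\theta^A$: I must show that $\mu^A(k)=\mu^A(k')$ implies $\nu^A(k)=\nu^A(k')$, so the recipe is independent of the chosen parameter. This is exactly where the topology must detect the \emph{kernel} of the labelling, and it is resolved by the equality opens $V[k=_A k']$ arising from the diagonal formula $(x=x')$, which is coherent because $=$ lies in the generating fragment. Indistinguishability then gives $\mu^A(k)=\mu^A(k') \Iff \mu\in V[k=_A k'] \Iff \nu\in V[k=_A k'] \Iff \nu^A(k)=\nu^A(k')$, yielding well-definedness and injectivity at once. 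The one point to be careful about is that this equality open is genuinely available: although the prebasis in Definition \ref{M0opens} is phrased via relations and functions, the coherent fragment includes $=$, so the diagonal subobjects supply the opens we need.
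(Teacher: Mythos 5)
Your proposal is correct and follows essentially the same route as the paper: recover the domains from the $V_{k,x}$, use the equality opens $V_{k=l}$ to get a well-defined bijection on underlying sets, and use the relation and function opens to see it is an $\LL$-structure isomorphism carrying one labelling to the other. You are somewhat more careful than the paper on two points worth keeping — spelling out the easy converse via preservation/reflection of coherent satisfaction under isomorphism, and noting that a bijective homomorphism must also reflect relations (which here follows from the symmetry of indistinguishability) — but these are refinements of the same argument, not a different one.
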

\begin{proof}
Suppose that $\mu$ and $\nu$ are topologically indistinguishable models (i.e., they belong to exactly the same open sets). Because they co-occur in the same context open sets $V_{k^x}$ we see that $k$ is defined for $x$ at $\mu$ just in case it is defined at $\nu$. Considering the basic open set $V_{k=l}$, we see that $\mu(k)=\mu(l)$ if and only if $\nu(k)=\nu(l)$. This means that the labellings induce a bijection between the underlying sets of $\mu$ and $\nu$:
$$\xymatrix{
K_{\mu} \ar@{=}[r] \ar@{->>}[d] & K_{\nu} \ar@{->>}[d]\\
|M_{\mu}| \ar@{-->}[r]^{\sim} & |M_{\nu}|}.$$
Because $\mu\models R(k)$ just in case $\nu\models R(k)$ for each basic relation (or similarly for functions), this bijection is actually an $\LL$-structure homomorphism. Thus $\mu$ and $\nu$ are topologically indistinguishable just in case there is an isomorphism $M_{\mu}\cong M_{\nu}$ which carries the labelling of $\mu$ to the labelling of $\nu$.
\end{proof}

Among these indistinguishable points there is a canonical representative. The relation $\mu(k)=\mu(l)$ defines an equivalence relation $k\sim_{\mu} l$ on $K_{\mu}$ (or equivalently a partial equivalence relation on the full set $\kappa$). We may substitute the quotient $K_{\mu}/\sim_{\mu}$ (also called a \emph{subquotient} $\kappa/\sim_{\mu}$) in place of the underlying set $|M_{\mu}|$, replacing each $a\in M_{\mu}$ by the equivalence class $\{k\in\kappa\ |\ \mu(k)=a\}$. The result is a labelled model indistinguishable from $\mu$ and canonically determined by the neighborhoods of $\mu$.

Thus every labelled model is indistinguishable from one whose underlying set is a subquotient of $\kappa$, and one can easily show that each of these is distinguishable. Thus in definition \ref{M0points} we could have specified these as the points of our space, making $|\MM_0|$ a set and $\MM_0$ a $T_0$ space. We prefer to use the unreduced space because it is sometimes convenient to refer to underlying sets (as in lemma \ref{reassignment} below); at the expense of conceptual clarity we can always rephrase these results in terms of isomorphisms between subquotients.

Caution: even with this modification, the topological space $\MM_0$ is \emph{not} Hausdorff. In some respects this is unsurprising given that the same phenomena occur for algebraic schemes.  However, the reduced version of $\MM_0$ is at least sober, as in the algebraic case; the reader can see \cite{butz_thesis} or \cite{forssell_thesis} for a proof. To verify the failure of separation in $\MM_0$, we observe that points are not closed.

\begin{prop}\label{spectral_closure}
Consider two labelled models $\mu,\nu\in|\MM_0|$. Then $\mu$ belongs to the closure of $\nu$ just in case
\begin{itemize}
\item For each $A\in \AA$, $K_{\mu}^A\subseteq K_{\nu}^A$,
\item these inclusions induce functions on the underlying sets of $\mu$ and $\nu$:
$$\xymatrix{
K_{\mu}^A \ar@{->>}[d] \ar@{}[r]|{\subseteq} & K_{\nu}^A \ar@{->>}[d]\\
A^\mu \ar@{-->}[r] & A^\nu}
$$
\item The induced functions $A^\mu\to A^\nu$ define an $\LL$-structure homomorphism $M_{\mu}\to M_{\mu}$.
\end{itemize}
\end{prop}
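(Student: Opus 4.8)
The plan is to translate the topological condition $\mu\in\overline{\{\nu\}}$ into its \emph{specialization order} form and then read off the three clauses by testing against suitably chosen basic opens. Since the sets $V_{\varphi(k)}$ generate the topology, $\mu$ lies in the closure of $\nu$ precisely when every such $V_{\varphi(k)}$ containing $\mu$ also contains $\nu$ (the specialization order may be tested on any generating family). By the lemma above this is exactly the statement that
$$\mu\models\varphi(k)\ \Longrightarrow\ \nu\models\varphi(k)$$
for every coherent formula $\varphi(x)$ and every parameter tuple $k$. So the proposition reduces to showing that this implication, quantified over all coherent $\varphi$, holds if and only if the inclusions $K_\mu^A\subseteq K_\nu^A$ induce an $\LL$-structure homomorphism $M_\mu\to M_\nu$.

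For the direction from the homomorphism to the closure, I would invoke the standard fact that coherent formulas are preserved under $\LL$-structure homomorphisms. Arguing by induction on $\varphi$ (base cases the atomic relation and function symbols, inductive steps $\wedge,\vee,\exists$, with surjectivity of the labellings used in the existential case), if $h\colon M_\mu\to M_\nu$ satisfies $h\circ v_\mu^A=v_\nu^A$ on $K_\mu^A$ and $\mu\models\varphi(k)$, then $M_\nu\models\varphi(\nu^x(k))$, i.e.\ $\nu\models\varphi(k)$. This is the only step invoking a genuine lemma about the coherent fragment, and it is where positivity is essential: no negation or implication is available, which is precisely why homomorphisms (rather than elementary embeddings) suffice.

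For the converse I would reconstruct the homomorphism by instantiating the implication at atomic formulas. Taking $\varphi$ to be $x=x$ gives $k\in\mu^x\Rightarrow k\in\nu^x$, hence the inclusions $K_\mu^A\subseteq K_\nu^A$ (clause one). Taking $\varphi$ to be $x_1=x_2$ gives $\mu^A(k)=\mu^A(l)\Rightarrow\nu^A(k)=\nu^A(l)$, so the composite $K_\mu^A\hookrightarrow K_\nu^A\epi A^\nu$ factors through $v_\mu^A$ to a well-defined map $h_A\colon A^\mu\to A^\nu$ filling in the displayed square (clause two); totality of $h_A$ uses surjectivity of $v_\mu^A$. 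Finally, instantiating at $R(x)$ shows $h$ preserves relations, and instantiating at $F(x)=y$, after naming the value $F^{M_\mu}(\mu^x(k))=\mu^y(l)$ via surjectivity of $\mu$'s labelling, shows $h$ preserves the function symbols; thus $h=(h_A)_{A\in\AA}$ is an $\LL$-homomorphism (clause three).

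The mechanical content is routine; the one place to be careful is the existential and function clauses, where surjectivity of the labellings is exactly what lets us move between elements of the models and parameters drawn from $\kappa$ — without it the reconstructed $h$ need not be total and the preservation argument could fail. I expect this bookkeeping around the labellings, rather than any conceptual difficulty, to be the main thing to get right.
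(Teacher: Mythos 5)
Your proposal is correct and follows essentially the same route as the paper: reduce closure to the implication $\mu\models\varphi(k)\Rightarrow\nu\models\varphi(k)$ over basic opens, extract the three clauses by instantiating at $x=x$, $x_1=x_2$, $R(x)$ and $F(x)=y$, and use preservation of coherent formulas under homomorphisms for the converse. The only difference is one of completeness: the paper leaves the homomorphism-implies-closure direction as "one easily shows," whereas you spell out the induction and correctly flag where surjectivity of the labellings is needed.
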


\begin{proof}
First suppose that $\mu\in\overline{\nu}$. This means that, for any open set $V\subseteq \MM_0$, $\mu\in V$ implies $\nu\in V$.

First consider open sets of the form $V_{k,x}$. Applying the above observation, this means that $k$ is defined for $x$ at $\mu$ (so that $\mu\in V_{k,x}$) implies that $k$ is defined for $x$ at $\nu$ as well. Therefore $K_{\mu}^x\subseteq K_{\nu}^x$.

Similarly, whenever $\mu\models k=k'$ then we must have $\nu\models k=k'$ as well. It follows that the inclusion $K_{\mu}^x\subseteq K_{\nu}^x$ induces a function on underlying sets:
$$\xymatrix{
&\kappa&\\
K^x_{\mu} \ar@{->>}[d] \ar@{>->}[ur] \ar@{>->}[rr] && K^x_{\nu} \ar@{->>}[d] \ar@{>->}[ul]\\
|M_{\mu}|^x \ar@{-->}[rr]^{|\alpha|^x} && |M_{\nu}|^x.\\
}$$

Finally, if $\mu\models \varphi(k)$ then $\nu\models \varphi(k)$ as well, so the induced functions $|M_{\mu}|^x\to|M_{\nu}|^x$ preserve any coherent properties of elements in $M_{\mu}$. In particular, this holds for basic functions and relations, so the functions $|\alpha|^x$ define an $\LL$-structure homorphism $M_{\mu}\to M_{\nu}$.

Conversely, if there are inclusions $K_{\mu}^A\subset K_{\nu}^A$ which induce a homorphism $M_{\mu}\to M_{\nu}$, then one easily shows that $\mu$ belongs to any open neighborhood of $\nu$.\footnote{In fact, one can show that the only closed point of $\MM_0$ is the empty structure. This is because the equivalence classes making up the underlying sets $|M_{\mu}|^x$ are infinite. Therefore we can always throw out a finite set of labels to create a new model $\mu^-$ which belongs to the closure of $\mu$.}
\end{proof}

\begin{lemma}[Reassignment lemma]\label{reassignment}
Suppose that $\mu$ is a labelled model, $k^x$ and $l^y$ are disjoint sequences of parameters and $b\in|M_{\mu}|^y$ is a sequence of elements in $\mu$. Then there is another labelling $\nu$ with the same underlying model, $M_{\mu}=M_{\nu}$, such that $\mu(k)=\nu(k)$ and $\nu(l)=b$
\end{lemma}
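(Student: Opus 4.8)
The plan is to produce $\nu$ by overwriting the labelling of $\mu$ on the finitely many parameters occurring in $l$, and leaving it unchanged everywhere else. Set $M_\nu := M_\mu$, and at each basic sort $B$ occurring in the context $y$ enlarge the domain to $K^B_\nu := K^B_\mu\cup\{\,l_j : y_j \text{ has sort } B\,\}$ and define
$$
v^B_\nu(p) :=
\begin{cases}
b_j & \text{if } p = l_j \text{ for some such } j,\\
v^B_\mu(p) & \text{otherwise.}
\end{cases}
$$
At every sort that does not occur in $y$ I would copy $\mu$ verbatim, so that $K^A_\nu = K^A_\mu$ and $v^A_\nu = v^A_\mu$. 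By construction $\nu(l) = b$; and since $k$ is disjoint from $l$, each $k_i$ differs from every $l_j$, so $\nu(k_i) = v^A_\mu(k_i) = \mu(k_i)$, i.e. $\nu(k) = \mu(k)$. Thus both required equations hold as soon as $\nu$ is shown to be a legitimate labelled model.

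The one substantive point is that each modified $v^B_\nu$ is again an infinite-to-one surjection onto $B^M$, and this is exactly where the infiniteness built into Definition \ref{M0points} does the work. Passing from $v^B_\mu$ to $v^B_\nu$ removes at most finitely many parameters from the fibre over each old value and inserts at most finitely many into the fibre over each $b_j$, so every fibre of $v^B_\nu$ differs from the corresponding fibre of $v^B_\mu$ by a finite set. Since the fibres of $v^B_\mu$ are all infinite, so are those of $v^B_\nu$: no fibre is emptied, so $v^B_\nu$ remains surjective, and none becomes finite, so $v^B_\nu$ remains infinite-to-one. This is really the heart of the lemma — infinite fibres provide enough slack to relabel finitely many parameters however one likes.

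Two bookkeeping remarks close the argument. First, well-definedness of the case split requires the components of the context-$y$ sequence $l$ to be taken distinct (equivalently, $l_i = l_j$ forces $b_i = b_j$), which is the natural reading of ``sequence of parameters''; with that convention the two clauses never conflict. Second, because the labellings at distinct basic sorts are specified independently in Definition \ref{M0points}, altering only the sorts occurring in $y$ leaves all the remaining data of $\mu$ intact, so the tuple $\nu$ genuinely is a point of $\MM_0$ with $M_\nu = M_\mu$. I do not expect any real obstacle: once the construction is written down the whole argument collapses to the observation that an infinite set minus a finite set is still infinite, with the disjointness hypothesis serving precisely to protect the values on $k$.
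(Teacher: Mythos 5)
Your proof is correct and follows essentially the same route as the paper's: the paper first deletes the $l$-parameters from the domains (forming $\mu\setminus l$) and then re-adds them with values $b$, while you overwrite in a single step, but both arguments reduce to the observation that altering finitely many parameters cannot destroy the infinite-to-one surjectivity of the labelling, with disjointness of $k$ and $l$ protecting the values $\mu(k)$. Your explicit remarks on well-definedness of the case split and on the fibres changing only by finite sets are slightly more careful than the paper's one-line justification, but the content is the same.
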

\begin{proof}
It is possible that some of the $l$-parameters may already be defined at $\mu$, so we begin by removing them from each domain $K_{\mu}$. Because $v_{\mu}$ is infinite-to-one and $l$ is finite, the modified labelling is again infinite-to-one and we call the resulting labelled model $\mu\setminus l$. Because $k$ is disjoint from $l$, we have $v_{\mu\setminus l}(k)=v_{\mu}(k)\in|M_{\mu}|^x$. 

Now we can freely reassign $l$ to $b$. Specifically, when $y=\<y_j:B_j\>$ we set
$$K_{\nu}^A=K_{\mu\setminus l}^A\cup\{ l_j\ |\ B_j=A\}.$$
Then we extend $v_{\nu}$ to this larger domain by setting setting $v_{\nu}(l_j)=b_j$. Clearly $\nu(l)=b$. As an extension of $\mu\setminus l$, we also have $\nu(k)=v_{\mu\setminus l}(k)=\mu(k)$, as desired.
\end{proof}

Now we shift from languages and structures to theories and models. In categorical logic, axioms are always expressed relative to a context of variables $x:A$; in this way we can interpret an outer layer of ``universal quantification'' even though $\forall$ is not a coherent symbol. Similarly, we express our axioms as sequents ${\varphi(x)\underset{x:A}{\vdash} \psi(x)}$, allowing for ``one layer'' of implication (or negation) even though $\to$ is not coherent.

A first-order theory $\bTT$ is \emph{coherent} if it has an axiomatization using sequents of coherent formulas. To indicate that a sequent is valid relative to $\bTT$ we write
$$\bTT\tri \varphi(x)\vdash_{x:A} \psi(x).$$
In an $\LL$-structure $M$, satisfaction of sequents is dictated by definable sets:
$$M\models \varphi(x)\vdash_{x:A}\psi(x)\ \Iff\ \varphi^M\subseteq \psi^M\subseteq |M|^x.$$

\begin{defn}
Consider a coherent theory
$$\bTT=\{\varphi_i(x_i)\underset{x_i:A_i}{\vdash} \psi_i(x_i)\}_{i\in I}$$
written in the language $\LL$. The \emph{(spatial) spectrum of $\bTT$} is the subspace $\Spec_0(\bTT)\subseteq\MM_0(\LL)$ consisting of those labelled $\LL$-structures $\mu$ which are models of $\bTT$: i.e., for all $i\in I$,
$$\varphi_i^\mu\subseteq\psi_i^\mu\subseteq |M_{\mu}|^{x_i}$$
\end{defn}

\begin{lemma}\label{inclusion_lemma}
Now let $\MM_0=\Spec_0(\bTT)$ denote the spectrum of $\bTT$. Any (non-empty) basic open set $U\subseteq V_{\varphi(k)}$ has the form $U=V_{\psi(k,l)}$ where $\bTT,\psi(x,y)\vdash \varphi(x)$.
\end{lemma}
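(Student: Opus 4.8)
The plan is to reduce everything to a single observation about conjunctions. By Definition \ref{M0opens} every basic open set is $V_{\psi_0(l)}$ for some coherent formula $\psi_0(y)$ and a tuple $l\in\kappa^y$, so I may write $U=V_{\psi_0(l)}$ at no cost. I would then simply set $\psi(x,y):=\varphi(x)\wedge\psi_0(y)$. Weakening each conjunct to the common context $\langle x,y\rangle$ and intersecting, as prescribed in Definition \ref{M0opens}, gives $V_{\psi(k,l)}=V_{\varphi(k)}\cap V_{\psi_0(l)}$ (the definedness sets $V_{k,x}$ and $V_{l,y}$ introduced by the weakenings are absorbed, since a formula's basic open already requires its parameters to be defined). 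As $U=V_{\psi_0(l)}\subseteq V_{\varphi(k)}$ by hypothesis, this intersection collapses to $U$, so $U=V_{\psi(k,l)}$. The entailment $\bTT\tri\psi(x,y)\vdash_{x,y}\varphi(x)$ is then immediate, being an instance of $\wedge$-elimination. The only care required is the bookkeeping of contexts and the verification that repeated or overlapping parameters in $\langle k,l\rangle$ cause no trouble, which they do not.

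I would also record the alternative, more informative route, since it is the one that explains why the containment hypothesis is doing real work and why the reassignment lemma sits just before this statement. Here one keeps the formula that already presents $U$. First note that $U\subseteq V_{\varphi(k)}\subseteq V_{k,x}$ forces $k$ to be defined for $x$ at every point of $U$; a short argument via Lemma \ref{reassignment} (removing $k$ from a point's domain would otherwise produce a point of $U$ outside $V_{k,x}$) shows that $k$ must already occur among the parameters of $U$ in $x$-typed slots, so $U=V_{\psi_0(k,l)}$ with $\psi_0$ in context $\langle x,y\rangle$. It then remains to prove directly that $\bTT\tri\psi_0(x,y)\vdash_{x,y}\varphi(x)$.

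For this last step I would argue by contraposition using completeness for $\kappa$-small models. If the sequent failed there would be a $\kappa$-small $\bTT$-model $M$ together with elements $a\in|M|^x$ and $b\in|M|^y$ satisfying $M\models\psi_0(a,b)$ but $M\not\models\varphi(a)$. Realizing $M$ on a subquotient of $\kappa$ and applying Lemma \ref{reassignment}, I would build a labelled model $\mu\in\MM_0$ with $M_\mu\cong M$, $\mu^x(k)=a$ and $\mu^y(l)=b$; then $\mu\in V_{\psi_0(k,l)}=U$ while $\mu\notin V_{\varphi(k)}$, contradicting $U\subseteq V_{\varphi(k)}$. The main obstacle is precisely this construction: one must guarantee that the semantic witness can be realized as a genuine point of $\Spec_0(\bTT)$ whose labelling sends the prescribed parameters $k,l$ to the prescribed elements $a,b$. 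This is exactly what completeness (to secure a $\kappa$-small witness) together with the reassignment lemma (to fix the labelling) are there to provide; everything else is routine.
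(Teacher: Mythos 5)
Your second route is, in substance, the paper's own proof: the reassignment lemma \ref{reassignment} is used once to show that deleting a $k_0$ from the domain of a point of $U$ would keep it in $U$ while expelling it from $V_{k,x}\supseteq V_{\varphi(k)}$ (so the defining parameters of $U$ must already contain $k$), and a second time to realize a $\kappa$-small countermodel to the sequent as a labelled point of $V_{\psi_0(k,l)}\setminus V_{\varphi(k)}$, with completeness for $\kappa$-small models supplying that countermodel. That part is correct and essentially verbatim.

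The first route should not, however, be offered as the main proof. Setting $\psi:=\varphi\wedge\psi_0$ does satisfy the letter of the statement (and your bookkeeping with the weakenings and the absorption of $V_{k,x}$, $V_{l,y}$ is fine), but it only shows that $U$ admits \emph{some} presentation with the stated properties; the lemma, as its proof and its downstream uses make clear, is about the \emph{given} presentation: if $U=V_{\psi_0(j)}$ then $j$ must already contain $k$, and $\bTT$ must already prove $\exists y.\psi_0(x,y)\vdash\varphi(x)$ for that very $\psi_0$. This stronger form is what lemma \ref{struct_stalks} invokes: the inclusion $V_{\psi(k,l)}\subseteq V_{\varphi(k)}$ must yield a map $\psi\to\varphi$ in $\EE$ along which one pulls back to obtain the transition functor $\OO(V_{\varphi(k)})\to\OO(V_{\psi(k,l)})\simeq\EE\!/\psi$ (cf.\ proposition \ref{str_sheaf_secs}). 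The conjunction trick only reaches $\EE\!/(\varphi\wedge\psi_0)$, and identifying that slice with $\EE\!/\psi_0$ is exactly the provability claim it skips. A symptom of the weakening is that your first route never uses the non-emptiness hypothesis, whereas the re-presentation claim holds even for $U=\emptyset$. Keep the second route; it is the proof.
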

\begin{proof}
There are two claims here. The first is that any basic open set contained in $V_{\varphi(k)}$ also depends on the parameters $k$. The second asserts the $\bTT$-provability of the sequent $\psi(x,y)\vdash\varphi(x)$.

For the first claim, suppose that $U=V_{\psi(l)}$ and that there is some $k_0\in k$ with $k_0\not\in l$. Fix a model $\mu'\in V_{\psi(l)}$ and consider the modified labelling $\mu=\mu'\setminus k_0$ as in the reassignment lemma. We have not adjusted the $l$-parameters so we again have $\mu\in V_{\psi(l)}$. However $k_0$ (and hence $k$) is not even defined at $\mu$ and therefore $\mu\not\in V_{k^x}\supseteq V_{\varphi(x)}$. This is contrary to the assumption that $V_{\psi(l)}\subseteq V_{\varphi(k)}$.

As we assumed $U$ was a basic open set, it must have the form $V_{\psi(k,l)}$ for some formula $\psi$ and some additional parameters $l$. Now we need to see that $\bTT$ proves $\exists y.\psi(x,y)\vdash \varphi(x)$ (which is equivalent to the sequent above). This is an easy application of completeness for $\kappa$-small models.

If the sequent were not valid then we could find a labelled model $\mu$ and a parameter $k^x$ such that $\mu\models \exists y.\psi(k,y)$ but $\mu\not\models \varphi(k)$. Let $b\in |M_{\mu}|^y$ witness the existential. By the reassignment lemma, there is an isomorphism $\alpha:\mu\iso\nu$ with $\nu\not\models \varphi(k)$ and $\nu(l)=\alpha(b)$. But then $\nu\models \psi(k,l)$ so that $V_{\psi(k,l)}\not\subseteq V_{\varphi(k)}$. This is again contrary to the assumption that $U\subseteq V_{\varphi(k)}$.
\end{proof}

\section{Sheaves on $\MM_0$}\label{sec_CovSpSh}

In this section we will define, for every $\bTT$-formula $\varphi(x)$, a sheaf $\ext{\varphi}$ over $\MM_0$.

Fix a basic sort $A\in\AA$ and a variable $x:A$. In a labelled model $\mu$ this corresponds to a labelling of the underlying set $K^x_{\mu}\to |M_{\mu}|^x$. Now we will reencode this data into the category of sheaves over $\MM_0$ by providing a diagram whose stalk at $\mu$ recovers the subquotient up to isomorphism
$\Sh(\MM_0)$
$$\xymatrix{
K^x \ar@{>->}[r] \ar@{->>}[d]_-q&\Delta(\kappa)\\
\ext{A}&\\
}\ \ \raisebox{-.7cm}{$\overset{\stalk_{\mu}}{\longmapsto}$}\ \  \xymatrix{
K^x_{\mu} \ar@{>->}[r] \ar@{->>}[d]_-q&\kappa\\
|M_{\mu}|^x&.\\
}$$

Here $\Delta(\kappa)$ is the constant sheaf $\coprod_{\kappa} 1$; we usually abuse terminology and simply write $\kappa$. As a subobject of the constant sheaf, $K^x$ must be a coproduct of open sets. At the $k$th index, we take the basic open set $V_{k^x}$:
$$K^x=\coprod_{k\in\kappa} V_{k^x} \mono \coprod_{k\in\kappa} \MM_0=\Delta(\kappa).$$
Clearly the stalk of $K^x$ at $\mu$ is isomorphic to the set of parameters $k$ such that $\mu\in V_{k^x}$; this is exactly the domain $K^x_{\mu}$.

Now we define a sheaf $\ext{A}$ which encodes the underlying set $|M_{\mu}|^x$. We build it as an \'etale space, generalizing the construction of $\MM_0=\ext{\top}$.

\begin{defn}[The semantic realization $\ext{A}$]\label{extA}
\mbox{}\begin{itemize}
\item The points of $\ext{A}$ are pairs $\big\<\mu,a\in |M_{\mu}|^x\big>$.
\item Each $k\in\kappa$ determines a \emph{canonical (partial) section} over the open set $V_{k^x}$:
$$\hat{k}^x:V_{k^x}\to\ext{A}$$
sending $\mu\mapsto \<\mu,\mu^x(k)\>$. This defines an open set $W_{x=k}\subseteq\ext{A}$, homeomorphic to $V_{k^x}$, and these form an open cover of $\ext{A}$. 
\end{itemize}\end{defn}
It is obvious from the definition that the fiber of $\ext{A}$ over $\mu$ is isomorphic to the underlying set $|M_{\mu}|^x$.

Although the sections $\hat{k^x}$ give an open cover of $\ext{A}$, it is convenient to have a richer collection of basic open sets. For any formula $\varphi(x)$ and parameters $k^x$, the basic open set $V_{\varphi(k)}$ has a homeomorphic image in $\ext{A}$:
$$W[\varphi(x)\wedge(x=k)]\cong V_{\varphi(k)}.$$

More generally, any formula $\varphi(x,y)$ together with parameters $l^y$ defines an open set
$$\begin{array}{rcl}
W_{\varphi(x,l)} &=&\{\ \<\mu,a\>\ |\ M_{\mu}\models\varphi(a,\mu^y(l))\ \}\\
&=&\displaystyle\bigcup_{k\in\kappa} W\big[\varphi(x,l)\wedge(x=k)\big].
\end{array}$$
The sets in this union are of the basic open form listed above, so $W_{\varphi(x,l)}$ is again open. The last equality follows from the fact that each $a\in|M_{\mu}|^{x}$ must be labelled by some parameter $k\in\kappa$.

\begin{prop}\label{Asheaf}
$\ext{A}$, as defined above, is a sheaf over $\MM_0$.
\end{prop}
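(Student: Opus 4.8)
The plan is to show that $\ext{A}$ is a sheaf by exhibiting it as an \'etale space: I will verify that the projection $p:\ext{A}\to\MM_0$, $\langle\mu,a\rangle\mapsto\mu$, is a local homeomorphism. Under the standard equivalence between sheaves on a space and local homeomorphisms into it, this is exactly what it means for $\ext{A}$ (as the sheaf of local sections of $p$) to be a sheaf over $\MM_0$.

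First I would record that each canonical section $\hat{k}^x:V_{k^x}\to\ext{A}$ is genuinely a section of $p$, since $p(\hat{k}^x(\mu))=p\langle\mu,\mu^x(k)\rangle=\mu$. Next, the images $W_{x=k}$ cover $\ext{A}$: given any point $\langle\mu,a\rangle$, surjectivity of the labelling $v_\mu^x$ supplies a parameter $k$ with $\mu^x(k)=a$, so that $\langle\mu,a\rangle=\hat{k}^x(\mu)\in W_{x=k}$. This is the one place where the infinite-to-one (in particular surjective) hypothesis on labellings is essential.

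The core step is to check that each $\hat{k}^x$ restricts to a homeomorphism $V_{k^x}\iso W_{x=k}$ whose inverse is $p|_{W_{x=k}}$. That $p|_{W_{x=k}}$ is a two-sided inverse is immediate from the definition of $W_{x=k}$ as the image of $\hat{k}^x$. For continuity of $\hat{k}^x$ I would compute preimages of the basic opens: $\hat{k}^x(\mu)\in W_{\varphi(x,l)}$ holds iff $M_\mu\models\varphi(\mu^x(k),\mu^y(l))$, i.e.\ iff $\mu\models\varphi(k,l)$, so by the earlier lemma relating $V_{\varphi(k)}$ to satisfaction, $(\hat{k}^x)^{-1}(W_{\varphi(x,l)})=V_{k^x}\cap V_{\varphi(k,l)}$, which is open. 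Conversely $p$ is continuous on $W_{x=k}$ because for open $V\subseteq\MM_0$ one has $p^{-1}(V)\cap W_{x=k}=\hat{k}^x(V\cap V_{k^x})$, open in $W_{x=k}$. Thus each $\hat{k}^x$ is an open embedding.

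Finally, since the $W_{x=k}$ are open, cover $\ext{A}$, and are carried homeomorphically by $p$ onto the opens $V_{k^x}$, the map $p$ is a local homeomorphism; continuity and openness of $p$ on all of $\ext{A}$ then follow by gluing this local data over the cover. This identifies $\ext{A}$ with an \'etale space over $\MM_0$, hence with a sheaf. I expect the main obstacle to be essentially bookkeeping: pinning down that the declared sets $W_{\varphi(x,l)}$ really do form a basis compatible with the section maps, so that the preimage computation above is legitimate. Beyond that bookkeeping the genuine mathematical input is only the surjectivity of labellings (for the cover) and the satisfaction lemma (for continuity).
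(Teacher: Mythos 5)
Your proof is correct, but it takes a different route from the paper's. The paper invokes the Joyal--Tierney reformulation: an \'etale space over $\MM_0$ is a sheaf just in case the projection $\ext{A}\to\MM_0$ \emph{and} the fiber-diagonal $\ext{A}\to\ext{A}\overtimes{\MM_0}\ext{A}$ are open maps. It then verifies both conditions on basic opens, computing $\pi(W_{\varphi(x,l)})=V_{\exists x.\varphi(x,l)}$ (so that openness of the projection is literally existential quantification) and showing that the diagonal carries basic opens to sets of the form $W_{\gamma(x,x',l)}$. You instead argue directly that $p$ is a local homeomorphism by exhibiting the canonical sections $\hat{k}^x:V_{k^x}\to\ext{A}$ as open embeddings whose images $W_{x=k}$ cover $\ext{A}$; the cover uses surjectivity of the labellings and the two continuity computations use the satisfaction lemma, exactly as you say. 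Your route is more elementary and closer to the classical \'etale-space picture, though a fair amount of its content is absorbed into definition \ref{extA}, which already declares the sets $W[\varphi(x)\wedge(x=k)]$ to be a basis making each $W_{x=k}$ homeomorphic to $V_{k^x}$ --- the ``bookkeeping'' you flag (closure of these sets under pairwise intersection, via $W[\varphi\wedge(x=k)]\cap W[\psi\wedge(x=k')]=W[\varphi\wedge\psi\wedge(k=k')\wedge(x=k)]$) is genuinely where that definitional work gets discharged. What the paper's approach buys in exchange is the explicit identification of the projection with existential quantification and of the open diagonal with definable equality, both of which are reused later (e.g.\ in the discussion of $\ext{A^x}$ and in theorem \ref{eq_top_equiv}); what yours buys is a self-contained verification that needs no external characterization of \'etale maps.
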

\begin{proof}
A convenient reformulation of the sheaf condition (cf. Joyal-Tierney, \cite{JT}) says that $\FF$ is a sheaf just in case the projection $\FF\to\MM_0$ and the fiber-diagonal ${\delta_{\FF}:\FF\to \FF\underset{\MM_0}{\times} \FF}$ are both open maps. The necessity of first condition is obvious because any sheaf projection is a local homeomorphism; the diagonal condition characterizes discreteness in the fibers of $\FF$.

It is enough to check both these conditions on basic open sets. The projection of a basic $\ext{A}$-open is given by quantifying out the $x$-variable:
$$\begin{array}{rcl}
\pi(W_{\varphi(x,l)}\big)&=&\displaystyle\pi\left(\bigcup_{k\in\kappa} W_{\varphi(x,l)\wedge(x=k)}\right)\\
&=&\displaystyle\bigcup_{k\in\kappa}V_{\varphi(k,l)}\\
&=&V_{\exists x. \varphi(x,l)}\\
\end{array}$$
Here the second equality follows from the definition of $W[\dots]$ as a section of $\pi$.

For the second map, we know that boxes $W_{\varphi(x,j)}\underset{\MM_0}{\times} W_{\psi(x,l)}$ give a basis for the topology of the fiber product. As in $\ext{A}$, unions provide for a richer collection of open sets. Suppose $\gamma(x,x',y)$ is a formula where $x$ and $x'$ are variables in the same context and $l^y$ is an additional parameter; from these we define an open set
$$\begin{array}{rcl}
W_{\gamma(x,x',l)}&=& \{\ \<\mu,a,a'\>\ |\ \mu\models \gamma(a,a',\mu(l))\ \}\subseteq\ext{A}\overtimes{\MM_0}\ext{A}\\
&=&\displaystyle\bigcup_{k,k'\in\kappa}\Big( W\big[\gamma(x,k',l)\wedge(x=k)\big]\overtimes{\MM_0} W\big[\gamma(k,x',l)\wedge(x'=k')\big]\Big).
\end{array}$$
This is a union of boxes of the basic open form above, and equality again follows from the fact that every $a,a'\in|M_{\mu}|^x$ is labelled by some pair $k,k'\in\kappa$.

Now the image of $\ext{A}$ under the fiber diagonal is the open set $W_{x=x'}$. Similarly, a basic open subset $W_{\varphi(x,l)}\subseteq\ext{A}$ maps to $W_{\varphi(x,l)\wedge(x=x')}$. These are both of the form $W_{\gamma(x,x',l)}$, so the fiber diagonal is an open map.
\end{proof}

Together the canonical sections $\hat{k}^x$ describe a map of sheaves $v:K^x\to\ext{A}$
$$\xymatrix{
K^x \cong \coprod_{k\in\kappa} V_{k^x}\ar@{->>}[r]^-v &\ext{A}\\
V_{k^x} \ar@{>->}[u] \ar[ur]_{\hat{k}^x}
}$$
Since these sections cover $\ext{A}$, $v$ is an epimorphism of sheaves. Furthermore, $v$ sends $k\in K^x_{\mu}$ to $\hat{k}^x(\mu)=\<\mu,\mu(k)\>$. This realizes our goal for this section, and altogether we have the following theorem.

\begin{thm}\label{ext_labels}
For each context $x:A$ for a basic sort $A\in\AA$ there is a span in $\Sh(\MM_0)$ of the form
$$\xymatrix{
K^x \ar@{>->}[r] \ar@{->>}[d]_-v&\kappa\\
\ext{A}&}$$
such that
\begin{itemize}
\item[(i)] $K^x$ is a subsheaf of the constant sheaf $\kappa$ and the stalk of $K^x$ at $\mu$ is isomorphic to the domain $K_{\mu}^x=\{k\in\kappa\ |\ k\rm{\ is\ defined\ for\ }x\rm{\ at\ }\mu\}$.
\item[(ii)] The stalk of $\ext{A}$ at $\mu$ is isomorphic to the underlying set $|M_{\mu}|^x=A^\mu$.
\item[(iii)] The map $v: K^x\to\ext{A}$ is an epimorphism of sheaves and the stalk of $v$ at $\mu$ is isomorphic to the subquotient map $v_{\mu}^x:K_{\mu}^x\epi|M_{\mu}|^x$ (cf. definition \ref{M0points}).
\end{itemize}
\end{thm}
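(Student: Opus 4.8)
The plan is to recognize that Theorem~\ref{ext_labels} is essentially a repackaging of the three constructions already carried out in this section: the subsheaf $K^x=\coprod_{k\in\kappa}V_{k^x}\mono\Delta(\kappa)$, the semantic realization $\ext{A}$ (Definition~\ref{extA}, shown to be a sheaf in Proposition~\ref{Asheaf}), and the gluing $v:K^x\to\ext{A}$ of the canonical sections $\hat{k}^x$. So rather than build anything new, I would take these three as the legs of the claimed span---the inclusion $K^x\mono\kappa$ and the map $v:K^x\to\ext{A}$---and then verify the three stalk computations (i)--(iii). The one conceptual input is the standard fact that, in a sheaf topos over a space, the stalk functor $\stalk_\mu$ is exact and preserves coproducts, and that a morphism of sheaves is an epimorphism precisely when it is surjective on every stalk.

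For (i), I would compute the stalk of $K^x$ directly from its presentation as a coproduct of open subsheaves. Since $\stalk_\mu$ commutes with coproducts, and the stalk at $\mu$ of an open $V\subseteq\MM_0$ (viewed as a subterminal sheaf) is a one-point set when $\mu\in V$ and empty otherwise, the stalk of $\coprod_{k}V_{k^x}$ at $\mu$ is the set $\{k\in\kappa\mid\mu\in V_{k^x}\}$. By the defining equation $V_{k^x}=\{\mu\mid k\in\mu^x\}$ this is exactly the domain $K^x_{\mu}$, giving (i). For (ii), I would appeal to the construction of $\ext{A}$ as an \'etale space whose points are the pairs $\<\mu,a\>$ with $a\in|M_{\mu}|^x$: by construction the fiber, hence the stalk, over $\mu$ is the underlying set $|M_{\mu}|^x=A^\mu$, and Proposition~\ref{Asheaf} guarantees this \'etale space is genuinely a sheaf.

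For (iii), I would first recall that $v$ is assembled from the partial sections $\hat{k}^x:V_{k^x}\to\ext{A}$ by the universal property of the coproduct $K^x=\coprod_k V_{k^x}$, and that their images $W_{x=k}$ cover $\ext{A}$; surjectivity on stalks then makes $v$ an epimorphism. To identify the stalk map, I would trace an element $k\in K^x_{\mu}$ through $v$: it is sent to $\hat{k}^x(\mu)=\<\mu,\mu^x(k)\>$, which under the identification of (ii) is the element $\mu^x(k)=v_{\mu}^x(k)\in|M_{\mu}|^x$. Thus $\stalk_\mu(v)$ is exactly the labelling $v_{\mu}^x:K^x_{\mu}\to|M_{\mu}|^x$, and since this labelling is infinite-to-one, and in particular surjective, the stalk map is the subquotient map of Definition~\ref{M0points}.

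I do not expect a genuine obstacle, since every ingredient has already been produced and verified; the only place demanding care is the interchange of the stalk functor with the coproduct defining $K^x$ and with the epimorphism $v$. Concretely, I would make sure to invoke the correct characterization of epimorphisms in $\Sh(\MM_0)$ (stalkwise surjectivity) rather than attempting to exhibit a global section, and to note that it is the infinite-to-one condition on labellings that matches the ``subquotient'' description of the stalk fibers, rather than a mere surjection.
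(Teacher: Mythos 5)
Your proposal is correct and follows essentially the same route as the paper, which states the theorem as a summary of the constructions already made in the section: $K^x=\coprod_k V_{k^x}$ with stalk $K^x_\mu$, the \'etale space $\ext{A}$ with fiber $|M_\mu|^x$, and $v$ glued from the canonical sections $\hat{k}^x$, epimorphic because those sections cover $\ext{A}$. Your substitution of stalkwise surjectivity for the paper's covering-by-sections argument is an equivalent characterization of epimorphisms in $\Sh(\MM_0)$ and changes nothing of substance.
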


\section{The generic model}

In the last section we defined a family of sheaves $\ext{A}$ for the basic sorts $A\in\AA$. In this section we will show that this family carries the structure of a $\bTT$-model in the internal logic of $\Sh(\MM_0)$. Moreover, the model is ``generic'' in the sense that it satisfies all and only the sequents which are provable in $\bTT$.

First we review the interpretation of $\LL$-structures and coherent theories in the internal logic of a topos $\SS$ (cf. Johnstone \cite{elephant}, D1 or Mac Lane \& Moerdijk \cite{SGL}, VI). In this and later chapters we will restrict our attention to Grothendieck toposes; we typically omit the adjective, so ``topos'' will always mean ``Grothendieck topos.''.

To define an $\LL$-structure $M$ in $\SS$ one must give, first of all, an ``underlying object'' $A^M$ for each basic sort $A\in\AA$. Given a compound context $A=A_1\times\ldots\times A_n$ we let $A^M$ denote the product $A_1^M\times\ldots\times A_n^M$. Given a context of variables $x:A$ we use the notation $|M|^x:= A^M$.

$M$ must also interpret basic relations and functions. Each relation $R(x)$ is associated with an interpretation $R^M$ which is a subobject of $|M|^x$. Similarly, a function symbol $f:x\to y$ is interpreted as an $\SS$-arrow $f^M:|M|^x \to |M|^y$.

From these we can define an interpretation $\varphi^M\leq |M|^x$ for each coherent formula $\varphi(x)$; we construct these in essentially the same way that we build up the definable sets of a classical model in $\Sets$. We use limits (intersections, pullbacks) in $\SS$ to interpret meet and substitution. Epi-mono factorization gives us existential quantification while factorization together with coproducts gives us joins.

\noindent\begin{tabular}{cccc}\label{coh_logic}
\\
\textbf{Subst.} &$\xymatrix{(\varphi[f(x)/y])^M \ar[r] \pbcorner \ar@{>->}[d] & \varphi^M \ar@{>->}[d]\\
|M|^y \ar[r]_{f^M} & |M|^x\\}$ &
\textbf{Meet} & $\xymatrix{(\varphi\wedge\psi)^M \pbcorner \ar@{>->}[r] \ar@{>->}[d] & \psi^M \ar@{>->}[d]\\
\varphi^M \ar@{>->}[r] & |M|^x\\}$\\\\
\textbf{Exist} & $\xymatrix{\varphi^M \pbcorner \ar@{->>}[r] \ar@{>->}[d] & (\exists y.\varphi)^M \ar@{>->}[d]\\
|M|^{\<x,y\>} \ar[r] & |M|^x\\}$ & \textbf{Join} & $\xymatrix{\varphi^M + \psi^M \ar@{>->}[d] \ar@{->>}[r] & (\varphi\vee\psi)^M \ar@{>->}[d]\\
|M|^x+|M|^x \ar[r] & |M|^x}$\\\\
\end{tabular}

Remember that an axiom in categorical logic has the form of a sequent $\varphi(x)\vdash_{x:A} \psi(x)$. We say that $M$ \emph{satisfies} this sequent if $\varphi^M\leq\psi^M$ in the subobject lattice $\Sub_{\SS}(|M|^x)$. As usual, $M$ \emph{satisfies $\bTT$} or is a \emph{$\bTT$-model} if it satisfies all the sequents in $\bTT$. We denote the class of $\bTT$-models in $\SS$ by $\bTT\Mod(\SS)$.

In the last section we defined a sheaf $\ext{A}$ for each basic sort $A\in\AA$; these are the underlying objects refered to above. We have already had occasion to consider the fiber product $\ext{A^2}=\ext{A}\overtimes{\MM_0}\ext{A}$ in the proof of proposition \ref{Asheaf}. There we saw that each parameterized formula $\gamma(x,x',l)$ involving two $A$-variables and an additional parameter $l^y$ determines a basic open set $W_{\gamma(x,x',l)}\subseteq\ext{A^2}$.\label{defin_products}

We can generalize this to any context $x=\<x_i:A_i\>_{i\leq n}$ by setting $\ext{A^x}=\ext{A_1}\overtimes{\MM_0}\ldots\overtimes{\MM_0}\ext{A_n}$. Given a formula $\varphi(x,y)$ and parameters $l^y$, the fiber product contains an open set
$$W_{\varphi(x,l)}=\{ \<\mu, a\>\ |\ M_{\mu}\models\varphi(a,\mu^y(l))\}.$$
Just as before, we show this set is open by representing it as a union of boxes indexed over the set of parameter sequences $k^x$.

Given a basic relation $R(x)$ we extend the $\ext{-}$ notation by setting
$$\ext{R}=W_{R(x)}=\{\<\mu,a\>\in\ext{A^x}\ |\ M_{\mu}\models R(a)\}.$$
This provides an interpretation in $\Sh(\MM_0)$ for each basic relation.

Similarly, each function symbol $f:x\to y$ induces a map of sheaves $\ext{f}:\ext{A^x}\to\ext{B^y}$. This is defined fiberwise, sending each $a\in |M_{\mu}|^x$ to $f^\mu(a)\in |M_{\mu}|^y$. This is continuous because the inverse image of a basic open set corresponds to substitution
$$\ext{f}^{-1}(V_{\varphi(x,k)})= V_{\varphi[f(y)/x,k]}\subseteq \ext{B^y}.$$

This specification defines an $\LL$-structure $M^*$ in $\Sh(\MM_0)$. Now suppose that $\varphi(x)\vdash_{x:A} \psi(x)$ is an axiom of $\bTT$ proves a coherent sequent $\varphi(x)\vdash_{x:A}\psi(x)$. Then for each labelled model there is an inclusion of definable sets $\varphi^\mu\subseteq\psi^\mu\subseteq A^\mu$ and consequently $\ext{\varphi}\subseteq\ext{\psi}$. Hence our $\LL$-structure satisfies the sequent, and $M^*$ is a model of $\bTT$ in $\Sh(\MM_0)$.

In fact, every coherent formula $\varphi(x)$ determines a \emph{definable sheaf} $\ext{\varphi}\subseteq\ext{A^x}$. On one hand, $\ext{\varphi}$ is the interpretation of $\varphi$ in the sheaf model $M_0$; as such, it can be constructed inductively from the interpretation of basic relations and functions. Alternatively, we may define $\ext{\varphi}$ semantically by setting
$$\ext{\varphi}=W_{\varphi(x)} = \{\<\mu,a\>\in\ext{A^x}\ |\ M_{\mu}\models \varphi(a)\}.$$
\label{def_sheaf}

\begin{prop}
$M_0$ is a generic model for $\bTT$ in the sense that a coherent sequent $\varphi(x)\vdash_{x:A} \psi(x)$ is provable in $\bTT$ if and only if it is satisfied in $M_0$:
$$\bTT\tri\varphi(x)\vdash_{x:A}\psi(x) \Iff M_0\models \varphi(x)\vdash_{x:A}\psi(x).$$
\end{prop}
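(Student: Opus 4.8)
The plan is to prove the two directions separately. The forward direction is \emph{soundness} and is essentially already in hand: the discussion immediately preceding the statement establishes that the sheaf $\LL$-structure $M_0$ is a $\bTT$-model in $\Sh(\MM_0)$. Since coherent logic is sound in any (Grothendieck) topos, every sequent provable from $\bTT$ is validated by every $\bTT$-model, and in particular by $M_0$. So I would dispose of $\bTT\tri\varphi(x)\vdash_{x:A}\psi(x)\Rightarrow M_0\models\varphi(x)\vdash_{x:A}\psi(x)$ by citing that soundness, with no further computation.

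The substance is the converse (completeness). Here I would work from the \emph{semantic} description of the definable sheaves. Realized as \'etale spaces, $\ext{\varphi}$ and $\ext{\psi}$ are the open subsets $W_{\varphi(x)}=\{\<\mu,a\>\mid M_\mu\models\varphi(a)\}$ and $W_{\psi(x)}$ of $\ext{A^x}$, so the hypothesis $M_0\models\varphi(x)\vdash_{x:A}\psi(x)$, i.e. $\ext{\varphi}\subseteq\ext{\psi}$, is \emph{literally} the pointwise statement that for every labelled model $\mu\in\MM_0=\Spec_0(\bTT)$ and every $a\in|M_\mu|^x$, $M_\mu\models\varphi(a)$ implies $M_\mu\models\psi(a)$. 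Equivalently, $\varphi^\mu\subseteq\psi^\mu$ for every $\mu$, so every labelled $\bTT$-model occurring in the spectrum satisfies the sequent $\varphi(x)\vdash_{x:A}\psi(x)$. The one thing that must be justified to make this reading legitimate is that the \emph{inductively} defined interpretation $\ext{\varphi}$ (built by meets, substitutions, images and joins in $\Sh(\MM_0)$) really does have fiber $\varphi^\mu$ over $\mu$; this is the identification $\stalk_\mu(\ext{\varphi})=\varphi^{M_\mu}=\varphi^\mu$, which holds because the stalk functor at a point of the space preserves finite limits and all colimits and hence commutes with the coherent operations used to compute interpretations.

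To finish I would invoke completeness for $\kappa$-small models, where $\kappa=\rm{LS}(\bTT)=|\LL|+\aleph_0$. Every $\kappa$-small $\bTT$-model $M$ is realized as $M_\mu$ for some $\mu\in\MM_0$: since $\kappa$ is infinite and each sort of $M$ has cardinality at most $\kappa$, one can choose for each basic sort an infinite-to-one surjective labelling from a subset of $\kappa$, yielding a point $\mu$ with $M_\mu\cong M$. Combined with the previous paragraph, every $\kappa$-small $\bTT$-model satisfies $\varphi\vdash\psi$; and since $\bTT$ is already complete for its $\kappa$-small models (the fact recorded when $\kappa$ was introduced), the sequent is provable, giving $\bTT\tri\varphi(x)\vdash_{x:A}\psi(x)$.

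I expect the main obstacle to be the stalk identification $\stalk_\mu(\ext{\varphi})=\varphi^\mu$, which is precisely the point where the inductive and the semantic descriptions of $\ext{\varphi}$ must be glued: one checks by induction on the structure of $\varphi$ that each coherent operation performed in $\Sh(\MM_0)$ is carried by $\stalk_\mu$ to the corresponding operation in $\Sets$ on the fibers. Everything else — the soundness half, the point-set reading of subsheaf containment, and the realization of $\kappa$-small models as labelled points — is routine given the results already in place.
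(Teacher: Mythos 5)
Your proposal is correct and follows essentially the same route as the paper: soundness for the forward direction, and for the converse the observation that $\ext{\varphi}\subseteq\ext{\psi}$ gives $\varphi^\mu\subseteq\psi^\mu$ at every labelled model, combined with the fact that every $\kappa$-small model admits a labelling and that $\bTT$ is complete for $\kappa$-small models. The one point where you are more careful than the paper is in flagging the identification of the inductively defined interpretation with the semantic description $W_{\varphi(x)}$ (the stalk computation), which the paper simply asserts when it introduces the two descriptions of $\ext{\varphi}$.
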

\begin{proof}
The left to right implication is an immediate consequence of soundness. If $\bTT$ proves the sequent then for every $\bTT$-model $M$ (and hence every labelled model $\mu$), $\varphi^M\subseteq \psi^M$. But then $\ext{\varphi}\subseteq \ext{\psi}$ so that $M^*$ satisfies the sequent as well.

The converse follows from the fact that $\bTT$ is complete for $\kappa$-small models (see \pageref{M0points}). If $M_0$ satisfies the sequent $\varphi(x)\vdash_{x:A}\psi(x)$ then $\ext{\varphi}\subseteq\ext{\psi}$. Then $\varphi^\mu\subseteq \psi^\mu$ for every labelled model $\mu$, and every $\kappa$-small model is isomorphic to a labelled model. Thus satisfaction in $M_0$ implies satisfaction in every $\kappa$-small model, and completeness ensures provability in $\bTT$.
\end{proof}

\section{The spectral groupoid $\MM=\Spec(\bTT)$}\label{sec_SpGpd}

In the last section we saw that each formula $\varphi(x)$ defines a subsheaf $\ext{\varphi}\subseteq \ext{A^x}$. Now, following Butz \& Moerdijk \cite{butz_thesis} \cite{BM_article}, we will characterize those subsheaves which are definable in this sense. To do so we introduce a space of $\bTT$-model isomorphisms together with continuous domain, codomain, composition and inverse operations. This topological groupoid $\MM=\Spec(\bTT)$ is the (groupoid) spectrum of $\bTT$. This groupoid acts naturally on the definable sheaves and we will show in the next section that the existence of such an action, together with a compactness condition, characterizes definability.

A groupoid in $\Top$ is a diagram of topological spaces and continuous maps like the one below:
$$\xymatrix{
\MM:&\MM_1\underset{\MM_0}{\times}\MM_1 \ar@<-1.5ex>[rr]_-\circ \ar@<.5ex>[rr] \ar@<1.5ex>[rr]^-{p_0,\ p_1}&& \MM_1  \ar@(ul,ur)^{\rm{inv}} \ar@<.5ex>[rr] \ar@<1.5ex>[rr]^-{\rm{dom,\ cod}} &&\MM_0 \ar@<1.5ex>[ll]^{\rm{id}}
}$$
$\MM_0$ and $\MM_1$ are called the object and arrow spaces of $\MM$, respectively. These spaces and continuous maps are required to satisfy the same commutative diagrams as a groupoid in $\Sets$. See \cite{SGL}, section V.7 for a discussion of internal categories and equivariance.

\begin{defn}[The spectral groupoid $\MM=\Spec(\bTT)$]\label{M1points}
\mbox{}
\begin{itemize}
\item An isomorphism of labelled models $\alpha\in\Hom_{\MM}(\mu,\nu)$ is simply a isomorphism of underlying $\bTT$-models $\alpha:M_{\mu}\iso M_{\nu}$. We do not require that these respect the labellings on $\mu$ and $\nu$. Domain, codomain, composition, inverse and identity are computed as in $\bTT\Mod$.
\item For each $x:A$, $\alpha$ defines a component $\alpha^x:|M_{\mu}|^x\to|M_{\nu}|^x$. Given an element $a\in|M_{\mu}|^x$ we often omit the superscript and simply write ${\alpha(a)\in|M_{\nu}|^x}$.
\item Each basic open set $V_{\varphi(k)}\subseteq \MM_0$ determines two basic open sets in $\MM_1$, the inverse images of $\underline{\dom}$ and $\underline{\cod}$:
$$V_{\varphi(k)^d} = \{\alpha:\mu\iso\nu\ |\ \mu\models \varphi(k)\}$$
$$V_{\varphi(k)^c} = \<\alpha:\mu\iso\nu\ |\ \nu\models \varphi(k)\}.$$
We refer to these basic opens as domain and codomain conditions in $\MM_1$.
\item For any context $x:A$ and any two parameter sequences $k^x$ and $l^x$ in the same arity, there is a basic open set
$$V_{k\overset{x}{\mapsto} l}=\{\alpha:\mu\iso\nu\ |\ \alpha(\mu^x(k))=\nu^x(l)\}.$$
\end{itemize}\end{defn}

\begin{prop}
$\MM$, as defined above, is a topological groupoid.
\end{prop}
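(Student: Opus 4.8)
The plan is to verify three things about the data of Definition \ref{M1points}: that the listed sets form a subbasis for a genuine topology on $\MM_1$, that the five structure maps $\dom$, $\cod$, $\id$, $\mathrm{inv}$ and composition are continuous, and that the groupoid identities hold. The last point is essentially free. Since an arrow of $\MM$ is a genuine $\bTT$-model isomorphism, with domain, codomain, composition, inverse and identity computed exactly as in $\bTT\Mod$, the associativity, unit and inverse laws are inherited from the underlying groupoid of $\kappa$-small models in $\Sets$; the topology plays no role in these algebraic equalities. So all the real content is continuity, and for that it suffices to check that the preimage of each subbasic open is open.

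Continuity of $\dom$ and $\cod$ is immediate from the definitions: the preimage of a basic open $V_{\varphi(k)}\subseteq\MM_0$ under $\dom$ (resp. $\cod$) is exactly the domain (resp. codomain) condition $V_{\varphi(k)^d}$ (resp. $V_{\varphi(k)^c}$). For $\id:\MM_0\to\MM_1$, sending $\mu\mapsto\id_\mu$, I would check the three kinds of subbasic opens: both $V_{\varphi(k)^d}$ and $V_{\varphi(k)^c}$ pull back to $V_{\varphi(k)}$, while a transport condition pulls back to $\{\mu\mid\mu^x(k)=\mu^x(l)\}=V_{k=l}$, all open. For $\mathrm{inv}:\MM_1\to\MM_1$, sending $\alpha:\mu\iso\nu$ to $\alpha^{-1}:\nu\iso\mu$, the domain and codomain conditions simply swap — $\mathrm{inv}^{-1}(V_{\varphi(k)^d})=V_{\varphi(k)^c}$ and conversely — and a transport condition reverses, since $\alpha^{-1}(\nu^x(k))=\mu^x(l)$ iff $\alpha(\mu^x(l))=\nu^x(k)$, giving $\mathrm{inv}^{-1}(V_{k\overset{x}{\mapsto}l})=V_{l\overset{x}{\mapsto}k}$.

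The interesting case, and the main obstacle, is continuity of composition $\circ:\MM_1\overtimes{\MM_0}\MM_1\to\MM_1$, sending a composable pair $(\alpha:\mu\iso\nu,\ \beta:\nu\iso\xi)$ to $\beta\circ\alpha:\mu\iso\xi$. Domain and codomain conditions are easy: as $\beta\circ\alpha$ has domain $\mu$ and codomain $\xi$, the set $\circ^{-1}(V_{\varphi(k)^d})$ is cut out by a condition on the first factor $\alpha$ alone and $\circ^{-1}(V_{\varphi(k)^c})$ by a condition on the second factor $\beta$ alone, each a subbasic box intersected with the fiber product. The transport condition is where surjectivity of the labellings does real work. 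We have $(\beta\circ\alpha)(\mu^x(k))=\xi^x(l)$ iff $\beta(\alpha(\mu^x(k)))=\xi^x(l)$; putting $a=\alpha(\mu^x(k))\in|M_\nu|^x$ and using that $v_\nu^x$ is onto, there is a parameter $m$ with $\nu^x(m)=a$, which factors the equality through the middle model. Concretely,
$$\circ^{-1}\big(V_{k\overset{x}{\mapsto}l}\big)=\bigcup_{m\in\kappa^x}\Big(V_{k\overset{x}{\mapsto}m}\times V_{m\overset{x}{\mapsto}l}\Big)\cap\big(\MM_1\overtimes{\MM_0}\MM_1\big),$$
a union of subbasic boxes restricted to the fiber product, hence open. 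The forward inclusion uses surjectivity to produce the witness $m$, while the reverse inclusion is just composing the two transport equalities. This is the very same maneuver — indexing over an intermediate parameter and invoking surjectivity of the labelling — that made the sheaf conditions go through in Proposition \ref{Asheaf}, so I expect the computation to be routine, but it is the one place where the infinite-to-one hypothesis on labellings is genuinely needed.
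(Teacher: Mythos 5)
Your proposal is correct and follows essentially the same route as the paper: groupoid identities inherited from $\bTT\Mod$, domain/codomain continuity by definition, inverse and identity checked on subbasic opens, and the transport condition for composition handled by factoring through an intermediate parameter on the middle model, exhibiting the preimage as a union of boxes in the fiber product. One small correction: the step producing the witness $m$ uses only that the labelling $v_\nu^x$ is surjective, not the stronger infinite-to-one hypothesis, which is needed elsewhere (e.g.\ in the reassignment lemma) but not here.
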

\begin{proof}
As the compositional structure on $\MM$ is inherited from $\bTT\Mod$, the internal category conditions on $\MM$ are immediate. A bit less obvious is that all of the structure maps are continuous. For the domain and codomain maps this is built into the definition; it follows that the fiber projections $p_0$ and $p_1$ are continuous as well.

After these inverse is the easiest, as it obviously swaps the basic open sets in pairs.
$$\begin{array}{rcl}
\alpha\in V_{\varphi(k)^d}&\Iff& \alpha^{-1}\in V_{\varphi(k)^c}\\
\alpha\in V_{k\overset{x}{\mapsto} l}&\Iff&\alpha^{-1}\in V_{l \overset{x}{\mapsto} k}.\\
\end{array}$$
For $\underline\id$ we have
$$\begin{array}{ccccc}
1_{\mu}\in V_{\varphi(k)^d}&\Iff &1_{\mu}\in V_{\varphi(k)^c}&\Iff& \mu\in V_{\varphi(k)}\\
1_{\mu}\in V_{k \overset{x}{\mapsto} l}&\Iff& \mu^x(k)=\mu^x(l)&\Iff& \mu\in V_{k\underset{x}{=}l}.\\
\end{array}$$
These latter sets are open in $\MM_0$, so $\underline\id$ is continuous as well.

Lastly, composition. Note that if $\alpha$ satisfies a codomain condition $V_{\varphi(k)^c}$ then so does any composite $\alpha\circ \beta$. This means that the inverse image of $V_{\varphi(k)^c}$ along $\circ$ is just a fiber product with $\MM_1$
$$\xymatrix{
V_{\varphi(k)^c}\underset{\MM_0}{\times} \MM_1 \ar[r]^-{p_0} \ar@{>->}[d]& V_{\varphi(k)^c} \ar@{>->}[d]\\
\MM_1\underset{\MM_0}{\times} \MM_1 \ar[r]^-\circ& \MM_1.\\
}$$
This is clearly open, and the same reasoning applies to domain conditions.

Lastly, suppose that we have composable maps ${\mu\stackrel{\beta}{\lto}\lambda\stackrel{\alpha}{\lto}\nu}$ and a neighborhood $\alpha\circ\beta\in V_{k\overset{x}{\mapsto} l}$. Choose any parameter $j$ such that $\alpha(\mu^x(k))=\lambda^x(j)$; this determines an open box in the fiber product:
$$\<\alpha,\beta\>\in\big(V_{k\overset{x}{\mapsto}  j}\big)\times_{\MM_0} \big(V_{j\overset{x}{\mapsto}  l}\big)\subseteq\circ^{-1}(V_{k\overset{x}{\mapsto}  l}).$$
Hence composition and the other structure maps are continuous, and $\MM$ is a groupoid in $\Top$.
\end{proof}

\begin{prop}
Suppose that we have two isomorphisms $\alpha:\mu_0\cong\mu_1$ and $\beta:\nu_0\cong\nu_1$. Then $\alpha$ belongs to the closure of $\beta$ just in case $\mu_i$ belongs to the closure of $\nu_i$ ($i=1,2$) and the canonical homomorphisms $h_i:M_{\mu_i}\to M_{\nu_i}$ induced by these closures form a commutative square
$$\xymatrix{
M_{\nu_0} \ar[r]^\beta & M_{\nu_1}\\
M_{\mu_0} \ar[r]_{\alpha} \ar[u]^{h_0} & M_{\mu_1} \ar[u]_{h_1}\\
}$$
\end{prop}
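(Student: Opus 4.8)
The plan is to mirror the proof of Proposition \ref{spectral_closure}, establishing both implications by testing membership in the generating open sets of $\MM_1$ listed in Definition \ref{M1points}. Throughout I will use the fact, implicit in Proposition \ref{spectral_closure}, that the homomorphism $h_i$ induced by a closure $\mu_i\in\overline{\nu_i}$ is compatible with labellings: whenever $k\in K^x_{\mu_i}$ one has $h_i(\mu_i^x(k))=\nu_i^x(k)$, since $h_i$ is by construction the map on underlying sets induced by the inclusion $K^x_{\mu_i}\subseteq K^x_{\nu_i}$. The two ingredients are the continuity of the structure maps $\dom,\cod:\MM_1\to\MM_0$, which transports closures down to the object space, and the transport opens $V_{k\overset{x}{\mapsto} l}$, which encode how an arrow acts on labelled elements.

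For the forward direction, suppose $\alpha\in\overline{\beta}$. Since $\dom$ and $\cod$ are continuous, $\dom(\beta)=\nu_0$ and $\dom(\alpha)=\mu_0$ (similarly for $\cod$), continuity gives $\mu_0\in\overline{\nu_0}$ and $\mu_1\in\overline{\nu_1}$, and Proposition \ref{spectral_closure} then produces the homomorphisms $h_0,h_1$. It remains to check $\beta\circ h_0=h_1\circ\alpha$. Fix $a\in|M_{\mu_0}|^x$ and, using surjectivity of the labellings, write $a=\mu_0^x(k)$ and $\alpha(a)=\mu_1^x(l)$ for suitable parameters $k,l$. Then $\alpha\in V_{k\overset{x}{\mapsto} l}$; as this is an open neighborhood of $\alpha$ and $\alpha\in\overline{\beta}$, we get $\beta\in V_{k\overset{x}{\mapsto} l}$, that is $\beta(\nu_0^x(k))=\nu_1^x(l)$. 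Using label-compatibility of $h_0$ and $h_1$ this reads $\beta(h_0(a))=h_1(\alpha(a))$, as required.

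For the converse, assume $\mu_i\in\overline{\nu_i}$ with the square commuting. To show $\alpha\in\overline{\beta}$ it suffices to verify that each generating open of Definition \ref{M1points} containing $\alpha$ also contains $\beta$, since these generate the topology of $\MM_1$ and any basic neighborhood is a finite intersection of them. A domain condition $V_{\varphi(k)^d}$ containing $\alpha$ asserts $\mu_0\models\varphi(k)$; as $\mu_0\in\overline{\nu_0}$ and $\varphi$ is coherent, Proposition \ref{spectral_closure} gives $\nu_0\models\varphi(k)$, so $\beta\in V_{\varphi(k)^d}$. Codomain conditions are handled identically using $\mu_1\in\overline{\nu_1}$. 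Finally, if $\alpha\in V_{k\overset{x}{\mapsto} l}$ then $\alpha(\mu_0^x(k))=\mu_1^x(l)$; applying $h_1$ and using label-compatibility gives $h_1(\alpha(\mu_0^x(k)))=\nu_1^x(l)$, while commutativity of the square together with $h_0(\mu_0^x(k))=\nu_0^x(k)$ gives $h_1(\alpha(\mu_0^x(k)))=\beta(\nu_0^x(k))$. Hence $\beta(\nu_0^x(k))=\nu_1^x(l)$, i.e. $\beta\in V_{k\overset{x}{\mapsto} l}$, completing the argument.

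The routine bookkeeping aside, the crux is the transport condition $V_{k\overset{x}{\mapsto} l}$: it is precisely the open set recording how an isomorphism moves a labelled point, and surjectivity of the labellings is what guarantees that every element $\alpha(a)$ carries such a label $l$, so that the commuting square can be read off neighborhood by neighborhood. I expect this step---matching the fiberwise action of $\alpha$ and $\beta$ through the vertical maps $h_i$---to be the only genuinely delicate point; the domain and codomain conditions reduce immediately to the object-space result already proved.
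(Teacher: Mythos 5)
Your proof is correct, and the forward direction is essentially the paper's own argument: read off $\mu_i\in\overline{\nu_i}$ from the domain and codomain conditions, then use surjectivity of the labellings to place $\alpha$ (hence $\beta$) in a transport open $V_{k\overset{x}{\mapsto} l}$ and chase the square. The converse, which you verify by checking each generating open of $\MM_1$ (and which reduces the transport case to the commuting square via the label-compatibility $h_i(\mu_i(k))=\nu_i(k)$), is actually omitted in the paper's proof despite the ``just in case'' in the statement, so your write-up is the more complete of the two.
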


\begin{proof}\label{iso_closure}
We have $\alpha\in\overline{\beta}$ just in case $\beta$ belongs to every $\MM_1$-open set which $\alpha$ belongs to. Applying this observation to the domain and codomain conditions tells us that $\nu_i$ belongs to every $\MM_0$-open set which $\mu_i$ does, and this tells us that $\mu_i\in\overline{\nu_i}$.

Recall from proposition \ref{spectral_closure} that $\mu$ belongs to the closure of $\nu$ just in case $K_{\mu}\subseteq K_{\nu}$ and this inclusion induces a model homomorphism $h(\mu(k))=\nu(k)$:
$$\xymatrix{
K_{\mu} \ar@{}[r]|{\textstyle\subseteq} \ar@{->>}[d] & K_{\nu} \ar@{->>}[d]\\
|M_{\mu}| \ar@{-->}[r]_{h} & |M_{\nu}|.\\
}$$

For any element $a\in|M_{\mu_0}|$ we can find a parameter $k$ such that $a=\mu_0(k)$ and a parameter $l$ such that $\mu_1(l)=\alpha(a)$. This tells us that $\alpha$ belongs to $V_{k\mapsto l}$, and therefore $\beta$ must as well. It follows that
$$\begin{array}{rcl|l}
\beta(h_0(a))&=&\beta(h_0(\mu_0(k))& k\ \mathrm{labels}\ a\\
&=& \beta(\nu_0(k))& \mathrm{def'n\ of\ }h_0\\
&=& \nu_1(l) & \beta\in V_{k\mapsto l}\\
&=& h_1(\mu_1(l)) & \mathrm{def'n\ of\ }h_1\\
&=& h_1(\alpha(\mu_0(k))) & \alpha\in V_{k\mapsto l}\\
&=& h_1(\alpha(a)) & k\ \mathrm{labels\ }a\\
\end{array}$$
Therefore $\beta\circ h_1=h_0\circ\alpha$, as asserted.
\end{proof}

In what follows we will often need to pull back along the domain map $\MM_1\to\MM_0$ so we give this operation a special notation $\MM*(-)$. Given a sheaf $F\in\Sh(\MM_0)$, an element $\MM* F$ consists of a map $\alpha:\mu\to\nu$ together with an element of the fiber $f\in F_{\mu}$. Similarly, $\MM*\MM*F$ denotes the pullback of $\MM* F$ along the second projection $p_1:\MM_1\overtimes{\MM_0}\MM_1\to\MM_1$. This is the space of composable pairs ${\mu\stackrel{\alpha}{\lto}\nu\stackrel{\beta}{\lto}\lambda}$ together with an element $f\in F_{\mu}$. 

With this shorthand, an \emph{equivariant sheaf} is an object $F\in\Sh(\MM_0)$ (viewed as an \'etale space) together with an action $\rho:\MM* F\to F$ commuting with the codomain:
$$\xymatrix{
\MM*F \ar[d]\ar[rrr]^{\rho} &&& F \ar[d]\\
\MM*\MM_0 \ar@{=}[r]^-{\sim}&\MM_1\ar[rr]_-{\cod} && \MM_0.\\
}$$

This says that each $\alpha:\mu\to\nu$ defines an action on fibers, $\rho_{\alpha}:F_{\mu}\to F_{\nu}$. We additionally require that this action satisfies the following \emph{cocycle conditions}, ensuring that $\rho$ respects the groupoid structure in $\MM$:
$$\begin{array}{ccc}
\rho_{1_{\mu}}(f)=f && \rho_{\alpha\circ\beta}(f)=\rho_{\alpha}\circ\rho_{\beta}(f)\\\\
\xymatrix@C=35pt{F \ar[r]^-{\id\times 1_{F}} \ar@{=}[dr]& \MM* F \ar[d]^\rho\\& F\\} &&
\xymatrix@C=35pt{\MM*\MM* F \ar[r]^-{\circ\times 1_{F} } \ar[d]_{1_{\MM}\times\rho} & \MM* F \ar[d]^\rho\\
\MM* F \ar[r]_{\rho} & F. }\end{array}$$

\begin{prop}
For each context $x:A$, the assignment
$$\rho_{x,\alpha}=\alpha^x:|M_{\mu}|^x\to|M_{\nu}|^x.$$
defines a canonical $\MM$-equivariant structure $\rho_x$ on $\ext{A}$.
\end{prop}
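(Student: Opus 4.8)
The plan is to verify the three ingredients in the definition of an equivariant sheaf for the candidate action $\rho_x$, which on the fiber over a map $\alpha:M_{\mu}\iso M_{\nu}$ sends $\langle\mu,a\rangle\mapsto\langle\nu,\alpha^x(a)\rangle$. Two of these are essentially formal. First, $\rho_x$ lands in the fiber of $\ext{A}$ over $\cod(\alpha)=\nu$, so it commutes with the codomain map by construction. Second, the cocycle conditions reduce to the functoriality of the assignment sending a model isomorphism to its $x$-component, i.e.\ the functor $M\mapsto|M|^x$, $\alpha\mapsto\alpha^x$ on $\bTT\Mod$: since $(1_{\mu})^x=\id_{|M_{\mu}|^x}$ and $(\alpha\circ\beta)^x=\alpha^x\circ\beta^x$, we get $\rho_{x,1_{\mu}}(a)=a$ and $\rho_{x,\alpha\circ\beta}=\rho_{x,\alpha}\circ\rho_{x,\beta}$ immediately. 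So the genuine content of the proposition is that $\rho_x$ is continuous as a map of \'etale spaces $\MM*\ext{A}\to\ext{A}$.

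For continuity I would check that the preimage of each basic open $W_{\varphi(x,l)}=\{\langle\mu,a\rangle\mid M_{\mu}\models\varphi(a,\mu^y(l))\}$ is open in $\MM*\ext{A}$. Recall that the fiber product $\MM*\ext{A}$ carries the subspace topology from $\MM_1\times\ext{A}$, so its basic opens are the restrictions to the fiber product of boxes $U\times W$ with $U\subseteq\MM_1$ and $W\subseteq\ext{A}$ basic open. The key idea is to use the surjectivity of the labellings (at both $\mu$ and $\nu$) to name both $a$ and its image $\alpha^x(a)$ by parameters, and thereby translate the condition defining the preimage into a combination of a transport condition and a codomain condition.

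Concretely, given a point $(\alpha:\mu\iso\nu,\langle\mu,a\rangle)$ in the preimage, I would choose $k\in\kappa^x$ with $\mu^x(k)=a$ and $m\in\kappa^x$ with $\nu^x(m)=\alpha^x(a)$; these exist because the labellings are onto. Then $\langle\mu,a\rangle\in W_{x=k}$, the equality $\alpha^x(\mu^x(k))=\nu^x(m)$ says $\alpha\in V_{k\overset{x}{\mapsto}m}$, and the membership $M_{\nu}\models\varphi(\alpha^x(a),\nu^y(l))$ becomes $\nu\models\varphi(m,l)$, i.e.\ $\alpha\in V_{\varphi(m,l)^c}$. Thus $(\alpha,\langle\mu,a\rangle)$ lies in the basic open box $(V_{k\overset{x}{\mapsto}m}\cap V_{\varphi(m,l)^c})\overtimes{\MM_0} W_{x=k}$, and unwinding the definitions shows this box is contained in $\rho_x^{-1}(W_{\varphi(x,l)})$. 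Taking the union over all $k,m\in\kappa^x$ exhibits the preimage as a union of basic opens, hence open.

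The main obstacle --- really the only place where any work happens --- is the continuity argument of the last two paragraphs, and within it the delicate point is that $\rho_x$ moves data between the two ends of $\alpha$: the element $a$ is labelled in the domain $\mu$ while its image is measured in the codomain $\nu$. The device that resolves this is the pair of parameters $(k,m)$ together with the open set $V_{k\overset{x}{\mapsto}m}$, which records exactly that $\alpha$ carries the $k$-labelled element to the $m$-labelled element; combined with the codomain condition $V_{\varphi(m,l)^c}$ this localizes the statement ``$M_{\nu}\models\varphi(\alpha^x(a),\nu^y(l))$'' to a neighborhood of $(\alpha,\langle\mu,a\rangle)$.
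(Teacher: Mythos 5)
Your proof is correct and follows essentially the same route as the paper's: the cocycle conditions are dispatched as formal consequences of functoriality of $\alpha\mapsto\alpha^x$, and continuity is established by using surjectivity of the labellings to exhibit, around each point of the preimage of $W_{\varphi(x,l)}$, a basic open box in $\MM_1\overtimes{\MM_0}\ext{A}$ contained in that preimage. The only (harmless) difference is in which box gets produced: the paper relabels the formula's parameter on the domain side and uses $V_{k\overset{y}{\mapsto}l}\overtimes{\MM_0}W_{\varphi(x,k)}$, exploiting that isomorphisms reflect coherent formulas, whereas you label $a$ and $\alpha^x(a)$ by parameters $k,m$ and push the satisfaction condition entirely onto the codomain via $V_{\varphi(m,l)^c}$.
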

\begin{proof}
The cocycle conditions for $\rho_x$ reduce to associativity and identity conditions for composition of $\bTT$-model homomorphisms. As for continuity, suppose that $\alpha:\mu\to\nu$ and $a\in|M_{\mu}|^x$. For any neighborhood $\alpha(a)\in W_{\varphi(x,l)}$ pick some $k$ such that $\alpha:k\mapsto l$.

Because $\nu\models\varphi(\alpha(a),l)$ and $\alpha$ is an isomorphism, $\mu\models\varphi(a,k)$. Thus the pair $\<\alpha,a\>$ belongs to an open neighborhood inside the inverse image
$$V_{k\overset{x}{\mapsto} l}\overtimes{\MM_0} W_{\varphi(x,k)}\subseteq \rho_x^{-1}(W_{\varphi(x,l)}).$$
Therefore $\rho$ is continuous.
\end{proof}

\section{Stability, compactness and definability}\label{sec_StabDef}

Now we are ready to characterize the definable sheaves $\ext{\varphi}$ in terms of equivariance together with a further condition: compactness. We say that an equivariant sheaf $\<E,\rho\>$ is \emph{compact} if any cover by \emph{equivariant} subsheaves has a finite subcover. Equivalently, any cover has a finite subfamily whose orbits under $\rho$ cover $E$. 

\begin{thm}[Stability for subobjects, cf. Awodey \& Forssell \cite{forssell_thesis,FOLD}]\label{stab_thm}
A subsheaf $S\subseteq\ext{A}$ is definable if and only if it is equivariant and compact.
\end{thm}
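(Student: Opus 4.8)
The plan is to prove both implications by analyzing how the groupoid $\MM$ acts, the crucial preliminary observation being that $\MM$ contains, for any two labellings $\mu,\nu$ of the \emph{same} underlying model, the identity map $\mathrm{id}:M_\mu\to M_\nu$ as a morphism $\mu\to\nu$. Hence an equivariant $S$ is automatically independent of the labelling: membership $\langle\mu,a\rangle\in S$ depends only on the pair $(M_\mu,a)$, and isomorphism-equivariance then says $S$ is stable under $\bTT$-model isomorphisms, $\alpha^x(S_{M})=S_{N}$ for every $\alpha:M\iso N$. The easy direction is immediate from this: if $S=\ext{\varphi}$ then, since coherent formulas are preserved by isomorphisms, $a\in\varphi^{M}$ implies $\alpha(a)\in\varphi^{N}$, so $\ext{\varphi}$ is closed under the action and is equivariant.

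The heart of the argument is the following local-definability lemma, which I would isolate first: \emph{if $S\subseteq\ext{A}$ is equivariant then $S$ is a union of definable subsheaves $\ext{\chi}$ with $\ext{\chi}\subseteq S$.} To prove it, fix a point $\langle\mu,a\rangle\in S$. Since a subsheaf is an open subspace of the \'etale space and the sets $W_{\psi(x,l)}$ form a basis (see the construction in \S\ref{sec_CovSpSh}), there is a formula $\psi(x,y)$ and parameters $l^y$ with $\langle\mu,a\rangle\in W_{\psi(x,l)}\subseteq S$; in particular $M_\mu\models\psi(a,\mu^y(l))$. Set $\chi(x):=\exists y.\psi(x,y)$, so that $M_\mu\models\chi(a)$. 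I claim $\ext{\chi}\subseteq S$. Indeed, take any $\langle\nu,c\rangle\in\ext{\chi}$ and a witness $d\in|M_\nu|^y$ with $M_\nu\models\psi(c,d)$; by the reassignment lemma (Lemma~\ref{reassignment}) there is a relabelling $\nu'$ of the same model $M_\nu$ with $\nu'(l)=d$, whence $M_{\nu'}\models\psi(c,\nu'(l))$, so $\langle\nu',c\rangle\in W_{\psi(x,l)}\subseteq S$, and label-independence returns $\langle\nu,c\rangle\in S$. This is the step I expect to be the main obstacle: the neighborhood witnessing $\langle\mu,a\rangle\in S$ is contaminated by the arbitrary parameters $l$, and the content of the lemma is precisely that equivariance, via the freedom of the reassignment lemma to let $l$ name \emph{any} element of \emph{any} model, washes out this dependence and upgrades the parameter-laden open $W_{\psi(x,l)}$ to the genuinely definable $\ext{\exists y.\psi}$.

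With the lemma in hand both directions close quickly using compactness. For the reverse direction, write the equivariant $S$ as the union $\bigcup_\alpha\ext{\chi_\alpha}$ of the definable pieces supplied by the lemma; this is a cover of $S$ by equivariant subsheaves, so compactness yields a finite subcover $S=\ext{\chi_1}\cup\cdots\cup\ext{\chi_n}=\ext{\chi_1\vee\cdots\vee\chi_n}$, which is definable because a finite disjunction is again coherent. For the forward direction it remains to check that a definable $\ext{\varphi}$ is compact. Given any cover of $\ext{\varphi}$ by equivariant subsheaves $S_i$, apply the lemma to each $S_i$ to refine the cover to one by definable pieces $\ext{\psi_{ij}}$, each satisfying $\bTT\tri\psi_{ij}(x)\vdash\varphi(x)$ by the inclusion lemma (Lemma~\ref{inclusion_lemma}). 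Since these cover $\ext{\varphi}$, every model $M$ and every $a\in\varphi^{M}$ lie in some $\psi_{ij}^{M}$; compactness of coherent logic (equivalently, ordinary first-order compactness applied in the metatheory to $\bTT\cup\{\varphi(c)\}\cup\{\neg\psi_{ij}(c)\}$ for a fresh constant $c$) then produces finitely many of the $\psi_{ij}$ covering $\varphi$ in every model, and the corresponding finitely many $S_i$ cover $\ext{\varphi}$. Hence $\ext{\varphi}$ is equivariant and compact, completing the equivalence.
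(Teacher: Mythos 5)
Your proof is correct and follows essentially the same route as the paper's: the easy equivariance of $\ext{\varphi}$, the key lemma that an equivariant subsheaf is a union of definable pieces $\ext{\exists y.\psi}$ obtained by using the reassignment lemma to wash out the parameters $l$, and then logical compactness to close both directions. (One pedantic note: the fact $\bTT\tri\psi_{ij}(x)\vdash\varphi(x)$ comes from completeness for $\kappa$-small models rather than from the inclusion lemma, which concerns open subsets of the base space $\MM_0$; but your argument only needs that the $\psi_{ij}$ cover $\varphi$ in every model, which you establish anyway.)
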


\begin{prop}
Each definable subsheaf $\ext{\varphi}\leq\ext{A}$ is equivariant and compact.
\end{prop}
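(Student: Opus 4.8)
The plan is to treat equivariance and compactness separately, since the two halves of the statement are essentially independent.

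For equivariance, I would simply observe that the canonical action $\rho_x$ already constructed on $\ext{A^x}$ restricts to the subsheaf $\ext{\varphi}$. Concretely, given $\alpha:\mu\iso\nu$ and a point $\<\mu,a\>\in\ext{\varphi}$ (so $M_\mu\models\varphi(a)$), the image $\rho_{x,\alpha}\<\mu,a\>=\<\nu,\alpha(a)\>$ again lies in $\ext{\varphi}$, because a $\bTT$-model isomorphism preserves definable sets: $a\in\varphi^\mu$ iff $\alpha(a)\in\varphi^\nu$. Thus $\ext{\varphi}$ is closed under $\rho_x$, and the cocycle and continuity conditions are inherited verbatim from $\ext{A^x}$. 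This step is routine.

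The substance is compactness. The key idea is that although $\ext{\varphi}$ is a large space, equivariance lets me collapse it onto a single canonical section. Fix any parameter $k^x$ and consider the section $\hat{k}^x$, whose image $W[\varphi(x)\wedge(x=k)]$ is homeomorphic to the basic open $V_{\varphi(k)}\subseteq\MM_0$. I would first show that any finite subfamily of a cover which happens to cover this slice already covers all of $\ext{\varphi}$: given an arbitrary $\<\nu,b\>\in\ext{\varphi}$, the reassignment lemma (\ref{reassignment}) produces a relabelling $\nu'$ of the same underlying model with $\nu'^x(k)=b$, so that $\<\nu',b\>$ lies in the slice; the identity on $M_\nu=M_{\nu'}$ is an isomorphism $\nu'\iso\nu$ fixing $b$, and equivariance of the covering subsheaves transports membership from $\<\nu',b\>$ back to $\<\nu,b\>$. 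Hence it suffices to extract a finite subcover of the slice.

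It then remains to prove that the slice, homeomorphic to $V_{\varphi(k)}$, is compact. Intersecting a given cover of $\ext{\varphi}$ by equivariant subsheaves $S_i$ with the slice yields an open cover of $V_{\varphi(k)}$; refining to basic opens and applying the inclusion lemma (\ref{inclusion_lemma}), I may assume it takes the form $V_{\varphi(k)}=\bigcup_j V_{\theta_j(k,n_j)}$ with each $\bTT\tri\exists\bar y_j.\theta_j(x,\bar y_j)\vdash\varphi(x)$. Setting $\chi_j:=\exists\bar y_j.\theta_j$, this says precisely that $\varphi^M=\bigcup_j\chi_j^M$ in every $\kappa$-small model $M$. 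The main obstacle is to pass from this infinitary covering to a finite one, and here I would invoke the compactness theorem: were no finite subfamily to suffice, then for each finite $F$ some model would carry an element of $\varphi^M$ avoiding every $\chi_j$ with $j\in F$, so the type $\bTT\cup\{\varphi(c)\}\cup\{\neg\chi_j(c)\}_j$ would be finitely satisfiable, hence realized in some $\bTT$-model, contradicting $\varphi^M=\bigcup_j\chi_j^M$. This appeal to compactness (through completeness for $\kappa$-small models) is the sole non-formal ingredient, and the existential quantifier inside $\varphi$ is exactly what rules out a naive induction on the structure of $\varphi$. Unwinding the slice reduction then yields finitely many $S_i$ covering $\ext{\varphi}$.
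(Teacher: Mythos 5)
Your proof is correct, and the equivariance half coincides with the paper's (restriction of $\rho_A$ to $\ext{\varphi}$ via preservation of definable sets under isomorphism). The compactness half, however, takes a genuinely different and in fact stronger route. The paper's own proof only treats covers by \emph{definable} subsheaves $\ext{\varphi}=\bigcup_i\ext{\psi_i}$: it immediately adds a constant $\mathbf{c}_0$ and the axioms $\varphi(\mathbf{c}_0)$, $\neg\psi_i(\mathbf{c}_0)$ and applies logical compactness; the extension to arbitrary equivariant covers is deferred to the later lemma that every equivariant subsheaf is a union of definable pieces (and to theorem \ref{coh_def}). You instead handle an arbitrary equivariant cover directly: the reassignment lemma plus equivariance collapses the problem onto the canonical slice $W[\varphi(x)\wedge(x=k)]\cong V_{\varphi(k)}$, the inclusion lemma \ref{inclusion_lemma} converts the induced open cover of $V_{\varphi(k)}$ into one by sets $V_{\theta_j(k,n_j)}$ with $\exists\bar y.\theta_j\vdash\varphi$, and only then does logical compactness enter --- in the same form as the paper's. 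The trade-off is that your argument is longer and front-loads the slice/reassignment technique that the paper saves for lemma \ref{equiv_ext} and the subsequent lemma on equivariant subsheaves, but it buys a self-contained proof of compactness for the full class of equivariant covers appearing in the definition, rather than only the definable ones. Both proofs rest on the same single non-formal ingredient, completeness/compactness for $\kappa$-small models.
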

\begin{proof}
If $\alpha:\mu\to\nu$ is an isomorphism and $\mu\models \varphi(a)$ then clearly $\nu\models\varphi(\alpha(a))$. This means that the restriction of $\rho_A$ factors through $\ext{\varphi}$, making it an equivariant subsheaf
$$\xymatrix{
\MM*\ext{A} \ar[rr]^{\rho_A} && \ext{A} \\
\MM*\ext{\varphi} \ar@{>->}[u] \ar@{-->}[rr]_{\rho_{\varphi}} && \ext{\varphi} \ar@{>->}[u]\\
}$$

Now suppose that $\ext{\varphi}=\bigcup_{i\in I} \ext{\psi_i}$ and let us abbreviate the sequent ${\psi(x)\vdash_{x:A}\bot}$ by $\neg\psi(x)$. Suppose that $\ext{\varphi}$ were not compact. Then for any finite subset $I_0\subseteq I$, $\bigcup_{i\in I_0}\ext{\psi_i}\subsetneq \ext{\varphi}$; thus there is a labelled model $\mu$ and an element $a\in|M_{\mu}|^x$ such that $\mu$ satisfies $\{\varphi(a)\}\cup\left\{\neg \bigvee_{i\in I_0} \psi_i(a)\right\}$.

Extend the language of $\bTT$ by a constant $\bf{c}_0$ and let
$$\bTT'=\bTT\cup\{\varphi(\bf{c}_0)\}\cup\{\neg\psi_i(\bf{c}_0)\}.$$
The pairs $\<M_0,a_0\>$ witness the consistency of each finite subtheory, so $\bTT'$ is also consistent. Let $\<\mu_*,a_*\>$ be a labelled model of $\bTT'$. Then $a_*\in\varphi^\mu$ but $a_*\not\in \psi_i^\mu$ for each $i\in I$. This contradicts the assumption that $\ext{\varphi}=\displaystyle\bigcup_{i\in I} \ext{\psi_i}$.
\end{proof}

This shows that every formula $\varphi(x)$ defines a compact equivariant subsheaf of $\ext{A}$. Now we argue the converse. First we show that every equivariant subsheaf is a union of definable pieces and from this compactness easily implies definability.

Recall from definition \ref{extA} that each parameter $k^x$ determines a canonical open section $\ext{k^x}:V_{k^x}\to\ext{A}$ sending $\mu\mapsto \<\mu,\mu^x(k)\>$. When $\mu\models\varphi(k)$ this factors through $\ext{\varphi}$, giving a subsection
$$\xymatrix{
\ext{\varphi} \ar@{}[r]|\subseteq & \ext{A}\\
V_{\varphi(k)} \ar@{-->}[u]^{\ext{k\models\varphi}} \ar@{}[r]|\subseteq & V_{k^x} \ar[u]_{\ext{k}}\\
}$$

\begin{lemma}\label{equiv_ext}
Given any equivariant sheaf $\<E,\rho\>$ and a partial section $e:V_{\varphi(k)}\to E$ there is a unique equivariant extension of $e$ along the canonical section $\ext{k\models\varphi}$:
$$\xymatrix{
\ext{\varphi} \ar[rr]^{\tilde{e}} && E\\
&V_{\varphi(k)} \ar[ul]^{\ext{k\models\varphi}} \ar[ur]_e&.\\
}$$
\end{lemma}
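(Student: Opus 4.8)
The plan is to exploit the fact that the image of the canonical section $\ext{k\models\varphi}$ meets every $\MM$-orbit of $\ext{\varphi}$, so that an equivariant map out of $\ext{\varphi}$ is pinned down by its restriction along this section. First I would record the \emph{orbit-covering} fact: given any point $\<\nu,b\>\in\ext{\varphi}$ (so $M_\nu\models\varphi(b)$), the reassignment lemma \ref{reassignment} lets me relabel $\nu$ to a labelled model $\mu$ with $M_\mu=M_\nu$ and $\mu^x(k)=b$. Since then $M_\mu\models\varphi(\mu^x(k))$ we have $\mu\in V_{\varphi(k)}$, and the identity $\id:M_\mu\iso M_\nu$ is an arrow $\mu\to\nu$ of $\MM$ carrying the generating point $\ext{k\models\varphi}(\mu)=\<\mu,\mu^x(k)\>$ to $\<\nu,b\>$ under $\rho$. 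More generally $\<\nu,b\>=\rho_\alpha\<\mu,\mu^x(k)\>$ for any isomorphism $\alpha:\mu\to\nu$ in $V_{k\overset{x}{\mapsto}l}$ with $\alpha(\mu^x(k))=b$.

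Uniqueness is then immediate: any equivariant $\tilde{e}$ with $\tilde{e}\circ\ext{k\models\varphi}=e$ must satisfy $\tilde{e}\<\nu,b\>=\tilde{e}(\rho_\alpha\<\mu,\mu^x(k)\>)=\rho_\alpha\,\tilde{e}\<\mu,\mu^x(k)\>=\rho_\alpha\,e(\mu)$, so its value is forced at every point of $\ext{\varphi}$. For existence I would \emph{define} $\tilde{e}\<\nu,b\>:=\rho_\alpha\,e(\mu)$ from such a witnessing pair $(\mu,\alpha)$, and then verify in turn that (a) $\tilde{e}$ is well defined, (b) it restricts to $e$ along $\ext{k\models\varphi}$ (take $\mu$ itself with $\alpha=\id$, using the cocycle identity $\rho_{1_\mu}=\id$), (c) it is equivariant, and (d) it is continuous. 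Equivariance follows by composing witnesses: for $\beta:\nu\to\lambda$ the pair $(\mu,\beta\alpha)$ witnesses $\rho_\beta\<\nu,b\>=\<\lambda,\beta(b)\>$, whence $\tilde{e}(\rho_\beta\<\nu,b\>)=\rho_{\beta\alpha}e(\mu)=\rho_\beta\rho_\alpha e(\mu)=\rho_\beta\tilde{e}\<\nu,b\>$ by the cocycle law; continuity follows from continuity of $\rho$ and of $e$, checked on the basic opens $V_{k\overset{x}{\mapsto}l}$ and $W_{\varphi(x,l)}$, since locally one may let the witness $\alpha$ range inside a single $V_{k\overset{x}{\mapsto}l}$, on which $\tilde{e}$ is the composite of $e$ with the action.

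The main obstacle is step (a). Two witnessing pairs $(\mu,\alpha)$ and $(\mu',\alpha')$ for the same $\<\nu,b\>$ differ by the isomorphism $\gamma:=\alpha'^{-1}\alpha:\mu\to\mu'$, which satisfies $\gamma(\mu^x(k))=\mu'^x(k)$ with $\mu,\mu'\in V_{\varphi(k)}$; well-definedness is exactly the assertion that $\rho_\gamma\,e(\mu)=e(\mu')$ for every such $\gamma$. I would emphasize that this is genuine content and not automatic for an arbitrary continuous section — the labels of $\mu$ and $\mu'$ away from $k$ may be unrelated, so $e(\mu)$ and $e(\mu')$ are \emph{a priori} independent. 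The condition expresses compatibility of $e$ with the groupoid action along the arrows fixing the distinguished label $k$, and this is precisely what is guaranteed when $e$ is an equivariant partial section — in particular in the intended application, where $e$ arises by restricting an already-equivariant section (a canonical section of $\ext{A}$ landing inside the equivariant target), so that the compatibility holds on the nose. Granting this identity, the value $\rho_\alpha e(\mu)$ is independent of the chosen witness, and the construction above produces the required $\tilde{e}$.
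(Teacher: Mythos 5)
Your construction is the same as the paper's: the published proof also invokes the reassignment lemma to produce, for each $\<\mu,a\>\in\ext{\varphi}$, a witness $\alpha:\mu\iso\nu$ with $\nu(k)=\alpha(a)$, and then writes down the forced formula $\tilde{e}\<\mu,a\>=\rho_{\alpha\inv}(e(\nu))$ — which is exactly your uniqueness argument — and stops there. Where you go further is your step (a): the paper never checks that the forced value is independent of the chosen witness, and you are right that this is genuine content, since two witnesses differ by an arrow $\gamma$ with $\gamma:k\mapsto k$ and nothing about a bare continuous section forces $\rho_\gamma(e(\mu))=e(\mu')$. Your diagnosis that the needed hypothesis is relative equivariance of $e$ (compatibility with the subgroupoid of arrows fixing $k$) is exactly the condition the paper later imposes when it actually uses this lemma (e.g.\ in proposition \ref{str_sheaf_secs}, where the sections are assumed relatively equivariant), so your version is the honest statement of what is proved; the equivariance and continuity checks in (c) and (d) are routine and correctly sketched.
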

\begin{proof}
This is an easy application of the reassignment lemma \ref{reassignment}. For any point $\<\mu,a\>\in\ext{\varphi}$ we find another model $\nu$ and an isomorphism $\alpha:\mu\cong\nu$ so that $\nu(k)=\alpha(a)$. Then equivariance forces us to set
$$\begin{array}{rcl}
\tilde{e}\<\mu,a\>&=& \rho_{\alpha^{-1}}\circ\tilde{e}\circ\rho_{\alpha}\<\mu,a\>\\
&=&\rho_{\alpha^{-1}}\circ\tilde{e}\ \<\nu,\alpha(a)\>\\
&=&\rho_{\alpha^{-1}}\circ\tilde{e}\circ\ext{k\models\varphi}(\mu)\\
&=&\rho_{\alpha^{-1}}\circ e(\nu).
\end{array}$$
\end{proof}

\begin{lemma}
If $S\subseteq\ext{A}$ is an equivariant subsheaf, then $S$ is a union of definable subsheaves.
\end{lemma}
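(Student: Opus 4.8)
The plan is to produce, for each point $\langle\mu,a\rangle\in S$, a genuine (parameter-free) coherent formula $\theta(x)$ with $\langle\mu,a\rangle\in\ext{\theta}\subseteq S$; then $S=\bigcup\ext{\theta}$ exhibits $S$ as a union of definable subsheaves. Since $S$ is a subsheaf it is an open subspace of the \'etale space $\ext{A}$, so every point $\langle\mu,a\rangle\in S$ has a basic open neighborhood of the form $W_{\psi(x,l)}\subseteq S$ (see Definition \ref{extA} and the discussion following it), where $\psi(x,y)$ is a formula and $l^y$ a tuple of parameters. Unwinding the definition, $\langle\mu,a\rangle\in W_{\psi(x,l)}$ means $M_\mu\models\psi(a,\mu^y(l))$.

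First I would set $\theta(x):=\exists y.\,\psi(x,y)$, which is again coherent and written in the context $x$ alone, so that $\ext{\theta}$ is a definable subsheaf. That $\langle\mu,a\rangle\in\ext{\theta}$ is immediate, since $\mu^y(l)$ witnesses the existential. The substance of the argument is the inclusion $\ext{\theta}\subseteq S$, and this is where equivariance is essential: passing to $\theta$ discards the parameter $l$ and so a priori enlarges the local picture, and one must check that the enlargement still lands in $S$.

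For that inclusion I would take an arbitrary $\langle\nu,b\rangle\in\ext{\theta}$, so $M_\nu\models\psi(b,c)$ for some $c\in|M_\nu|^y$. Using the reassignment lemma \ref{reassignment} I would choose a new labelling $\nu'$ of the same underlying model $M_{\nu'}=M_\nu$ with $\nu'^y(l)=c$; then $\langle\nu',b\rangle\in W_{\psi(x,l)}\subseteq S$. Finally I would transport back across the identity isomorphism $\mathrm{id}_{M_\nu}:\nu'\iso\nu$: since $S$ is equivariant and the action of this isomorphism on fibers is the identity on $|M_\nu|^x$, membership $b\in S_{\nu'}$ yields $b\in S_\nu$, i.e.\ $\langle\nu,b\rangle\in S$. (This is exactly the mechanism behind Lemma \ref{equiv_ext}, specialized to the canonical sections.)

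The main obstacle is precisely this reconciliation between the parameter-dependent local description of $S$ and the parameter-free formulas defining $\ext{\theta}$: quantifying out the extra parameters is harmless for provability but must be justified set-theoretically, and the reassignment lemma together with the identity-isomorphism case of equivariance is what supplies that justification. Everything else — coherence of $\theta$, openness of $S$ in $\ext{A}$, and the final assembly $S=\bigcup\ext{\theta}$ — is routine.
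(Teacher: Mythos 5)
Your proof is correct and follows essentially the same route as the paper's: cover $S$ by basic opens $W_{\psi(x,l)}$, pass to the existential $\exists y.\psi$, and use the reassignment lemma together with equivariance along the (identity-underlying-map) isomorphism of relabellings to show $\ext{\exists y.\psi}\subseteq S$. The only cosmetic difference is that you make explicit that the transporting isomorphism acts as the identity on fibers, which the paper leaves implicit in its use of $\alpha$.
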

\begin{proof}
First we show that when a basic open set $W_{\varphi(x,k)}$ is contained in $S$ then so is the definable sheaf $\ext{\exists y.\varphi}$. Fix a model $\mu$ and a pair $\<a,b\>\in|M_{\mu}|^{\<x,y\>}$ such that $\mu\models\varphi(a,b)$. We need to see that $\<\mu,a\>$ belongs to $S$.

By the reassignment lemma we can find an isomorphism $\alpha:\mu\iso\nu$ such that $\alpha(b)=\nu(k)$. Then $\nu\models\varphi(\alpha(a),k)$, so that $\<\nu,\alpha(a)\>\in W_{\varphi(x,k)}\subseteq S$. Because $S$ is equivariant along $\alpha$, this implies that $\<\mu,a\>\in S$ as well.

For each $\<\mu,a\>\in S$ choose a basic open neighborhood $W[\varphi_a(x,k_a)]\subseteq S$. By the foregoing, $\ext{\exists y_a.\varphi_a}$ is contained in $S$ as well, so we have
$$S=\displaystyle \bigcup_{\<\mu,a\>\in S} \ext{\exists y_a.\varphi_a}.$$
\end{proof}

\begin{thm}\label{coh_def}
An equivariant subsheaf $S\subseteq \ext{A}$ is definable if and only if it is compact.
\end{thm}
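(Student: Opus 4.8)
The plan is to read the theorem off from the two results immediately preceding it, so that almost no new work is required. The forward direction is already contained in the Proposition above, which establishes that every definable subsheaf $\ext{\varphi}\leq\ext{A}$ is both equivariant and compact. Since definability is precisely the standing hypothesis in this direction, compactness follows at once and there is nothing further to prove.

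For the converse, I would suppose $S\subseteq\ext{A}$ is equivariant and compact. The preceding lemma expresses any equivariant subsheaf as a union of definable subsheaves, $S=\bigcup_{i\in I}\ext{\psi_i}$, where each $\psi_i$ is a coherent formula in the fixed context $x:A$ (concretely, each piece arises as $\ext{\exists y_a.\varphi_a}$ for a basic open neighbourhood $W[\varphi_a(x,k_a)]\subseteq S$). By the Proposition, each $\ext{\psi_i}$ is itself an equivariant subsheaf, so this display is a cover of $S$ by equivariant subsheaves. Compactness then furnishes a finite subcover $S=\ext{\psi_{i_1}}\cup\cdots\cup\ext{\psi_{i_n}}$.

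It remains to recognize this finite union as a single definable sheaf. Since all the $\psi_{i_j}$ share the common context $x:A$, their disjunction $\psi_{i_1}\vee\cdots\vee\psi_{i_n}$ is again a coherent formula, and the interpretation of a join is the union of the interpretations (as recorded in the Join rule for coherent logic). Hence $S=\ext{\psi_{i_1}\vee\cdots\vee\psi_{i_n}}$, which is definable, completing the argument.

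I do not expect any serious obstacle here, since the substantive work has already been carried out in establishing the compactness of definable sheaves and in the decomposition lemma. The only points that require a moment's care are, first, that the covering family $\{\ext{\psi_i}\}$ genuinely consists of \emph{equivariant} subsheaves, so that the compactness hypothesis applies to it; and second, that a finite disjunction of coherent formulas written in a fixed context is still coherent, so that the finite subcover is actually definable rather than merely a finite union of definable pieces.
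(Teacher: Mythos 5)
Your converse direction (compact $\Rightarrow$ definable) is exactly the paper's argument: decompose $S$ via the preceding lemma, extract a finite subcover by compactness, and observe that the finite union is the extension of a finite disjunction. The forward direction, however, has a gap. You dismiss it by citing the Proposition, but the notion of compactness in play is compactness for covers by arbitrary \emph{equivariant} subsheaves, whereas the Proposition's proof only tests $\ext{\varphi}$ against covers by \emph{definable} subsheaves $\ext{\psi_i}$. A general equivariant cover $S=\bigcup_{i\in I} T_i$ need not consist of definable pieces, so ``there is nothing further to prove'' is not quite right: you still owe the reduction from equivariant covers to definable ones, and the paper's own proof of this direction is devoted precisely to that step.

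The repair uses the same decomposition lemma you already invoke in the converse, now applied to each member of the cover: write $T_i=\bigcup_{j\in J_i}\ext{\psi_{ij}}$, so that $S=\ext{\varphi}=\bigcup_{i,j}\ext{\psi_{ij}}$ is a cover by definable subsheaves. At this point one can either appeal to the definable-cover compactness established in the Proposition's proof, or argue as the paper does: by completeness $\bTT\tri \{\psi_{ij}(x)\}_{i,j}\vdash_{x:A}\varphi(x)$, and logical compactness yields finitely many $\psi_{ij}$ whose extensions already cover $S$; these finitely many definable pieces lie in finitely many of the $T_i$, which therefore form the required finite subcover. With this paragraph supplied, your proof coincides with the paper's.
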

\begin{proof}
We have just seen that $S=\bigcup_i\ext{\varphi_i}$ for some set of formulas $\{\varphi_i(x)\}_{i\in I}$. If $S$ is compact, then we can reduce this to a finite subcover $$S=\ext{\varphi_{i_1}}\cup\ldots\cup\ext{\varphi_{i_n}},$$ in which case $S$ is definable by the finite disjunction:
$$S=\bigext{A}$$
$$S=\bigext{\bigvee_{j=1,\ldots,n} \varphi_{i_j}}.$$

Conversely, suppose that $S=\ext{\varphi}$ is definable and that $S=\bigcup_{i\in I} T_i$ for some equivariant subsheaves $T_i$. By the previous lemma, we may express each of these as a union of definable pieces: $T_i=\bigcup_{j\in J_i} \ext{\psi_{ij}}$. Then $S=\bigcup_{i,j}\ext{\psi_{ij}}$ and, by completeness, it follows that
$$\bTT\tri \{\psi_{ij}(x)\}_{i\in I, j\in J_i}\vdash_{x:A} \varphi(x).$$

Logical compactness ensures that only finitely many of these formulas is required to prove $\varphi$. Consequently, finitely many of the definable sheaves $\ext{\psi_{ij}}$ cover $S$. These are contained in finitely many of the subsheaves $T_i$, which must also cover $S$, so $S$ is compact for equivariant covers.
\end{proof}

\begin{cor}[cf. Butz \& Moerdijk \cite{butz_thesis} \cite{BM_article}]\label{ext_full}
Any equivariant map between definable sheaves $s:\ext{A}\to\ext{B}$ is definable by a formula $\sigma(x,y)$, in the sense that for any $\<\mu,a\>\in\ext{A}$, $s(a)=b$ iff $\mu\models\sigma(a,b)$.
\end{cor}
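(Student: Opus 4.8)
The plan is to realize $s$ by its graph and then invoke the definability criterion of Theorem \ref{coh_def}. Write $x:A$ and $y:B$ for the two contexts and form the product sheaf $\ext{A}\overtimes{\MM_0}\ext{B}$, which by the construction of products (see page \pageref{defin_products}) is the definable sheaf associated to the joint context $\<x,y\>$. Inside it sits the graph
$$\Gamma_s=\big\{\<\mu,a,b\>\ \big|\ s\<\mu,a\>=\<\mu,b\>\big\}.$$
Since $s$ is a morphism of \'etale spaces over $\MM_0$, the map $\<\id,s\>:\ext{A}\to\ext{A}\overtimes{\MM_0}\ext{B}$ is a local homeomorphism onto an open set, so $\Gamma_s$ is a subsheaf.

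First I would check that $\Gamma_s$ is equivariant. If $\<\mu,a,b\>\in\Gamma_s$ and $\alpha:\mu\to\nu$ is an isomorphism, then equivariance of $s$ gives $s\<\nu,\alpha(a)\>=\<\nu,\alpha(b)\>$, so $\<\nu,\alpha(a),\alpha(b)\>\in\Gamma_s$; thus the product action $\rho_x\times\rho_y$ carries $\Gamma_s$ into itself, making it an equivariant subsheaf of $\ext{A}\overtimes{\MM_0}\ext{B}$.

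The main step is compactness, and here I would exploit that $s$ is a \emph{function}: the first projection $\pi:\Gamma_s\to\ext{A}$ is an isomorphism of equivariant sheaves, with inverse $\<\id,s\>$. Since $\ext{A}$ is itself definable (it is $\ext{x=x}$), it is compact by the proposition following Theorem \ref{stab_thm}. Compactness for equivariant covers transports across the equivariant isomorphism $\pi$: any cover of $\Gamma_s$ by equivariant subsheaves pushes forward to an equivariant cover of $\ext{A}$, which admits a finite subcover, and pulling back along $\pi$ returns a finite subcover of $\Gamma_s$. Hence $\Gamma_s$ is a compact equivariant subsheaf, and this is precisely the point where the functionality of $s$ is used — without it there would be no reason for the graph to be compact.

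Now Theorem \ref{coh_def} applies directly, giving $\Gamma_s=\ext{\sigma}$ for some coherent formula $\sigma(x,y)$; unwinding the definition of $\ext{\sigma}$ yields exactly the asserted equivalence $s\<\mu,a\>=\<\mu,b\>$ iff $\mu\models\sigma(a,b)$. Finally, because $s$ is total and single-valued on every stalk, the generic model $M_0$ satisfies the sequents expressing that $\sigma$ is functional; by the generic-model proposition these sequents are provable in $\bTT$, so $\sigma$ is a provably functional relation, i.e.\ a term in the sense of the dictionary on page \pageref{dictionary}. Everything outside the compactness argument is a routine transcription between sheaf-theoretic and syntactic data, and the isomorphism $\Gamma_s\cong\ext{A}$ is what makes compactness automatic rather than an added hypothesis.
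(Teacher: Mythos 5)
Your proposal is correct and follows the same route as the paper's own proof: realize $s$ by its graph, observe that the graph is an equivariant subsheaf of $\ext{A}\overtimes{\MM_0}\ext{B}$ isomorphic (via the first projection) to the compact sheaf $\ext{A}$, and then invoke Theorem \ref{coh_def} to conclude definability. The only difference is that you spell out the transport of compactness across the isomorphism and add the (correct, though not demanded by the statement) observation that genericity of $M_0$ makes $\sigma$ provably functional.
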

\begin{proof}
Because $s$ is equivariant, the graph $\Gamma_s=\{\<\mu,a,b\>|s(a)=b\}$ defines an equivariant subsheaf of $\ext{A}\overtimes{\MM_0}\ext{B}$. Because it is a graph, this subsheaf is isomorphic to $\ext{A}$ which is compact.

It then follows that the graph is definable: $\Gamma_s=\ext{\sigma}$, so
$$s(a)=b\Iff \<\mu,a,b\>\in\ext{\sigma}\Iff \mu\models\sigma(a,b).$$
\end{proof}

In the next chapter we will define the syntactic pretopos $\EE_{\bTT}$ associated with a coherent theory $\bTT$. Roughly speaking, the object of $\EE_{\bTT}$ are formulas in context (supplemented with formal coproducts and quotients). The arrows are provably functional relations. A $\bTT$-model is equivalent to a pretopos functor $\EE_{\bTT}\to\Sets$, taking each formula $\varphi$ to the definable set $\varphi^M$. We close this section with a few topos-theoretic results connecting $\EE_{\bTT}$ and $\MM_{\bTT}$.

For any pretopos $\EE$, the family of finite, jointly epimorphic families defines a Grothendieck topology called the \emph{coherent topology} $\JJ_c$. Whenever we talk about sheaves on a pretopos we will mean the coherent sheaves, so we write $\Sh(\EE)$ rather than $\Sh(\EE,\JJ_c)$.

It is well-known that $\Sh(\EE_{\bTT})$ is the classifying topos for $\bTT$, and for any other topos $\SS$ a geometric morphism $\epsilon:\SS\to\Sh(\EE_{\bTT})$ is essentially determined by a pretopos functor $\epsilon_0:\EE_{\bTT}\to\SS$. From this, one defines the inverse image $\epsilon^*:\Sh(\EE_{\bTT})\to\SS$ by left Kan extension; each $\EE_{\bTT}$-sheaf is a colimit of representables $F\cong\colim_j yA_j$ and $\epsilon^*(F)\cong\colim_j \epsilon_0(A_j)$.

\begin{thm}[Butz-Moerdijk \cite{butz_thesis} \cite{BM_article}]\label{eq_top_equiv}
$\EqSh(\MM_{\bTT})$ is the classifying topos for $\bTT$-models. Specifically, the classifying geometric morphism of the generic $\bTT$-model in $\EqSh(\MM_{\bTT})$ is an equivalence of categories.
\end{thm}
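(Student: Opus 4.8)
The plan is to produce the classifying geometric morphism determined by the generic model and then show it is an equivalence by means of Grothendieck's Comparison Lemma. Since $\Sh(\EE_{\bTT})$ is (as the excerpt recalls) the classifying topos for $\bTT$, the equivariant model $M_0\in\EqSh(\MM_{\bTT})$ classifies a geometric morphism $\epsilon:\EqSh(\MM_{\bTT})\to\Sh(\EE_{\bTT})$ whose inverse image sends each representable $y\varphi$ to the interpretation of $\varphi$ in $M_0$, namely the definable sheaf $\ext{\varphi}$. It therefore suffices to compare the full subcategory $\DD\subseteq\EqSh(\MM_{\bTT})$ of definable sheaves with the site $(\EE_{\bTT},\JJ_c)$, and to check that the comparison equivalence is the one induced by $M_0$.

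First I would verify that $\varphi\mapsto\ext{\varphi}$ gives an equivalence $\EE_{\bTT}\simeq\DD$. Fullness is Corollary \ref{ext_full}, extended to subsheaves: an equivariant map $\ext{\varphi}\to\ext{\psi}$ has a graph which is an equivariant subsheaf of the definable sheaf $\ext{A}\overtimes{\MM_0}\ext{B}$, is isomorphic to the compact sheaf $\ext{\varphi}$, and hence is itself definable by Theorem \ref{coh_def}; so the map arises from a provably functional relation, i.e.\ a morphism of $\EE_{\bTT}$. Faithfulness follows from the genericity of $M_0$: two functional relations inducing the same sheaf map are satisfied-equivalent in $M_0$, hence provably equal in $\bTT$ and equal as arrows of $\EE_{\bTT}$. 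Coproducts and quotients are absorbed by the liberal reading of ``formula,'' since $\ext{-}$ preserves the pretopos operations.

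Next I would establish that $\DD$ is \emph{dense} in $\EqSh(\MM_{\bTT})$, so every equivariant sheaf is covered by definable sheaves. This is where the reassignment machinery pays off. Given an equivariant sheaf $E$ and an element $e\in E_\mu$, the fact that $E$ is \'etale provides a partial section over a basic open $V_{\varphi(k)}$, and Lemma \ref{equiv_ext} extends it uniquely to an equivariant map $\tilde e:\ext{\varphi}\to E$ whose image contains $e$, since $\tilde e\circ\ext{k\models\varphi}(\mu)=e$. Ranging over all elements of $E$ yields a jointly epimorphic family of definable sheaves over $E$, which is density. Moreover, Theorem \ref{coh_def} identifies the topology induced on $\DD$ with $\JJ_c$: any jointly epimorphic family of definable sheaves admits, by compactness, a finite jointly epimorphic subfamily, and such finite families correspond exactly to provable coherent covering sequents.

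With the equivalence $\EE_{\bTT}\simeq\DD$, the density of $\DD$, and the matching of topologies in hand, the Comparison Lemma yields $\EqSh(\MM_{\bTT})\simeq\Sh(\DD,J_{\mathrm{ind}})\simeq\Sh(\EE_{\bTT},\JJ_c)=\Sh(\EE_{\bTT})$, and one checks that on representables this equivalence agrees with $\epsilon^*$, so that $\epsilon$ is the asserted equivalence. I expect the main obstacle to be the precise verification of the Comparison Lemma hypotheses rather than any single hard computation: one must confirm that the family of extensions $\tilde e$ is genuinely jointly epimorphic as equivariant sheaf maps (surjective on every stalk), and, above all, that the induced topology is \emph{exactly} the coherent topology---neither finer nor coarser---which is where compactness from Theorem \ref{coh_def} is essential to trim arbitrary jointly epimorphic families down to the finite coherent covers.
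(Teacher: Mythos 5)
Your proposal is correct and follows essentially the same route as the paper: both invoke the Grothendieck comparison lemma for the subcategory of definable sheaves, using Corollary \ref{ext_full} for fullness, completeness of $\bTT$ for $\kappa$-small models for faithfulness, and the unique equivariant extensions of partial sections from Lemma \ref{equiv_ext} to show the definable sheaves generate. Your extra attention to verifying that the induced topology on the definable sheaves is exactly the coherent topology (via compactness from Theorem \ref{coh_def}) is a point the paper's proof passes over silently, but it is a refinement of the same argument rather than a different one.
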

\begin{proof}
The following argument comes from Awodey \& Forssell, \cite{forssell_thesis, FOLD}.

The definable sheaf construction induces a pretopos functor $\ext{-}:\EE_{\bTT}\to\EqSh(\MM)$. Finite limits are preserved a the level of fibers because these are definable sets (where limits are preserved) and so so $\big|\ext{\lim A_i}\big|\cong\big|\lim \ext{A_i}\big|$. Then observe (cf. proposition \ref{Asheaf}) that every open set in $\ext{\lim A_i}$ is a union of boxes from the factors. Similarly, coproducts and quotients are preserved on the stalks and, since colimits in equivariant sheaves are computed stalk-wise, they are also preserved at the level of sheaves.

By the Grothendieck comparison lemma, the induced geometric morphism $\EqSh(\MM)\to\Sh(\EE_{\bTT})$ is an equivalence just in case $\ext{-}$ is full, faithful and generating, in the sense that for any equivariant sheaves and maps $h,h':F\rightrightarrows G$ there is a definable sheaf $\ext{\varphi}$ and an equivariant map $e:\ext{\varphi}\to F$ such that $h\circ e\not=h'\circ e$. Corollary \ref{ext_full} shows that $\ext{-}$ is full.

Completeness ensures that it is also faithful. Suppose that $\sigma\not=\tau:A\rightrightarrows B$. Then there is a model $M$ (which we may assume is $\kappa$-small) and an element $a\in A^M$ such that $\sigma^M(a)\not=\tau^M(a)$. Choose any labelled model $\mu$ with $M_{\mu}=M$. Then $\ext{\sigma}(\<\mu,a\>)=\sigma^\mu(a)\not=\tau^\mu(a)=\ext{\tau}(\<\mu,a\>)$, so $\ext{\sigma}\not=\ext{\tau}$.

Now suppose that $\<F,\rho\>$ is any equivariant sheaf on $\MM$. For any basic open set there is a canonical section $\hat{k}:V_{\varphi(k)}\to\ext{\varphi}$, and every partial section $s:V_{\varphi(k)}\to F$ has a unique equivariant extension to $\ext{\varphi}$ such that $\hat{s}\circ\hat{k}=s$ (lemma \ref{equiv_ext}). Any $h,h'$ can be distinguished by some partial section $s$ (as these cover $F$), so these are also distinguished by $\hat{s}$. Therefore $\ext{-}$ generates $\EqSh(\MM)$.
\end{proof}

\begin{prop}\label{essential_gm}
The geometric morphism $\epsilon:\xymatrix{\Sh(\MM_0)\to\EqSh(\MM)\simeq\Sh(\EE)}$ is both surjective and essential (and therefore open).
\end{prop}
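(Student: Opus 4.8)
The plan is to recognise $\epsilon^*$ as the forgetful functor $U\colon\EqSh(\MM)\to\Sh(\MM_0)$ which discards the equivariant action. Under the equivalence $\EqSh(\MM)\simeq\Sh(\EE)$ of Theorem~\ref{eq_top_equiv}, $\epsilon$ is the geometric morphism classifying the generic model $M_0\in\bTT\Mod(\Sh(\MM_0))$ constructed in the previous section, so its inverse image sends each definable sheaf $\ext{A}$ to its underlying sheaf; since $\epsilon^*$ and $U$ both preserve colimits, agree on the generating definable sheaves, and every equivariant sheaf is a colimit of these (Theorem~\ref{eq_top_equiv}), they agree everywhere. With this identification the proposition reduces to two statements about $U$: that it is faithful (surjectivity) and that it admits a left adjoint (essentiality), openness then being the formal consequence noted in the statement.

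Surjectivity I would obtain immediately: a morphism of equivariant sheaves is exactly a morphism of underlying sheaves commuting with the actions, so two that become equal after applying $U$ were already equal; hence $\epsilon^*=U$ is faithful and $\epsilon$ is a surjection. (This is the topos-theoretic shadow of the fact, established in the previous section, that $M_0$ is a \emph{generic}, hence conservative, model.) Openness I would read off the groupoid directly: the reassignment lemma~\ref{reassignment} shows that the domain and codomain maps of $\MM$ are open, so $\MM$ is an open groupoid and the canonical morphism $\Sh(\MM_0)\to\EqSh(\MM)$ is an open surjection by the theorem of Butz \& Moerdijk~\cite{BM_article}; this is consistent with, and also a formal consequence of, the essentiality proved below.

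The substance of the proposition is essentiality, i.e.\ the construction of a left adjoint $\epsilon_!\dashv\epsilon^*$. The natural candidate is the \emph{free equivariant sheaf}: pull a sheaf $F$ back along the domain map to $\MM*F=\dom^*F$ over $\MM_1$ and push the result forward along $\cod$. On an open (subterminal) $V\subseteq\MM_0$ this is literally the image $\exists_{\cod}(\dom^{-1}V)$, which is again open because $\cod$ is an open map; the translation action of $\MM$ makes it equivariant, and the reassignment lemma~\ref{reassignment} matches its germs with those of $F$, which gives the universal property on opens. One then extends $\epsilon_!$ to all of $\Sh(\MM_0)$ by colimits, using that the localic topos $\Sh(\MM_0)$ is generated by its opens. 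I expect the main obstacle to be precisely this extension: since $\cod$ is open but \emph{not} étale, the naive formula $\epsilon_!F=\cod_!\,\dom^*F$ is unavailable for general $F$ (the espace étalé $\dom^*F$ has no direct image along the non-étale $\cod$), so what must genuinely be verified is that $U=\epsilon^*$ preserves all small limits---equivalently, that the construction defined on opens descends coherently to a left adjoint on arbitrary equivariant sheaves. This is the step that uses the fine labelled structure of $\MM$ rather than mere openness of an abstract groupoid: for a general open groupoid (e.g.\ a non-discrete topological group, whose infinite products of continuous actions are not computed underlying) the analogous morphism fails to be essential, so the argument must exploit that $\MM_0$ is a rich spatial object carrying the parameter labellings of Definition~\ref{M0points}. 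Granting this, $\epsilon$ is essential, and together with the preceding paragraph all three properties follow.
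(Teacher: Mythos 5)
Your treatment of surjectivity is the paper's own argument (faithfulness of the forgetful functor $\epsilon^*$), and your remark that openness can be seen directly from the openness of the groupoid is fine. The problem is essentiality, which you correctly identify as the substance of the proposition but do not actually prove: the paragraph ends with ``granting this,'' deferring exactly the step that needs an argument. Worse, the candidate you propose is wrong already on the generators. For a basic open $V_{\varphi(k)}$ the formula $\exists_{\cod}(\dom^{-1}V_{\varphi(k)})$ produces the $\MM$-saturation of $V_{\varphi(k)}$ --- an invariant open, hence a \emph{subterminal} equivariant sheaf (roughly $V_{\exists x.\varphi}$). But the left adjoint must satisfy $\Hom_{\EqSh(\MM)}(\epsilon_!V_{\varphi(k)},E)\cong\Hom_{\Sh(\MM_0)}(V_{\varphi(k)},\epsilon^*E)$, and the right-hand side is the set of \emph{all} continuous partial sections of $E$ over $V_{\varphi(k)}$, not just the equivariant sections over the saturation; e.g.\ the canonical section $\hat{k}:V_{k^x}\to\ext{A}$ is a section over a basic open that is not equivariant over the saturation. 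So the universal property fails for your candidate even on opens, and no colimit extension can repair a functor that is wrong on the generators.

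What the paper uses instead is lemma \ref{equiv_ext}: every partial section $e:V_{\varphi(k)}\to E$ of an equivariant sheaf $E$ extends \emph{uniquely} to an equivariant map $\tilde{e}:\ext{\varphi}\to E$ along the canonical section $V_{\varphi(k)}\to\ext{\varphi}$. This gives a natural isomorphism $\Hom_{\Sh(\MM_0)}(V_{\varphi(k)},\epsilon^*E)\cong\Hom_{\EqSh(\MM)}(\ext{\varphi},E)$, i.e.\ $\epsilon_!V_{\varphi(k)}=y\varphi$ --- the definable sheaf $\ext{\varphi}$, which is not subterminal. One then writes an arbitrary $F\in\Sh(\MM_0)$ as $\colim_i V_{\varphi_i(k_i)}$, sets $\epsilon_!F=\colim_i y\varphi_i$, and the adjunction follows by pulling the colimit out of the first variable of $\Hom$. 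This is where ``the fine labelled structure of $\MM$'' enters, via the reassignment lemma inside lemma \ref{equiv_ext} --- but as a representability statement for each generator, not as a pushforward $\cod_!\dom^*$ of espaces \'etal\'es. You should replace your third paragraph with this argument (or an equivalent left Kan extension along $V_{\varphi(k)}\mapsto y\varphi$); as written, the proof of the only nontrivial claim is missing.
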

\begin{proof}
The inverse image of $\epsilon$ is given by the forgetful functor $\epsilon^*:\Sh(\EE)\simeq\EqSh(\MM)\to\Sh(\MM_0)$. We extend the semantic bracket notation by writing $\epsilon^*E=\ext{E}$ for any $E\in\Sh(\EE)$. By definition, $\epsilon$ is surjective just in case $\epsilon^*$ is faithful. Since an equivariant map $E\to E'$ is just a sheaf map which preserves the equivariant action, this is certainly true.

According to the adjunction, the direct image $\epsilon_*$ is is canonically determined by maps out of the definable sheaves. Given a sheaf $F\in\Sh(\MM_0)$,
$$\begin{array}{rcl}
(\epsilon_*F)(A)&\cong & \Hom_{\Sh(\EE)}(yA,\epsilon_*F)\\
&\cong& \Hom_{\Sh(\MM_0)}(\ext{A},F).\\
\end{array}$$

The essential left adjoint $\epsilon_!\dashv\epsilon^*$ is determined by colimits (which it must preserve) together with the requirement $\epsilon_! V_{\varphi(k)}=y\varphi$. Since any $F\in\Sh(\MM_0)$ is a colimit of these basic opens, $F\cong\colim_i V_{\varphi_i(k_i)}$, we are forced to define $\epsilon_!$ by
$$\epsilon_!F\cong\colim_i y\varphi_i.$$

The adjunction then follows from the universal property of the canonical sections (referenced in the previous theorem). These induce a sequence of canonical isomorphisms
$$\begin{array}{rcl}
\Hom_{\Sh(\MM_0)}(F,\ext{E})&\cong& \Hom_{\Sh(\MM_0)}(\colim_i V_{\varphi_i(x_i)},\ext{E})\\
&\cong & \lim_i \Hom_{\Sh(\MM_0)}(V_{\varphi_i(x_i)},\ext{E})\\
&\cong & \lim_i \Hom_{\EqSh(\MM)}(\ext{\varphi_i},\ext{E})\\
&\cong & \lim_i \Hom_{\Sh(\EE)}(y\varphi_i,E)\\
&\cong & \Hom_{\Sh(\EE)}(\colim_i y\varphi_i,E)\\
&\cong & \Hom_{\Sh(\EE)}(\epsilon_!F,E).
\end{array}$$

\end{proof}

\section{Classical first-order logic}\label{FOL}

The foregoing chapter has been concerned with the coherent fragment of (intuitionistic) first-order logic. In this section we discuss the relevant generalization to full first-order (classical) logic. In order to extend the coherent definability theorem we will replace the given language $\LL$ by a theory $\overline{\bTT}$, written in an extended language $\overline{\LL}$, such that $\MM(\LL)^{fo}\cong\MM(\bTT)^{coh}$.

Supplementing coherent logic by negation yields a complete set of quantifiers, so we may assume that all first-order formulas are written using $\{\top,\wedge,\exists,\neg\}$. Apart from this, the definition of the first-order spectrum is completely analagous to that of the coherent spectrum.

\begin{defn}[First-order spectra]
Fix a language $\LL$ and a regular cardinal $\kappa\geq |\LL|+\aleph_0$. The first-order spectrum of the language $\MM^{fo}=\MM(\LL)^{fo}$ is defined by
\begin{itemize}
\item The underlying sets of $\MM^{fo}$ are the same as those of $\MM^{coh}$ (cf. definitions \ref{M0points} \& \ref{M1points}):
$$|\MM_0^{fo}|=|\MM_0^{coh}| \hspace{1cm}|\MM_1^{fo}|=|\MM_1^{coh}|$$
\item The topology of $\MM^{fo}$ is defined in the same fashion as that of $\MM^{coh}$ (cf. definitions \ref{M0opens} \& \ref{M1points}), except that the basic open sets $V_{\varphi(k)}$ range over all \emph{first-order} formulas $\varphi(x)$.
\item The spectrum of a first-order theory $\bTT$ is the full subgroupoid $\MM(\bTT)^{fo}\subseteq\MM(\LL)^{fo}$ consisting of labelled models $\mu$ which satisfy the axioms of $\bTT$.
\end{itemize}\end{defn}

The first-order and coherent spectra are nearly identical; they share exactly the same groupoid structure of labelled models and isomorphisms. However, the first-order topology is finer and, in particular, more separated. Recall that the Stone space for a Boolean propositional theory has a basis of clopen sets. Points of the Stone space are valuations of the algebra and these are completely disconnected in the topology. Here we have the related fact

\begin{lemma}
The connected components of $\MM_0(\LL)^{fo}$ correspond to complete theories in $\LL$.
\end{lemma}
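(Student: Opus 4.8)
The plan is to exhibit a bijection between $\pi_0(\MM_0(\LL)^{fo})$ and the complete $\LL$-theories, realized by the assignment $[\mu]\mapsto\Th(M_\mu)$. I would prove three things: that points whose underlying models satisfy different complete theories lie in different components (well-definedness); that the fiber over a fixed complete theory $T$ is connected (injectivity); and surjectivity, which is immediate, since every consistent complete $T$ has a $\kappa$-small model by Löwenheim--Skolem, and any model of size $\leq\kappa$ admits an infinite-to-one labelling by a subset of $\kappa$. Before either half I would record the first-order analogue of Proposition \ref{spectral_closure}: because the first-order topology adds the opens $V_{\neg\varphi(k)}$, the argument of \ref{spectral_closure} upgrades verbatim, so $\mu\in\overline{\nu}$ holds iff $K^A_\mu\subseteq K^A_\nu$ for every $A$ and the induced map $\mu(k)\mapsto\nu(k)$ is an \emph{elementary} embedding $M_\mu\preceq M_\nu$ (preserving each $\varphi$ and each $\neg\varphi$ forces preservation in both directions). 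I will write $\mu\leq\nu$ for this specialization relation and use repeatedly that whenever $\mu\leq\nu$ the subspace $\{\mu,\nu\}$ is connected, so any two points joined by a finite zig-zag of $\leq$-comparabilities lie in one component.

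The separation half is easy. Given distinct complete theories $T\neq T'$, pick a sentence $\sigma\in T$ with $\neg\sigma\in T'$. In the empty context $V_\sigma$ is open, and since classically every model decides $\sigma$ its complement is exactly $V_{\neg\sigma}$, also open; hence $V_\sigma$ is clopen. Every $\mu$ with $M_\mu\models T$ lies in $V_\sigma$ and every $\nu$ with $M_\nu\models T'$ lies in $V_{\neg\sigma}$, so they cannot share a component. This shows $[\mu]\mapsto\Th(M_\mu)$ is constant on components and therefore well defined.

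The substantive half is connectedness of the fiber $\MM_0(T)=\bigcap_{\sigma\in T}V_\sigma$. Given $\mu,\nu\in\MM_0(T)$, the models $M_\mu$ and $M_\nu$ are elementarily equivalent, so by elementary amalgamation they embed elementarily into a common model $M_\mu\xrightarrow{f}N\xleftarrow{g}M_\nu$; passing to an elementary substructure containing $f[M_\mu]\cup g[M_\nu]$ I may take $N$ to be $\kappa$-small. The aim is then to produce a common $\leq$-upper bound $\lambda$: a labelling of $N$ whose restriction to $K_\mu$ is $f\circ v_\mu$ and whose restriction to $K_\nu$ is $g\circ v_\nu$, extended by fresh parameters so as to be infinite-to-one onto all of $N$. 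Such a $\lambda$ gives induced elementary maps $f$ and $g$, hence $\mu\leq\lambda\geq\nu$, which is the desired zig-zag.

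The main obstacle is precisely the parameter bookkeeping needed to build $\lambda$: the prescriptions $f\circ v_\mu$ and $g\circ v_\nu$ are jointly realizable only if $K_\mu$ and $K_\nu$ are disjoint and room remains in $\kappa$ for the fresh labels. I would resolve this with the thinning form of the Reassignment Lemma \ref{reassignment}. First replace $\mu$ by a sub-labelling $\mu^-\leq\mu$ obtained by deleting a co-infinite infinite subset from each (infinite) fiber of $v_\mu$, so that $v_{\mu^-}$ is still infinite-to-one while $|\kappa\setminus K_{\mu^-}|=\kappa$; do the same for $\nu$, obtaining $\nu^-\leq\nu$. Next relabel $\nu^-$ onto a parameter set disjoint from $K_{\mu^-}$, calling the result $\nu'$; since $\nu'$ and $\nu^-$ label the same model with disjoint supports, they agree vacuously on their overlap and so admit a common upper bound (the join labelling on the union of supports), whence $\nu'\sim\nu^-$ in $\pi_0$. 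With $K_{\mu^-}$ and $K_{\nu'}$ now disjoint and $\kappa$-much room available, the three pieces $K_{\mu^-}$, the disjoint copy $K_{\nu'}$, and the fresh labels needed to cover $N$ all fit inside $\kappa$, so $\lambda$ exists. Chaining $\mu\geq\mu^-\leq\lambda\geq\nu'\sim\nu^-\leq\nu$ connects $\mu$ to $\nu$; hence $\MM_0(T)$ is connected, the three properties hold, and $[\mu]\mapsto\Th(M_\mu)$ is a bijection onto the complete $\LL$-theories.
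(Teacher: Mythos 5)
Your proof is correct and follows essentially the same route as the paper's: clopen sets $V_\sigma$ separate the fibers over distinct complete theories, and joint elementary embedding into a common $\kappa$-small model yields a specialization zig-zag connecting any two labelled models of the same complete theory. The only differences are presentational — you are more explicit about the first-order upgrade of Proposition \ref{spectral_closure} to elementary embeddings and about the disjoint-relabelling bookkeeping (thin, relabel, reconnect via a join labelling), and you phrase the final step as chaining connected two-point subspaces rather than chasing a clopen partition, but these are the same argument.
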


\begin{proof}
For any collection of sentences $\Delta$ we let $V^\Delta=\bigcap_{\varphi\in\Delta} V_{\varphi}$. Every model $\mu$ defines a complete theory $\Delta_{\mu}=\{\varphi\ |\ M_{\mu}\models\varphi\}$. If $\Delta\not=\Gamma$ are distinct complete theories then there is some formula with $\varphi\in\Delta$ and $\neg\varphi\in\Gamma$, so $V^\Delta\cap V^\Gamma\subseteq V_\varphi\cap V_{\neg\varphi}=\emptyset$. Thus we have a partition
$$\MM_0(\LL)^{fo}=\coprod_{\Delta\rm{\ compl.}} V^\Delta.$$

It remains to see that each subset $V^\Delta$ is itself connected. This follows from the fact that complete theories satisfy the joint embedding property: if $M_1,M_2\models \Delta$ there is another $\Delta$-model $N$ and a pair of elementary embeddings $M_1\to N$ and $M_2\to N$. This is easy to see using the method of diagrams introduced in the next chapter (cf. proposition \ref{diag_models}). By Lowenheim-Skolem, we may assume $|N|=|M_1|+|M_2|$.

Now suppose that $\mu_1$ and $\mu_2$ are labelled models of $\Delta$. The labellings on $\mu_1$ and $\mu_2$ are infinite so we may restrict to sublabellings $\mu_1'$ and $\mu_2'$ (still infinite) which use disjoint sets of labels. Thus we have $\mu_1'\in\overline{\mu_1}$ and $\mu_2'\in\overline{\mu_2}$. By the previous observation regarding joint embedding, together with the disjointness of labels, we can find another labelled model $\nu$ with $\mu_1',\mu_2'\in\overline{\nu}$. Thus we have a chain
$$\overline{\mu_1} \ni \mu_1' \in \overline{\nu} \ni \mu_2' \in \overline{\mu_2}.$$

Now suppose that $V^\Delta$ has a clopen partition $V^\Delta= U_1+U_2$. If $\mu\in U_1$ then we also have the closure $\overline{\mu}\subseteq U_1$. Given the chain above, this implies that $\mu_1\in U_1$ if and only if $\mu_2\in U_1$. Hence $U_2=\emptyset$, and $V^\Delta$ is connected.

\end{proof}

In order to apply the coherent definability theorem \ref{coh_def} to the first-order case, we give a translation from classical to coherent logic. This is a process called Morleyization and a discussion can be found at \cite{elephant}, D1.5.13.

\begin{lemma}\label{bool_comp}
For each first-order theory $\bTT$ in a language $\LL$ there is a coherent theory $\bTT^*$, written in an extended language $\LL^*$, such that $\MM(\bTT)^{fo}\cong\MM(\bTT^*)^{coh}$.
\end{lemma}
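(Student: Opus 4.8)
The plan is to carry out \emph{Morleyization}: expand $\LL$ by a fresh relation symbol for each first-order formula and impose coherent axioms that force these symbols to have the intended (non-coherent) meaning in every set-model. Concretely, I would set $\LL^* = \LL \cup \{R_\varphi\}$, one relation symbol $R_\varphi(\overline x)$ for each first-order $\LL$-formula $\varphi(\overline x)$, with arity and sorting inherited from the context of $\varphi$. The theory $\bTT^*$ then consists of the following coherent sequents, given by recursion on $\varphi$: for atomic $\varphi$, the pair $R_\varphi \vdash \varphi$ and $\varphi \vdash R_\varphi$; for conjunctions, $R_{\psi\wedge\chi} \dashv\vdash R_\psi \wedge R_\chi$; for existentials, $R_{\exists y.\psi} \dashv\vdash \exists y.\, R_\psi$; together with a coherent translation $R_\varphi \vdash_x R_\psi$ of each axiom $\varphi \vdash_x \psi$ of $\bTT$. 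The delicate clause is negation, which I would axiomatize not by an equivalence (negation is not coherent) but by the pair of coherent sequents
\[
R_\psi \wedge R_{\neg\psi} \vdash \bot, \qquad \top \vdash R_\psi \vee R_{\neg\psi}.
\]
Since there are only $|\LL| + \aleph_0$ first-order $\LL$-formulas, $|\LL^*| + \aleph_0 = |\LL| + \aleph_0$, so both spectra may be built with the same parameter cardinal $\kappa$.

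The key lemma is that in \emph{any} $\Sets$-model $M$ of $\bTT^*$ one has $R_\varphi^M = \varphi^{M\reduct\LL}$, proved by induction on $\varphi$. The base case and the $\wedge$, $\exists$ clauses are immediate from the corresponding axioms, since conjunction, disjunction and existential quantification are interpreted identically in the two fragments. The negation clause is where classical (Boolean) semantics is essential, and I expect this to be the main conceptual point: the two displayed sequents say exactly that $R_{\neg\psi}^M$ is disjoint from $R_\psi^M$ and jointly covers $|M|^x$ with it, and in the Boolean algebra $\PP(|M|^x)$ the complement is the unique such element; hence $R_{\neg\psi}^M = |M|^x \setminus R_\psi^M = (\neg\psi)^{M\reduct\LL}$. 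It is precisely the two-valuedness of $\Sets$-models that lets a coherent theory pin down a non-coherent connective, and the argument would fail for models in a general coherent category.

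From this lemma the correspondence of groupoids is essentially bookkeeping. An $\LL^*$-structure is an $\LL$-structure together with interpretations of the $R_\varphi$; the axioms of $\bTT^*$ force those interpretations to be $\varphi^M$, and the translated axioms then say exactly that $M\reduct\LL \models \bTT$ in the first-order sense. Thus the reduct $M \mapsto M\reduct\LL$ is a bijection from $\bTT^*$-models to first-order $\bTT$-models, with inverse the (forced, hence unique) expansion $N \mapsto N^*$, $R_\varphi^{N^*} := \varphi^N$. Since the added relations are $\LL$-definable, every $\LL$-isomorphism automatically preserves them, so the object and arrow sets of $\MM(\bTT^*)^{coh}$ and $\MM(\bTT)^{fo}$ coincide, as do the labellings (no new sorts were added) and all groupoid structure maps.

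It remains to match the two topologies under this identification, and here the lemma is used once more. For a first-order $\LL$-formula $\varphi$ and parameters $k$, the lemma gives $\mu \models \varphi(k)$ iff $\mu \models R_\varphi(k)$, so $V_{\varphi(k)}$ (a basic open of $\MM^{fo}$) coincides with $V_{R_\varphi(k)}$ (a basic open of $\MM^{coh}$, as $R_\varphi$ is atomic in $\LL^*$). Conversely every coherent $\LL^*$-formula is built from atomic $\LL^*$-formulas, each $\bTT^*$-equivalent to a first-order $\LL$-formula (replacing each $R_\varphi$ by $\varphi$); pushing the coherent connectives through this substitution exhibits every coherent-spectrum basic open as a first-order-spectrum basic open. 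The two pre-bases therefore agree, yielding a homeomorphism of object spaces and, by the same argument applied to the domain/codomain conditions of Definition~\ref{M1points}, of arrow spaces. Hence $\MM(\bTT)^{fo} \cong \MM(\bTT^*)^{coh}$ as topological groupoids.
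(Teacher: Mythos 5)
Your proof is correct and is essentially the paper's argument: both are the standard Morleyization, establishing that every $\LL$-structure (resp.\ labelled model, isomorphism) expands uniquely to a $\bTT^*$-model, that every first-order $\LL$-formula becomes $\bTT^*$-provably coherent, and hence that the identification is a homeomorphism of groupoids. The only cosmetic difference is that the paper adds negation symbols $N_\varphi$ just for coherent formulas, iterating through a hierarchy $\LL_n$, whereas you add a symbol $R_\varphi$ for every first-order formula in one step with recursive defining axioms; the two constructions yield $\bTT^*$-equivalent theories.
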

\begin{proof}
First consider the case of first-order $\LL$-structures (i.e., $\bTT$ is the empty theory over $\LL$). We obtain the extended language as a union $\LL^*=\bigcup \LL_n$ where $\LL_0=\LL$. Let $\rm{Coh}(\LL_n)$ denote the set of coherent formulas in $\LL_n$ and define $\LL_{n+1}=\LL_n\cup\{N_\varphi(x)\ |\ \varphi(x)\in\rm{Coh}(\LL_n)\}$. Similarly, $\bTT^*=\bigcup \bTT_n$ where $\bTT_{n+1}$ is $\bTT_n$ together with the $\LL_{n+1}$-coherent axioms
$$\begin{array}{c}
\vdash \varphi(x)\vee N_\varphi(x)\\
\varphi(x)\wedge N_\varphi(x)\vdash \bot.\\
\end{array}$$
Of course, many of these new symbols are redundant; for example, one of de Morgan laws forces an equivalence $N_{\varphi\wedge\psi}\equiv N_{\varphi}\vee N_{\psi}$. However, there is little benefit in a more parsimonious approach.

Suppose that $M$ is an $\LL$-structure. Given the axioms in $\bTT_1$, each $\LL_1$-symbol $N_\varphi$ has a unique interpretation as the complement $N_\varphi^M=|M|^x\setminus\varphi^M$. As usual, these interpretations for $\LL_1$-symbols extend uniquely to an interpretation for any $\LL_1$-formula. Then each $\LL_2$-symbol has a unique interpretation satisfying $\bTT_2$ (as the complement of an $\LL_1$-formula), and so on.

This shows that each $\LL$-structure $M$ has a unique extension to a model $M^*\models\bTT^*$. As $\LL$ and $\LL^*$ have the same basic sorts we can also lift a labelling of $M$ to a labelling of $M^*$; this gives a map $\mu\mapsto \mu^*$ inducing a bijection
$$j:|\MM_0(\LL)^{fo}|\cong|\MM_0(\LL)^{coh}|\cong|\MM_0(\bTT^*)^{coh}|.$$

Every classical formula $\varphi(x)$ over $\LL$ is $\bTT^*$-equivalent to a coherent formula over $\LL^*$. This is easily proved by induction. If $\varphi(x)$, $\psi(x)$ and $\gamma(x,y)$ are all coherent over $\overline{\LL}$ then they are already coherent over some $\LL_n$. But then $(\varphi\wedge\psi)(x)$ and $\exists y.\gamma(x)$ are also coherent over $\LL_n$ and $\neg\varphi(x)\equiv N_{\varphi}$ is coherent over $\LL_{n+1}$.

From this it is easy to see that the bijection $j$ is actually a homeomorphism:
$$\mu\in V_{\neg\varphi(k)}\subseteq\MM_0(\LL)^{fo}\Iff \mu^*\in V_{N_\varphi(k)}\subseteq\MM_0(\bTT^*)^{coh}.$$ 
Similarly, an isomorphism $\alpha:M\cong N$ lifts uniquely to $\alpha^*:M^*\cong N^*$ and this induces a homeomorphism at the level of $\MM_1$. This establishes the claimed isomorphism of spectra $\MM(\LL)^{fo}\cong\MM\left(\bTT^*\right)^{coh}$

Now suppose that $\bTT$ is not empty. Each formula $\varphi\in\bTT$ corresponds to a coherent formula $\varphi^*$ over $\LL^*$, and we simply add these into $\bTT^*$. Then $\mu\models\varphi$ just in case $\mu^*\models\varphi^*$ and so $\MM(\bTT)^{fo}\cong\MM(\bTT^*)^{coh}$.
\end{proof}

As in the coherent case, one can proceed to define the a generic first-order model in the equivariant sheaves over $\MM^{fo}$. For each context $x:A$ there is a sheaf $\ext{A}^{fo}$ and for each formula $\varphi(x)$ there is a definable subsheaf $\ext{\varphi}^{fo}\subseteq \ext{A}^{fo}$. Moreover, the homeomorphism $j:\MM(\bTT)^{fo}\iso \MM(\bTT^*)^{coh}$ induces corresponding maps $j_A:\ext{A}^{fo}\iso\ext{A}^{coh}$ between these.

\begin{thm}[First-order definability]
Suppose that $x:A$ is a context and for every first-order labelled model $\mu$ we have a subset $S_{\mu}\subseteq|M_{\mu}|^x$. These subsets are first-order definable just in case the union $\bigcup_{\mu} S_{\mu}$ defines a compact equivariant subsheaf of $\ext{A}^{fo}$.
\end{thm}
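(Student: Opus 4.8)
The plan is to reduce the first-order statement entirely to the coherent definability theorem \ref{coh_def} by way of Morleyization. By Lemma \ref{bool_comp} there is a coherent theory $\bTT^*$ over the extended language $\LL^*$ together with an isomorphism of spectra $j:\MM(\bTT)^{fo}\iso\MM(\bTT^*)^{coh}$. First I would verify that $j$ is not merely a homeomorphism of the object (and arrow) spaces but a genuine isomorphism of topological groupoids: on points it is $\mu\mapsto\mu^*$ and on arrows $\alpha\mapsto\alpha^*$, and since the extension $M\mapsto M^*$ is unique and functorial (each $N_\varphi$ being forced to the complement $|M|^x\setminus\varphi^M$), these assignments commute with domain, codomain, composition, identity and inverse. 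An isomorphism of topological groupoids induces an equivalence $\EqSh(\MM(\bTT)^{fo})\simeq\EqSh(\MM(\bTT^*)^{coh})$ carrying $\ext{A}^{fo}$ to $\ext{A}^{coh}$ via $j_A$; and since compactness was defined purely in terms of finite subcovers by equivariant subsheaves, this equivalence preserves and reflects both equivariance and compactness.

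With this transport in hand the theorem splits into matching the two notions of definability across $j$. In the forward direction, if $S_\mu=\varphi^\mu$ for a first-order formula $\varphi(x)$ over $\LL$, then Lemma \ref{bool_comp} supplies a $\bTT^*$-equivalent coherent formula $\varphi^*$ over $\LL^*$ computing the same definable set, so under $j_A$ the family $\bigcup_\mu S_\mu$ becomes the definable coherent sheaf $\ext{\varphi^*}^{coh}$; by Theorem \ref{coh_def} this is equivariant and compact, and transporting back along $j_A^{-1}$ shows $\bigcup_\mu S_\mu$ is a compact equivariant subsheaf of $\ext{A}^{fo}$. In the backward direction, if $\bigcup_\mu S_\mu$ is a compact equivariant subsheaf, then its image under $j_A$ is a compact equivariant subsheaf of $\ext{A}^{coh}$, so Theorem \ref{coh_def} produces a coherent formula $\psi(x)$ over $\LL^*$ with $j_A\big(\bigcup_\mu S_\mu\big)=\ext{\psi}^{coh}$.

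The remaining — and I expect most delicate — step is the reverse translation: turning this coherent $\LL^*$-formula $\psi$ back into a first-order formula over the original language $\LL$. Here I would argue that since every symbol $N_\varphi\in\LL^*\setminus\LL$ is interpreted in $M^*$ as the complement of a formula over a lower level, each such symbol unfolds to a classical negation, so by induction along the filtration $\LL^*=\bigcup\LL_n$ every coherent $\LL^*$-formula is $\bTT^*$-equivalent to a first-order $\LL$-formula $\psi^\circ$. Since $M^*\models\psi(a)\Iff M\models\psi^\circ(a)$, applying $j_A^{-1}$ gives $S_\mu=(\psi^\circ)^\mu$ for every first-order labelled model $\mu$, which is exactly first-order definability. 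The main obstacle is thus the two-way dictionary between coherent $\LL^*$-formulas and first-order $\LL$-formulas — Lemma \ref{bool_comp} provides one direction, the unfolding of the $N_\varphi$ the other — together with confirming that $j$ really identifies the equivariant and compactness structure on the two groupoids, not merely their underlying spaces.
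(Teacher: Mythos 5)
Your proposal is correct and follows essentially the same route as the paper: Morleyize via Lemma \ref{bool_comp}, transport the subsheaf across the homeomorphism $j_A$, and invoke Theorem \ref{coh_def}, checking that $j$ preserves equivariance and compactness. The one point you develop more explicitly than the paper --- the reverse dictionary sending a coherent $\LL^*$-formula back to a first-order $\LL$-formula by unfolding each $N_\varphi$ to a negation --- is indeed needed and is only implicit in the paper's opening ``clearly,'' so your extra care there is warranted rather than a deviation.
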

\begin{proof}
Clearly the subsets $S_{\mu}$ are first-order definable just in case the corresponding subsets $S_{\mu}^*=j(S_{\mu})$ are coherently definable from $\overline{\bTT}$. By the coherent definability theorem, this is true just in case $j_A(S)$ is a compact equivariant subsheaf of $\ext{A}^{coh}$.

A homeomorphism preserves open sets, so $S$ is a subsheaf just in case $j_A(S)$ is. We have already observed that a first-order isomorphism $\mu\iso\nu$ is the same as a coherent isomorphism $\mu^*\iso\nu^*$ and these act on $\ext{A}^{fo}$ and $\ext{A}^{coh}$ in the same way. Thus $S$ is equivariant just in case $j_A(S)$ is. Finally, as $j_A$ preserves covers and equivariant subsheaves, it also preserves (equivariant) compactness. Thus 
\begin{tabular}[t]{rcl}
$S$ is definable &$\Iff$& $j_A(S)$ is definable\\
&$\Iff$& $j_A(S)$ is a compact equivariant subsheaf\\
&$\Iff$& $S$ is a compact equivariant subsheaf\\
\end{tabular}

\end{proof}

This provides a full solution to the problem of ``logicality'', the question of when a family of subsets $\{S_M\subset|M|\}_{M\models \bTT}$ is definable in first-order logic. This question goes back to Tarski, who proved (with Lindenbaum) that definable sets must be invariant under isomorphism \cite{LT1936}. However, this is clearly not a sufficient condition for (first-order) definability because, for example, infinitary quantifiers and connectives are also isomorphism stable. This is a question which he returned to throughout his career \cite{tarski1986}.

Most attempts to answer this question have centered around stability under some (typically) larger class of morphisms. For example, McGee has shown that the sets which are stable under isomorphisms are exactly those which are definable in $\LL_{\infty\infty}$ (i.e., definable by formulas with any (infinite) number of variables and arbitrarily large conjunctions \cite{mcgee}. Bonnay showed that a family of sets is stable under arbitrary homomorphism if and only if it is definable in $\LL_{\infty\infty}$ without equality \cite{bonnay}. Feferman has shown that sets are $\lambda$-definable from monadic operations if and only if they are definable in first-order logic without equality \cite{feferman}.

The foregoing discussion suggests a different approach, relying on topology rather that morphism invariance. We have given a precise topological characterization of definability in coherent logic and, via Morleyization, we may regard any classical first-order theory as a coherent theory. Together, this gives an exact characterization of definability in first-order logic.

Similar methods can also be applied to yield a topological definability theorem for intuitionistic logic, so long as we modify our spectra to range over Henkin models as well as the intended interpretations. To make sense of this we will need to employ the categorical logic discussed in the next chapter.

\chapter{Pretopos Logic}

In this chapter we will recall and prove a number of well-known facts about pretoposes, most of which can be found in either the Elephant \cite{elephant} or Makkai \& Reyes \cite{MakkaiReyes}. We recall the definitions of coherent categories and pretoposes and the pretopos completion which relates these two classes of categories. We go on to discuss a number of constructions on pretoposes, including the quotient-conservative factorization and the machinery of slice categories and localizations. In particular, we define the elementary diagram, a logical theory associated with any $\bTT$-model, and its interpretation as a colimit of pretoposes. These are Henkin theories: they satisfy existence and disjunction properties which can be regarded as a sort of ``locality'' for theories. We will also show that this machinery interacts well with complements, so that the same methods may be applied to the study of classical theories.

\section{Coherent logic and pretoposes}\label{sec_Ptop}

Categorical logic is founded upon two principles: (i) logical theories are structured categories and (ii) models and interpretations are structure-preserving functors. These premises were first developed by Lawvere in his PhD. thesis, \emph{Functorial Semantics of Algebraic Theories} \cite{lawvereFuncSem}. The first slogan means that the syntactic entities of a theory $\bTT$ (i.e., formulas and terms) can be arranged into a \emph{syntactic category} $\CC_{\bTT}$. Any $\bTT$-model then defines a functor $\CC_{\bTT}\to\Sets$ which is unique up to isomorphism. The ambient logic of $\bTT$ (e.g., classical or intuitionistic, propositional or first-order) corresponds to the relevant categorical structure which $\CC_{\bTT}$ possesses (and which its models preserve).

We will assume that the reader is familiar with the basic contours of functorial semantics, in particular the interpretation of finite limits as pairing and conjunction and regular factorization as existential quantification. These ideas were discussed briefly in the last chapter (\pageref{coh_logic}), and a more complete discussion can be found in \cite{MakkaiReyes}, \cite{SGL} or \cite{elephant}. In this chapter we will present the extension of these ideas to coherent categories and pretoposes. The material in this section and the next is well-known; a standard reference is Johnstone \cite{elephant} A1. 

A category $\CC$ is \emph{coherent} if it has (i) finite limits, (ii) regular epi-mono factorization and (iii) finite joins of subobjects and, moreover, the latter two constructions should be stable under pullbacks. Coherent structure is sufficient to interpret coherent theories; this structure is exactly that which was required, in the last chapter, for our interpretation of the generic model in $\Sh(\MM_0)$ (page \pageref{coh_logic}).

Now fix a (multi-sorted) first-order language $\LL$. An \emph{interpretation} $I$ of $\LL$ in a coherent category $\bEE$ begins with an underlying object $A^I\in\bEE$ for each basic sort $A\in\LL$. A compound context $B=\<A_1,\ldots,A_n\>$ is interpreted by the product $B^I=\prod_i A_i^I$.

To each basic relation $R(x)$ (in a context $x:A$) we assign a subobject $R^I\leq A^I$; similarly, we assign an arrow $f^I:A^I\to B^I$ each function symbol $f:x\to y$ (where $y:B$). Given this data, we can construct an interpretation of any coherent formula $\varphi(x)$ as a subobject $\varphi^I\leq A^I$. The construction proceeds inductively based on the formulaic structure of $\varphi(x)$, where each logical operation is interpreted as on page \pageref{coh_logic}. Given such an interpretation, we asy that $I$ \emph{satisfies} a sequent $\varphi(x)\overprove{x:A}\psi(x)$ just in case $\varphi^I\leq\psi^I$ in $\Sub_{\bEE}(A^I)$. 

Each coherent theory $\bTT$ determines a \emph{syntactic category} $\bEE_{\bTT}$ which is itself coherent. For each context $x:A$ there is an object $A^x\in\EE_{\bTT}$ and these are related inductively by
$$A^{\<x,y\>}\cong A^x\times A^y\hspace{1cm} A^{\<\>}\cong 1_{\EE_{\bTT}}.$$
Every formula $\varphi(x)$ determines a subobject $[x|\varphi]\leq A^x$ and we have $[x|\varphi]\leq[x|\psi]$ if and only if $\bTT$ proves the corresponding sequent, which we notate
$$\bEE_{\bTT}\tri \varphi(x)\vdash_{x:A} \psi(x).$$
A morphism $\sigma:[\varphi(x)]\to[\psi(y)]$ is a formula $\sigma(x,y)$ which is \emph{provably functional} (p.f.) on $\varphi$:
$$\begin{array}{c}
\sigma(x,y)\vdash \varphi(x)\wedge\psi(y)\\
\sigma(x,y)\wedge\sigma(x,y')\vdash y=y'\\
\varphi(x)\vdash \exists y. \sigma(x,y).\\
\end{array}$$
When $\sigma$ is provably functional, we will often write $\sigma(x)=y$ in place of $\sigma(x,y)$.

If $I$ in an interpretation of $\bTT$ in a coherent category $\bFF$, this induces a coherent functor $\widetilde{I}:\bEE_{\bTT}\to\bFF$ by sending $[x|\varphi]\mapsto \varphi^I$. One can also define $\LL$-structure homomorphisms in $\bFF$ and these correspond to natural transformations $\widetilde{I_0}\to\widetilde{I_1}$. This defines an equivalence of categories (natural in $\FF$)
$$\bTT\Mod(\bFF)\sim\Coh(\bEE_{\bTT},\bFF)$$
and this classification property fixes $\bEE_{\bTT}$ up to equivalence. In particular, a classical model of $\bTT$ corresponds to a coherent functor $M:\bEE_{\bTT}\to\Sets$.

\begin{defn}
An initial object $0\in\bEE$ is called \emph{strict} if, for any other object $A\not\cong0$, $\Hom_{\bEE}(A,0)=\emptyset$.

Suppose that $\bEE$ has a strict initial object. A coproduct $A+B\in\bEE$ is called \emph{disjoint} if the pullback of the coprojections is 0:
$$\xymatrix{
0 \pbcorner \ar[r] \ar[d] & A \ar[d]^{i_A}\\
B \ar[r]_{i_B} & A+B.}$$

A left-exact category with a strict initial object and all disjoint coproducts, both stable under pullbacks, is called \emph{extensive}.
\end{defn}

\begin{defn}
A subobject $R\leq A\times A$ in $\bEE$ is called an \emph{equivalence relation} or \emph{congruence} if it satisfies diagrammatic versions of reflexivity, transitivity and symmetry axioms: 

\begin{tabular}{lll}
\textbf{Refl.} & \textbf{Trans.} & \textbf{Sym.}\\
$\xymatrix{A \ar@{-->}[r] \ar@{>->}[rd]_{\Delta} & R \ar@{>->}[d] &\\& A\times A\\}$&
$\xymatrix{\raisebox{-3.5ex}{$R\overtimes{A}R$} \ar@{-->}[r] \ar[rd]_{\<p_1,p_3\>} & R \ar@{>->}[d] \\& A\times A\\}$&
$\xymatrix@R=5ex{ R \ar@{-->}[r] \ar@{>->}[rd] &A\times A \ar[d]^{\rsim}_{\rm{tw}}\\& A\times A}$
\end{tabular}

An object $Q=A/R$ is the \emph{(exact) quotient} of $A$ by $R$ if there is a coequalizer $e:A\epi Q$ (necessarily a regular epimorphism) such that $R$ is the kernel pair of $e$:
$$\xymatrix{R \pbcorner \ar[r] \ar[d] & A \ar[d]^e \\ A \ar[r]_e & Q.\\}$$

If a left-exact category $\bEE$ has exact quotients for all equivalence relations and these are stable under pullback then $\bEE$ is called an \emph{exact category}.
\end{defn}

\begin{defn}
A \emph{pretopos} is a category which is both extensive and exact.
\end{defn}

Note that a pretopos is automatically regular and coherent. The first claim follows from the fact that each kernel pair is an equivalence relation, so that an exact category already has a quotient for every kernel pair. For the second claim, simply note that the join of two subobjects $R,S\leq A$ can be computed as the epi-mono factorization $R+S \epi R\vee S \mono A$. A coherent functor automatically preserves any disjoint coproducts and quotients in its domain, so pretopos functor is nothing more than a coherent functor between pretoposes.

\begin{lemma}\label{ptop_nice}
If $\EE$ is a pretopos, then
\begin{enumerate}
\item every monic in $\EE$ is regular (an equalizer).
\item $\EE$ is balanced: epi + mono $\To$ iso.
\item every epic in $\EE$ is regular (a coequalizer).
\end{enumerate}
Furthermore, for any epimorphism $f:A\epi B$,
\begin{enumerate}
\item[4.] $f^*:\Sub(B)\to\Sub(A)$ is injective.
\item[5.] $\exists_f\circ f^*$ is the identity on $\Sub(B)$.
\end{enumerate}
\end{lemma}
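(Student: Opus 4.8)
The crux of this lemma is part (1); once monos are known to be regular, the other items follow essentially formally, so the plan is to establish them in the order $(1)\Rightarrow(2)\Rightarrow(3)$ and then to treat $(4)$ and $(5)$ together using regularity. \textbf{Monos are regular.} The plan for (1) is to exhibit a given mono $m:A\mono B$ as the equalizer of the two coprojections $B\rightrightarrows Q$ into a quotient $Q$ obtained by gluing two copies of $B$ along $A$. Concretely, using extensivity I would decompose $(B+B)\times(B+B)\cong B^2+B^2+B^2+B^2$ into four summands indexed by pairs of coprojections, and define a subobject $R\leq(B+B)^2$ which is the diagonal $\Delta_B$ on the two ``diagonal'' summands and the diagonal image $\Delta_{\im m}\cong A$ on the two ``off-diagonal'' summands. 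A direct check shows $R$ is an equivalence relation: reflexivity and symmetry are visible from the description, and transitivity reduces to a short case analysis that uses disjointness of the coproduct. Exactness then makes $R$ effective, so the quotient $q:B+B\epi Q$, with $Q=(B+B)/R$, has $R$ as its kernel pair; write $j_0=qi_0$ and $j_1=qi_1$.

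It then remains to verify that $m=\eq(j_0,j_1)$. Since $\langle i_0m,i_1m\rangle$ factors through the copy of $\Delta_{\im m}$ sitting inside $R$, and $R$ is the kernel pair of $q$, I obtain $j_0m=j_1m$, so $m$ factors through $E:=\eq(j_0,j_1)$. Conversely, any $t:T\to B$ with $j_0t=j_1t$ satisfies $qi_0t=qi_1t$, so $\langle i_0t,i_1t\rangle$ factors through $R$; but this map lands in the $(0,1)$-summand of $(B+B)^2$, whose intersection with $R$ is exactly $\Delta_{\im m}$, which forces $t$ to factor through $m$. Hence $E\leq A\leq E$ and $m$ is the equalizer. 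I expect the explicit construction of $R$ and the summand bookkeeping (locating the generalized element in the correct disjoint summand) to be the main obstacle; the rest is routine once extensivity and disjointness are used correctly.

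\textbf{Balanced and epics regular.} For (2), if $f$ is both mono and epi, then by (1) it is the equalizer of some pair $g,h:B\rightrightarrows C$; from $gf=hf$ and $f$ epi I get $g=h$, so $f$ equalizes a pair of equal maps and is therefore an isomorphism. For (3), I would factor an arbitrary epi as $f=m\circ e$ with $e$ a regular epi and $m$ a mono (regular factorization, available since $\EE$ is regular); because $f$ is epi so is $m$, hence $m$ is an isomorphism by (2), and $f=me$ is regular.

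\textbf{The subobject identities.} It suffices to prove (5), since $\exists_f\circ f^*=\id$ exhibits $f^*$ as split monic and hence injective, which is (4). Given $S\mono B$, I would form the pullback defining $f^*S\mono A$. By (3) the epi $f$ is regular, and regular epis are stable under pullback in the regular category $\EE$, so the top edge $f^*S\to S$ of the pullback square is again a regular epi. Since $\exists_f(f^*S)$ is by definition the image of the composite $f^*S\mono A\xrightarrow{f}B$, which equals the composite of the regular epi $f^*S\epi S$ with the mono $S\mono B$, its image is $S$ itself; thus $\exists_f f^*S=S$.
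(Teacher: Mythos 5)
Your proof is correct and follows essentially the same route as the paper's: part (1) is the cokernel-pair argument, realizing the pushout $B+_A B$ as the exact quotient of an equivalence relation on $B+B$ (you simply spell out the four-summand description of that relation, and verify the equalizer property directly by chasing a generalized element into the off-diagonal summand, where the paper instead notes that the coprojections are monic with intersection $A$). Parts (2)--(5) — balance from (1), regular epis from the epi-mono factorization plus balance, and $\exists_f f^*=\mathrm{id}$ from pullback-stability of regular epis — coincide with the paper's argument.
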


\begin{proof}
Suppose that $m:R\mono A$ is monic. This induces an equivalence relation $E_R\mono A+A$ and its quotient is a pushout: $Q\cong A\overplus{R} A$. Moreover, the coprojections $A\rightrightarrows Q$ are again monic and their intersection is $R$. Consequently, $R\mono A\rightrightarrows Q$ is an equalizer diagram and $m$ is regular.

Now suppose that $m$ is both epic and monic. Because it is epic and equalizes the coprojections $A\rightrightarrows Q$, these coprojections must be equal. But then $m$ is the equalizer of identical maps, and therefore an isomorphism. Hence $\EE$ is balanced.

Now suppose $f:A\to B$ is epic. Factor $f$ as a regular epi $e$ followed by a monic $m$. Since $f$ is epic, $m$ is as well. But then balance implies that $m$ is an isomorphism. Since $e$ and $f$ are isomorphic maps and $e$ is regular, $f$ too.

Note that $f^*$ injective follows immediately from the identity $\exists_f\circ f^*=1_{\Sub(B)}$. Because regular epis are stable under pullback, any subobject $S\leq B$ induces a pullback square in which the left-hand side is a reg epi-mono factorization
$$\xymatrix{
f^*S \pbcorner \ar@{>->}[r] \ar@{->>}[d] & A \ar@{->>}[d]^f\\
S \ar@{>->}[r] & B.\\
}$$
This is exactly the definition of $\exists_f$, so uniqueness of factorizations guarantees that $S\cong \exists_f(f^*S)$.\end{proof}

\begin{defn}\label{ptop_morph_props}
Suppose that $I:\bEE\to\bFF$ is a coherent functor we say that $I$ is:
\begin{itemize}
\item \emph{conservative} if it reflects isomorphisms.
\item \emph{full on subobjects} if, for every object $A\in\EE$ and every subobject $S\leq IA\in \FF$ there is a subobject $R\leq A$ with $IR\cong S$.
\item \emph{subcovering} if for every object $B\in\FF$ there is an object $A\in\EE$, a subobject $S\leq IA\in\FF$ and a (regular) epimorphism $S\epi B$.
\end{itemize}
\end{defn}

\begin{lemma}[Pretopos completion]\label{ptop_comp}
The forgetful functor $\Ptop\to\Coh$ has a left adjoint sending each coherent category $\bEE$ to its \emph{pretopos completion} $\EE$. Moreover, the unit is a coherent functor $I:\bEE\to\EE$ which is conservative, full, full on subobjects and (finitely) subcovering.\footnote{Because $\bEE$ may not have coproducts, we are allowed to cover each $B\in\EE$ by a finite family of subobjects $S_i\leq IA_i$.}
\end{lemma}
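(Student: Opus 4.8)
The plan is to build $\EE$ by hand as the category of ``formal finite coproducts of quotients'' in $\bEE$, then to check it is a pretopos, that the evident unit functor $I$ has the four listed properties, and finally to establish the universal property giving the adjunction. Concretely, an object of $\EE$ is a finite family $(A_i,R_i)_{i\in I}$ where each $R_i\leq A_i\times A_i$ is an equivalence relation in $\bEE$, to be thought of as $\coprod_i A_i/R_i$. A morphism to $(B_j,S_j)_{j\in J}$ is an equivalence class of families of relations $\phi_{ij}\leq A_i\times B_j$ in $\bEE$ which are compatible with $R_i,S_j$ and which are jointly single-valued and total modulo these equivalence relations; composition is relational composition followed by image, and two families are identified when they determine the same relation after quotienting. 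The unit $I:\bEE\to\EE$ sends $A\mapsto(A,\Delta_A)$ (a singleton family equipped with the diagonal) and a morphism to its graph.

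First I would verify that $\EE$ is a pretopos. Finite limits, images and stable finite joins are computed from the coherent structure of $\bEE$ on representatives; the finite formal coproducts are disjoint and stable by construction; and every equivalence relation in $\EE$ is effective, since a congruence on $(A,R)$ is represented by a congruence on $A$ refining $R$ whose formal quotient is its coequalizer. Because $\bEE$ is regular, a provably functional relation between two diagonal objects is exactly the graph of a genuine $\bEE$-morphism, so $I$ is full and faithful, and it is manifestly coherent. Conservativity follows from full faithfulness together with the fact that an $\EE$-isomorphism between diagonal objects is carried by mutually inverse $\bEE$-morphisms. A subobject of $IA=(A,\Delta_A)$ is represented by a mono whose domain carries the trivial congruence, hence by a genuine $R\leq A$ with $IR\cong S$, giving fullness on subobjects. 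Finally, each coprojection $A_i/R_i\mono\coprod_j A_j/R_j$ is a subobject and the quotient map $IA_i\epi A_i/R_i$ is a regular epi onto it, so the finite family $\{IA_i\}_{i\in I}$ subcovers any object, yielding finite subcovering.

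For the adjunction I would show that, for every pretopos $\PP$, precomposition with $I$ induces an equivalence $\Ptop(\EE,\PP)\simeq\Coh(\bEE,\PP)$. Given a coherent $F:\bEE\to\PP$, define $\bar F$ on objects by $\bar F(A_i,R_i)_i=\coprod_i F(A_i)/F(R_i)$, using that $\PP$ has disjoint coproducts and effective quotients and that $F$, being coherent, sends equivalence relations to equivalence relations; on morphisms, send a representing family $\phi_{ij}$ to the map induced by the universal properties of the coproducts and quotients in $\PP$. One checks that this is independent of representatives, functorial, and a pretopos functor, with a canonical isomorphism $\bar F\circ I\cong F$. Naturality in $\PP$, and the extension of each natural transformation $F\To F'$ to $\bar F\To\bar F'$, then complete the equivalence of hom-categories.

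The main obstacle is the universal property: verifying that $\bar F$ is well defined and unique up to unique isomorphism. The subtlety is that $\bar F$ must depend neither on the chosen representing relations $\phi_{ij}$ nor on the presentation of an object as a formal sum of quotients, and that any pretopos functor agreeing with $F$ along $I$ must coincide with $\bar F$. Both facts rest on the universal properties of coproducts and exact quotients in $\PP$ together with Lemma \ref{ptop_nice}: effectivity of congruences, and the identities that $f^*$ is injective with $\exists_f\circ f^*$ the identity, are precisely what force the value of $\bar F$ on a quotient object, and on a subobject presented through an image, to be determined by the values of $F$ on the representing data in $\bEE$. Once uniqueness of epi--mono factorizations in $\PP$ is in hand, well-definedness and the uniqueness clause follow by the same bookkeeping.
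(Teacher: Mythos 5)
There is a genuine gap in the construction itself. You take the objects of $\EE$ to be families $(A_i,R_i)_{i\in I}$ with each $R_i\leq A_i\times A_i$ an equivalence relation on a \emph{single} object, i.e.\ formal coproducts of quotients $\coprod_i A_i/R_i$. But the pretopos completion must contain quotients of coproducts, and these do not in general decompose as coproducts of quotients. Concretely, an equivalence relation on a two-summand object $IA+IB$ has cross-components $T_{AB}\mono A\times B$, and its quotient can be a pushout $A\overplus{S}B$ gluing $A$ and $B$ along a common subobject $S$; if $\bEE$ is, say, the coherent syntactic category of a theory with a sort $A$ and a predicate $P\leq A$ whose complement is not definable, then $A\overplus{P}A$ is not isomorphic to any $\coprod_j C_j/S_j$ with $C_j\in\bEE$ and $S_j\leq C_j\times C_j$. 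Your verification of exactness betrays the problem: you check that a congruence on a singleton family $(A,R)$ is effective, but say nothing about congruences on multi-summand objects, which is exactly where your category fails to be closed under quotients. The correct objects are the ``finite-object equivalence relations'' of the paper: families $\langle A_i\rangle$ equipped with relations $R_{ij}\mono A_i\times A_j$ for \emph{all} pairs $i,j$, satisfying reflexivity, symmetry and transitivity across the family, so that the object represents $(\coprod_i A_i)/R$ rather than $\coprod_i(A_i/R_i)$.

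With that repair, the remainder of your outline is sound and follows the same route as the paper: morphisms as provably functional relations modulo the equivalences, the unit $A\mapsto A/\!\!=$ exhibiting $\bEE$ as a full subcategory (hence $I$ full, faithful and conservative), subcovering via the presentation $\coprod_i A_i\epi A_i/R$, fullness on subobjects via epi--mono factorization, and the universal property by sending $(A_i,R)$ to $\coprod_i F(A_i)$ quotiented by the image of $R$ in the target pretopos. Note the paper only sketches this and defers the universal property to the references; your appeal to lemma \ref{ptop_nice} ($f^*$ injective and $\exists_f\circ f^*=\mathrm{id}$ for epis $f$) for the uniqueness of the extension is the right ingredient there.
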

\begin{proof}[Sketch]
Please see \cite{elephant}, A3 or \cite{shulman_completion} for a more detailed presentation.

A \emph{finite-object equivalence relation} $A_i/R$ in $\mathbb{E}$ is a family of objects $\<A_i\>_{i\leq n}$ together with binary relations $R_{ij}\mono A_i\times A_j$ satisfying

\begin{tabular}{cc}
\textbf{Refl.} & $\xymatrix{A_i \ar@/^3ex/[rr]^{\Delta} \ar@{-->}[r] & R_{ii} \ar@{>->}[r] & A_i\times A_i,}$\\\\
\textbf{Trans.} & $\xymatrix{\raisebox{-3.5ex}{$R_{ij}\overtimes{[\varphi_j]} R_{jk}$} \ar@/^3ex/[rr]^{\<p_i,p_k\>} \ar@{-->}[r] & R_{ik} \ar@{>->}[r] & A_i\times A_k,}$\\\\
\textbf{Sym.} & \raisebox{.7cm}{$\xymatrix@R=5ex{ R_{ij} \ar@{=}[d]^{\rsim} \ar@{>->}[r] &A_i\times A_j \ar@{=}[d]^{\rsim}_{\rm{tw}}\\
			      R_{ji} \ar@{>->}[r] & A_j\times A_i}$}.\\
\end{tabular}

The objects of $\EE$ are the finite-object equivalence relations in $\bEE$. Given two such objects $A_i/R$ and $B_k/S$ an arrow $f:A_i/R\to B_j/S$ is a family of relations $F_{ik}\mono A_i\times B_k$ which is provably functional modulo the equivalences $R$ and $S$. For example, the condition $\sigma(x,y)\wedge\sigma(x,y')\vdash y=y'$ becomes the following requirement: for all $Z\in\bEE$ and all $x_i:Z\to A_i$ and $y_k:Z\to B_k$ we have
$$\begin{array}{c}
\left(\raisebox{.8cm}{\xymatrix{
Z \ar[r]^-{\<x_i,x_j\>} \ar@{-->}[rd] & A_i\times A_j & Z \ar[r]^-{\<x_i,y_k\>} \ar@{-->}[rd] & A_i\times B_k & Z \ar[r]^-{\<x_j,y_l\>} \ar@{-->}[rd] & A_j\times B_l \\ 
& R_{ij} \ar[u] && F_{ik} \ar[u] && F_{jl} \ar[u]\\
}}\right)\\
\xymatrix{\ar@{}[rd]|{\mbox{\huge{$\To$}}} && Z \ar[r]^-{\<y_k,y_l\>} \ar@{-->}[rd]& B_k\times B_l\\
 &&& S_{kl} \ar[u] \\
}\\
\rm{i.e.,}\ \big(R_{ij}(x_i,x_j)\wedge F_{ik}(x_i,y_k)\wedge F_{jl}(x_j,y_l) \big)\To S_{jl}(y_j,y_l)
\end{array}
$$

Each object $A\in\bEE$ defines a trivial quotient $A/\!\!=$ and the comparison functor $I:\bEE\to\EE$ acts by sending $A\mapsto A/\!\!=$. Two maps are equal modulo the identity relation just in case they are provably equal, so this displays $\bEE$ as a full subcategory inside $\EE$. It follows at once that $I$ is conservative. Henceforth we will not distinguish between an object $A\in\bEE$ and its image $IA\in\EE$.

Suppose $A_i$ is a family of objects in $\bEE$. Define an equivalence relation $\Delta_{ij}=0$ if $i\not=j$ and $\Delta_{ii}=\Delta_{A_i}$. In $\EE$ this defines a (disjoint) coproduct $\coprod_i A_i=A_i/\Delta_{ij}$. From these, any binary coproduct $(A_i/R)+(B_j/S)$ can be defined as a quotient of $\coprod_i A_i+\coprod_j B_j$, so $\EE$ is closed under $+$. Moreover, any object $A_i/R$ comes equipped with a family $A_i\in\bEE$ and a presentation $\coprod_i A_i\epi A_i/R$, so $I$ is finitely subcovering.

To see that $I$ is full on subobjects, suppose that $A, B_i\in\bEE$ and $B_i/R\leq A$. Each composite $B_i\to B_i/R\mono A$ belongs to the full subcategory $\bEE$, so its epi-mono factorization $B_i\epi \im(B_i) \mono A$ does as well. Since $B_i/R$ is the epi-mono factorization of a map $\coprod_i B_i\to A$ it is isomorphic to $\bigvee_i \im(B_i)$, which again belongs to $\bEE$.
\end{proof}

\begin{defn}\label{class_ptop}
The \emph{classifying pretopos} $\EE_{\bTT}$ of a coherent theory $\bTT$ is the pretopos completion of the syntactic category $\bEE_{\bTT}$. We will say that $\bTT$ is a \emph{pretopos theory} if $I:\bEE_{\bTT}\to\EE_{\bTT}$ is an equivalence of categories.
\end{defn}

Every pretopos classifies a theory $\bTT_{\EE}$. To form this theory, we let every object $A\in\EE$ determines a basic sort, every subobject $S\leq A$ a basic relation and every arrow $f:A\to B$ a function symbol. The axioms of $\bTT_{\EE}$ are defined by the subobject ordering in $\EE$: $R\underset{\bTT_{\EE}}{\vdash} S \Iff R\leq S\in\Sub_{\EE}(A)$. If $\EE=\EE_{\bTT}$ is already the classifying category for some coherent theory $\bTT$, then the extension $\bTT\subseteq\bTT_{\EE}$ is essentially the same as Shelah's model-theoretic closure $\bTT\subseteq\bTT^{\rm{eq}}$ (see e.g. \cite{harnik} for a discussion).

Because coproducts and quotients are definable in $\Sets$, every $\bTT$-model $M$ has an essentially unique extension to a $\bTT^{\rm{eq}}$ model $M^{\rm{eq}}$. Category-theoretically, this amounts to an equivalence of categories
$$\Coh(\bEE_{\bTT},\Sets)\simeq\Ptop(\EE_{\bTT},\Sets)$$
and this follows immediately from the fact that pretopos completion is left adjoint to the forgetful functor $\Ptop\to\Coh$. Therefore it is reasonable to regard $\bTT$ and $\bTT^{\eq}$ as the ``same'' theory in the sense that they are semantically equivalent.

$\bTT^\rm{eq}$ is a conservative extension which allows for a syntactic operation called elimination of imaginaries. An \emph{imaginary element} $a/E$ in a model $M$ is a definable equivalence relation $E(x,x')$ (where $x,x':A$) together with an equivalence class $[a]\in A^M/E^M$. $M$ has \emph{uniform elimination of imaginaries} (u.e.i) if for every equivalence relation $E(x,x')$ there is a sort $y:B$ and a p.f. formula $\epsilon(x,y)$ such that
$$M\ \models\ \  E(a,a') \ \dashv\vdash\ \exists y.\big(\epsilon(a)=y=\epsilon(a')\big).$$

Pretopos structure supports elimination of imaginaries in the sense that every $\bTT^{\rm{eq}}$-model has u.e.i. Given a definable equivalence relation $E(x,x')$, simply take $B=A/E$ and let $\epsilon(x,y)$ be the p.f. relation corresponding to the quotient $A\epi A/E$.

When we study categorical semantics we sometimes fix a pretopos $\SS$ (often $\SS=\Sets$) to serve as a ``semantic universe''. In that case, we distinguish between \emph{interpretations} $I:\EE\to \FF$ (between arbitrary pretoposes) and \emph{models} $M:\EE\to\SS$ (into $\SS$). We let $A^M$ denote the value of a model $M$ at an object $A\in\EE$. We (loosely) refer to $A^M$ as a definable ``set'' in $M$ and write $a\in A^M$ as a shorthand for $a\in\Hom_{\SS}(1,A^M)$.

In this terminology, interpretations act on models by precomposition (contravariantly):
$$\xymatrix@R=2ex{
\FF\Mod(\SS) \ar[rr]^{I^*} && \EE\Mod(\SS)\\
N \ar@{|->}[rr] && I^*N\\
\FF \ar[rdd]_N && \EE \ar[ll]_I \ar[ldd]^{I^*N}\\\\
&\SS&\\
}$$
For each $A\in\EE$, $A^{I^*N}=(IA)^N$ and we call $I^*N$ the \emph{reduct} of $N$ along $I$. This generalizes the classical terminology; if $I$ is induced by an inclusion of theories $\bTT\subseteq\bTT'$ and $N\models\bTT'$, $I^*N\cong N\!\!\upharpoonright\!\LL(\bTT)$.

From this point forward we will work in the doctrine of pretoposes and define our logical terminology accordingly: ``theory'' is synonymous with ``pretopos'', a formula is an object and a model is a pretopos functor to $\Sets$. Please see the table on page \pageref{dictionary} for a full list of categorical definitions and their logical equivalents.

\section{Factorization in $\Ptop$}

In this section we review the quotient-conservative factorization system in $\Ptop$. A discussion of this factorization system can be found in \cite{makkai}. Given a pretopos $\EE$, a sequent in $\EE$ is an object $A\in \EE$ (a context) together with an (ordered) pair of subobjects $\varphi,\psi\in\Sub(A)$. We say that $\EE$ satisfies $\varphi\vdash\psi$ just in case $\varphi\leq \psi$.

An interpretation $I:\EE\to\FF$ is conservative in the logical sense if $\EE$ satisfies any sequents which $I$ forces in $\FF$
$$\FF\tri I(\varphi)\vdash I(\psi) \To \EE\tri \varphi\vdash\psi.$$
Categorically, this says that $I$ is \emph{injective on subobjects}: $R\lneq S\leq A$ implies $IR\lneq IS$. We will see below that, in a pretopos, this is equivalent conservativity in the categorical sense.

\begin{lemma}\label{eq_cons}
For a pretopos functor $I:\EE\to\FF$, the following are equivalent:
\begin{itemize}
\item[(i)] $I$ is conservative (reflects isomorphisms).
\item[(ii)] $I$ is injective on subobjects.
\item[(iii)] $I$ is faithful.
\end{itemize}\end{lemma}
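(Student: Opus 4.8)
The plan is to prove the cycle (i)$\To$(iii)$\To$(ii)$\To$(i), leaning throughout on the fact that a pretopos functor preserves all finite limits, images (regular epi--mono factorizations) and the subobject order, together with the structural facts of Lemma \ref{ptop_nice}: in a pretopos every mono and every epi is regular, and the category is balanced. For the first implication, (i)$\To$(iii), suppose $I$ reflects isomorphisms and take parallel maps $f,g\colon A\rightrightarrows B$ with $If=Ig$. I would form the equalizer $e\colon E\mono A$ of $f$ and $g$; since $I$ preserves equalizers, $Ie$ is the equalizer of the coincident pair $If=Ig$ and hence an isomorphism. Conservativity then forces $e$ to be an isomorphism, and therefore $f=g$, so $I$ is faithful.

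For (iii)$\To$(ii) I would first reduce to the comparison of a nested pair. Given arbitrary $R,S\leq A$ with $IR=IS$, applying $I$ to $R\wedge S\leq R$ and $R\wedge S\leq S$ (meets are preserved) gives $I(R\wedge S)=IR\wedge IS=IR=IS$; so once we know that $IR=IS$ together with $R\leq S$ implies $R=S$, we obtain $R\wedge S=R$ and symmetrically $R\wedge S=S$, hence $R=S$. To handle the nested case, I use that the inclusion $R\mono S$ is a regular mono (Lemma \ref{ptop_nice}), so $R=\mathrm{eq}(u,v)$ for some parallel pair $u,v\colon S\rightrightarrows T$. Since $I$ preserves equalizers, $IR$ is the equalizer of $Iu,Iv$ inside $IS$; as $IR=IS$, this equalizer is all of $IS$, so $Iu=Iv$. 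Faithfulness gives $u=v$, whence $R=\mathrm{eq}(u,v)=S$.

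Finally, for (ii)$\To$(i), suppose $I$ is injective on subobjects and $If$ is an isomorphism for some $f\colon A\to B$. Since $\EE$ is balanced, it suffices to show $f$ is both mono and epi. For monicity I compare the kernel pair $K=\ker f\leq A\times A$ with the diagonal: one always has $\Delta_A\leq K$, while $I$ preserves pullbacks and $If$ is mono, so $IK=\ker(If)=\Delta_{IA}=I\Delta_A$; injectivity on subobjects then forces $\Delta_A=K$, i.e.\ $f$ is mono. For epimorphy I compare the image $\im f\leq B$ with the top subobject: $I$ preserves images and $If$ is epi, so $I(\im f)=\im(If)=IB$, and injectivity on subobjects again forces $\im f=B$, i.e.\ $f$ is epi. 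Hence $f$ is an isomorphism and $I$ is conservative.

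Each step is individually short, and the crux is really organizational: the one requiring the most care is (iii)$\To$(ii), where the content lies in the reduction to a nested pair and in the essential use of \emph{every mono in a pretopos being regular}, so that a subobject can be presented as an equalizer which $I$ is then guaranteed to preserve. The remainder is bookkeeping to confirm that $I$ preserves the relevant limits and factorizations, which is exactly the definition of a pretopos functor together with Lemma \ref{ptop_nice}.
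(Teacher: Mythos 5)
Your proof is correct, but it traverses the cycle in the opposite direction from the paper: you prove (i)$\To$(iii)$\To$(ii)$\To$(i), whereas the paper proves (i)$\To$(ii)$\To$(iii)$\To$(i), and the individual arguments are genuinely different. The paper gets (i)$\To$(ii) by applying conservativity to the comparison maps $R\wedge S\to R$ and $R\wedge S\to S$, gets (ii)$\To$(iii) by observing that $f\not=g$ makes $\Eq(f,g)$ a proper subobject whose properness is preserved, and gets (iii)$\To$(i) from the purely formal fact that any faithful functor reflects monos and epis, finishing with balancedness. Your route instead uses conservativity on the equalizer of a parallel pair for (i)$\To$(iii), and your (ii)$\To$(i) replaces the faithfulness argument by comparing the kernel pair with the diagonal and the image with the top subobject --- this leans on preservation of kernel pairs and image factorizations rather than on reflection of monos/epis, though both ultimately invoke balance. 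The step where your approach pays a real price is (iii)$\To$(ii): you need the reduction to a nested pair and the fact that every mono in a pretopos is regular (Lemma \ref{ptop_nice}) so that $R\mono S$ can be presented as an equalizer that $I$ preserves; the paper's corresponding step (ii)$\To$(iii) is a one-liner by contraposition. What your version buys is that the (iii)$\To$(ii) implication is isolated as the place where genuine pretopos structure (regularity of monos) enters, whereas the paper concentrates that structure in the final appeal to balance; both are legitimate, and yours is slightly longer but equally rigorous.
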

\begin{proof}

The implication (i)$\Rightarrow$(ii) is immediate. Suppose that $I$ identifies two subobjects $IR \cong IS\in\Sub_{\FF}(IA)$; then $IR\cong I(R\wedge S)\cong IS$ and, since $I$ reflects both these isomorphisms, $R\cong S$ already in $\EE$. To see that (ii)$\Rightarrow$(iii), suppose that $I$ is injective on subobjects and that $f\not=g:A\rightrightarrows B\in\CC$. Then $\Eq(f,g)\lneq A$, which implies that
$$\Eq(If,Ig)=I(\Eq(f,g))\lneq IA.$$
But then $If\not=Ig$, so $I$ is faithful.

For (iii)$\Rightarrow$(i), suppose that $I$ is faithful and that $If$ is an isomorphism. $If$ is monic and epic, and both the properties are reflected by faithful functors. For example, if $If$ is monic and $f\circ g= f\circ h$ then $If\circ Ig=If\circ Ih$, whence $Ig=Ih$ by monicity and $g=h$ by faithfulness. Thus $f$ is also monic and epic, and so by balance (lemma \ref{ptop_nice}) $f$ is an isomorphism.
\end{proof}

Now we introduce a second class of pretopos functors, the quotients. Suppose that $\bTT\subseteq\bTT'$ is an extension of theories written in the same language. Let $\EE$ and $\EE'$ denote the classifying pretoposes of $\bTT$ and $\bTT'$, respectively. From the pretopos completion we know that each object of $\EE$ is a $\bTT$-definable equivalence relation, and therefore also $\bTT'$-definable. Similarly, $\EE$-arrows are $\bTT$-provably functional relations (modulo equivalence), and these are also $\bTT'$-provably functional.

This means that the inclusion $\bTT\subseteq\bTT'$ induces an interpretation $I:\EE\to\EE'$. Since equality of arrows in $\EE'$ is $\bTT'$-provable equality, $I$ may not be faithful. Similarly, the new axioms in $\bTT'$ may introduce provable functionality or equivalence, so in general $I$ is neither full nor surjective on objects. However, $I$ does satisfy some related surjectivity conditions defined in the last section.

\begin{lemma}
If $\bTT\subseteq\bTT'$ is an extension of theories in the same language $\LL$ then the induced functor $I:\EE\to \EE'$ is subcovering and full on subobjects.
\end{lemma}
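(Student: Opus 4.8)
The plan rests on a single observation: since $\bTT$ and $\bTT'$ are written in the same language $\LL$, they share exactly the same stock of formulas-in-context, and the functor $I:\EE\to\EE'$ alters only which sequents are provable, not the syntactic material. In particular every context object $A^x$ is common to both, with $I(A^x)=A^x$, and every formula-object $[x|\varphi]$ of $\EE$ is carried to the ``same'' formula-object of $\EE'$. I would first record the base case. By the pretopos completion of $\bEE_{\bTT'}$ (Lemma~\ref{ptop_comp}, full on subobjects), a subobject $S\leq A^x$ in $\EE'$ is represented by an $\LL$-formula $\psi(x)$; reading $\psi$ as a $\bTT$-formula gives $[x|\psi]\leq A^x$ in $\EE$ with $I[x|\psi]=S$. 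More generally a subobject $S\leq[x|\varphi]$ in $\EE'$ is given by $\psi$ with $\bTT'\tri\psi\vdash\varphi$, and then $[x|\psi\wedge\varphi]$ in $\EE$ maps under $I$ to $S$, since $\psi\wedge\varphi\dashv\vdash\psi$ is $\bTT'$-provable.

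For \emph{full on subobjects} with a general $A\in\EE$ and $S\leq IA$, I would exploit that the completion presents $A$ as a quotient $q:P\epi A$ with $P=\coprod_i[x_i|\varphi_i]$ a finite coproduct of formula-objects. Applying the coherent functor $I$ yields an epimorphism $Iq:IP\epi IA$, and pulling back gives $q^*S\leq IP=\coprod_i[x_i|\varphi_i]$. Since $\EE'$ is extensive, $q^*S$ splits as $\coprod_i S_i$ with $S_i\leq[x_i|\varphi_i]$, and by the base case each $S_i=I(T_i)$ for some $T_i\leq[x_i|\varphi_i]$ in $\EE$. Setting $T=\coprod_i T_i\leq P$ we get $IT=q^*S$, and because $Iq$ is epic, Lemma~\ref{ptop_nice} gives $S=\exists_{Iq}(q^*S)=\exists_{Iq}(IT)=I(\exists_q T)$, the last step since $I$ preserves images. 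Hence $R:=\exists_q T\leq A$ is the desired subobject with $IR\cong S$.

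For \emph{subcovering}, I would invoke that the completion of $\bEE_{\bTT'}$ is finitely subcovering: every $B\in\EE'$ admits a regular epi $\coprod_j[y_j|\chi_j]\epi B$ from a finite coproduct of formula-objects. Each $[y_j|\chi_j]$ is a subobject of the context object $A^{y_j}=I(A^{y_j})$, so taking $A:=\coprod_j A^{y_j}\in\EE$ we obtain $IA=\coprod_j A^{y_j}$, a subobject $S:=\coprod_j[y_j|\chi_j]\leq IA$, and the regular epi $S\epi B$, which is exactly the data witnessing subcovering. The only delicate points are bookkeeping: verifying that the shared-language identification of formulas genuinely commutes with $I$, and that subobjects of coproducts split as claimed. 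Both follow directly from extensivity and the explicit description of the pretopos completion in Lemma~\ref{ptop_comp}, so I expect no substantive obstacle beyond assembling these pieces carefully.
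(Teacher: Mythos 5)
Your proof is correct and follows essentially the same route as the paper's: both exploit that a shared language means shared formula-objects, use the pretopos-completion presentation of an object as a (quotient of a) finite coproduct of formula-objects, and reduce subcovering and fullness on subobjects to that base case. Your handling of fullness on subobjects via $\exists_{Iq}\circ (Iq)^{*}=\mathrm{id}$ merely makes explicit the step the paper compresses into the assertion that any $S\leq IA$ has the form $S_i/IE$ for subobjects $S_i\leq IA_i$.
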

\begin{proof}
Suppose that $Q$ is an object of $\EE'$. By the pretopos completion we know that $Q$ has the form $Q=B_i/E$, where $B_i=\varphi_i(x_i)$ is a formula in context $x_i:A_i$ and $E$ is a $\bTT'$-provable finite-object equivalence relation. Since $\bTT$ and $\bTT'$ share the same language, $A_i=IA_i$ already belongs to $\EE$ and the presentation $\coprod_i IA_i\geq \coprod_i B_i \epi Q$ shows that $I$ is subcovering.

Now suppose $A=A_i/E$ is an object in $\EE$ and $S\leq IA$. Then $S=S_i/IE$ for some family of subobjects $S_i\leq IA_i$. Each $S_i$ corresponds to a $\bTT'$-formula $\varphi_i(x_i)$ in context $x_i:A_i$. Since $\bTT$ and $\bTT'$ have the same language, $\varphi_i(x_i)$ also defines a subobject $R_i\leq A_i$ and $I(R_i/E)\cong S_i/IE$. Therefore $I$ is full on subobjects.
\end{proof}

\begin{defn}
A pretopos functor $I:\EE\to\FF$ is called a \emph{quotient} if it is both subcovering and full on subobjects (cf. definition \ref{ptop_morph_props}). In particular, for every $B\in\FF$ there is an $A\in\EE$ and an epimorphism $IA\epi B$.
\end{defn}

\begin{prop}\label{ptop_factor}
Every pretopos functor $I:\EE\to\FF$ has an factorization into a quotient map followed by a conservative functor:
$$\xymatrix@R=3ex{
\EE \ar[rr]^I \ar@{->>}[rd]_{Q} && \FF\\
& \EE\!/I \ar@{>->}[ur]_C &\\
}$$
\end{prop}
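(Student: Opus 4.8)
The plan is to construct the intermediate pretopos $\EE\!/I$ syntactically, as the classifying pretopos of an extension of the canonical theory of $\EE$ obtained by forcing precisely those sequents that $I$ already validates in $\FF$. Write $\bTT$ for the theory $\bTT_{\EE}$ read off from $\EE$ (objects as sorts, subobjects as relations, arrows as function symbols, the subobject order as axioms). The interpretation $I$ assigns to each sort, relation and function symbol of $\bTT$ a corresponding datum in $\FF$, so for any sequent $\varphi\vdash_x\psi$ in the language of $\bTT$ it makes sense to ask whether $\FF\tri I\varphi\vdash I\psi$. Let $\bTT_I$ be $\bTT$ together with every such $I$-valid sequent, and set $\EE\!/I:=\EE_{\bTT_I}$.

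Since $\bTT\subseteq\bTT_I$ is an extension of theories in the same language, the induced functor $Q:\EE\to\EE\!/I$ is subcovering and full on subobjects by the lemma immediately preceding the definition of a quotient; hence $Q$ is a quotient. For the conservative factor I would argue that $I$ descends along $Q$: every axiom of $\bTT_I$ is by construction satisfied by the interpretation $I$, so whenever two parallel arrows of $\EE$ become equal in $\EE\!/I$ their equalizer subobject is forced up to the whole context by some axiom of $\bTT_I$ — which $I$ validates — so $I$ already equates them in $\FF$. (Equality of morphisms is thus handled uniformly as a special case of sequent validity, via the equalizer $\Eq(f,g)\leq A$ together with the equivalence of conservativity, injectivity on subobjects and faithfulness in Lemma \ref{eq_cons}.) This produces a pretopos functor $C:\EE\!/I\to\FF$ with $C\circ Q=I$, and conservativity of $C$ then follows from the same equivalence: if $C\varphi\leq C\psi$ in $\FF$, then since $C\varphi=I\varphi$ and $C\psi=I\psi$ the sequent $\varphi\vdash\psi$ is one of those adjoined to $\bTT_I$, so $\varphi\leq\psi$ already holds in $\EE\!/I$; thus $C$ is injective on subobjects, hence conservative.

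The main obstacle I anticipate is the bookkeeping in the middle step: verifying carefully, through the pretopos completion (Lemma \ref{ptop_comp}), that the object $\EE\!/I$ built from $\bTT_I$ really is a pretopos through which $I$ factors as a genuine pretopos functor. In particular one must check that adjoining the $I$-valid sequents identifies exactly the morphisms and subobjects that $I$ already collapses — no more and no fewer — so that $Q$ absorbs all of the non-conservativity of $I$ while $C$ retains none of it. Once this is confirmed, the factorization $\EE\xrightarrow{Q}\EE\!/I\xrightarrow{C}\FF$ has $Q$ a quotient and $C$ conservative, as required.
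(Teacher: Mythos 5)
Your proof is correct, but it takes a genuinely different route from the one in the text. The text factors $I$ categorically: first through its full image $\bGG$ (an essentially surjective functor followed by a full and faithful one), then applies the pretopos completion to $\bGG$; conservativity of the second factor is extracted by pulling subobjects back along covers and using fullness and faithfulness of the inclusion of the full image. You instead build $\EE\!/I$ syntactically, as the classifying pretopos of $\bTT_I=\bTT_{\EE}$ together with all $I$-valid sequents, so that $Q$ is a quotient directly by the lemma on same-language extensions of theories, and $C$ exists by the universal property of the classifying pretopos and is injective on subobjects of objects $QA$ essentially by construction. By Proposition \ref{qc_ortho} the two intermediate pretoposes must agree up to equivalence, but your construction has a real advantage: the full image $\bGG$ can contain monomorphisms $IB\mono IA$ whose graphs are not of the form $IR$ for any $R\leq A\times B$ in $\EE$ (for instance the graph of a function symbol adjoined conservatively to $\EE$), so the step in the text asserting that the essentially surjective factor is automatically full on subobjects is delicate, whereas your $Q$ is full on subobjects outright because $\bTT_I$ extends $\bTT_{\EE}$ in an unchanged language. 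The ``bookkeeping'' you flag at the end --- promoting injectivity on subobjects from the objects $QA$ to arbitrary objects of $\EE\!/I$ --- is a genuine remaining step but a routine one: given $S\leq B$ in $\EE\!/I$, use the subcovering property of $Q$ to find an epimorphism from a subobject of some $QA$ onto $B$, pull $S$ back, and apply part (4) of Lemma \ref{ptop_nice} (for an epimorphism $p$ the map $p^*$ is injective on subobjects); this is precisely the second half of the argument given in the text and it transfers verbatim to your construction.
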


\begin{proof}
Begin by factoring $I$ as a composite $\EE\labelarrow{Q_0}\bGG\labelarrow{C_0}\FF$, where $Q_0$ is essentially surjective on objects and $C_0$ is full and faithful. $\bGG$ is not a pretopos but it is coherent, and its pretopos completion is our desired factorization
$$\xymatrix{
\EE \ar[rrr]^I \ar[rd]_{Q_0} \ar[rrd]^{Q} &&& \FF\\
&\bGG \ar[r]_J & \GG \ar[ur]_C\\
}$$

Since $Q_0$ is e.s.o., it is both subcovering and full on subobjects. $J$ satisfies the same properties by lemma \ref{ptop_comp}. Since both classes of functors are closed under composition, $Q$ must be a quotient.

Note that if $S\lneq B$ is a proper subobject and $p:A\epi B$ is a (regular) epi then $p^*S\lneq A$ is again proper. This is because the composite $p^*S\to S\mono A$ is \emph{not} epic (because $S$ is proper) and this is equal to the composite $p^*S\mono B\epi A$. If $p^*S\iso B$ were not proper then the composite would be epic, in contradiction to the previous observation.

Suppose $S\leq B$ is a subobject in $\GG$. Because $Q$ is a quotient, there is an object $A\in \EE$ and an epimorphism $p:QA\epi B$. Moreover, the pullback $p^*S\cong S\overtimes{B} QA$ is a subobject of $QA$ and therefore also in the image of in $Q$: $\exists R \leq A$ with $Q(R)\cong S\overtimes{B} QA$.

In order to see that $C$ is conservative, suppose $CS\cong CB$. Since $C$ preserves pullbacks and $C\circ Q\cong I$, this implies
$$IR\cong C(S\overtimes{B} QA)\cong CS\overtimes{CB} C(QA)\cong CB\overtimes{CB} IA \cong IA.$$
As $C_0$ is full and faithful, it follows that $Q_0 R\cong Q_0 A$ in $\bGG$ (and hence $QR\cong QA$ in $\GG$). But $p:QA\epi B$ is an epimorphism, so $p^*$ is injective on subobjects. Thus $QR\cong p^*S\cong QA$ implies $S\cong B$, so that $C$ is injective on subobjects and hence conservative.
\end{proof}

\begin{prop}\label{qc_ortho}
Quotient and conservative pretopos functors are orthogonal. I.e., for any diagram of pretopos functors as below with $C$ conservative and $Q$ a quotient there is a diagonal filler
$$\xymatrix{
\EE \ar@{->>}[d]_Q \ar[rr]^I && \GG \ar@{>->}[d]^{C}\\
\FF \ar[rr]_J \ar@{-->}[urr]^{D} && \HH.\\
}$$
The factorization is unique up to natural isomorphism.
\end{prop}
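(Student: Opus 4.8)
The engine of the proof is a \emph{transfer principle}: for any $X\in\EE$ and subobjects $S,T\le X$, if $QS\le QT$ in $\FF$ then $IS\le IT$ in $\GG$. Indeed, any coherent functor preserves the subobject order, so $QS\le QT$ gives $JQS\le JQT$; since the square commutes, $CI\cong JQ$, this reads $CIS\le CIT$. But $C$ is conservative, hence injective on subobjects by Lemma \ref{eq_cons}, and a subobject-injective coherent functor reflects the order (if $C(IS\wedge IT)\cong CIS$ then $IS\wedge IT\cong IS$). Thus $IS\le IT$, and in particular $QS\cong QT$ forces $IS\cong IT$. This is what lets data that is merely ``correct after applying $Q$'' become genuinely correct after applying $I$.

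To define $D$ on objects, use that $Q$ is subcovering to choose for each $B\in\FF$ a presentation $p_B:QA_B\epi B$ (regular, since epis in a pretopos are regular by Lemma \ref{ptop_nice}) with kernel pair $E_B\le QA_B\times QA_B\cong Q(A_B\times A_B)$. Since $Q$ is full on subobjects, pick $\tilde E_B\le A_B\times A_B$ with $Q\tilde E_B\cong E_B$. The relation $\tilde E_B$ need not be an equivalence relation in $\EE$ --- the transitive closure one would like to form is an infinite join and need not exist --- but this is exactly where the transfer principle does the work: each equivalence-relation axiom (reflexivity $\Delta\le\tilde E_B$, symmetry $\tilde E_B^{\mathrm{op}}\cong\tilde E_B$, transitivity $\tilde E_B\circ\tilde E_B\le\tilde E_B$) is a subobject (in)equality built from $\tilde E_B$ by operations that $Q$ preserves, and each holds after applying $Q$ because $E_B$ is an equivalence relation. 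By transfer the corresponding (in)equalities hold for $I\tilde E_B$, so $I\tilde E_B$ \emph{is} an equivalence relation on $IA_B$ in $\GG$. Define $DB:=IA_B/I\tilde E_B$. Applying the pretopos functor $C$ and using $CI\cong JQ$ gives $C(DB)\cong JQA_B/JQ\tilde E_B\cong J(QA_B/E_B)\cong JB$, so $C\circ D\cong J$ on objects.

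$D$ on morphisms runs along the same lines. A map $f:B\to B'$ is a functional relation $\Gamma\le QA_B\times QA_{B'}$ compatible with $E_B,E_{B'}$, which we lift through $Q$ (full on subobjects) to $\tilde\Gamma\le A_B\times A_{B'}$; the functionality axioms ($E_B\circ\tilde\Gamma\circ E_{B'}\cong\tilde\Gamma$, single-valuedness $\tilde\Gamma^{\mathrm{op}}\circ\tilde\Gamma\le E_{B'}$, totality $E_B\le\tilde\Gamma\circ\tilde\Gamma^{\mathrm{op}}$) again hold after $Q$ and so, by transfer, hold for $I\tilde\Gamma$, which therefore defines $Df:DB\to DB'$. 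Independence of the chosen lift $\tilde\Gamma$, and functoriality, follow because composition of relations, identities, and equalities of morphisms are all preserved by $I$ up to the transfer principle; the same bookkeeping shows $D$ is a pretopos functor. Taking for each $QX$ the trivial presentation $\tilde E=\Delta_X$ yields $DQX=IX/\Delta_{IX}\cong IX$, whence $D\circ Q\cong I$; combined with the previous paragraph this is the required commutativity. I expect this verification --- that $D$ is well defined and functorial independently of all the choices of presentations and lifts --- to be the main obstacle, though it is entirely driven by the transfer principle rather than by any new idea.

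Finally, uniqueness. If $D$ and $D'$ are two fillers (pretopos functors) with $DQ\cong I\cong D'Q$ and $CD\cong J\cong CD'$, then for each $B$ apply the preserved regular epis $Dp_B$ and $D'p_B$ to the presentation: both exhibit $DB$ and $D'B$ as the quotient of $DQA_B\cong IA_B\cong D'QA_B$ by the kernel pair $D(E_B)\cong I\tilde E_B\cong D'(E_B)$. Since quotients are unique up to canonical isomorphism, these isomorphisms assemble into a natural isomorphism $D\cong D'$ compatible with both triangles.
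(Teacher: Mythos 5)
Your proposal is correct and follows essentially the same route as the paper's (sketched) proof: present each $B\in\FF$ as a quotient of some $QA$ via subcovering, lift the kernel pair through $Q$ using fullness on subobjects, use conservativity of $C$ together with $CI\cong JQ$ to see that the $I$-image is again an equivalence relation, define $D(B)$ as its quotient, handle morphisms via provably functional relations, and derive uniqueness from preservation of quotients. Your ``transfer principle'' is just a clean repackaging of the paper's observation that conservative functors reflect equivalence relations and provable functionality.
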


\begin{proof}[Sketch]
Consider an object $B\in\FF$. Since $Q$ is a quotient, there is an $A\in \EE$ together with an epimorphism $e:QA\epi B$. The kernel pair of $e$ is a subobject of $Q(A\times A)$ and so has a preimage $K\mono A\times A$ in $\EE$. Applying $J$ to $B$ and observing that $JQ\cong CI$ gives us a coequalizer diagram in $\HH$:
$$CI(K)\rightrightarrows CI(A) \epi J(B).$$

Parallel arrows $k_1,k_2:K\rightrightarrows A$ define an equivalence relation just in case certain monics defined from $k_1$ and $k_2$ are, in fact, isomorphisms. For example, the reflexivity and transitivity conditions (where $\tau:A\times A\iso A\times A$ the twist isomorphism) can be expressed by requiring that the marked arrows in the following pullback diagrams are isos:
$$\xymatrix{
A\wedge K \ar@{>->}[d] \ar@{>->}[r]^-{\sim} & A \ar@{>->}[d] && \tau(K)\wedge K \ar@{>->}[d]^{\vertsim} \ar@{>->}[r]^-{\sim} & \tau(K) \ar@{>->}[d] \\
K \ar@{>->}[r] & A\times A && K \ar@{>->}[r] & A\times A\\
}$$
A more complicated diagram involving subobjects of $A\times A\times A$ expresses transitivity in much the same way. Since a conservative morphism reflects isomorphisms, it must also reflect equivalence relations.

Since $C$ is conservative, this shows that $IK\rightrightarrows IA$ is already an equivalence relation in $\GG$, and we set $D(B)$ equal to its quotient. Arrows can be handled similarly. Each morphism $b:B\to B'$ induces a subobject $S_b\mono IA\times IA'$ which is provably functional modulo the equivalence relations defining $B$ and $B'$. This has a preimage $R_b\in \EE$ and the property of being provably functional modulo equivalence is reflected by conservative morphisms, so that $I(R_b)$ induces a morphism $D(B)\to D(B')$ as desired.

Essential uniqueness of the functor follows from the fact that $D$ must preserve quotients; if the kernel pair $QK\rightrightarrows QA$ maps to the kernel pair $IK\rightrightarrows IA$ (so that $D\circ Q\cong I$), then $D$ must send $B$ to the quotient of $IK$, as above. A similar argument applies to arrows. For further details, please see Makkai \cite{makkai_ultra}.
\end{proof}

\begin{cor}
The factorization in lemma \ref{ptop_factor} is unique up to equivalence in the category $\EE\!/I$ and also functorial: for any square in $\Ptop$ there is a factorization (unique up to natural isomorphism)
$$\xymatrix{
\EE \ar@/^3ex/[rr]^I \ar@{->>}[r] \ar[d] & \EE\!/I \ar@{>->}[r] \ar@{-->}[d] & \FF \ar[d]\\
\EE' \ar@/_3ex/[rr]_J \ar@{->>}[r] & \EE'/J \ar@{>->}[r] & \FF'\\
}$$
\end{cor}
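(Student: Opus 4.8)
The plan is to deduce both assertions formally from the orthogonality established in Proposition \ref{qc_ortho}; no further pretopos-theoretic input is required, since uniqueness and functoriality are the standard consequences of having an orthogonal factorization system, once one has paid attention to the 2-categorical bookkeeping.

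For uniqueness, suppose $\EE\xrightarrow{Q}\EE\!/I\xrightarrow{C}\FF$ and $\EE\xrightarrow{Q'}\GG\xrightarrow{C'}\FF$ are two quotient-conservative factorizations of $I$. First I would form the square with $Q$ on the left, $C'$ on the right, $Q'\circ(\,\cdot\,)$ across the top and $C$ along the bottom; it commutes because $C'Q'\cong I\cong CQ$. Since $Q$ is a quotient and $C'$ is conservative, Proposition \ref{qc_ortho} yields a filler $D:\EE\!/I\to\GG$ with $DQ\cong Q'$ and $C'D\cong C$. Symmetrically one obtains $D':\GG\to\EE\!/I$ with $D'Q'\cong Q$ and $CD'\cong C'$. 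To see these are mutually inverse, observe that both $D'D$ and $\id_{\EE/I}$ fill the lifting problem with $Q$ on both edges out of $\EE$ and $C$ on both edges into $\FF$; the essential uniqueness of the filler then forces $D'D\cong\id$, and likewise $DD'\cong\id$. Hence $D$ is an equivalence compatible with the two quotient legs and the two conservative legs, which is exactly the asserted uniqueness of the factorization in $\EE\!/I$.

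For functoriality, start from a commuting square $G\circ I\cong J\circ F$ in $\Ptop$ (with $F:\EE\to\EE'$ on the left and $G:\FF\to\FF'$ on the right) and factor $I=C\circ Q$, $J=C'\circ Q'$. I would then assemble the rectangle
$$\xymatrix{
\EE \ar@{->>}[d]_{Q} \ar[rr]^{Q'\circ F} && \EE'/J \ar@{>->}[d]^{C'}\\
\EE\!/I \ar[rr]_{G\circ C} && \FF',\\
}$$
which commutes since $C'(Q'F)\cong JF\cong GI\cong G(CQ)=(GC)Q$. Because $Q$ is a quotient and $C'$ is conservative, Proposition \ref{qc_ortho} supplies a diagonal filler $\EE\!/I\to\EE'/J$, unique up to natural isomorphism, and its two triangles are precisely the commutativities demanded by the dashed arrow in the statement. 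Compatibility with composition of squares and with identities then follows again from essential uniqueness: given two composable squares, the composite of the two induced fillers solves the lifting problem attached to the pasted square, so it must be canonically isomorphic to the filler produced directly, and the identity square plainly admits $\id$ as a filler.

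Since every step is mediated by the \emph{non-strict} orthogonality of Proposition \ref{qc_ortho}, the one genuine point of care is the 2-categorical coherence: all displayed ``equalities'' of functors are really natural isomorphisms, and I would track these isomorphisms explicitly so that the phrase ``unique up to natural isomorphism'' is justified rather than merely asserted. This coherence—deciding, for instance, that the filler is determined only up to a canonical $2$-cell and that these $2$-cells compose associatively—is the main (but entirely routine) obstacle; there is no obstruction arising from pretopos structure itself.
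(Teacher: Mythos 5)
Your proposal is correct and follows essentially the same route as the paper: both legs of the corollary are derived from the orthogonality of Proposition \ref{qc_ortho}, with uniqueness obtained by producing fillers in both directions and invoking essential uniqueness of the filler to see that their composites are isomorphic to identities, and functoriality obtained from the filler for the rectangle with $Q$ on the left and $C'$ on the right. Your additional remarks on tracking the coherence $2$-cells and on compatibility with pasting of squares are elaborations the paper leaves implicit, but they do not change the argument.
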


\begin{proof}
First suppose that we have two quotient-conservative factorizations of $I$, through $\GG$ and $\GG'$. By the previous proposition, there are essentially unique diagonal fillers
$$\xymatrix{
\EE \ar@{->>}[d] \ar@{->>}[rr] && \GG \ar@{>->}[d] \ar@{-->}@/_/[lld]_D \\
\GG' \ar@{>->}[rr] \ar@/_/@{-->}[rru]_{D'} && \FF\\
}$$
The composite $D'\circ D$ is a diagonal filler between $\GG$ and itself so, by uniqueness of these diagonals, it must be naturally isomorphic to $1_{\GG}$. Similarly, $D\circ D'\cong 1_{\GG'}$, so $\GG$ and $\GG'$ are equivalent categories.

In order to establish functoriality, simply observe that there is a diagonal filler
$$\xymatrix{
\EE \ar@{->>}[d] \ar[r] & \EE' \ar@{->>}[r] & \EE'/J \ar@{>->}[d] \\
\EE\!/I \ar@{>->}[r] \ar@{-->}[urr] & \FF \ar[r] & \FF'.\\
}$$

\end{proof}

It is worth noting that the (coherent) sheaf functor sends the quotient/conservative factorization on pretoposes to the surjection/embedding factorization of geometric morphisms.
$$\xymatrix{
\EE \ar@{->>}[r]&\EE/I \ar@{>->}[r] & \FF& \rm{in\ }\Ptop\\
\Sh(\EE) & \ar@{>->}[l] \Sh(\EE/I) & \ar@{->>}[l] \Sh(\FF)& \rm{in\ }\Top\\
}$$
See \cite{SGL} VII.4 for details of this construction in $\Top$.

\section{Semantics, slices and localization}

In this section we translate some basic model-theoretic concepts into the context of pretoposes. Specifically, we will consider certain \emph{model-theoretic} extensions $\bTT\subseteq\bTT'$ which involve new constants and axioms, but do not involve new sorts, functions or relations. Although some of the specific definitions may be new, the material is well-known and folklore. Most can be found, at least implicitly, in Makkai \& Reyes \cite{MakkaiReyes}.

Fix a pretopos $\EE$ and an object $A\in\EE$. Observe that the slice category $\EE\!/A$ is again a pretopos. Most of the pretopos structure--pullbacks, sums and quotients--is created by the forgetful functor $\EE\!/A\to\EE$. The remaining structure, products, exist because the identity $1_A$ is terminal in $\EE\!/A$. Moreover, the forgetful functor has a right adjoint $A^\times: \EE\to\EE\!/A$ sending an object $B$ to the second projection $B\times A\to A$. As a right adjoint $A^\times$ preserves limits and the definition of pretoposes assumes pullback-stability for sums and quotients. Therefore $A^\times$ is a pretopos functor.

\begin{lemma}
$\EE\!/A$ classifies $A$-elements. More specifically, there is an equivalence of categories
$$\Ptop(\EE\!/A,\SS)\cong \Sigma_{M\models\EE} \Hom_{\SS}(1,A^M).$$
\end{lemma}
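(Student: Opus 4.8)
The plan is to exhibit the equivalence by a pair of explicit functors $\Phi$ and $\Psi$ in opposite directions and verify that they are mutually quasi-inverse. Write $\Sigma_{M\models\EE}\Hom_\SS(1,A^M)$ for the total category whose objects are pairs $(M,a)$ with $M\colon\EE\to\SS$ a model and $a\colon 1\to A^M$ a global element, and whose morphisms $(M,a)\to(M',a')$ are model homomorphisms $\theta\colon M\To M'$ with $\theta_A\circ a=a'$. The key observation is that $\EE\!/A$ carries a \emph{generic point}: the diagonal $\Delta\colon A\to A\times A$ is a section of $\pi_2$, so it defines a morphism $\delta\colon 1_{\EE\!/A}\to A^\times A$ out of the terminal object $1_{\EE\!/A}=(\id_A\colon A\to A)$.

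First I would define $\Phi$, sending a pretopos functor $N\colon\EE\!/A\to\SS$ to the pair $(M,a)$ with $M=N\circ A^\times$ and $a=N(\delta)\colon 1_\SS\to N(A^\times A)=A^M$ (using $N(1_{\EE\!/A})=1_\SS$). Both $A^\times$ and $N$ are pretopos functors --- the former by the preceding lemma --- so $M$ is one too, and a natural transformation $N\To N'$ whiskers to $M\To M'$ while respecting the points by naturality at $\delta$. In the opposite direction $\Psi$ sends $(M,a)$ to $N=a^*\circ(M/A)$, where $M/A\colon\EE\!/A\to\SS/A^M$ is the functor induced on slices and $a^*\colon\SS/A^M\to\SS/1\cong\SS$ is pullback along $a$; both are pretopos functors, since slicing and pullback preserve the pretopos structure, so $N$ is one. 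Explicitly, $N(B\xrightarrow{f}A)$ is the fibre $M(B)\times_{A^M}1$ of $M(f)$ over $a$.

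The crux is the round-trip $\Psi\Phi\cong\id$, namely that an arbitrary $N$ is reconstructed from $M=NA^\times$ together with $a=N(\delta)$. This rests on a structural identity: every object $(B,f)$ of $\EE\!/A$ is the pullback of $A^\times$-objects along the generic point,
$$(B,f)\ \cong\ A^\times B\ \overtimes{A^\times A}\ 1_{\EE\!/A},$$
with legs $A^\times(f)\colon A^\times B\to A^\times A$ and $\delta\colon 1_{\EE\!/A}\to A^\times A$. One checks this directly: the pullback of $(b,x)\mapsto(f(b),x)$ against $x\mapsto(x,x)$ is $\{(b,x):f(b)=x\}\cong B$ as an object over $A$. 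Applying the pullback-preserving functor $N$, and using $N(A^\times(f))=M(f)$ and $N(\delta)=a$, yields $N(B,f)\cong M(B)\times_{A^M}1=\Psi\Phi(N)(B,f)$, naturally in $(B,f)$. The reverse round-trip $\Phi\Psi\cong\id$ is a shorter computation: $M'(B)=N(A^\times B)$ is the fibre of $M(B)\times A^M\xrightarrow{\pi_2}A^M$ over $a$, which is canonically $M(B)$ because $M$ preserves products, and tracing $N(\delta)$ through the same identification returns $a$.

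I expect the main obstacle to be exactly this structural decomposition and the associated bookkeeping: confirming that $\Psi\Phi(N)$ agrees with $N$ not only on the objects $A^\times B$ (which already pin down $M$) but on \emph{all} objects $f\colon B\to A$, and that the comparison isomorphisms are natural both in the slice variable and in $N$. Once the pullback identity is established, the remaining verifications --- compatibility with model homomorphisms and naturality of the equivalence in $\SS$ --- follow formally from preservation of finite limits by pretopos functors.
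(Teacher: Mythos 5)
Your proposal is correct and follows essentially the same route as the paper: recover $M$ as the reduct along $A^\times$, recover $a$ as the interpretation of the diagonal section, and send $(M,a)$ back to the fibre functor. The only difference is that you explicitly carry out the round-trip verification via the pullback identity $(B,f)\cong A^\times B\overtimes{A^\times A}1_{\EE\!/A}$, which the paper leaves to the reader; that identity is stated and used correctly.
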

\begin{proof}
An object of the latter category in the statement of the lemma is a pair $\<M,a\>$ where $M$ is a model $\EE\to\SS$ and $a$ is a (global) element of the $\EE$-definable set $A^M$. A morphism $\<M,a\>\to\<M',a'\>$ is an $\EE$-model homomorphism $h:M\to M'$ such that $h_A(a)=a'$.

Suppose that $N:\EE\!/A\to\SS$; we recover $M$ from $N$ by taking the reduct along $A^\times$: $\varphi^M\cong (\varphi\times A)^N$. To recover $a$, note that the diagonal is a global section in $\EE\!/A$
$$\xymatrix{ \Delta_A:1_{\EE\!/A} \cong A \ar[r] & A^\times(A)}.$$ 
Therefore its interpretation in $N$ is a global element $(\Delta_A)^N:1\lto (A\times A)^N=A^M$. Similarly, we can recover an $\EE$-model homomorphism by composition $\xymatrix{\EE \ar[r]^{A^\times} & \EE\!/A \rtwocell^M_N{h} & \SS}$. The following naturality square shows that $h(a)=a'$:
$$\xymatrix{
1 \ar@{=}[d] \ar[rr]^{a'=(\Delta_A)^{M'}} && A^{M'} \ar[d]^{h_A} \\
1 \ar[rr]_{a=(\Delta_A)^M} && A^M.\\
}$$

Conversely, given $M$ and $a$ we may define $N:\EE\!/A\to\SS$ by sending each map $\sigma:E\to A$ to the fiber of $\sigma^M$ over $a$. The same construction applies to morphisms: when $h(a)=a'$ the map $h_E$ restricts to the fiber $\sigma^N$:
$$\xymatrix@=2ex{
\sigma^{N'} \ar[rr] \ar[dd] \ar@{-->}[rd] &&E^{M'} \ar[dd]|\hole \ar[rd]^{h_E}\\
&\sigma^N \pbcorner \ar[rr] \ar[dd] && E^M \ar[dd]^{\sigma^M}\\
1 \ar@{=}[rd] \ar[rr]^(.3){a'}|\hole && A^{M'} \ar[rd]^{h_A}\\
&1 \ar[rr]_a && A^M.\\
}$$

We leave it to the reader to check that these maps define an equivalence of categories.
\end{proof}

More generally, the pullback along any arrow $\tau:A\to B$ is a pretopos functor. If $N:\EE\!/A\to\SS$ classifies an element $a\in A^M$ then the composite $N\circ\tau^*$ classifies the element $\tau^M(a)\in B^M$. In particular, an element $a\in A^M$ satisfies a formula $\varphi\mono A$ just in case the associated functor $\<M,a\>:\EE\!/A\to \SS$ factors through the pullback $\EE\!/A\to \EE\!/\varphi$.

A (model-theoretic) \emph{type} in context $A$ is a filter of formulas $p\subseteq\Sub_{\EE}(A)$. This is precisely a set of mutually consistent formulas $\varphi(x)$ in a common context $x:A$. Given an element $a\in A^M$ we write $a\models \varphi$ if $M\models\varphi(a)$. In that case, there is an essentially unique factorization of $\<M,a\>$ through $\EE\!/\varphi$.
$$\xymatrix{
\EE\!/A \ar[rr]^{\<M,a\>} \ar[d]_{\varphi\vdash A} && \SS\\
\EE\!/\varphi \ar@{-->}[urr]_{a\models\varphi} &\\
}$$

Similarly, we write $a\models p$ if $a\models \varphi$ for every $\varphi\in p$. This induces a functor out of the filtered colimit
$$\xymatrix{
\EE\!/A \ar[r]^{\<M,a\>} \ar[d]_{\varphi\vdash A} & \SS\\
\EE\!/\varphi \ar@{-->}[ur] \ar[r]_{\psi\vdash\varphi} & \EE\!/\psi \ar[r]_-{p\vdash\psi} \ar@{-->}[u] & \colim_{\varphi\in p} E/\varphi. \ar@{-->}[ul] \\
}$$
It follows that we can define the classifying pretopos for $p$-elements by taking a directed (2-)colimit\footnote{See Lack \cite{lack} for the definition of (pseudo-)colimits in a 2-category.}
$$\EE_p\simeq\colim_{\varphi\in p} \EE\!/\varphi.$$
We will see in a moment that $\EE_D$ is a pretopos. Although $\EE_D$ is only defined up to equivalence of categories, the same is true for the classifying pretopos of a theory.

More generally we have the following definition:

\begin{defn}\label{def_localization}
Given a filtered diagram $D:J\to\EE^{\op}$ the \emph{localization} of $\EE$ at $D$ is the colimit (in $\Cat$) of the composite $j\mapsto D_j\mapsto \EE\!/D_j$:
$$\EE_D\simeq \colim_{j\in J}\EE\!/D_j$$
\end{defn}

Given a map $i\to j$ in $J$ we let $d_{ij}:D_j\to D_i$ so that $d_{ij}^*:\EE\!/D_i\to\EE\!/D_j$. For each $j\in J$, $\tilde\jmath$ denotes the colimit injection $\EE\!/D_j\to \EE_D$. 

We may take $\Ob(\EE_D)=\coprod_j \Ob(\EE\!/D_j)$. Given $\sigma\in\EE\!/D_i$ and $\tau\in\EE\!/D_j$, an arrow $f:\sigma\to\tau$ is defined by a span $D_i\from D_k \to D_j$ together with a map $\overline{f}:d_{ki}^*\sigma\to d_{kj}^*\tau$. Similarly, two arrows $\overline{f}\in\EE\!/D_i$ and $\overline{g}\in\EE\!/D_j$ are identified in the colimit just in case there is a span as above such that $d_{ki}^*\overline{f}=d_{kj}^*\overline{g}$. 

\begin{lemma}\label{localization_limits}
If $\EE$ is a pretopos and $D$ is a filtered diagram in $\EE^{\op}$ then $\EE_D$ is again a pretopos and a finite (co)cone in $\EE_D$ is (co)limiting just in case it is the image of a (co)limit in one of the slice categories.
\end{lemma}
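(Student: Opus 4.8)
The plan is to exploit the two facts already in place in this section: each slice $\EE/D_j$ is a pretopos, and each transition functor $d_{ij}^*:\EE/D_i\to\EE/D_j$, being a pullback functor, is a pretopos functor (so it preserves finite limits, finite coproducts, quotients, and their pullback-stability, hence all finite colimits). Everything then reduces to the general principle that a \emph{finite} diagram in a filtered colimit of categories already lives at a single stage. Concretely, I would first prove a compactness lemma: for any functor $F:I\to\EE_D$ out of a finite category $I$ there is an index $j\in J$ and a functor $\bar F:I\to\EE/D_j$ with $F\cong\tilde\jmath\circ\bar F$. Since $\Ob(\EE_D)=\coprod_j\Ob(\EE/D_j)$, each of the finitely many objects $F(i)$ lives at some stage, and filteredness of $J$ supplies a common upper bound; each of the finitely many arrows is represented by a span, which we push forward to a common later stage; finally the finitely many functoriality equations (composition and identities) that hold in $\EE_D$ must, by the description of equality of morphisms in the colimit, already be witnessed at some further stage $j$. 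Assembling these bounds—and carrying along the coherence isomorphisms $d_{jk}^*d_{ij}^*\cong d_{ik}^*$ that make this a pseudo-colimit—yields $\bar F$ at a single stage $j$.

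Next I would prove the direction $(\Leftarrow)$, that the image of a (co)limit is a (co)limit, which simultaneously shows that $\EE_D$ is a pretopos. Given $\bar F:I\to\EE/D_j$, form its limit $\bar L$ in the pretopos $\EE/D_j$; I claim $\tilde\jmath(\bar L)$, with the image cone, is a limit of $\tilde\jmath\circ\bar F$ in $\EE_D$. For the existence of factorizations, take any competing cone with vertex $X$; applying the compactness lemma to the finite diagram built from $\bar F$, $X$, and the cone legs transports everything to a common stage $k\geq j$. There $d_{jk}^*\bar L$ is still a limit, so the transported cone factors uniquely through it in $\EE/D_k$, and the image of this factorization under $\tilde k$ is the required map in $\EE_D$. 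For uniqueness, two factorizations are represented at some stage and, if equal in $\EE_D$, are equal at a further stage $k'$, where uniqueness of the limit factorization in $\EE/D_{k'}$ forces them equal. The colimit case is exactly dual, using preservation of finite colimits by the transition functors. Since every finite limit and colimit in $\EE_D$ can thus be computed at a stage, and the pretopos axioms (strict initial object, disjoint pullback-stable coproducts, exact pullback-stable quotients) hold at each stage and are preserved by the injections, $\EE_D$ inherits the full pretopos structure.

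Finally, for $(\Rightarrow)$, that every (co)limiting cone arises as such an image, I would begin with a limiting cone over a finite diagram $F$ in $\EE_D$, use the compactness lemma to realize $F$ and its cone at a stage $j$ as $\bar F$, and form the limit $\bar L$ of $\bar F$ in $\EE/D_j$. By the direction just proved, $\tilde\jmath(\bar L)$ carries a limiting cone in $\EE_D$ over $F$; since the given cone is also limiting over the same diagram, the universal property yields a canonical isomorphism identifying the given cone with that of $\tilde\jmath(\bar L)$. Thus the given cone is, up to canonical isomorphism, the image of the limit $\bar L$ from a slice, and dually for colimits.

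The main obstacle is the careful bookkeeping of the pseudo-colimit, both in the compactness lemma and in the uniqueness clauses: one must check that each finite collection of objects, arrows, \emph{and} equations can be transported to a single stage compatibly with the coherence isomorphisms, and that an equality holding in $\EE_D$ is already witnessed at some finite stage. These are the standard but delicate filtered-colimit arguments; once they are secured, every pretopos-theoretic claim follows by computing in a single slice $\EE/D_j$ and transporting along $\tilde\jmath$ and the $d_{jk}^*$.
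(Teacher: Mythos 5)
Your proposal is correct and follows essentially the same strategy as the paper's proof: establish that any finite diagram (with its cones and equations) can be represented at a single stage of the filtered colimit, compute the (co)limit in that slice pretopos, and use the fact that the transition functors preserve the relevant structure to transport the universal property to $\EE_D$. You are somewhat more explicit than the paper about the pseudo-colimit bookkeeping and about the converse direction (that every limiting cone arises, up to canonical isomorphism, from a slice), but these are refinements of the same argument rather than a different route.
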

\begin{proof}
Because any two indices $D_i$ and $D_j$ have a common span $D_i\from D_k\to D_j$, representatives from the two categories can be compared in $D_k$. Therefore, for any finite family of objects and arrows $\EE_D$ may choose representatives belonging to a single slice category $\EE\!/D_k$. Pulling back along a further map $D_l\to D_k$ we may realize all of the (finite number of) equations holding among this family. Thus any finite diagram in $\EE_D$ has a representative in a single slice.

This allows us to compute limits in $\FF$ using those in the slices. Suppose that $F$ is a finite diagram in $\EE_D$ and find a diagram of representatives $\overline{F}\in\EE\!/D_j$. Let $\overline{L}$ denote the limit of these representatives in $\EE\!/D_j$. Now suppose that $Z\to F$ is a cone in $\EE_D$. By the previous observation there is a further map $D_k\to D_j$ such that the cone and the diagram together have representatives $\overline{Z}\to d_{jk}^*\overline{F}$ in $\EE\!/D_k$.

Since pullbacks preserve limits, this induces a map $\overline{Z}\to d_{jk}^*\overline{L}$. This makes $\tilde\jmath(\overline{L})$ a limit for $F$ in $\EE_D$; uniqueness in $\EE_D$ follows from uniqueness in each of the slices. Essentially the same argument shows that coproducts and quotients may be computed in slices, so $\EE_D$ is a pretopos and each map $\tilde\jmath:\EE\!/D_j\to\EE_D$ preserves pretopos structure. 

\label{ptop_alg}
From a more sophisticated perspective, we may observe that the theory of pretoposes is itself quasi-algebraic. This means that we may write down the theory of pretoposes using sequents of Cartesian formulas (using only $\{=,\wedge\}$). This follows from the facts that pretopos structure amounts to the existence of certain adjoints, and the theory of adjoints is equational (see \cite{awodeyCT}, ch. 9). It is also important that provable functionality and equivalence relations are Cartesian-definable.

The categorical interpretation $\{\wedge, =\}$ involves only finite limits, so any functors which preserve these must also preserve models of Cartesian theories. In $\Sets$ filtered colimits commute with finite limits, so a filtered colimit of pretoposes in $\Sets$ is again a pretopos.
\end{proof}

\section{The method of diagrams}\label{sec_diagrams}

Classically the Henkin diagram of a $\bTT$-model $M$ is an extension $\bTT\subseteq\rm{Diag}(\bTT)$ constructed by:
\begin{itemize}
\item Extending the language $\LL(\bTT)$ to include new constants $\bf{c}_a$ for each $a\in|M|$.
\item Adding the axiom $\vdash \varphi(\bf{c}_a)$ for any $\bTT$-formula $\varphi(x)$ such that $a\in \varphi^M$.
\end{itemize}
In this section we review the categorical interpretation for Henkin diagrams. Here we work in the semantic context $\SS=\Sets$.

Fix a model $M:\EE\to\Sets$. From this one defines the \emph{category of elements} $\int_{\EE} M$. For the objects of $\int_{\EE} M$ we take the disjoint union $\displaystyle\coprod_{A\in\EE} A^M$. A morphism $\<b\in B^M\>\to\<a\in A^M\>$ is an arrow $f:B\to A$ such that $f^M(b)=a$. We call the pair $\<f,b\>$ a \emph{specialization} for $a$, and write $f:b\mapsto a$ to indicate that $\<f,b\>$ is a specialization of $a$.

Composition in $\int_{\EE} M$ is computed as in $\EE$. The existence of products and equalizers in $\EE$ guarantees that $(\int_{\EE} M)^{\op}$ is a filtered category. A specialization for a finite family of elements $a_i\in A_i^M$ is a specialization of the tuple, i.e. a family of maps $\<f_i: b\mapsto a_i\>$.

\begin{defn}[The pretopos diagram]\label{def_diagram}
Given a model $M$, the \emph{localization} $\EE_M$ is the localization of $\EE$ along the filtered diagram $(\int_{\EE} M)^{\op}\to\Ptop$ defined by $\<a\in A^M\>\mapsto A\mapsto \EE\!/A$. We also write $\DD(M)\simeq \underset{a\in\int M}{\colim} \EE/A$ and call this category the \emph{diagram} of $M$.
\end{defn}

An object in $\DD(M)$ is called a \emph{parameterized set} in $M$ or a p-set over $M$; such an object is defined by a triple $\<A,\sigma,a\>$ where $\sigma\in\EE\!/A$ and $a\in A^M$. The \emph{context} of the p-set is $A$ and its \emph{domain} is the domain of $\sigma$. We usually suppress the context and denote this object $\<\sigma,a\>$.

A \emph{morphism of p-sets} $\<\sigma,a\>\to\<\tau,b\>$ consists of a specialization $\<f,g\>:c\mapsto\<a,b\>$ together with a map between the pullbacks in $\EE\!/C$:
$$\xymatrix{
D \ar[d]_{\sigma} & \ar[l] \raisebox{1.5ex}{$f^*\sigma \stackrel{h}{\lto} g^*\tau$} \ar[r] \ar/va(-8) 1.6cm/;[d] \ar/va(-6) 2.7cm/;[d] &E \ar[d]^{\tau}\\
A & \ar[l]^{f} C  \ar[r]_{g} & B\\
}$$
Though technically $\<h,c,f,g\>:\<\sigma,a\>\to\<\tau,b\>$, we write $h/c:f^*\sigma\to g^*\tau$ as a shorthand for this data.

Two parallel morphisms $h/c$ and $h'/c'$ from $\<\sigma,a\>\to\<\tau,b\>$ induce a diagram:
$$\xymatrix@=2ex{
&&& f^*\sigma \ar[llldd] \ar[rd] \ar[rr]^h &&g^*\tau \ar[rrrdd] \ar[ld]&\\
&&&&C \ar[rrd]^g \ar[lld]_f &&\\
D \ar[rr]^{\sigma} && A &&&& B && E \ar[ll]_{\tau}\\
&&&& C' \ar[urr]_{g'} \ar[ull]^{f'} &&\\
&&& f'^*\sigma \ar[llluu] \ar[ru] \ar[rr]_{h'} &&g'^*\tau \ar[rrruu] \ar[lu]&\\
}$$
These two maps are equal if there is a further specialization $\<k,k'\>:d\to \<c,c'\>$ such that (i) $\<k,k'\>$ factors through the pullback $C\overtimes{A\times B} C'$ and (ii) $k^*h$ and $k'^*h'$ are equal (up to canonical isomorphism).

More specifically, the first condition ensures that $f\circ k=f'\circ k'$, inducing a canonical isomorphism $k^*f^*\cong (f\circ k)^*=(f'\circ k')^*\cong k'^*f'^*$ (and similarly for $k^*g^*\cong k'^*g'^*$). Then $h/c$ and $h'/c'$ are equal if the following diagram commutes:
$$\xymatrix{
k^*f^*\sigma \ar@{=}[d]^{\vertsim} \ar[r]^{k^*h} & k^*g^*\tau \ar@{=}[d]^{\vertsim}\\
k'^*f'^*\sigma \ar[r]_{k'^*h'} & k'^*g'^*\tau\\
}$$

Now suppose that $h/b$ and $k/c$ are composable at $\<\sigma,a\>$. The definitions of $h$ and $k$ involve maps $f:B\to A$ and $g:C\to A$; let $p_B$ and $p_C$ denote the projections from their pullback. Then we can define a map in $\EE\!/(B\overtimes{A} C)$ by
$$\xymatrix{
l:p_B^*(\dom(h)) \ar[r]^-{p_B^*h} & p_B^*f^*\sigma\cong p_C^*g^*\sigma \ar[r]^-{p_C^*k} & p_C^*(\cod(k))\\
}$$ 
Since $h/b$ and $k/c$ are morphisms of p-sets, $f(b)=a=g(c)$ and the pair $\<b,c\>$ belongs to $(B\overtimes{A} C)^M$. Then we may define the composite by $(k/c)\circ(h/b)=l/\<b,c\>$.

\begin{prop}
$\DD(M)$ is the classifying pretopos for the classical diagram $\rm{Diag}(M)$.
\end{prop}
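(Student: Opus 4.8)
The plan is to verify the universal property defining a classifying pretopos: for every pretopos $\SS$ I must produce an equivalence $\Ptop(\DD(M),\SS)\simeq\Diag(M)\Mod(\SS)$, pseudonatural in $\SS$. First I would unwind the right-hand side. Since we work in the pretopos doctrine, a model of the classical diagram $\Diag(M)$ in $\SS$ is a pretopos functor $N:\EE\to\SS$ (i.e. a $\bTT$-model) together with an interpretation $a^N\in\Hom_\SS(1,A^N)$ of the constant $\cc_a$ for each element $a\in A^M$ of each object $A\in\EE$, subject to the diagram axioms $a\in\varphi^M\Rightarrow a^N\in\varphi^N$. Reading these axioms off the graphs of the morphisms and monos of $\EE$, they are precisely equivalent to demanding that the assignment $a\mapsto a^N$ respect every specialization: whenever $f:b\mapsto a$ in $\int_{\EE}M$ one has $f^N(b^N)=a^N$. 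Thus a $\Diag(M)$-model is exactly a pretopos functor $N:\EE\to\SS$ equipped with a specialization-compatible family $\{a^N\}$ of global elements indexed by $\int_{\EE}M$, morphisms being natural transformations of the underlying functors that carry one family to the other.

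Next I would compute the left-hand side. By Definition \ref{def_diagram}, $\DD(M)$ is the localization $\colim_{\langle a\in A^M\rangle}\EE/A$, a filtered pseudo-colimit; by Lemma \ref{localization_limits} this is a genuine pretopos whose structure-preserving injections $\EE/A\to\DD(M)$ compute all finite limits, sums and quotients in the slices, so a pretopos functor out of $\DD(M)$ is the same thing as a pseudo-cocone of pretopos functors on the slices. Since $\Ptop(-,\SS)$ turns this pseudo-colimit into the corresponding pseudo-limit,
$$\Ptop(\DD(M),\SS)\ \simeq\ \lim_{\langle a\in A^M\rangle\in\int_{\EE}M}\Ptop(\EE/A,\SS),$$
the limit being taken along the pullback maps induced by specializations. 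Applying the slice lemma ($\EE/A$ classifies $A$-elements) termwise, $\Ptop(\EE/A,\SS)\simeq\Sigma_{N\models\EE}\Hom_\SS(1,A^N)$, and unwinding the pseudo-limit yields exactly a coherent functor $N:\EE\to\SS$ together with a family of global elements $a^N\in A^N$ compatible along specializations --- the same data as on the right-hand side. The morphisms coincide as well, using that natural transformations of coherent functors are exactly $\bTT$-model homomorphisms (from $\bTT\Mod(\SS)\simeq\Coh(\EE,\SS)$), so the two categories are equivalent, pseudonaturally in $\SS$.

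The step needing genuine care is the recovery of the membership axioms $a\in\varphi^M\Rightarrow a^N\in\varphi^N$ from specialization-compatibility. The point is that a subobject $m:\varphi\mono A$ with $a\in\varphi^M$ is itself a morphism $\langle a\in\varphi^M\rangle\to\langle a\in A^M\rangle$ of $\int_{\EE}M$, so compatibility with $m$ forces the element indexed by $\langle a\in A^M\rangle$ to factor through $\varphi^N\mono A^N$ --- which is precisely $a^N\in\varphi^N$ --- while compatibility with function graphs recovers $f^N(b^N)=a^N$. Beyond this, the only real obstacle is 2-categorical bookkeeping: ensuring that the pseudo-colimit-to-pseudo-limit passage together with the termwise slice lemma assembles into an honest equivalence of categories (respecting morphisms and coherence isomorphisms), rather than a mere object-level bijection. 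Given Lemma \ref{localization_limits} and the slice lemma this is routine, and the pseudonaturality in $\SS$ follows since every construction used is natural in the target.
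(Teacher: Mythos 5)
Your proof is correct in outline, but it takes a genuinely different route from the paper's. The paper's own argument is much more modest and concrete: it constructs the interpretation $\tilde{\top}:\EE\to\DD(M)$ and, for each $a\in A^M$, the constant $\cc_a:1\to\tilde{A}$ as the image of the generic diagonal $\Delta_A$ under the colimit injection $\tilde{a}:\EE/A\to\DD(M)$, and then verifies by a factorization argument that $\DD(M)$ proves $R(\cc_a)$ whenever $M\models R(a)$. In other words, the paper only exhibits $\DD(M)$ as an interpretation of $\Diag(M)$, i.e.\ a comparison functor from the classifying pretopos of the classical diagram into $\DD(M)$; that this comparison is an equivalence is not argued inside the proof of the proposition itself but is in effect completed by Lemma \ref{diag_models}, which identifies the models of $\DD(M)$ with $\EE$-models under $M$ --- the classical classification property of the Henkin diagram. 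You instead verify the universal property directly, writing $\Ptop(\DD(M),\SS)$ as a pseudo-limit over $\int_{\EE}M$ of the categories $\Ptop(\EE/A,\SS)$ and applying the slice lemma termwise. This buys a self-contained proof of the ``classifying'' claim for arbitrary pretopos targets (subsuming Lemma \ref{diag_models} as the special case of models under $M$), at the cost of the 2-categorical bookkeeping you acknowledge; the one step you should make explicit is that the $\Cat$-colimit of Definition \ref{def_diagram} really is the pseudo-colimit in $\Ptop$, which follows from Lemma \ref{localization_limits} because finite (co)limits in $\DD(M)$ are computed in the slices, so any pseudo-cocone of pretopos functors induces a pretopos functor out of $\DD(M)$. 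Your observation that the membership axioms $a\in\varphi^M\Rightarrow a^N\in\varphi^N$ amount to compatibility with the specializations $\varphi\mono A$ is precisely the factorization drawn in the paper's second diagram, read in the opposite direction.
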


\begin{proof}
The unique element $\top\in 1^M$ induces an interpretation of $\tilde{\top}:\EE\to\DD(M)$; let $\tilde{A}=\tilde{\top}(A)$ denote the interpretation of $A$ in $\DD(M)$. For each $a\in A^M$, applying $\tilde{a}$ to the generic constant $\Delta_A$ defines a global section (i.e., a constant) $1\to\tilde{A}$ in $\DD(M)$.

Now suppose that $i:R\mono A$ is a basic relation and $M\models R(a)$. To see that $\DD(M)\vdash R(\bf{c}_a)$ we must show that the constant $\bf{c}_a:1\to \tilde{A}$ factors through the inclusion $\tilde{R}\mono\tilde{A}$.

Because $M\models R(a)$ we have the following factorizations, where $a_R=i(a)$ is $a$ itself, regarded as an element of $R^M$:
$$\xymatrix{ \EE \ar[rr]|-{A^\times} \ar@/^5ex/[rrrrrr]^(.3){\tilde{\top}} \ar@/_3ex/[rrrr]_(.4){R^\times} && \EE\!/A \ar[rr]|{\ i^*\ } \ar@/^3ex/[rrrr]^(.2){\tilde{a}}&& \EE\!/R \ar@{-->}[rr]_{\tilde{a_R}} && \DD(M)\\}$$
Therefore, the following diagrams are identified (up to isomorphism) in $\DD(M)$, yielding the desired factorization
$$\xymatrix@R=2ex{
A^\times(R)\ \ar@{>->}[r] & **[l]A^\times(A) & R^\times(R)\ \ar@{>->}[r] & **[l]R^\times(A) &\ \ \tilde{R}\ \ar@{>->}[r] &\ \tilde{A}\ \  \\
& \ar@{}[r]^(.2){}="a"^(.7){}="b" \ar@{|->} "a";"b"^{i^*} && \ar@{}[r]^(.25){}="a"^(.75){}="b" \ar@{|->} "a";"b"^{\tilde{a_R}} &\\
 A \ar[ruu]_{\Delta_A} && R \ar[ruu]_{i^*(\Delta_A)} \ar[uu]|{\Delta_R} && 1 \ar@{-->}[uu] \ar[uur]_{\bf{c}_a}\\
}$$
\end{proof}

\begin{lemma}\label{diag_models}
$\DD(M)$ classifies $\EE$-models under $M$. A model of $\DD(M)$ consists of an $\EE$-model $N$ together with a homomorphism $h:M\to N$. A morphism of $\DD(M)$-models is a commutative triangle under $M$

In particular, the identity on $M$ induces a canonical model $I_M:\DD(M)\lto\Sets$.
\end{lemma}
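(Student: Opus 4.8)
The plan is to construct a natural equivalence between the model category $\DD(M)\Mod(\Sets)=\Ptop(\DD(M),\Sets)$ and the category $\EE\Mod(\Sets)_{M/}$ of $\EE$-models \emph{under} $M$, whose objects are pairs $\<N,h\>$ with $N:\EE\to\Sets$ a model and $h:M\to N$ a homomorphism, and whose morphisms $\<N,h\>\to\<N',h'\>$ are homomorphisms $\theta:N\to N'$ with $\theta\circ h=h'$. I would build a functor in each direction out of the two ingredients already available: the colimit presentation $\DD(M)\simeq\colim_{\<a\>\in\int_{\EE}M}\EE\!/A$ of definition \ref{def_diagram}, and the slice-classification lemma, which identifies $\Ptop(\EE\!/A,\Sets)$ with the category of pairs $\<N,\alpha\in A^N\>$. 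This refines the previous proposition, which already identifies $\Ptop(\DD(M),\Sets)$ with models of the classical diagram $\rm{Diag}(M)$; the present lemma reads off what such a model is semantically.

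For the forward functor, given $N:\DD(M)\to\Sets$ I would take the underlying $\EE$-model to be the reduct $\ol N:=N\circ\tilde{\top}$ along the canonical interpretation $\tilde{\top}:\EE\to\DD(M)$, and define $h_A(a):=N(\cc_a)\in A^{\ol N}$ using the constants $\cc_a:1\to\tilde{A}$ from the previous proposition. That $h$ is a homomorphism is exactly that proposition transported across $N$: for a basic relation $R$ with $M\models R(a)$ we have $\DD(M)\vdash R(\cc_a)$, hence $\ol N\models R(h_A(a))$, and for an arrow $f:A\to B$ the pair $(a,f^M(a))$ lies in the graph of $f$, so $f(\cc_a)=\cc_{f^M(a)}$ in $\DD(M)$ and therefore $f^{\ol N}(h_A(a))=h_B(f^M(a))$, which is naturality of $h$.

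For the reverse functor, given $\<N,h\>$ I would produce a pretopos functor out of the colimit by exhibiting a pseudo-cocone: a compatible family of functors $\nu_{\<a\>}:\EE\!/A\to\Sets$ indexed by $\<a\in A^M\>$. I take $\nu_{\<a\>}$ to be the functor classifying $\<N,h_A(a)\>$ under the slice lemma. For a specialization $f:b\mapsto a$ (a morphism $\<b\>\to\<a\>$ in $\int_{\EE}M$, whose action on slices is the pullback $f^*:\EE\!/A\to\EE\!/B$) the remark following the slice lemma gives that $\nu_{\<b\>}\circ f^*$ classifies $f^{N}(h_B(b))$; naturality of $h$ yields $f^{N}(h_B(b))=h_A(f^M(b))=h_A(a)$, so $\nu_{\<b\>}\circ f^*\cong\nu_{\<a\>}$ and the family is compatible. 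The universal property of $\DD(M)\simeq\colim_{\<a\>}\EE\!/A$ as a colimit of pretoposes (lemma \ref{localization_limits}) then assembles these into a single functor $\tilde N:\DD(M)\to\Sets$. I expect this to be the main obstacle: the slice lemma supplies the objectwise classification and the one-dimensional compatibility, but one must check that the coherence isomorphisms fit together into a genuine pseudo-cocone and that the assignment is well defined on the span-morphisms of $\DD(M)$; this is where the uniqueness-up-to-canonical-isomorphism in the slice classification, together with filteredness of $\int_{\EE}M$, has to be used carefully.

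Finally I would check that the two functors are mutually inverse up to natural isomorphism, which is immediate on objects from the reconstruction formulas $\ol N=N\circ\tilde{\top}$ and $h_A(a)=N(\cc_a)$. On morphisms, a natural transformation $\theta:N\to N'$ whiskers to $\ol\theta:=\theta\cdot\tilde{\top}:\ol N\to\ol N'$, and naturality of $\theta$ at each constant $\cc_a$ gives $\ol\theta_A(h_A(a))=N'(\cc_a)=h'_A(a)$, i.e. a triangle under $M$; conversely such a triangle determines $\theta$ on each slice and hence on $\DD(M)$. For the last clause, the identity homomorphism $1_M:M\to M$ is the object $\<M,1_M\>$ of $\EE\Mod(\Sets)_{M/}$, and the reverse functor sends it to the claimed canonical model $I_M:\DD(M)\to\Sets$.
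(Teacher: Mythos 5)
Your proposal is correct and follows essentially the same route as the paper: reduct along $\tilde{\top}$ and evaluation at the constants $\cc_a$ in one direction, and the slice classification of $\<N,h_A(a)\>$ assembled over the category of elements in the other, with whiskering handling morphisms. The only cosmetic difference is that where you invoke the colimit universal property and worry about pseudo-cocone coherence, the paper simply writes down the resulting functor explicitly as $\<\sigma,b\>\mapsto(\sigma^N)^{-1}\big(h_B(b)\big)$, which is exactly what your cocone produces.
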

\begin{proof}
Suppose that $H:\DD(M)\to\Sets$ is a model of $\DD(M)$. The composite $H\circ\tilde{\top}$ provides the asserted $\EE$-model $N$. Similarly, $H\circ\tilde{a}$ defines an interpretation of each $\bf{c}_a$ in $N$:
$$\xymatrix{
\EE\!/A \ar@/^3ex/[rr]^{\<N,\bf{c}_a^N\>} \ar[r]_-{\tilde{a}} & \DD(M) \ar[r]_H & \Sets\\
\EE \ar[u]^{A^\times} \ar[ur]_{\tilde{\top}} \ar@/_3ex/[urr]_{N} \\
}$$

From this we can define a family of functions $h_A: A^M\to A^{N}$ by setting $h_A(a)=\bf{c}_a^N$. Fix an $a\in A^M$ and for each $\sigma:A\to B$ let $b_\sigma=\sigma^M(a)$. This corresponds to an axiom $\vdash \sigma(\bf{c}_a)=\bf{c}_{b_\sigma}$ in $\DD(M)$ and therefore
$$\sigma^N\circ h_A(a) = \sigma^N(\bf{c}_a^N) = \big(\sigma(\bf{c}_a)\big)^N = \bf{c}_{b_\sigma}=h_B(\sigma^M(a))$$
This ensures that $h$ defines a natural transformation $M\to N$ and hence an $\EE$-model homomorphism.

This construction is reversible. Given a homomorphism $h:M\to N$, simply define $H:\DD(M)\to \Sets$ by sending each pair $\<b,\sigma\>\in\DD(M)$ to the definable set $(\sigma^N)^{-1}\big(h_B(b)\big)$

For a natural transformation $\theta:H_1\to H_2$, the composite $\theta\cdot\tilde{\top}$ induces a homomorphism $g_\theta:N_1\to N_2$. Similarly, $\theta\cdot\tilde{a}$ induces an $\EE\!/A$-natural transformation $H_1\cdot\tilde{a}\to H_2\cdot\tilde{a}$. From this it follows that $g_\theta\big(\bf{c}_a^{N_1}\big)=\bf{c}_a^{N_2}$. Then $g_\theta$ commutes with the maps $h_i:M\to N_i$, making a commutative triangle under $M$.
\end{proof}

The diagram $\DD(M)$ is closely related to the notion of a definable set. Recall that a set $S\subseteq A^M$ is \emph{definable} if there is some binary formula (subobject) $\varphi\mono A\times B$ and an element $b\in B^M$ such that $S=\{a\in A^M\ |\ M\models\varphi(a,b)\}$. We often specify a definable set in $M$ by writing $S=\varphi(x,a)^M$. A function $f:S\to T$ is \emph{definable} just in case its graph is. Let $\DS(M)$ denote the category of definable sets in $M$.

\begin{defn}
When $\sigma:E\to A$, the \emph{realization} of $\<\sigma,a\>$ in $M$ is a definable subset $(\sigma^M)^{-1}(a)\subseteq E^M$, the fiber of $\sigma^M$ over $a$. Given any homomorphism $f:M\to N$, the \emph{realization under} $f$ is the definable set $(\sigma^N)^{-1}(f(a))$. We denote this set by $\<\sigma,a\>^f$ or $\<\sigma,f(a)\>^N$.
\end{defn}

Suppose $S=\varphi(x,b)$ for some formula $\varphi\mono A\times B$. If $\sigma$ denotes the composite map $\varphi\mono A\times B\to B$, then $S$ is the realization of $\<\sigma,b\>$ in $M$. Thus every definable set is a realization of some p-set (though this requires changing the ``ambient set'' of $S$ from $A^M$ to $\varphi^M$).

Similarly, given two p-sets and a definable function between their realization, the formula which defined its graph induces a morphism of p-sets. Thus realization defines a functor $\DD(M)\to\DS(M)$ which is essentially surjective on objects and full.

However, $\DD(M)$ and $\DS(M)$ are not identical. A definable set can have many definitions and these definitions might induce induce different p-sets in $\DD(M)$. This is analogous to the situation in algebraic geometry, where distinct polynomials sometimes induce the same vanishing set (e.g. $xy$ and $x^2y$ both vanish on the same set $\{\<x,y\>|\ x=0\vee y=0\}$).

In fact, $\DS(M)$ is the quotient-conservative factorization of the canonical functor $I_M$ defined above:
$$\xymatrix{
\DD(M) \ar[rr]^{I_M} \ar@{->>}[rd]_{\rm{quot.}} &&\Sets\\
& \DS(M) \ar@{>->}[ur]_{\rm{cons.}}&\\
}$$

\begin{lemma}
Suppose that we have two p-sets $\<\sigma,a\>$ and $\<\tau,b\>$ and that $\dom(\sigma)=E=\dom(\tau)$. The following are equivalent:
\begin{itemize}
\item $\<\sigma,a\>\leq\<\tau,b\>$ in the subobject lattice $\Sub_{\DD(M)}(\tilde{E})$.
\item There is a binary formula $\gamma\mono A\times B$ such that $M\models \gamma(a,b)$ and $\EE$ proves
$$(\sigma(w)=x)\wedge \gamma(x,y)\vdash \tau(w)=y.$$
\item For every homomorphism $f:M\to N$, the realization of $\<\sigma,a\>$ in $N$ is contained in the realization of $\<\tau,b\>$:
$$(\sigma^N)^{-1}\big(f(a)\big)\subseteq (\tau^N)^{-1}\big(f(b)\big).$$
\end{itemize}
\end{lemma}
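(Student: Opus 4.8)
The plan is to read condition (1) as a syntactic statement about $\DD(M)$, prove $(1)\Iff(2)$ using the filtered-colimit structure of the localization, and then tie in $(3)$ via homomorphism-preservation and completeness. First I would fix the dictionary between p-sets and subobjects of $\tilde E$. Writing $\tilde\top\colon\EE\to\DD(M)$ and $\tilde E=\tilde\top(E)$, each $a\in A^M$ gives a generic constant $\bf{c}_a\colon 1\to\tilde A$, which is monic as a map out of the terminal object; I define $\sigma_a\leq\tilde E$ to be the fiber $\{w\in\tilde E\mid\tilde\sigma(w)=\bf{c}_a\}$, i.e.\ the pullback of $\bf{c}_a$ along $\tilde\sigma$, and similarly $\tau_b$. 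Under this dictionary condition (1) is exactly the validity in $\DD(M)$ of the coherent sequent $\sigma(w)=\bf{c}_a\vdash_w\tau(w)=\bf{c}_b$, equivalently its provability from $\rm{Diag}(M)$ (by the Proposition identifying $\DD(M)$ with the classifying pretopos of $\rm{Diag}(M)$); and the realization of $\sigma_a$ at the canonical model $I_M$ is the fiber $(\sigma^M)^{-1}(a)$, so all three conditions compare the same data at different levels.

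For $(1)\Rightarrow(2)$ I would realize both subobjects in a single slice. Over $A\times B$ put $\Sigma=\{(w,x,y)\mid\sigma(w)=x\}$ and $\Theta=\{(w,x,y)\mid\tau(w)=y\}$ inside $(A\times B)^\times E$, so that $\tilde{\<a,b\>}(\Sigma)=\sigma_a$ and $\tilde{\<a,b\>}(\Theta)=\tau_b$. By Lemma~\ref{localization_limits} the finite diagram expressing $\sigma_a\leq\tau_b$ (equivalently $\sigma_a\wedge\tau_b\cong\sigma_a$) is already the image of a diagram in one slice, so there is a specialization $h\colon D\to A\times B$ with $h^M(d)=\<a,b\>$ and $h^*\Sigma\leq h^*\Theta$ in $\EE\!/D$; writing $h_A=\pi_A\circ h$, $h_B=\pi_B\circ h$, this says $\EE$ proves $\sigma(w)=h_A(z)\vdash_{w,z}\tau(w)=h_B(z)$. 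Taking $\gamma=\im(h)\mono A\times B$, the witness $d$ gives $M\models\gamma(a,b)$, and a short coherent manipulation (pick $z$ witnessing $\gamma(\sigma(w),y)$, then apply the slice sequent) yields $(\sigma(w)=x)\wedge\gamma(x,y)\vdash\tau(w)=y$.

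The implication $(2)\Rightarrow(1)$ is the same argument run with $D=\gamma$ and $h=i_\gamma$: the hypothesis $M\models\gamma(a,b)$ supplies the specialization, (2) gives precisely $i_\gamma^*\Sigma\leq i_\gamma^*\Theta$, and the pretopos functor $\tilde d\colon\EE\!/\gamma\to\DD(M)$ carries this to $\sigma_a\leq\tau_b$. For $(2)\Rightarrow(3)$, given any $f\colon M\to N$, homomorphisms preserve the coherent subobject $\gamma$, so $\<f(a),f(b)\>\in\gamma^N$; then for $w\in(\sigma^N)^{-1}(f(a))$ the sequent of (2)—valid in $N$ since $N\models\EE$—forces $\tau^N(w)=f(b)$, giving the desired inclusion. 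Finally $(3)\Rightarrow(1)$ is completeness: by Lemma~\ref{diag_models} the $\Sets$-models of $\DD(M)$ are exactly pairs $(N,f\colon M\to N)$, and under this identification (3) states precisely that the coherent sequent $\sigma(w)=\bf{c}_a\vdash_w\tau(w)=\bf{c}_b$ holds in every $\Sets$-model of $\DD(M)$, whence the completeness theorem for coherent logic delivers its provability, i.e.\ (1).

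I expect the main obstacle to be $(1)\Rightarrow(2)$: making the filtered-colimit bookkeeping precise—verifying that containment of the two fibers of $\tilde E$ is already witnessed in a single slice, and correctly identifying the image of the transition map as the formula $\gamma$—whereas the remaining implications reduce to preservation, a reversal of the same construction, and completeness.
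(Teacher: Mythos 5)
Your proposal is correct, and its overall architecture coincides with the paper's: both establish the cycle $(1)\Rightarrow(2)\Rightarrow(3)\Rightarrow(1)$, with $(2)\Rightarrow(3)$ by preservation of coherent formulas along homomorphisms and $(3)\Rightarrow(1)$ by completeness via the identification of $\DD(M)$-models with homomorphisms out of $M$ (your extra direct argument for $(2)\Rightarrow(1)$ is redundant but harmless). The genuine difference is in $(1)\Rightarrow(2)$. The paper works proof-theoretically: it reads the inclusion as a derivation $\rm{Diag}(M),\sigma(w)=\mathbf{c}_a\vdash\tau(w)=\mathbf{c}_b$, extracts the finitely many diagram axioms $\varphi_i(\mathbf{c}_{c_i})$ used, conjoins them, and existentially quantifies out the auxiliary constants to obtain $\gamma(x,y)=\exists z.\varphi(x,y,z)$. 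You instead work categorically with the presentation $\DD(M)\simeq\colim\,\EE\!/D_j$, invoking Lemma~\ref{localization_limits} to witness the subobject inclusion in a single slice $\EE\!/D$ via a specialization $h:D\to A\times B$, and taking $\gamma=\im(h)$. These are parallel moves — your single slice $D$ plays the role of the paper's finite conjunction of axioms, and your image factorization plays the role of its existential quantification — but yours trades the appeal to logical compactness and the syntactic description of $\rm{Diag}(M)$ for the filtered-colimit bookkeeping already established in the paper's treatment of localizations. What the paper's route buys is an explicit syntactic $\gamma$ read off a derivation; what yours buys is independence from the choice of presentation of the diagram and a cleaner fit with the 2-colimit machinery. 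Your own flagged concern (that the inclusion of the two fibers is witnessed in a single slice, after possibly passing to a further transition map to identify the representatives with $h^*\Sigma$ and $h^*\Theta$) is exactly the point that needs care, and Lemma~\ref{localization_limits} does supply it.
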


\begin{proof}
First suppose that $\<\sigma,a\>\leq\<\tau,b\>$. Because $\DD(M)$ classifies the diagram of $M$, there must be a proof
$$\rm{Diag}(M),\sigma(w)=\bf{c}_a \vdash \tau(w)=\bf{c}_b.$$
Let $\vdash\varphi_i(\bf{c}_{c_i})$ be the (finitely many) axioms from $\Diag(M)$ involved in the proof and set $\varphi(\bf{c}_a,\bf{c}_b,\bf{c}_c))=\bigwedge_i \varphi_i(\bf{c}_{c_i})$, where $\bf{c}_c$ is disjoint from $\bf{c}_a$ and $\bf{c}_b$. This leaves us with 
$$\varphi(\bf{c}_a,\bf{c}_b,\bf{c}_c)\wedge(\sigma(z)=\bf{c}_a)\vdash \tau(z)=\bf{c}_b.$$

The right-hand side of this sequent does not involve the constants $\bf{c}_c$, so we may replace them by an existential quantifier on the left. Let $\gamma(x,y)=\exists z.\varphi(x,y,z)$. The existence of $c\in C^M$ shows that $M\models\gamma(a,b)$. Now replacing the constants $\bf{c}_a$ and $\bf{c}_b$ by free variables $x$ and $y$, we are left with the desired statement:
$$\EE\tri\gamma(x,y)\wedge(\sigma(z)=x)\vdash \tau(z)=y.$$

Now suppose that the second condition holds and that $f:M\to N$. Since $M\models\gamma(a,b)$, we also have $N\models\gamma(f(a),f(b))$. If $c\in \sigma^{-1}(f(a))$ then the $\<f(a),f(b),c\>$ satisfies the left-hand side of the sequent above. By soundness, we also have $\tau^N(c)=f(b)$, so the realization of $\<\sigma,a\>$ under $f$ is contained in that of $\<\tau,b\>$.

For the last equivalence, simply note that $\DD(M)$ classifies homomorphisms under $M$; let $N_f:\DD(M)\to\Sets$ denote the model associated to a homomorphism $f:M\to N$. Then in every model we have 
$$N_f(\<\sigma,a\>)=\<\sigma,f(a)\>^N\leq \<\tau,f(b)\>^N=N_f(\<\tau,b\>).$$
Then completeness guarantees that this inclusion must hold in the theory, so $\<\sigma,a\>\leq\<\tau,b\>$ in $\DD(M)$.
\end{proof}

We close with a discussion of the locality property satisfied by the diagram $\DD(M)$. This will be an important condition when we define the structure sheaf of a logical scheme in the next chapter.
\begin{defn}\label{def_local}
Fix an object $A\in\EE$.
\begin{itemize}
\item $A$ is \emph{projective} if every epi $p:B\epi A$ has a section $s:A\to B$, so that $p\circ s=1_A$.
\item $A$ is \emph{indecomposable} if for any subobjects $R,S\leq A$ we have $R\vee S=A$ implies $R\cong A$ or $S\cong A$.
\item $\EE$ is \emph{local} if the terminal object $1\in\EE$ is projective and indecomposable.
\end{itemize}
\end{defn}

\begin{lemma}\label{local_functor}
When $\EE$ is a local pretopos, the co-representable functor $\Hom(1,-):\EE\to\Sets$ is a pretopos functor (and hence a model of $\EE$).
\end{lemma}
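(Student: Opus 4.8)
The plan is to verify that $\Gamma:=\Hom(1,-)$ preserves exactly the structure whose preservation characterises a pretopos functor. Recall that a pretopos functor is the same as a coherent functor, and that it suffices to check preservation of finite limits, finite coproducts (including the strict initial object), and quotients of equivalence relations: images and finite joins then follow formally, since in a pretopos the image of $f$ is the quotient of its domain by its kernel pair, and $R\vee S$ is the image of $R+S\to A$. Preservation of finite limits is immediate, because $\Gamma$ is representable and representable functors preserve all limits.

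The engine for the remaining cases is the fact that projectivity of $1$ forces $\Gamma$ to preserve regular epimorphisms. Given a regular epi $p:B\epi A$ and a global element $a:1\to A$, I would pull $p$ back along $a$ to form $P:=1\overtimes{A} B$; since regular epis are pullback-stable in a pretopos, $P\epi 1$ is again a regular epi, so the projectivity of $1$ (Definition \ref{def_local}) supplies a section $1\to P$, whose composite with $P\to B$ is a global element of $B$ lifting $a$. Thus $\Gamma(p)$ is surjective, and since by Lemma \ref{ptop_nice} every epi in a pretopos is regular, $\Gamma$ preserves all epimorphisms. This step also disposes of images, since $\Gamma$ preserves monos (a limit notion) and regular epis, hence epi-mono factorizations.

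Coproducts are where indecomposability enters, and I expect this to be the most delicate step. Given a global element $c:1\to A+B$, I would pull the two disjoint coprojections back along $c$ to obtain subobjects $c^*A,\,c^*B\leq 1$. Because pullback preserves finite meets and finite joins, disjointness gives $c^*A\wedge c^*B=c^*0=0$, while joint epimorphy of the coprojections gives $c^*A\vee c^*B=1$. Indecomposability of $1$ then forces $c^*A\cong 1$ or $c^*B\cong 1$, i.e.\ $c$ factors through exactly one coprojection (not both, as $c^*A\wedge c^*B=0$ and $1\not\cong 0$). This makes the canonical map $\Hom(1,A)+\Hom(1,B)\to\Hom(1,A+B)$ surjective; its injectivity, and the disjointness of the two summands, follow from the coprojections being monic with disjoint images, using strictness of $0$. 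Preservation of the initial object is just $\Hom(1,0)=\emptyset$, which holds by strictness provided $1\not\cong 0$ — the degenerate pretopos admits no models and is tacitly excluded.

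Finally, quotients. If $R\rightrightarrows A$ is an equivalence relation with quotient $q:A\epi A/R$, then $R$ is the kernel pair of $q$; as $\Gamma$ preserves finite limits it sends $R\rightrightarrows A$ to the kernel pair of $\Gamma(q)$, and as $q$ is a regular epi the second step makes $\Gamma(q)$ surjective. In $\Sets$ every surjection is the coequalizer of its own kernel pair, so $\Gamma(q)$ exhibits $\Gamma(A/R)$ as the quotient of $\Gamma A$ by the equivalence relation $\Gamma R$. Hence $\Gamma$ preserves quotients of equivalence relations, and combined with the preservation of finite limits and finite coproducts this shows $\Gamma$ is a pretopos functor, and therefore a model of $\EE$.
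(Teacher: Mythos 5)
Your proof is correct and follows essentially the same route as the paper's: finite limits come for free from representability, regular epimorphisms are preserved by pulling back along a global element and invoking projectivity of $1$, and coproducts are preserved by pulling back the coprojections and invoking indecomposability. The only difference is that you spell out more carefully why these checks suffice (deriving images, joins, and quotients formally, and noting the degenerate case $1\cong 0$), which the paper leaves implicit.
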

\begin{proof}
The co-representables preserve limits, essentially by the definition of limits: $\Hom(1,\lim_i A_i)\cong\lim_i \Hom(1,A_i)$. Therefore it is enough to check that $\Hom(1,-)$ preserves epimorphisms and coproducts.

For the first, suppose that $f:A\epi B$ is epic and that $b:1\to B$. Pulling $f$ back along $b$ yields another epi $b^*A\epi 1$ which, by locality, has a section $s:1\to b^*A\to A$. Then $f\circ s=b$, so the induced map $\Hom(1,A)\to\Hom(1,B)$ is surjective.

Similarly, given a point $c:1\to A+B$ we may pull back the coproduct inclusions to give a partition $1\cong c^*A+c^*B$. Since $1$ is indecomposable, either $c^*A\cong 1$ (giving a factorization of $c$ through $A$), or vice versa. This defines an isomorphism $\Hom(1,A+B)\cong\Hom(1,A)+\Hom(1,B)$.

When $\EE=\Diag(M)$ is the diagram of a $\bTT$-model $M$, one easily checks that $\Hom(1,-)$ is the canonical $\EE$-model associated with the identity homomorphism $M\to M$ (cf. lemma \ref{diag_models})
\end{proof}

\begin{lemma}\label{local_stalks}
Each diagram $\DD(M)$ is a local pretopos. Logically speaking, $\DD(M)$ satisfies the existence and disjunction properties.
\end{lemma}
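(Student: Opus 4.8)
The plan is to verify Definition~\ref{def_local} directly: I will show that the terminal object $1\in\DD(M)$ is both projective and indecomposable, and then read off the logical statements (projectivity of $1$ being the existence property, indecomposability of $1$ the disjunction property). The whole argument rests on Lemma~\ref{localization_limits}, which lets me represent any finite (co)limit diagram of $\DD(M)\simeq\colim_{a\in\int M}\EE/A$ inside a single slice $\EE/A$, together with the colimit coherence $\tilde a\cong\tilde e\circ\beta^*$ that a specialization $\beta:e\mapsto a$ in $\int_{\EE}M$ induces between the injections $\tilde a,\tilde e$ and the connecting pullback $\beta^*$.

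For projectivity I would take an epimorphism $p:B\epi 1$. Since every epi in a pretopos is regular (Lemma~\ref{ptop_nice}), $p$ is a coequalizer, so Lemma~\ref{localization_limits} realizes it as the image of a regular epi $\bar p:\bar B\epi 1_A$ in some slice $\EE/A$ at an element $a\in A^M$; the underlying $\EE$-arrow of $\bar p$ is then an epi $\beta:E\epi A$. Because $M$ is a pretopos functor it preserves epimorphisms, so $\beta^M:E^M\to A^M$ is onto and I may choose $e\in E^M$ with $\beta^M(e)=a$. This is exactly a specialization $\beta:e\mapsto a$, giving $\tilde a\cong\tilde e\circ\beta^*$. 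Now $\beta^*\bar B$ is the pullback of $\beta$ along itself, i.e.\ the projection $E\overtimes{A}E\to E$, and the diagonal $\Delta:E\to E\overtimes{A}E$ is a section of it over $E$; that is, $\Delta:1_E\to\beta^*\bar B$ is a section in $\EE/E$. Transporting along $\tilde a\cong\tilde e\circ\beta^*$ and applying $\tilde e$ then yields a morphism $s:1\to B$. Since any two maps into a terminal object agree, $p\circ s=1_1$ comes for free, so $1$ is projective.

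For indecomposability I would take subobjects $R,S\leq 1$ with $R\vee S=1$. Joins are built from coproducts and image factorizations, so Lemma~\ref{localization_limits} again lets me represent $R$, $S$ and the defining equation in one slice $\EE/A$: there subobjects of the terminal $1_A$ are just subobjects $U,V\leq A$ in $\EE$, and $R\vee S=1$ becomes $U\vee V=A$. As $M$ preserves joins, $U^M\cup V^M=(U\vee V)^M=A^M$, so the distinguished element $a$ lies in $U^M$ or in $V^M$. If $a\in U^M$, let $u\in U^M$ be its unique preimage along the mono $\iota:U\mono A$; the specialization $\iota:u\mapsto a$ gives $\tilde a\cong\tilde u\circ\iota^*$, and pulling the mono $\iota$ back along itself returns the maximal subobject, $\iota^*\bar R\cong 1_U$. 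Hence $R=\tilde a(\bar R)\cong\tilde u(\iota^*\bar R)\cong\tilde u(1_U)=1$, and symmetrically $a\in V^M$ forces $S\cong 1$. Thus $1$ is indecomposable, and $\DD(M)$ is local.

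Finally I would translate the two conditions into logic: an epi $B\epi 1$ says that the object $B$ is provably inhabited, while a section $1\to B$ is a witnessing global element, so projectivity is the existence property; and $R\vee S=1$ for subobjects $R,S\leq 1$ is a provable disjunction $\vdash\varphi\vee\psi$ of sentences, which indecomposability turns into $\vdash\varphi$ or $\vdash\psi$, the disjunction property. I expect the only delicate point to be the filtered-colimit bookkeeping, namely making sure the slice-level section and the slice-level subobject identity transport correctly through the coherences $\tilde a\cong\tilde e\circ\beta^*$ and $\tilde a\cong\tilde u\circ\iota^*$. The genuine mathematical content is light: it is just that $M$ preserves epis and joins, fed into the two elementary facts that an epi acquires a diagonal section after self-pullback and that a mono becomes an isomorphism after self-pullback.
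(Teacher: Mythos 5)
Your proof is correct, but it is packaged quite differently from the paper's. The paper does not route through Lemma~\ref{localization_limits} at all: it applies the canonical model $I_M:\DD(M)\to\Sets$ (from Lemma~\ref{diag_models}) to the epi $E\epi 1$ (resp.\ to $R\vee S=1$) to conclude that $E^M$ (resp.\ $R^M$ or $S^M$) is inhabited, picks a witness $a$, and then builds the section $1\to E$ syntactically: the constant $\bf{c}_a$ together with the diagram axiom $\vdash\sigma(\bf{c}_a)=\bf{c}_b$ makes the relevant triangle into the pullback square defining $E$ commute. For indecomposability the paper then finishes by balance (the section makes $R\to 1$ epi as well as mono, hence iso). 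You instead push the whole finite (co)limit datum into a single slice $\EE/A$, use that $M$ preserves epis and joins to find the witnessing element there, and promote it via the coherence $\tilde a\cong\tilde e\circ\beta^*$ using two self-pullback facts: the diagonal sections a self-pulled-back epi, and a self-pulled-back mono is an iso. The underlying mechanism is the same --- $M$'s preservation of epis and joins is exactly what makes $I_M$ detect inhabitation in the paper's version --- but your route trades the paper's explicit syntactic constants for filtered-colimit bookkeeping, and your mono-self-pullback step for indecomposability is a small simplification over the paper's appeal to balance. The one point you rightly flag, that the representative of a colimiting cocone is only \emph{eventually} (after a further specialization) a colimit in a slice, is consistent with how the paper itself uses Lemma~\ref{localization_limits} elsewhere (e.g.\ the ``eventually monic/iso'' reasoning in Lemma~\ref{cons_loc}), so it is not a gap.
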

\begin{proof}
Suppose $E=\<\sigma,b\>$ is an object in $\DD(M)$ and that the projection $E\epi 1$ is epic. Applying the canonical interpretation $I_M$ sends this epi to a surjection $E^M\to 1\in \Sets$. This means the definable set $E^M$ is non-empty, so pick an element ${a\in E^M\subseteq A^M}$. We need to see that the constant $\bf{c}_a$ factors through $E$.

In $\DD(M)$ we have a section $\<\bf{c}_a,\bf{c}_b\>:1\to \tilde{A}\times \tilde{B}$ and $E$ is a pullback along this section:
$$\xymatrix{
E \pbcorner \ar[d] \ar[r] & \tilde{A}\times \tilde{A} \ar[d]^{1\times\tilde{\sigma}}\\
1 \ar[r]_-{\<\bf{c}_a,\bf{c}_b\>} \ar[ur]|{\<\bf{c}_a,\bf{c}_a\>} & \tilde{A}\times \tilde{B}\\
}$$
Because $\sigma^M(a)=b$, the diagram $\DD(M)$ contains an axiom $\vdash \sigma(\bf{c}_a)=\bf{c}_b$. This implies that the bottom triangle in this diagram commutes and therefore the map $\<\bf{c}_a,\bf{c}_a\>$ induces a section $1\to E$, as desired.

Now suppose that $R,S\leq 1$ are subterminal objects and that $R\vee S=1$. Applying the canonical model $I_M$, this gives a surjection $R^M+S^M\epi 1$ in $\Sets$, so either $R^M$ or $S^M$ is non-empty. Without loss of generality, suppose $a\in R^M$. Just as above, this defines a section $1\to R$ in $\DD(M)$. Since we are in a pretopos and the projection $R\to 1$ is both epic and monic, $R\cong 1$.

Logically, projectivity and indecomposability correspond to the existence and disjunction properties for $\DD(M)$:
$$\begin{array}{rcl}
\DD(M)\models \exists x. \varphi(x) & \Iff & \DD(M)\models\varphi(t)\textrm{ for some (definable) closed term }t.\\
\DD(M)\models \varphi\vee\psi & \Iff & \DD(M)\models \varphi\textrm{ or }\DD(M)\models \psi.\\
\end{array}$$

The first equivalence follows from the fact that sections $1\to \varphi\leq\tilde{A}$ are exactly the $\DD(M)$-definable singletons in $A$ which provably satisfy $\varphi$ (where $\varphi(x)=\varphi(x,\bf{c}_b)$ is an $\EE$-formula in context $x:A$ which may also contain parameters from $M$).

For the second, suppose that $\varphi,\psi\leq 1$ are subterminal object in $\DD(M)$ (i.e. sentences). If $\DD(M)\models\varphi\vee\psi$ then the projection $\varphi+\psi\epi 1$ is an epi. But then either $\varphi\to 1$ or $\psi\to 1$ is both epi and mono, hence an isomorphism. Thus either $\DD(M)\models\varphi$ or $\DD(M)\models\psi$.
\end{proof}

\section{Classical theories and Boolean pretoposes}\label{sec_bool_ptop}

\begin{defn}
A coherent category (or pretopos) $\bEE$ is \emph{Boolean} if every subobject in $\bEE$ is complemented: for any object $A\in\bEE$ and subobject $S\leq A$, there exists another subobject denoted $\neg S$ such $A\cong S+\neg S$. We denote the (2-)category of Boolean pretoposes by $\BPtp$.
\end{defn}

\begin{lemma}
If $\bEE$ is a coherent category which is Boolean, then its pretopos completion $\EE$ is also Boolean.
\end{lemma}
\begin{proof}
Fix an object $Q\in \EE$ and a subobject $S\leq Q$. Because the unit of the completion $I:\bEE\to\EE$ is finitely subcovering, there is a finite coproduct $A\cong \coprod_i A_i$ with $A_i\in\bEE$ and a cover $f:A\epi Q$. Let $R=f^*S$ and $R_i=R\cap A_i$. 

$I$ is full on subobjects, so each $R_i$ belongs to $\bEE$. Since $\bEE$ is Boolean, this means that $R$ has a complement $\neg R\cong\coprod_i\neg R_i$. Its image $\exists_f(\neg R)$ is the complement of $S$.

Since $S=\exists_f(f^*S)=\exists_f(R)$ and $R+\neg R\epi Q$, the two subobjects cover $Q$: $S\vee\exists_f(\neg R)=\exists_f(R+\neg R)=Q.$ They are disjoint because $S\cap \exists_f(\neg R)=\exists_f(R\cap\neg R)=\exists_f(0)$. Thus any subobject $S\leq Q$ has a complement $\neg S=\exists_f(\neg R)$, so $\EE$ is Boolean.
\end{proof}

\begin{prop}
There is a Boolean completion operation $\hat{(-)}:\Ptop\to\BPtp$ which is right adjoint to the forgetful functor $\BPtp\to\Ptop$.
\end{prop}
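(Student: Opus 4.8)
The plan is to recognize the desired operation as a \emph{coreflection}. Since $\hat{(-)}$ is to be right adjoint to the inclusion $U\colon\BPtp\to\Ptop$, it is the pretopos analogue of the passage from a Heyting algebra $H$ to the Boolean algebra of its complemented elements: that passage is exactly the right adjoint to $\mathbf{Bool}\hookrightarrow\mathbf{Heyt}$, the point being that any Heyting homomorphism out of a Boolean algebra already lands among complemented elements. The property to establish is that for every Boolean pretopos $\cBB$ and every pretopos $\FF$, restriction along the counit $\hat\FF\to\FF$ induces an equivalence $\BPtp(\cBB,\hat\FF)\simeq\Ptop(\cBB,\FF)$. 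I would first note that half of this is free: every pretopos functor out of a Boolean pretopos automatically preserves complements, since it preserves each coproduct decomposition $A\cong S+\neg S$. So the real content is the \emph{existence} of a universal Boolean target.

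Rather than build $\hat\FF$ by hand, I would obtain it from an adjoint functor theorem. As recorded on page~\pageref{ptop_alg}, the theory of pretoposes is quasi-equational, so $\Ptop$ is a locally presentable $2$-category; adjoining a (partial) complementation operation with the evident equations presents $\BPtp$ as a second locally presentable $2$-category and exhibits $U$ as the evident forgetful morphism. A cocontinuous functor between locally presentable categories has a right adjoint, so it suffices to prove that $U$ preserves all (pseudo-)colimits, i.e. that $\BPtp$ is closed under colimits in $\Ptop$. Granting this, the accessible adjoint functor theorem produces the right adjoint $\hat{(-)}$ and its counit, and a $2$-dimensional version upgrades this to the asserted biadjunction, with the universal property holding up to the coherent isomorphisms already implicit in our ``up to equivalence'' conventions.

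The main obstacle is precisely this cocontinuity of $U$, which (unlike the preservation of limits, which any such forgetful functor enjoys) is a special feature of complementation. I would verify it by passing to classifying toposes: $\EE\mapsto\Sh(\EE)$ carries $\Ptop$ into a $2$-category of coherent toposes in such a way that colimits of pretoposes become limits of toposes. It then suffices to know that a limit of Boolean toposes is Boolean, and this rests on standard closure properties — most importantly that every subtopos of a Boolean topos is Boolean, which handles the equalizer/inserter/equifier part of any limit, together with the behaviour of products. Transporting back, a colimit of Boolean pretoposes is again Boolean, which is the closure required.

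Finally, as a sanity check I would explain why one cannot simply take $\hat\FF$ to be a full subcategory of $\FF$. The tempting choice — the objects all of whose subobjects are complemented — need not contain the terminal object of $\FF$ (in $\Sh(X)$ the subterminal objects are the opens, which are complemented only when clopen), whereas the counit $\hat\FF\to\FF$ must be a pretopos functor and hence must preserve $1$. Thus the coreflection is genuinely not a subcategory inclusion, and this is exactly why I would route the existence proof through the adjoint functor theorem rather than through an explicit description of $\hat\FF$ inside $\FF$.
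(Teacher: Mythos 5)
You have taken the word ``right'' in the statement at face value and set out to build a coreflection, but the paper's own proof (and every later use of the Boolean completion) establishes the \emph{opposite} adjunction: it constructs a unit $\EE\to\hat{\EE}$ by Morleyization --- regard the internal theory $\bTT_{\EE}$ as a classical theory and take its classifying Boolean pretopos --- and then checks that any pretopos functor $I:\EE\to\BB$ into a Boolean pretopos lifts essentially uniquely along this unit, because classical reasoning is sound in $\BB$. That is the universal property of a \emph{left} adjoint to the forgetful functor $\BPtp\to\Ptop$, i.e.\ a reflection, and it is exactly what is invoked later in the paper (every $\LL$-structure extends uniquely along $\EE_{\LL}\to\widehat{\EE_{\LL}}\simeq\EE_{\overline{\LL}}$ because $\Sets$ is Boolean). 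Your opening observation --- that a pretopos functor out of a Boolean pretopos automatically preserves complements --- is correct, but it is the easy half of the \emph{reflection} (it shows the forgetful functor is full), not evidence for a coreflection.

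More seriously, the coreflection you are after does not exist, and the step you yourself isolate as the main obstacle is exactly where it fails: $\BPtp$ is not closed under pushouts in $\Ptop$. Take $\EE$ to be the classifying Boolean pretopos of a bare sort $W$, and let $\FF$ and $\GG$ be its classical extensions by unary predicates $P$ and $Q$ respectively. The pushout $\FF\overplus{\EE}\GG$ in $\Ptop$ is the \emph{coherent} classifying pretopos of the joint theory (cf.\ lemma \ref{2colimits}), and the subterminal $\exists w.(P(w)\wedge Q(w))$ is not complemented there: a complement would be a coherent sentence of the joint language holding exactly when $P\cap Q=\emptyset$, but coherent sentences are preserved by homomorphisms of joint models, and the inclusion of $(\NN,\rm{evens},\rm{odds})$ into $(\NN\cup\{*\},\ \rm{evens}\cup\{*\},\ \rm{odds}\cup\{*\})$ is elementary on each reduct separately (hence a homomorphism of joint models) while destroying $P\cap Q=\emptyset$. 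Since the pushout in $\BPtp$ is Boolean and the pushout in $\Ptop$ is not, the forgetful functor does not preserve pushouts and therefore admits no right adjoint at all; no adjoint functor theorem can rescue this. Your proposed verification via classifying toposes would not have caught the problem, because $\Sh(\EE)$ is in general not a Boolean \emph{topos} even when $\EE$ is a Boolean pretopos, so the closure of Boolean toposes under limits is not the relevant question. The fix is to prove the proposition as a reflection, via the explicit syntactic construction sketched above.
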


\begin{proof}
For any classical theory $\bTT$ the first-order formulas and (classically) provably functional relations form a Boolean coherent category. Its pretopos completion is the classifying pretopos of the (classical) theory. Moreover, any coherent theory may be regarded as a classical theory with the same language and axioms (but more connectives and inference rules).

Now replace any pretopos $\EE$, by its associated theory $\bTT_{\EE}$ (page \pageref{class_ptop}) and then regard $\bTT_{\EE}$ as a classical theory. The associated Boolean pretopos is the Boolean completion $\hat{\EE}$. The unit $\EE\to\hat{\EE}$ acts by sending each coherent $\EE$-formula to the same formula regarded classically. This operation extends to maps and to quotients because intuitionistic proofs (of provable functionality or equivalence relations) are all classically valid.

When $\BB$ is Boolean we can lift any pretopos functor $I:\EE\to\BB$ along the unit:
$$\xymatrix{
\hat{\EE} \ar@{-->}[r]^{\hat{I}} & \BB\\
\EE \ar[u] \ar[ur]_{I}&.\\
}$$
We build the lift using the usual inductive scheme $\hat{I}(A)=I(A)$ when $A\in\EE$ and while for other objects
$$\begin{array}{cc}
\hat{I}(A \overtimes{C} B)\cong \hat{I}(A)\overtimes{\hat{I}(C)} \hat{I}(B) & \hat{I}(\top_{\hat{E}})=\top_{\BB} \\
\hat{I}(A+B)\cong \hat{I}(A)+\hat{I}(B) & \hat{I}(A/R)\cong \hat{I}(A)/\hat{I}(R)\\
\hat{I}(\neg A)\cong \neg\hat{I}(A)
\end{array}$$
On one hand, these operations suffice to construct all the classical pretopos formulas which are the objects of $\hat{E}$. The functor is well-defined because $I$ is a pretopos functor, meaning that the inductive clauses agree with the base cases wherever they overlap.
\end{proof}

For example, in the last chapter we showed how to replace any classical theory $\bTT$ by an equivalent coherent theory $\overline\bTT$. To do so we needed to extended the language $\LL\subseteq\overline\LL$; at the level of pretoposes, this corresponds to Boolean completion: $\EE_{\LL} \lto \widehat{\EE_{\LL}}\simeq \EE_{\overline\LL}$. Since $\Sets$ is Boolean, the universal mapping property of $\widehat{\EE_{\LL}}$ guarantees that every $\LL$-structure has a unique extension to an $\overline\LL$-structure. Similarly, when $\bTT$ is already coherent we have $\EE_{\overline\bTT}\simeq\widehat{\EE_{\bTT}}$.

\begin{lemma}
\mbox{}
\begin{itemize}
\item Any quotient of a Boolean pretopos is Boolean.
\item Any localization of a Boolean pretopos is Boolean.
\end{itemize}
\end{lemma}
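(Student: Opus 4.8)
The plan is to prove the two parts separately, in each case reducing to machinery already in hand.

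For the first part, let $Q\colon\EE\to\FF$ be a quotient (subcovering and full on subobjects, Definition \ref{ptop_morph_props}) with $\EE$ Boolean, and fix a subobject $S\leq B$ in $\FF$. I would run the same argument as in the preceding lemma on Boolean pretopos completions, since a quotient shares exactly the relevant surjectivity properties. Being a quotient, $Q$ supplies an epimorphism $f\colon QA\epi B$ for some $A\in\EE$; put $R=f^{*}S\leq QA$. Fullness on subobjects gives $R_0\leq A$ in $\EE$ with $QR_0\cong R$, and Booleanness of $\EE$ yields $A\cong R_0+\neg R_0$. Applying the coproduct-preserving functor $Q$ gives $QA\cong R+Q\neg R_0$, so $Q\neg R_0$ complements $R$ in $QA$. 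I then set $\neg S:=\exists_{f}(Q\neg R_0)$ and verify it complements $S$: using $S=\exists_{f}(R)$ from Lemma \ref{ptop_nice}(5) together with the Frobenius law $\exists_{f}(x)\cap y=\exists_{f}(x\cap f^{*}y)$, one gets $S\cap\neg S=\exists_{f}(Q\neg R_0\cap R)=\exists_{f}(0)=0$, while $S\vee\neg S=\exists_{f}(R\vee Q\neg R_0)=\exists_{f}(QA)=B$ because $f$ is epic. Hence every subobject of $B$ is complemented.

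For the second part I would first check that each slice is Boolean and then transfer this to the colimit. If $\EE$ is Boolean, then for any $D_j$ the slice $\EE/D_j$ is Boolean: a subobject of $(\sigma\colon E\to D_j)$ is precisely a subobject $S\leq E$ in $\EE$ (the forgetful functor $\EE/D_j\to\EE$ creates monomorphisms and coproducts), so the decomposition $E\cong S+\neg S$ in $\EE$ lies over $D_j$ and exhibits the complement in $\EE/D_j$. Recalling Definition \ref{def_localization}, $\EE_D\simeq\colim_j\EE/D_j$, and by Lemma \ref{localization_limits} every finite diagram of $\EE_D$---in particular a mono $T\mono X$ together with the finite-limit data witnessing its monicity---descends to a single slice $\EE/D_k$. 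Thus $T\mono X$ is the image under the colimit injection $\tilde\jmath$ of a genuine mono $T_0\mono X_0$ in the Boolean pretopos $\EE/D_k$; taking the complement of $T_0$ there and applying the pretopos functor $\tilde\jmath$ (which preserves the coproduct decomposition) produces a complement of $T$ in $\EE_D$.

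The main obstacle is the last reduction in the second part: a priori a subobject of $\EE_D$ is only represented by \emph{some} arrow in a slice, which need not be monic. To handle this I would use that monicity is a finite-limit condition---namely that the kernel pair is trivial, equivalently that the relevant pullback is detected in a slice by Lemma \ref{localization_limits}---so that after passing to a further index the chosen representative becomes an honest monomorphism, at which point Booleanness of the slice together with preservation of coproducts by $\tilde\jmath$ finishes the argument.
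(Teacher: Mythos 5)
Your proof is correct and follows essentially the same route as the paper's: for quotients, pull the subobject back along a cover coming from $\EE$, complement the preimage there using fullness on subobjects, and push forward with $\exists_f$, checking disjointness via Frobenius and covering via $\exists_f f^*=\mathrm{id}$; for localizations, note that slices of a Boolean pretopos are Boolean and that any subobject of the colimit is represented by a complemented subobject in some slice, whose complement is carried over by the pretopos functor $\tilde\jmath$. The extra care you take about the representative of $T\mono X$ actually being a monomorphism is a point the paper passes over silently, but both arguments rest on the same lemma that finite diagrams and their limit conditions descend to a single slice.
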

\begin{proof}
First consider a quotient map $I:\EE\to\FF$. Any pretopos functor necessarily preserves joins and disjointness (a limit condition). Therefore, it also preserves coproducts and hence complementation: $I(S)+I(\neg S)\cong I(S+\neg S)\cong IA$.

Since quotients are full on subobjects, any $S\leq IA$ has a preimage $IR\cong S$ and therefore a complement $\neg S=I(\neg R)$. For any other object $B\in\FF$ and $S\leq B$, we define $\neg S$ just as in the the previous lemma: $\neg S=\exists_q(\neg q^*S)$. Exactly the same argument shows that $S+\neg S\cong B$.

As for localizations, note that subobjects and complements in $\EE\!/A$ are just subobjects and complements in $\EE$. Thus $\EE\!/A$ is Boolean so long as $\EE$ is. Now suppose that $\FF\simeq\colim_j \EE\!/A_j$ is a localization of $\EE$ over a filtered category $J$ and that $S\leq F$ belong to $\FF$. This subobject has a representative $\overline{S}\leq\overline{F}$ in $\EE\!/A_j$ for some $j\in J$ and the representative is complemented. Since $\tilde\jmath:\EE\!/A_j\to\FF$ is a pretopos functor, $\tilde\jmath(\neg \overline{S})$ is a complement to $S$.
\end{proof}

Recall that $\EE$ is \emph{well-pointed} if for any distinct maps $f\not= g:A\to B$ there is a global element $a:1\to A$ such that $f\circ a\not=g\circ a$. This is the appropriate notion of locality for Boolean pretoposes.

\begin{lemma}\label{bool_stalks}
If a pretopos $\DD$ is Boolean and local, then it is two-valued and well-pointed. In particular, if $\EE$ is Boolean and $M:\EE\to\Sets$ is a model then the diagram $\DD(M)$ satisfies these conditions.
\end{lemma}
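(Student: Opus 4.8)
The plan is to establish the general assertion first—that any Boolean, local pretopos $\DD$ is two-valued and well-pointed—and then read off the statement about $\DD(M)$ as a special case. Throughout I would use that $\DD$ is non-degenerate ($0 \not\cong 1$); this is automatic for $\DD(M)$, since the canonical model $I_M : \DD(M) \to \Sets$ of Lemma \ref{diag_models} preserves finite limits and coproducts and hence separates $0 \mapsto \emptyset$ from $1 \mapsto 1$.

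For two-valuedness I would take an arbitrary subterminal $S \leq 1$ and invoke the Boolean hypothesis to write $1 \cong S + \neg S$, so that $S \vee \neg S = 1$. Indecomposability of $1$ (part of locality, Definition \ref{def_local}) then forces $S \cong 1$ or $\neg S \cong 1$; in the first case $S = 1$, and in the second $\neg S = 1$, whence $S = S \wedge \neg S = 0$ by disjointness of the complement. Thus $\Sub(1) = \{0,1\}$, and these two differ by non-degeneracy.

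For well-pointedness, the key intermediate claim—and the step I expect to be the main obstacle—is that every nonzero object $T \in \DD$ admits a global point $1 \to T$. I would argue this by factoring the unique map $T \to 1$ as an epi followed by a mono, $T \epi \Im \mono 1$. The image $\Im$ is subterminal, so two-valuedness gives $\Im \cong 0$ or $\Im \cong 1$. The case $\Im \cong 0$ is impossible, since it yields a map $T \to 0$ and hence $T \cong 0$ by strictness of the initial object, contradicting $T \neq 0$. Therefore $T \to 1$ is epic, and projectivity of $1$ (the other half of locality) produces a section $1 \to T$, the desired point.

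Granting this, well-pointedness follows quickly. For distinct $f \neq g : A \to B$ the equalizer $E = \Eq(f,g)$ is a proper subobject of $A$, so its Boolean complement $\neg E$ is nonzero; applying the previous step I would pick a point of $\neg E$ and compose with the inclusion to obtain $a : 1 \to A$. This $a$ cannot factor through $E$, for otherwise it would factor through $E \wedge \neg E = 0$ and force $1 \cong 0$, so by the universal property of the equalizer $f \circ a \neq g \circ a$. (Equivalently, this shows $\Hom(1,-)$ is injective on subobjects, hence faithful by Lemma \ref{eq_cons}.) Finally, for the \emph{in particular} clause, Lemma \ref{local_stalks} gives that $\DD(M)$ is local, and since $\DD(M)$ is a localization of the Boolean pretopos $\EE$ (Definition \ref{def_diagram}) it is Boolean by the preceding lemma on localizations; the general result then applies.
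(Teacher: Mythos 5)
Your proof is correct and follows essentially the same route as the paper: two-valuedness via $1\cong S+\neg S$ plus indecomposability, and well-pointedness by taking the complement of the equalizer, showing its support is $1$ via the image factorization and two-valuedness, and then using projectivity of $1$ to extract a separating point. The only cosmetic differences are that you isolate ``every nonzero object has a global point'' as an intermediate claim and add the (harmless, and slightly more careful) non-degeneracy remark, whereas the paper applies the same argument directly to $\neg\Eq(f,g)$.
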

\begin{proof}
Consider any subterminal object $S\leq 1$ in $\DD$. Since $\DD$ is Boolean, $S+\neg S\cong 1$. But $\EE$ is local and $1$ is indecomposable, so either $S=1$ or $\neg S=1$ (and hence $S=0$). Thus $\DD$ is two-valued.

Now suppose that $f\not=g: A\to B$ in $\DD$. Let $R\leq A$ denote the complement of the equalizer of $f$ and $g$. Since $f\not= g$, $R$ is non-zero and therefore its image $R\epi \exists R \mono 1$ in non-zero as well. Since $\DD$ is two-valued, we must have $\exists R=1$, so the projection $R\epi 1$ is epic.

Since $\DD$ is local, $1$ is projective and we can find a section $s:1\to R\leq A$. If $f\circ s=g\circ s$ then $s$ would factor through the equalizer; this is impossible since $s$ factors through $R$, which is the (disjoint) complement of the equalizer. Therefore $f\circ s\not= g\circ s$, so $\DD$ is well-pointed.

In particular, we know that the diagram $\DD(M)$ is always local (lemma \ref{local_stalks}). By the previous lemma, when $\EE$ is Boolean so are its localizations. Therefore the assumptions of the lemma apply, so $\DD(M)$ is two-valued and well-pointed.
\end{proof}

\chapter{Logical Schemes}

In this chapter we will define the 2-category of logical schemes and prove some basic facts about them. We begin by defining the logical structure sheaf $\OO_{\bTT}$, a sheaf of local pretoposes over the spectrum $\MM_{\bTT}$ defined in chapter 1. The pair $(\MM_{\bTT},\OO_{\bTT})$ is an affine logical scheme. Following familiar definitions in algebraic geometry, we go on to define more general logical schemes by gluing, and we use the affine case to guide the definition of morphisms and natural transformations of schemes. This defines a reflective 2-adjunction with the 2-category of pretoposes, and we use this fact to show that schemes are closed under all finite 2-limits.

\section{Stacks and Sheaves}

We begin by defining three closely related notions of a ``category over $\EE$;'' see Vistoli \cite{fibrations} or Moerdijk \cite{moerdijk_stacks} (for the topological case) for a thorough treatment of fibrations and stacks. The simplest of these three is a \emph{presheaf of categories}, which is simply a (strict) functor $\bf{C}:\EE^{\op}\to\Cat$. The second is that of an \emph{$\EE$-indexed category}, which is a pseudofunctor $\bf{C}:\EE^{\op}\to\Cat$. Here the transition morphisms $f^*:\bf{C}(B)\to\bf{C}(A)$ are only expected to commute up to a canonical natural isomorphism, $(g\circ f)^*\cong f^*\circ g^*$, and these satisfy further coherence conditions. See \cite{bunge_pare} for further details.

The third, fibrations, requires a preliminary definition. Given a fixed projection functor $\bf{p}:\CC\to\EE$, an arrow $g:C'\to C$ is \emph{Cartesian over $f$} if $\bf{p}(g)=f$ and for any $h:C''\to C$ and any factorization $\bf{p}(h)=f\circ q$, there exists a unique lift $\bf{p}\left(\bar{q}\right)=q$ such that $h=g\circ \bar{q}$. Diagramatically,
$$\xymatrix@=12pt{
C'' \ar[rrrrd]^{h} \ar@{-->}[rrd]_{\exists !\bar{q}}\\
&& C' \ar[rr]_{g} && C & g\rm{ is Cartesian}\\
\bf{p}(C'') \ar[rrd]_{q} \ar[rrrrd]^{\bf{p}(h)} &&&&& \rm{if } \forall h,q \ \exists !\ \overline{q}  \\
&& \bf{p}(C') \ar[rr]_{f=\bf{p}(g)} && \bf{p}(C).\\
}$$
The functor $\bf{p}:\CC\to\EE$ is a \emph{fibration} if every arrow in $\EE$ has a Cartesian lift in $\CC$. Usually we will suppress the functor $\bf{p}$ and refer to a fibration by its domain $\CC$.

A \emph{fibered functor} over $\EE$ is a functor between fibrations which commutes with the projections to $\EE$ and sends Cartesian arrows to Cartesian arrows. We let $\uHom_{\EE}(\CC,\DD)$ denote the category of fibered functors and natural transformations between them.

Given a fibration $\CC\to\EE$ and an object $A\in\EE$, we write $\CC(A)$ for the subcategory of objects and arrows sent to $A$ and $1_A$, respectively. If we fix an $f:B\to A$ and choose a Cartesian lift $f^*C\to C$ for every $C\in\CC(A)$, this defines a ``pullback'' functor $f^*:\CC(A)\to\CC(B)$
$$\xymatrix@=12pt{
&f^*D \ar@{-->}[rd]_{\exists! f^*h} \ar[rrr] &&& D  \ar[rd]^{\forall h} \\
&& f^*C \ar[rrr] &&& C  &\\\\
&& \CC(B) &&& \ar[lll]_{f^*} \CC(A).
}$$

A \emph{cleavage} on $\CC$ is a choice of Cartesian lift $f^*C\to C$ for every pair $\<f,C\>$. Modulo the axiom of choice, these always exist. Cartesian arrows compose and any two ``pullbacks'' are canonically isomorphic (by the usual argument) yielding canonical isomorphisms $(g\circ f)^*\cong f^*\circ g^*$. This shows that the data necessary to specify a cloven fibrations is exactly the same as that necessary to give an indexed category.

The choice of cleavage is essentially irrelevant; different choices induce canonically isomorphic pullbacks. Therefore, we abuse terminology by referring to ``the'' transition functor $\CC(A)\to\CC(B)$ associated with a fibration without specifying a chosen cleavage. This is harmless so long as we make no claims about object identity in $\CC(A)$. However, note that the identity of arrows in $\Hom_{\CC(A)}(f^*D,f^*C)$ does not depend on the choice of cleavage.

We say that $\CC$ is \emph{fibered in $\Sets$} (or $\Grpd$, $\Ptop$, etc.) if for every $A\in\EE$, $\CC(A)$ is a set (or a groupoid, pretopos, $\ldots$). A fibration in $\Sets$ is essentially an ordinary presheaf $\EE^{\op}\to\Sets$. This follows from the fact that the Cartesian lifts of a fibration in $\Sets$ are unique: any two must be isomorphic and isomorphisms in a set (regarded as a discrete category) are identities. In particular, the representable functors $yA$ correspond to the forgetful functor $\EE\!/A\to\EE$ and there is a fibrational analogue of the Yoneda lemma.

\begin{defn}
For every object $A\in\EE$ the forgetful functor $\EE\!/A\to\EE$ is a fibration in $\Sets$, call the \emph{representable fibration} $yA$:
$$\xymatrix{
\EE\!/A \ar[dd]_{yA} & E' \ar[rr]^{\rm{Cartesian}} \ar[rd] && E \ar[ld]\\
&&A&\\
\EE & E' \ar[rr] && E\\
}$$
\end{defn}

\begin{prop}[2-Yoneda Lemma]\label{2yoneda}

For any fibration $\CC\to\EE$ and any object $A\in\EE$ there is an equivalence of categories (natural in $\CC$ and $A$)
$$\CC(A)\ \simeq\ \uHom_{\EE}(yA,\CC).$$ 
\end{prop}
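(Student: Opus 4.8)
The plan is to exhibit the equivalence through an explicit adjoint pair and then show that one of the functors is fully faithful and essentially surjective, with the Cartesian universal property doing all the real work. In one direction, I send a fibered functor $F\colon yA\to\CC$ to its value $F(1_A)$ at the terminal object $1_A=(A,\id_A)$ of $\EE\!/A$; since $F$ commutes with the projections down to $\EE$, this value lies in the fiber $\CC(A)$, and on a natural transformation $\alpha\colon F\To G$ of fibered functors I take its (necessarily vertical) component $\alpha_{1_A}\colon F(1_A)\to G(1_A)$, which again lies in $\CC(A)$. Call this functor $\Psi$. In the other direction I fix a cleavage on $\CC$ and send $C\in\CC(A)$ to the fibered functor $\Phi_C$ given on objects by $(f\colon B\to A)\mapsto f^*C$ and on a morphism $h\colon(B,f)\to(B',f')$ (so $f'\circ h=f$) by the composite of the canonical isomorphism $f^*C\iso h^*(f')^*C$ with the chosen Cartesian arrow $h^*(f')^*C\to(f')^*C$ over $h$.

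First I would verify that $\Phi_C$ is a well-defined fibered functor. Functoriality is a coherence check using the canonical isomorphisms $(g\circ f)^*\cong f^*\circ g^*$ supplied by the cleavage, and preservation of Cartesian arrows is automatic: in the representable fibration $yA$ every morphism is Cartesian, and $\Phi_C(h)$ is by construction the composite of an isomorphism with a Cartesian arrow, hence Cartesian. I would likewise note that a morphism $C\to C'$ in $\CC(A)$ induces a fiberwise natural transformation $\Phi_C\To\Phi_{C'}$, so that $\Phi$ is a genuine functor. These are routine and I would not belabor them.

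The crux is that $\Psi$ is fully faithful. The key observation is that in $\EE\!/A$ the unique map $f\colon(B,f)\to 1_A$ is Cartesian over $f$, so for any fibered functor $F$ the image $F(B,f)\to F(1_A)$ is a Cartesian arrow over $f$. Given fibered functors $F,G$ and a fiber morphism $\phi\colon F(1_A)\to G(1_A)$ in $\CC(A)$, the composite $F(B,f)\to F(1_A)\xrightarrow{\phi}G(1_A)$ lies over $f=\id_A\circ f$, so it factors uniquely through the Cartesian arrow $G(B,f)\to G(1_A)$ by a vertical map $\theta_{(B,f)}\colon F(B,f)\to G(B,f)$. Uniqueness of these factorizations forces the family $\theta=(\theta_{(B,f)})$ to be natural and shows that $\phi\mapsto\theta$ is a two-sided inverse to evaluation at $1_A$; thus $\Psi$ is full and faithful. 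Essential surjectivity is witnessed by $\Phi$, since $\Psi\Phi_C=(\id_A)^*C\iso C$ naturally in $C$.

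The main obstacle is the cleavage bookkeeping needed to confirm $\Phi\Psi\cong\id$ coherently: for a fibered functor $F$ one compares $\Phi_{F(1_A)}(B,f)=f^*F(1_A)$ with $F(B,f)$, and the previous paragraph shows both are Cartesian lifts of $F(1_A)$ over $f$, hence canonically isomorphic by a unique vertical isomorphism; the work is checking that these assemble into a natural isomorphism $\Phi_{F(1_A)}\To F$ of fibered functors, which again reduces to uniqueness of Cartesian factorizations. Once full faithfulness and essential surjectivity are established, naturality of the equivalence in $\CC$ is immediate (a fibered functor $\CC\to\CC'$ commutes strictly with evaluation at $1_A$), and naturality in $A$ follows by tracking the terminal objects: a map $a\colon A\to A'$ sends $1_A$ to $(A,a)\in\EE\!/A'$, and the comparison $F'(A,a)\iso a^*F'(1_{A'})$ furnished by the Cartesian structure makes evaluation commute with the reindexing functors $a^*$ up to canonical isomorphism.
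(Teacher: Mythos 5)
Your proof is correct and follows essentially the same route as the paper's: both directions are given by evaluation at $1_A$ and by $C\mapsto(f\mapsto f^*C)$, with the key point in each case being that every morphism of $yA$ is Cartesian, so a fibered functor $F$ satisfies $F(B,f)\cong f^*F(1_A)$ by uniqueness of Cartesian lifts. You simply spell out the coherence and full-faithfulness checks that the paper leaves implicit.
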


\begin{proof}
The argument is essentially the same as in $\Sets$. Evaluation at $1_A\in yA(A)$ defines a functor $\uHom_{\EE}(yA,\CC)\to\CC(A)$. Conversely, given an object $C\in\CC(A)$ we define a fibered functor $\EE\!/A\to\CC$ by sending $f\mapsto f^*C$. We always have $1_A^*C\cong C$, so the composite $\CC(A)\to\uHom(yA,\CC)\to\CC(A)$ is clearly an equivalence of categories.

On the other hand, suppose $F:yA\to\CC$ is a fibered functor. Every $g:E\to A$ can be viewed as a map $g\to 1_A$ in $\EE\!/A$. This map is Cartesian (every map is Cartesian, since $yA$ is fibered in $\Sets$). Therefore its image $F(g)\to F(1_A)$ is also Cartesian, so $F(g)\cong g^*(F(1_A))$. This guarantees that the composite $\uHom_{\EE}(\EE\!/A,\CC)\to\CC(A)\to \uHom_{\EE}(\EE\!/A,\CC)$ is again an equivalence of categories.
\end{proof}

A \emph{splitting} of $\CC$ is a coherent cleavage $(g\circ f)^*= f^*\circ g^*$, corresponding to a presheaf of categories. Not every fibration has a splitting, but we can always find split fibration which is pointwise equivalent to $\CC$. Indeed, if we set $\widehat{\CC}(A)=\uHom_{\EE}(\EE\!/A,\CC)$ then, because composition of functors is strict, this defines a presheaf of categories over $\EE$. Pointwise equivalence follows from the Yoneda lemma.

Now we turn to the definition of stacks, which have roughly the same relation with fibrations as sheaves have to presheaves.

\begin{defn}
Suppose that $J=\{A_i\to Q\}$ is a covering family in $\EE$. Let $A_{ij}=A_i\overtimes{Q} A_j$. An object of \emph{descent data for $J$ over $Q$}, denoted $C_i/\alpha$, consists of the following:
\begin{itemize}
\item a family of objects $C_i\in \CC(A_i)$
\item together with a family of gluing isomorphisms $\alpha_{ij}:p_i^*C_i\cong p_j^*C_j$ in $\CC(A_{ij})$
\item which satisfy the following \emph{cocycle conditions} in $\CC(A_i)$ and $\CC(A_{ijk})$, respectively:
$$\Delta^*(\alpha_{ii})=1_{A_i} \hspace{1cm} p_{ij}^*(\alpha)\circ p_{jk}^*(\alpha)=p_{ik}^*(\alpha).$$
\end{itemize}
The situation is summarized in the following diagram (where, e.g., $\displaystyle p_{12}$ is the aggregate of the projections $A_{ijk}\to A_{ij}$).
$$\xymatrix@R=3ex{
p_{ij}^*(\alpha_{ij})\circ p_{jk}^*(\alpha_{jk})=p_{ik}^*(\alpha_{ik}) & p_i^*C_i\stackrel{\alpha_{ij}}{\cong}p_j^*C_j & C_i & C/\alpha\\
\raisebox{-4ex}{$\displaystyle\coprod_{i,j,k} A_{ijk}$} \ar@<1.5ex>[r]^-{p_{12}} \ar[r]|-{p_{13}} \ar@<-1.5ex>[r]_-{p_{23}} &
\raisebox{-4ex}{$\displaystyle\coprod_{i,j} A_{ij}$} \ar@<-1.5ex>[r]_-{p_2} \ar@<1.5ex>[r]^-{p_1} & \raisebox{-4ex}{$\displaystyle\coprod_{i} A_{i}$}  \ar@{->>}[r]^{q}  \ar[l]|-{\Delta} & Q.\\
}$$
\end{defn}

This defines a descent category $\Desc(\CC,J)$, where a morphism $C_i/\alpha\to D_i/\beta$ is a family of maps $h_i:C_i\to D_i$ which respect the descent isomorphisms: $\beta_{ij}\circ p_i^*(h_i)=p_j^*(h_j)\circ \alpha_{ij}$. Roughly speaking, we can think of $\alpha_{ij}$ as instructions for gluing the pieces $E_i$ together to form an object over $Q$; the cocycle conditions guarantee these gluings are compatible. Moreover, there is a functor $i_Q:\CC(Q)\to\Desc(\CC,J)$ sending each object $D\in\CC(Q)$ to $q^*D$, together with the canonical isomorphism $p_1^*(q^*D)\cong p_2^*(q^*D)$.

\begin{defn}
$\CC$ is \emph{stack} if the map $i_Q$ is an equivalence of categories for every object $Q$ and every covering family $J$. We denote the 2-category of stacks by $\Stk(\EE)$ (when the topology $\JJ$ is implicit).
\end{defn}

When we are working with the coherent topology on a pretopos, we can factor any covering family into an extensive (coproduct) cover $\{A_i\to\coprod_i A_i\}$ and a regular (singleton) cover $\{A\epi Q\}$. The stack condition for the former simply asserts that $\CC(\coprod_i A_i)\simeq\prod_i \CC(A_i)$ and this is typically easy to verify. Therefore we usually restrict consideration to singleton covers, leaving the general case to the reader.

We note that any representable functor $yA$ is a sheaf of sets (and hence a stack) for the coherent topology. 

\begin{lemma}[cf. Awodey \cite{awodey_thesis}]\label{stack_to_sheaf}
$ $\begin{itemize}
\item If $\CC$ is a presheaf of categories (i.e. $\CC_0$ and $\CC_1$ are fibered in $\Sets$) and $\CC$ is a stack, then it is equivalent in $\Stk(\EE)$ to the sheafification
$$\bf{a}\CC=\bf{a}\CC_1\rightrightarrows\bf{a}\CC_0.$$
\item For every stack $\CC$ there is a sheaf of categories $\overline{\CC}$ such that $\CC(A)\simeq\overline{\CC}(A)$ (naturally in $A$).
\end{itemize}

\end{lemma}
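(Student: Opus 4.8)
My plan is to treat a presheaf of categories $\CC$ as an internal category $\CC_1\rightrightarrows\CC_0$ in the presheaf topos $\Setexp{\EE}$ and to transport it across the sheafification functor $\mathbf{a}\colon\Setexp{\EE}\to\Sh(\EE)$. The key leverage is that $\mathbf{a}$, being the inverse image of a geometric morphism, is left exact. Consequently it carries the whole internal-category diagram --- the structure maps $\dom,\cod\colon\CC_1\to\CC_0$, the identity $\CC_0\to\CC_1$, the composition $\CC_1\times_{\CC_0}\CC_1\to\CC_1$, and all the unit and associativity equations --- to an internal category $\mathbf{a}\CC_1\rightrightarrows\mathbf{a}\CC_0$ in $\Sh(\EE)$; here one uses $\mathbf{a}(\CC_1\times_{\CC_0}\CC_1)\cong\mathbf{a}\CC_1\times_{\mathbf{a}\CC_0}\mathbf{a}\CC_1$ to transport composition. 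Thus $\mathbf{a}\CC$ is a sheaf of categories, and naturality of the adjunction unit makes $\eta\colon\CC\to\mathbf{a}\CC$ an internal functor. The claim is that $\eta$ is an equivalence; since it is a morphism of presheaves of categories, it suffices to show that each component $\eta_A\colon\CC(A)\to(\mathbf{a}\CC)(A)$ is an equivalence of ordinary categories, naturally in $A$. Because $\CC$ is assumed to be a stack, such a pointwise equivalence will simultaneously certify that $\mathbf{a}\CC$ is a stack and that $\eta$ is the asserted equivalence in $\Stk(\EE)$.

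For full faithfulness I would argue via hom-sheaves. Since $\CC$ is a stack it is in particular a prestack, so for $x,y\in\CC_0(A)$ the hom-presheaf $\underline{\Hom}_\CC(x,y)$ on $\EE\!/A$ (sending $f\colon B\to A$ to $\Hom_{\CC(B)}(f^*x,f^*y)$) is already a sheaf. This hom-presheaf is the pullback of $\CC_1\to\CC_0\times\CC_0$ along the section $(x,y)$, a finite limit; since $\mathbf{a}$ preserves it and sends $(x,y)$ to $(\eta x,\eta y)$, we obtain $\underline{\Hom}_{\mathbf{a}\CC}(\eta x,\eta y)\cong\mathbf{a}\,\underline{\Hom}_\CC(x,y)\cong\underline{\Hom}_\CC(x,y)$, the last isomorphism because the presheaf was already a sheaf. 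Evaluating at the terminal object of $\EE\!/A$ yields $\Hom_{\mathbf{a}\CC(A)}(\eta x,\eta y)\cong\Hom_{\CC(A)}(x,y)$, which is full faithfulness of $\eta_A$ (and hence of each $\eta_B$).

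Essential surjectivity is where the effective-descent half of the stack hypothesis does the work, and I expect it to be the main obstacle. Given $s\in\mathbf{a}\CC_0(A)$, local surjectivity of the sheafification unit provides a cover $\{A_i\to A\}$ and objects $x_i\in\CC_0(A_i)$ with $\eta(x_i)=s|_{A_i}$. On the overlaps $A_{ij}$ the sections $\eta(x_i)$ and $\eta(x_j)$ coincide (both restrict $s$), so the identity arrow between them is an isomorphism in $(\mathbf{a}\CC)(A_{ij})$; by the full faithfulness just proved it lifts to a unique isomorphism $\theta_{ij}\colon x_i|_{A_{ij}}\cong x_j|_{A_{ij}}$ in $\CC(A_{ij})$, and faithfulness of $\eta$ over $A_{ijk}$ propagates the cocycle identity from the trivially satisfied identities $\eta(\theta_{ij})$ down to the $\theta_{ij}$. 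Thus $(x_i,\theta_{ij})$ is an object of descent data for $\CC$; effectivity (the stack condition) produces $x\in\CC(A)$ restricting to the $x_i$, so that $\eta(x)$ agrees with $s$ locally and compatibly. Finally, since $\mathbf{a}\CC_1$ is a sheaf, the hom-presheaf $\underline{\Hom}_{\mathbf{a}\CC}(\eta x,s)$ is a sheaf, and the local comparison isomorphisms glue to a global isomorphism $\eta(x)\cong s$. The bookkeeping of these gluing isomorphisms and their cocycle compatibility is the delicate point; everything else is formal.

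For the second bullet I would reduce to the first by strictification. An arbitrary stack $\CC$ need not be a presheaf of categories, but the assignment $\widehat{\CC}(A)=\uHom_\EE(\EE\!/A,\CC)$ introduced earlier is a strict presheaf of categories, and by the $2$-Yoneda lemma (Proposition \ref{2yoneda}) it is pointwise equivalent to $\CC$, hence itself a stack, since descent transfers across pointwise equivalences. Applying the first bullet to $\widehat{\CC}$ yields a sheaf of categories $\overline{\CC}:=\mathbf{a}\widehat{\CC}$ together with an equivalence $\widehat{\CC}\simeq\overline{\CC}$ in $\Stk(\EE)$, and composing the two equivalences gives $\CC(A)\simeq\widehat{\CC}(A)\simeq\overline{\CC}(A)$ naturally in $A$, as required.
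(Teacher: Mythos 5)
Your argument is correct, but for the first bullet it takes a genuinely different route from the paper's. The paper works directly with the plus-construction: it observes that a strictly matching family for a cover $q:A\epi Q$ is exactly an object of descent data with identity gluing isomorphisms, so that $\widehat{\CC}^+(Q)$ sits as a full subcategory of $\Desc(\CC,q)$; the stack condition then makes this inclusion essentially surjective, giving $\CC^+(Q)\simeq\Desc(\CC,q)\simeq\CC(Q)$ and hence $\bf{a}\CC=\CC^{++}\simeq\CC$ pointwise. You instead use the abstract characterization of $\bf{a}$ as a left-exact reflection and prove the unit $\eta:\CC\to\bf{a}\CC$ is a pointwise equivalence: full faithfulness by transporting the hom-presheaves (which are sheaves by the prestack half of the hypothesis, and are finite limits preserved by $\bf{a}$ -- note you are implicitly using that the coherent topology is subcanonical so $\bf{a}(yA)\cong yA$), and essential surjectivity by local surjectivity of the unit plus effectivity of descent. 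The paper's argument is shorter because it reuses the descent category already set up in the preceding definition, and it makes visible exactly where ``matching family'' and ``descent datum'' differ (strict versus isomorphism-coherent gluing). Yours is more robust: it does not depend on the particular $(-)^{++}$ presentation of sheafification and cleanly separates the prestack hypothesis (used for full faithfulness) from the effective-descent hypothesis (used for essential surjectivity). The delicate points you flag -- lifting the gluing isomorphisms through full faithfulness, propagating the cocycle identity by faithfulness on triple overlaps, and gluing the final comparison isomorphism using that $\uHom_{\bf{a}\CC}(\eta x,s)$ is a sheaf -- are all handled correctly. Your treatment of the second bullet (strictify via $\widehat{\CC}(A)=\uHom_{\EE}(yA,\CC)$, transfer the stack property across the pointwise $2$-Yoneda equivalence, then apply the first bullet) coincides with the paper's.
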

\begin{proof}
Recall that for a presheaf $P$, the sheafification is defined by $\bf{a}(P)=P^{++}$, where $P^+$ is the presheaf of matching families in $P$. A (strictly) matching family for a singleton cover $q:A\to Q$ is simply an object $C\in\widehat{\CC}(A)$ such that $p_1^*C=p_2^*C$. Similarly, a map $h:C'\to C$ matches for $q$ if $p_1^*(h)=p_2^*(h)$. This is precisely a descent map $C'/\!\!=\lto C/\!\!=$.

This displays $\widehat{\CC}^+(Q)$ as a full subcategory inside $\Desc(\CC,q)$. As $\CC$ is a stack, any object of descent data $C/\alpha$ has a representative $D\in\CC(Q)$ with $C/\alpha\cong q^*D/\!\!=$. Therefore the inclusion $\widehat{\CC}^+(Q)\subseteq\Desc(\CC,q)$ is essentially surjective and
$$\widehat{\CC}^+(Q)\simeq\Desc(\CC,q)\simeq\widehat{\CC}(Q).$$

Now for any stack $\CC$ we set $\overline{\CC}=\bf{a}\widehat{\CC}$, the sheafification of the strict fibration $\widehat{\CC}(A)=\uHom_{\EE}(yA,\CC)$. Now we have an equivalence $\widehat{\CC}\simeq\CC$ where $\widehat{\CC}$ is a presheaf and $\CC$ is a stack, so the previous statement applies: $\CC(A)\simeq\widehat{\CC}(A)\simeq\overline{\CC}(A)$.
\end{proof}

Because of this many sheaf constructions (e.g., limits and colimits, direct and inverse image) have ``up to isomorphism'' analogues for stacks. Moreover, many of the same formulas will hold so long as we weaken equalities to isomorphisms and isomorphisms to equivalence of categories.

In particular, limits of stacks are computed pointwise. Since adjunctions, disjointness and equivalence relations can be expressed in finite limits, $\CC$ is a pretopos in $\Stk(\EE)$ just in case $\CC(A)$ is a pretopos for each $A\in\EE$ and each transition map $f^*:\CC(A)\to\CC(B)$ is a pretopos functor.

Shifting attention to topological spaces, a stack on $X$ is a stack on the open sets $\OO(X)$. Any continuous function $f:X\to Y$ induces an adjoint pair $\xymatrix{\Stk(X)\rtwocell^{f_*}_{f^*}{'\top} & \Stk(Y)}$. These are defined just as for sheaves.

\section{Affine schemes}

Now we define a sheaf of pretoposes over the topological groupoid $\MM_{\EE}$ (from chapter 1, definitions \ref{M0points}, \ref{M0opens}, \ref{M1points}) which will act as the structure sheaf in our scheme construction. This arises most naturally as a stack over $\EE$. The ``walking arrow'' is the poset $\2=\{0\leq 1\}$ regarded as a category, and the exponential $\EE^{\2}$ is the arrow category of $\EE$. Its objects are $\EE$-arrows $E\to A$ and its morphisms are commutative squares. The inclusion $\{1\}\subseteq \2$ induces the \emph{codomain fibration} $\EE^{\2}\to\EE$.

To justify the name, note that an arrow in $\EE^{\2}$ (i.e. a commutative square) is Cartesian just in case it is a pullback square; therefore $\EE^{\2}\to\EE$ is a fibration so long as $\EE$ has finite limits. The associated pseudofunctor $\EE^{\op}\to\Cat$ sends $A\mapsto\EE\!/A$, with the contravariant action given by pullback. We will usually refer to an object in $\EE\!/A$ by its domain $E$; when necessary, we generically refer to the projection $E\to A$ by $\pi$ or $\pi_E$.

\begin{prop}[cf. Bunge \& Pare \cite{bunge_pare}]
When $\EE$ is a pretopos, $\EE^{\2}$ is a stack for the coherent topology.
\end{prop}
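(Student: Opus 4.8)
The plan is to use the reduction, noted just before this proposition, of an arbitrary coherent cover into an extensive (coproduct) cover together with a single regular-epimorphism cover, and to verify the stack condition separately in each case. For an extensive cover $\{A_i\to\coprod_i A_i\}$, disjointness forces $A_i\overtimes{\coprod_k A_k}A_j\cong 0$ for $i\neq j$ and $\cong A_i$ for $i=j$, so the gluing data is trivial and $\Desc(\EE^{\2},J)\simeq\prod_i\EE\!/A_i$; the comparison $i$ is then an equivalence precisely because $\EE\!/\coprod_i A_i\simeq\prod_i\EE\!/A_i$, which is exactly the statement that coproducts in $\EE$ are disjoint and pullback-stable, i.e. extensivity. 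So this case is immediate from the pretopos axioms, and the real content is effective descent along a single regular epimorphism $q:A\epi Q$ with kernel pair $p_1,p_2:A\overtimes{Q}A\rightrightarrows A$. I would therefore concentrate on showing that $i_Q:\EE\!/Q\to\Desc(\EE^{\2},q)$ is an equivalence.

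For essential surjectivity, start from a descent datum: an object $\pi:E\to A$ of $\EE\!/A$ together with a gluing isomorphism $\alpha:p_1^*E\iso p_2^*E$ over $A\overtimes{Q}A$ satisfying the unit and cocycle conditions. I would repackage $\alpha$ as a subobject $R\mono E\times E$, the image of $\langle d_0,d_1\rangle:p_1^*E\to E\times E$, where $d_0$ is the projection to $E$ and $d_1$ is that projection composed with $\alpha$; this is monic because its composite with $\pi\times\pi$ factors through the monomorphism $A\overtimes{Q}A\mono A\times A$. The unit condition $\Delta^*\alpha=\id$ gives reflexivity, the cocycle condition gives transitivity, and symmetry follows by combining the cocycle identity with the invertibility of $\alpha$ (so that $\alpha_{(a_2,a_1)}=\alpha_{(a_1,a_2)}^{-1}$). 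Thus $R$ is an equivalence relation on $E$, and here exactness of $\EE$ is invoked: $R$ has an effective quotient $e:E\epi\bar E$ with $R=\ker(e)$. Since $q\pi$ coequalizes $d_0,d_1$ (because $qp_1=qp_2$ on the kernel pair), it descends to a map $\bar\pi:\bar E\to Q$, and I take $\bar E$, so equipped, as the candidate object of $\EE\!/Q$.

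It then remains to prove that $q^*\bar E\cong E$ as descent data, i.e. that the square with sides $e,\pi,q,\bar\pi$ is a pullback, and that $i_Q$ is fully faithful. Full faithfulness is the assertion that $q$ is a descent morphism: a morphism of descent data $q^*D\to q^*D'$ is a map $A\overtimes{Q}D\to A\overtimes{Q}D'$ over $A$ compatible with the canonical gluing, and because $q$ is a regular epimorphism, hence the coequalizer of its kernel pair, such equivariant maps descend uniquely along $q^*$, giving a bijection with $\Hom_{\EE\!/Q}(D,D')$. For the pullback claim I would form the comparison $\psi=\langle\pi,e\rangle:E\to A\overtimes{Q}\bar E$ over $A$ and show it is invertible. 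The projection $A\overtimes{Q}\bar E\to\bar E$ is a regular epi (a pullback of $q$) whose kernel pair is canonically $p_1^*(A\overtimes{Q}\bar E)$, while $\ker(e)=R\cong p_1^*E$; one computes that the map induced by $\psi$ on kernel pairs is exactly $p_1^*(\psi)$. Since $p_1$ is a regular epimorphism and pullback along a regular epi is conservative in the regular category $\EE$, $p_1^*(\psi)$ an isomorphism forces $\psi$ to be one; compatibility of this isomorphism with $\alpha$ and the canonical gluing is then a direct check.

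The main obstacle is precisely this effectiveness step — manufacturing an honest object over $Q$ from the descent datum and verifying $q^*\bar E\cong E$ — since it is here that Barr-exactness (not merely regularity) is indispensable. In $\Sets$ the inverse of $\psi$ is visible directly: send $(a,z)$ to $\alpha_{(\pi e_0,a)}(e_0)$ for any $e_0$ with $e(e_0)=z$, which is well defined by the cocycle law. The categorical work is to internalize this construction using disjointness, pullback-stability of quotients and the conservativity of pullback along regular epis, or else to quote the effective-descent theorem of Bunge \& Pare \cite{bunge_pare} for the internalization. Everything else — the coproduct case and the descent of morphisms — reduces cleanly to the defining axioms of a pretopos.
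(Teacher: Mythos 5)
Your construction follows the paper's almost line for line: the reduction of a coherent cover to a single regular epimorphism $q:A\epi Q$, the repackaging of the gluing isomorphism $\alpha$ as an equivalence relation $R_\alpha\mono E\times E$ (the image of $\langle 1,\alpha\rangle$ on $p_1^*E$, monic because its composite with $\pi\times\pi$ factors through $K\mono A\times A$), the derivation of reflexivity, transitivity and symmetry from the cocycle conditions, and the descent of the structure map $E/\alpha\to Q$ by coequalizing through the kernel pair. You also make explicit two things the paper leaves implicit — the extensive-cover case via $\EE\!/\coprod_i A_i\simeq\prod_i\EE\!/A_i$, and the full faithfulness of $i_Q$ — which is a genuine improvement in bookkeeping.

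The gap is in the step you yourself identify as the crux: proving that $\psi=\langle\pi,e\rangle:E\to A\overtimes{Q}(E/\alpha)$ is invertible. Your route — identify the map that $\psi$ induces on kernel pairs over $E/\alpha$ with $p_1^*\psi$ and then appeal to conservativity of pullback along the regular epi $p_1$ — is circular as stated: the identification shows the kernel-pair comparison is an isomorphism \emph{if and only if} $p_1^*\psi$ is, but you give no independent reason for either to be one, and the set-level formula for the inverse is exactly what remains to be internalized. The paper closes this differently and elementarily: it shows the comparison $h:E\to q^*(E/\alpha)$ is \emph{epic} by pasting pullback squares (the outer rectangle and the right-hand square in $p_1^*E\to E\to q^*(E/\alpha)$ over $E\to E/\alpha\to Q$ are pullbacks, hence so is the left-hand square, so $p_1^*E\epi q^*(E/\alpha)$ is a pullback of the cover $E\epi E/\alpha$ and its second factor $h$ is epic), and \emph{monic} by testing against $Z\rightrightarrows E$ whose composites to $E/\alpha\times A$ agree, factoring $Z$ first through $R_\alpha$ and then through the equalizer of $K\rightrightarrows A$, which is the diagonal of $A$; balance in a pretopos (lemma \ref{ptop_nice}) then yields the isomorphism. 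To complete your version you would need to supply this epi-and-mono content (or an equivalent), since quoting effective descent for regular epimorphisms is essentially quoting the proposition being proved.
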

\begin{proof}
Suppose that we have a covering map $q:A\epi Q$ with its kernel pair $K=A\overtimes{Q} A\overset{p_1}{\underset{p_2}{\rightrightarrows}} A$. Consider an object $E\to A$ together with descent data $\alpha:p_1^* E\iso p_2^*E$.

From this we define an equivalence relation $R_\alpha$ on $E$ by setting $e\stackrel{\alpha}{\sim} e'$ whenever $\alpha(e)=e'$ (which makes sense so long as $\pi(e)\stackrel{q}{\sim} \pi(e')$). More formally,
$$\xymatrix{
p_1^*E \ar@{-->>}[r] \ar@{>->}[d]_{\<1,\alpha\>} & R_\alpha \ar@{>-->}[d]\\
p_1^*E\times p_2^*E \ar[r] & E\times E.
}$$
Invertibility of $\alpha$ implies that $R_\alpha$ is symmetric, while reflexivity and transitivity follow from the cocycle conditions on $\alpha$.

Moreover, the quotient $E/\alpha$ is compatible with $q$ in the following sense. Since $\alpha$ is a map over $K$, we can factor through the kernel pair:
\raisebox{2ex}{$\xymatrix@R=1.5ex@C=2ex{p_1^*E \ar@{->>}[r] \ar@{-->}[rd] & R_\alpha \ar@<.5ex>[r] \ar@<-.5ex>[r]  & E \ar[d] \\
&K \ar@<.5ex>[r] \ar@<-.5ex>[r]   & A.
}$}
Therefore these upper maps coequalize $q$ and, since $p_1^*E\epi R_\alpha$ is epic, so do the maps $R_\alpha\rightrightarrows E\to A$. It follows that $R_\alpha$ factors through the kernel pair and this induces a map from $E/\alpha$ into $Q$:
$$\xymatrix{
R_\alpha \ar@<.7ex>[r] \ar@<-.7ex>[r] \ar@{-->}[d] & E \ar[d] \ar@{->>}[r] & E/\alpha \ar@{-->}[d]\\
K \ar@<.7ex>[r] \ar@<-.7ex>[r] & A \ar@{->>}[r]_q & Q.\\
}$$

To see that $\EE^{\2}$ is a stack, we need to check that $E\cong q^*(E/\alpha)$ so that the right-hand square is Cartesian. It is enough to show (following Johnstone \cite{elephant} Prop 1.2.1) that the comparison map $h:E\to q^*(E/\alpha)$ is both epic and monic (and hence an iso). For the first claim, consider the diagram:
$$\xymatrix{
p_1^* E \ar[r] \ar[d] & E \ar[r]^-h & q^*(E/\alpha) \ar[d] \ar[r] & A \ar[d]^q\\
E \ar@{->>}[rr] && E/\alpha  \ar[r] & Q.\\}$$
The right-hand square is evidently a pullback; the outer rectangle is as well, since it equals \raisebox{2.5ex}{$\xymatrix@=2ex{p_1^*E \ar[r] \ar[d] & K \ar[r] \ar[d] & A \ar[d] \\ E \ar[r] & A \ar[r] & Q}$}.

This guarantees that the left-hand square is a pullback, and that the map $p_1^*E\epi q^*(E/\alpha)$ is epic (since pullbacks preserve covers). Therefore the second factor $E\epi q^*(E/\alpha)$ is epic as well.

To show the map is monic consider the composite $E\stackrel{h}{\to} q^*(E/\alpha)\mono E/\alpha\times A$. Clearly, this is monic just in case $h$ itself is. Now suppose that we have parallel maps ${Z\rightrightarrows E}$ whose composites $Z\to E\to E/\alpha\times A$ are equal.

Equality in the first component shows that $Z$ factors through $R_\alpha$; equality in the second guarantees that the composite $Z\to R_\alpha \to K$ equalizes the projections $K\rightrightarrows A$. Therefore $Z$ factors through the equalizer $\Eq(K\rightrightarrows A)$, and this equalizer is just the diagonal $\Delta:A\to K$. This leaves us with the diagram below, which plainly shows that the two maps $Z\rightrightarrows E$ are equal:
$$\xymatrix{
&& Z \ar@{-->}[lld]_{z_1=z=z_2\ \ } \ar@{-->}[d] \ar[rrd]_{z_2} \ar@<1ex>[rrd]^{z_1} &&&\\
**[l] \Delta^*p_1^*E \cong E\ar[r] \ar[d] & p_1^*E \ar@{->>}[r] \ar[rd] & R_\alpha \ar[d] \ar@<.3ex>[rr] \ar@<-.7ex>[rr] && E\ar[d] \ar@{->>}[r] & E/\alpha\\
A \ar@<-1ex>@/_2ex/[rrrr]_{1_A} \ar[rr]^{\Delta} && K \ar@<.5ex>[rr] \ar@<-.5ex>[rr] && A &\\
}$$
\end{proof}

\begin{defn}[The structure sheaf]\label{def_str_sheaf}
Given a pretopos $\EE$, the \emph{structure sheaf} $\OO=\OO_{\EE}$ is the sheaf over $\MM(\EE,\kappa)$ associated with the stack $\EE^{\2}$ via the maps $\Stk(\EE)\stackrel{\widehat{\ }}{\lto}\Cat(\Sh(\EE))\simeq \EqSh(\MM)$ (cf. lemma \ref{stack_to_sheaf} and theorem \ref{eq_top_equiv}).
\end{defn}

As we have defined it, $\OO$ is an equivariant sheaf of pretoposes over $\MM$ but usually we will work up to equivalence, as if $\OO$ were a stack over $\EE$. This eliminates a good deal of bookkeeping and emphasizes the relationship to the codomain fibration (as opposed to its strict replacement $\widehat{\EE^{\2}}$). At the same time, this allows us to justify our formal manipulations while avoiding the additional machinery of equivariant stacks.

We can give a more concrete description of $\OO$ using the notion of relative equivariance. For any open subset $U\subseteq\MM_0$ there is a smallest parameter $k=k(U)$ such that $U\subseteq V_k$; we call $k$ the \emph{context} of $U$. $U$ is \emph{relatively invariant} if it is closed under those isomorphisms sending $k\mapsto k$. We may also call $U$ \emph{$k$-invariant} or, when $k=\emptyset$, simply \emph{invariant}.

By the definability theorem \ref{stab_thm} from Chapter 1, an invariant set $U$ which is compact for invariant covers is definable: $U=V_\varphi$ for some sentence $\varphi$. Any other invariant set is a union of these definable pieces. By the same token, a $k$-invariant $U$ which is compact for $k$-invariant covers has the form $V_{\varphi(k)}$ for some formula $\varphi(x)$. 

When $U$ is relatively invariant, the pair $\<U,V[k\mapsto k]\>$ defines an open subgroupoid of $\MM$. Given an equivariant sheaf $\<E,\rho\>$, we say that a subsheaf $S\leq E$ is \emph{relatively equivariant} if the image $S\epi U\subseteq \MM$ is relatively invariant and $S$ is equivariant with respect to the subgroupoid defined by $U$. In particular, an open section $s:V_{\varphi(k)}\to E$ is relatively equivariant if $s(\nu)=\rho_\alpha(s(\mu))$ whenever $\alpha(\mu(k))=\nu(k)$. We let $E(U)$ denote the set of relatively equivariant sections of $E$ over $U$, while $\Gamma_{\eq}(E)$ denotes the set of equivariant global sections $\MM_0\to E$.

\begin{prop}\label{str_sheaf_secs}
$\Gamma_{\eq}(\OO)$ is equivalent to $\EE$. More generally,
$$\OO(V_{\varphi(k)})\simeq\EE\!/\varphi.$$
\end{prop}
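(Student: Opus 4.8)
The two assertions amount to a single statement: the equivariant global sections are the special case $\varphi=\top$, since $V_{\top}=\MM_0$, the $\emptyset$-invariant sections over $\MM_0$ are precisely the equivariant global sections, and $\EE/\top\simeq\EE$. So I would prove the general equivalence $\OO(V_{\varphi(k)})\simeq\EE/\varphi$ and read off the first claim. The plan is to transport the computation across the equivalence $\EqSh(\MM)\simeq\Sh(\EE)$ of Theorem~\ref{eq_top_equiv}, under which, by Definition~\ref{def_str_sheaf}, the structure sheaf $\OO$ is (the strictification of) the codomain stack $\EE^{\2}$, and then to invoke the 2-Yoneda lemma.

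First I would recall, from the preceding proposition, that $\EE^{\2}$ is already a stack for the coherent topology, so by Lemma~\ref{stack_to_sheaf} its strictification $\widehat{\EE^{\2}}$ and the associated sheaf of categories have the same values up to equivalence: $\widehat{\EE^{\2}}(A)\simeq\EE^{\2}(A)=\EE/A$, naturally in $A$. Thus on the $\Sh(\EE)$ side, $\OO$ is presented by the sheaf of categories $A\mapsto\uHom_{\EE}(yA,\EE^{\2})$.

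Next I would match the open $V_{\varphi(k)}$ with the representable $y\varphi$. Regarded through its definable sheaf $\ext{\varphi}$, the basic open $V_{\varphi(k)}$ corresponds under the equivalence to $y\varphi$; this is exactly the identification $\epsilon_! V_{\varphi(k)}\cong y\varphi$ from Proposition~\ref{essential_gm}, together with the universal property of the canonical section $\ext{k\models\varphi}$, which identifies relatively equivariant sections out of $V_{\varphi(k)}$ with maps out of $y\varphi$. Categorifying this (the categorical analogue of Lemma~\ref{equiv_ext}), the category of relatively equivariant sections of $\OO$ over $V_{\varphi(k)}$ is equivalent to the category of fibered functors $\uHom_{\EE}(y\varphi,\EE^{\2})$. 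Finally, the 2-Yoneda lemma (Proposition~\ref{2yoneda}) evaluates this at the object $\varphi$:
\[
\OO(V_{\varphi(k)})\ \simeq\ \uHom_{\EE}(y\varphi,\EE^{\2})\ \simeq\ \EE^{\2}(\varphi)\ =\ \EE/\varphi.
\]
Concretely, an object $\pi:E\to\varphi$ of $\EE/\varphi$ is sent to the section assigning to each $\mu\in V_{\varphi(k)}$ the p-set in the stalk $\DD(M_\mu)$ cut out as the fiber of $\pi$ over the element $\mu(k)\in\varphi^{M_\mu}$, and relative equivariance is exactly the compatibility of these fibers under isomorphisms fixing $k$.

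The main obstacle I anticipate is the bookkeeping in the middle step: verifying that a relatively equivariant section over $V_{\varphi(k)}$ assembles into a genuine fibered functor $y\varphi\to\widehat{\EE^{\2}}$ (and conversely), coherently on morphisms, and that sheafification introduces no discrepancy. This is the categorified version of the unique-equivariant-extension argument of Lemma~\ref{equiv_ext}: one must check that specifying the value on the canonical section $\ext{k\models\varphi}$ determines the section everywhere by equivariance, that the reassignment lemma guarantees such extensions exist and are continuous at the level of the sheaf of objects and the sheaf of arrows simultaneously, and that descent for the extensive and singleton covers is respected. Once this dictionary between relative equivariance and the stack value is in place, the identification with $\EE/\varphi$ is immediate from 2-Yoneda.
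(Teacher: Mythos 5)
Your proposal is correct and follows essentially the same route as the paper: transport across $\EqSh(\MM)\simeq\Sh(\EE)$, identify relatively equivariant sections over $V_{\varphi(k)}$ with maps $\ext{\varphi}\to\OO$ via the unique equivariant extension along the canonical section (Lemma \ref{equiv_ext}), and conclude by 2-Yoneda applied to $y\varphi$ and $\widehat{\EE^{\2}}$. The only cosmetic difference is that the paper runs the chain for $\Gamma_{\eq}(\OO)\simeq\EE$ separately using $\ext{1}=\MM_0$ terminal, whereas you fold it in as the case $\varphi=\top$.
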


\begin{proof}
For the first claim we have the following sequence of equivalences
$$\begin{array}{rcl}
\EE\simeq \EE\!/1&\simeq&\uHom_{\EE}(y1,\EE^{\2})\\
&\simeq & \uHom_{\EE}(y1,\widehat{\EE^{\2}})\\
&\simeq & \uHom_{\MM}(\ext{1},\OO_{\EE}) = \Gamma_{\eq}(\OO_{\EE}).
\end{array}$$
Here we use the Yoneda lemma together with the fact that $\MM_0=\ext{1}$ is terminal in $\EqSh(\MM)$.

Similarly, suppose that we have a relatively equivariant section $s\in\OO(V_{\varphi(k)})$. By proposition \ref{equiv_ext}, there is a unique equivariant extension along the canonical section $\hat{k}$:
$$\xymatrix{
\ext{\varphi} \ar@{-->}[r]^{\exists ! \overline{s}} & \OO\\
& V_{\varphi(k)} \ar[u]_s \ar[ul]^{\hat{k}}\\
}$$
Since the equivalence $\EqSh(\MM)\simeq\Sh(\EE)$ sends $\ext{\varphi}$ to the representable sheaf $y\varphi$, this gives another chain of equivalences
$$\OO(V_{\varphi(k)})\simeq\uHom_{\MM}(\ext{\varphi},\OO)\simeq\uHom_{\EE}\left(y\varphi,\widehat{\EE^{\2}}\right)\simeq \EE\!/\varphi.$$
\end{proof}

\begin{lemma}\label{struct_stalks}
The stalk of $\OO$ at a labelled model $\mu$ is equivalent to the diagram of the model $M_\mu$ (cf. definition \ref{def_diagram}). An isomorphism $\alpha:\mu\to\nu$ acts on these fibers by sending $\<\sigma, a\>\longmapsto\<\sigma,\alpha(a)\>$. 
\end{lemma}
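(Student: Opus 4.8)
The plan is to compute the stalk $\OO_\mu$ as a filtered (pseudo-)colimit of the section categories over a neighbourhood basis of $\mu$, and then to match this colimit with the localization defining $\DD(M_\mu)$ by exhibiting a cofinal comparison functor between the two indexing categories.

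First I would use that the basic opens $V_{\varphi(k)}$ with $\mu\models\varphi(k)$ form a neighbourhood basis, so the stalk is the filtered colimit
$$\OO_\mu\simeq\colim_{\mu\models\varphi(k)}\OO\big(V_{\varphi(k)}\big)\simeq\colim_{\mu\models\varphi(k)}\EE\!/\varphi,$$
the second equivalence being Proposition \ref{str_sheaf_secs}. By Lemma \ref{inclusion_lemma} an inclusion $V_{\psi(l)}\subseteq V_{\varphi(k)}$ is witnessed by a provable sequent $\psi(x,y)\vdash\varphi(x)$, hence by a map $g\colon\psi\to\varphi$ in $\EE$, and under Proposition \ref{str_sheaf_secs} the restriction $\OO(V_{\varphi(k)})\to\OO(V_{\psi(l)})$ is the pullback $g^*\colon\EE\!/\varphi\to\EE\!/\psi$. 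Thus $\OO_\mu$ is a filtered colimit of slices with pullback transitions, exactly the shape of the localization $\DD(M_\mu)\simeq\colim_{\langle a\in A^{M_\mu}\rangle}\EE\!/A$ of Definition \ref{def_diagram}; the machinery of Lemma \ref{localization_limits} guarantees both are pretoposes computed slice-wise.

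Next I would define a comparison functor $\Phi$ from the poset $\mathcal{N}$ of basic neighbourhoods of $\mu$ to the index category $(\int_{\EE}M_\mu)^{\op}$ by
$$\Phi(\varphi,k)=\big\langle\,\mu^x(k)\in\varphi^{M_\mu}\,\big\rangle,$$
sending an inclusion $(\varphi,k)\to(\psi,l)$ to the specialization $g\colon\psi\to\varphi$ above; since $g^{M_\mu}\big(\mu(l)\big)=\mu(k)$ this is a morphism in $(\int_{\EE}M_\mu)^{\op}$, and composing the diagram $\langle a\in A^{M_\mu}\rangle\mapsto\EE\!/A$ with $\Phi$ recovers the diagram $(\varphi,k)\mapsto\EE\!/\varphi$ up to the canonical equivalence of Proposition \ref{str_sheaf_secs}. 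It then suffices to show $\Phi$ is cofinal. The comma category over $\langle a\in A^{M_\mu}\rangle$ is nonempty: since the labelling is surjective I may pick $k$ with $\mu^x(k)=a$ and take $\big((\top_A,k),\id_A\big)$. It is connected: given objects $((\varphi,k),h)$ and $((\varphi',k'),h')$ with $h\colon\varphi\to A$, $h'\colon\varphi'\to A$, I form the conjunction $\varphi\wedge\varphi'$ in the combined context (a basic neighbourhood $V_{(\varphi\wedge\varphi')(l)}$ of $\mu$, $l=\langle k,k'\rangle$) and pass to the equalizer $\psi=\Eq(hg,h'g')\leq\varphi\wedge\varphi'$ of the two composites into $A$; as both composites send $\mu(l)\mapsto a$, we have $\mu\models\psi(l)$, and $\big((\psi,l),(hg)|_\psi\big)$ receives morphisms from both objects. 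Since $\mathcal{N}$, and hence each comma category, is a directed preorder (intersections of neighbourhoods contain basic ones, and there is at most one inclusion between two basic opens), these comma categories are filtered, so $\Phi$ is cofinal and $\OO_\mu\simeq\DD(M_\mu)$.

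Finally, for the groupoid action I would trace an isomorphism $\alpha\colon\mu\to\nu$ through this identification. Given a neighbourhood $V_{\varphi(k)}$ of $\mu$, the reassignment lemma \ref{reassignment} supplies $l$ with $\alpha\in V_{k\overset{x}{\mapsto}l}$, whence $\nu\models\varphi(l)$ and $V_{\varphi(l)}$ is a neighbourhood of $\nu$. The equivariant action $\rho_\alpha$ on $\OO$ is inherited from the action on the $\ext{A}$, which is $\alpha^x$, so it carries a germ represented by an object of $\EE\!/\varphi$ over $V_{\varphi(k)}$ to the same object of $\EE\!/\varphi$ now regarded over $V_{\varphi(l)}$, leaving the syntactic datum $\sigma$ untouched. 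Under $\Phi$ the source germ corresponds to $\langle\sigma,\mu^x(k)\rangle$ and the target to $\langle\sigma,\nu^x(l)\rangle$, and since $\nu^x(l)=\alpha(\mu^x(k))$ this is exactly $\langle\sigma,a\rangle\mapsto\langle\sigma,\alpha(a)\rangle$. The step I expect to require the most care is the cofinality argument — in particular verifying connectivity of the comma categories through the conjunction-and-equalizer construction, and confirming that the whole comparison is compatible with the pseudo-functorial (up-to-equivalence) transition data, so that it descends to a genuine equivalence of filtered colimits of pretoposes.
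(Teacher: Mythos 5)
Your proof is correct, but it packages the argument differently from the paper in both halves. For the identification $\OO_\mu\simeq\DD(M_\mu)$, the paper does not pass through a cofinality argument on index categories: it directly defines the ``evaluation of parameters'' functor $\OO_\mu\to\DD(M_\mu)$, $\<\sigma,k\>\mapsto\<\sigma,\mu(k)\>$, observes that surjectivity of the labelling makes it surjective on objects and full, and then reduces faithfulness to conservativity (Lemma \ref{eq_cons}), which it checks by lifting any isomorphism $\<\sigma,\mu(k)\>\cong\<\tau,\mu(l)\>$ in $\DD(M_\mu)$ back to $\OO_\mu$ using a label for the witnessing element $c$. Your cofinality argument proves the same equivalence; the nonemptiness and connectedness of your comma categories encode exactly the same uses of surjective labelling, products and equalizers that make $(\int_{\EE}M_\mu)^{\op}$ filtered, so nothing is lost, and you trade the hands-on verification for a standard categorical lemma. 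For the groupoid action the difference is more substantive: the defining property of the relatively equivariant section $\<\sigma,k\>$ only gives $\rho_\alpha(\<\sigma,k\>(\mu))=\<\sigma,k\>(\nu)$ when $\alpha:k\mapsto k$, so the paper proves the general case by factoring $\alpha$ through a zig-zag of three isomorphisms, each preserving some parameter --- this is the bulk of its proof. Your route, namely that $\<\sigma,k\>$ factors as $\name{\sigma}\circ\hat{k}$ with $\name{\sigma}:\ext{\varphi}\to\OO$ the genuinely equivariant map classified by $\sigma$ (Lemma \ref{equiv_ext}), so that $\rho_\alpha$ carries the germ at $\mu$ to $\name{\sigma}(\<\nu,\alpha(\mu(k))\>)=\<\sigma,l\>(\nu)$, is cleaner and avoids the zig-zag entirely; but as written it is compressed to the clause ``inherited from the action on the $\ext{A}$,'' and you should make the factorization through $\name{\sigma}$ explicit, since that is exactly the point where the paper has to work hardest. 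A minor point: producing $l$ with $\nu(l)=\alpha(\mu(k))$ needs only surjectivity of the labelling, not the reassignment lemma.
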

\begin{proof}
The partial sections $s\in\OO(V_{\varphi(k)})$ cover the structure sheaf, so we can use them to compute its stalks. Recall (lemma \ref{inclusion_lemma}) that an inclusion of basic open sets always has the form $V_{\psi(k,l)}\subseteq V_{\varphi(k)}$ where $\exists z.\psi(x,z)\vdash\varphi(x)$.

This corresponds to a map from $\psi\to\varphi$ in $\EE$; pullback along this map describes the transition functor $\OO(V_{\varphi(k)})\to\OO(V_{\psi(k,l)})$. This means that the stalk $\OO_\mu\simeq\displaystyle\colim_{\mu\models\varphi(k)} \EE\!/\varphi$ is a directed colimit of slices and pullbacks, i.e. a localization as in definition \ref{def_localization}.

This is nearly the definition of the diagram; however, $\DD(M_\mu)$ and $\OO_\mu$ are presented as colimits over different index categories. The former is indexed by elements $a\in\varphi^M$; the latter is indexed by labels $k\in\kappa$ with $\mu\models \varphi(k)$. Evaluation of parameters induces a comparison map $\OO_\mu\to\DD(M_\mu)$ sending $\<\sigma,k\>\mapsto\<\sigma,\mu(k)\>$.

Since $\mu$ is a surjective labelling, this functor is both surjective on objects and full. As we are working with pretoposes, conservativity will imply faithfulness and hence an equivalence of categories.

If $\<\sigma,\mu(k)\>\cong\<\tau,\mu(l)\>$ is an isomorphism in $\DD(M_\mu)$ then there is a diagram
$$\xymatrix{
D \ar[d]_{\sigma} & \ar[l] \raisebox{1.5ex}{$f^*\sigma \cong g^*\tau$} \ar[r] \ar/va(-8) 1.6cm/;[d] \ar/va(-6) 2.4cm/;[d] &E \ar[d]^{\tau}\\
\varphi & \ar[l]^{f} \gamma \ar[r]_{g} & \psi\\
}$$
together with an element $c\in \gamma^\mu$ with $f^\mu(c)=\mu(k)$ and $g^\mu(c)=\mu(l)$. This same diagram forces an isomorphism in $\OO_\mu$: for any label $\mu(j)=c$,
$$\<\sigma,k\>\cong\big\<f^*\sigma,j\big\>\cong\big\<g^*\tau,j\big\>\cong\<\tau,l\>.$$

Finally, we want to see that an isomorphism $\alpha:\mu\iso\nu$ acts by sending $\<\sigma,a\>\mapsto\<\sigma,\alpha(a)\>$. We proceed in two steps, first supposing that there is a parameter $k$ with $a=\mu(k)$ and $\alpha(a)=\nu(k)$. Since $\<\sigma,k\>$ is a $k$-equivariant section sending $\mu\mapsto \<\sigma,\mu(k)\>$ and $\alpha:k\mapsto k$, it follows that 
$$\rho_\alpha(\<\sigma,a\>)\cong\rho_\alpha(\<\sigma,k)(\mu))\cong\<\sigma,k\>(\nu)\cong\<\sigma,\alpha(a)\>.$$ 

For the general case, fix labels $\mu(j)=a$ and $\nu(l)=\alpha(a)$. Using the reassignment lemma \ref{reassignment} we may find a diagram of labelled models satisfying the following conditions (with $k$ disjoint from $j$ and $l$):
$$\xymatrix{
\mu \ar[r]^{\alpha} \ar[d]_{\beta} & \nu & \mu'(k)=\mu'(j)=\beta(a)\\
\mu' \ar[r]_{\alpha'} & \nu' \ar[u]_{\beta'}& \beta'(\nu(k))=\beta'(\nu(l))=\alpha(a)\\
}$$
Now $\beta:j\mapsto j$, $\alpha':k\mapsto k$ and $\beta':l\mapsto l$, the previous observation applied three times gives
$$\begin{array}{rcl}
\rho_\alpha(\<\sigma,a\>)&\cong&\rho_{\beta'}\circ\rho_{\alpha'}\circ\rho_{\beta}(\<\sigma,\mu(k)\>)\\
&\cong&\rho_{\beta'}\circ\rho_{\alpha'}(\<\sigma,\mu'(k)\>)\\
&\cong&\rho_{\beta'}\circ\rho_{\alpha'}(\<\sigma,\mu'(j)\>)\\
&\cong&\rho_{\beta'}(\<\sigma,\nu'(j)\>)\\
&\cong&\rho_{\beta'}(\<\sigma,\nu'(l)\>)\\
&\cong&\<\sigma,\nu(l)\>\cong\<\sigma,\alpha(a)\>\\
\end{array}$$
\end{proof}

\begin{cor}[Subdirect product representation]\label{sub_dir_prod}
Every pretopos $\EE$ embeds (conservatively) into a product of local pretoposes.
\end{cor}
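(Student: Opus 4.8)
The plan is to exhibit the promised embedding as the map from $\EE$ into the product of the stalks of its own structure sheaf. By Proposition~\ref{str_sheaf_secs} we have $\EE\simeq\Gamma_{\eq}(\OO)$, and by Lemma~\ref{struct_stalks} the stalk of $\OO$ at a labelled model $\mu$ is (equivalent to) the diagram $\DD(M_\mu)$, the germ functor $\Gamma_{\eq}(\OO)\to\OO_\mu$ being the canonical interpretation $\tilde\top\colon\EE\to\DD(M_\mu)$ of the diagram construction. First I would fix a set of representatives $M$ for the isomorphism classes of $\kappa$-small models (these suffice for completeness) and assemble the germ functors into a single functor
$$\Phi\colon\EE\lto\prod_M\DD(M),\qquad A\longmapsto\big(\tilde\top_M(A)\big)_M.$$

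Next I would record that this lands where we want. Each $\DD(M)$ is a local pretopos by Lemma~\ref{local_stalks}; a set-indexed product of pretoposes is again a pretopos, with all pretopos structure computed componentwise; and each component $\tilde\top_M$ is a pretopos functor, since the colimit injections of a localization preserve pretopos structure (Lemma~\ref{localization_limits}). Hence $\Phi$ is a pretopos functor into a product of local pretoposes, and it only remains to see that it is conservative.

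The hard part is this conservativity, and essentially the whole weight of the argument rests on completeness for $\kappa$-small models. I would post-compose each factor with the co-representable $I_M=\Hom_{\DD(M)}(1,-)\colon\DD(M)\to\Sets$, which is a pretopos functor (Lemma~\ref{local_functor}) equal to the canonical model of $\DD(M)$ (Lemma~\ref{diag_models}) and satisfying $I_M\circ\tilde\top_M\cong M$. Now suppose $\Phi(f)$ is an isomorphism for some $f\colon A\to B$ in $\EE$. Then each $\tilde\top_M(f)$ is an isomorphism in $\DD(M)$, so $f^M=I_M(\tilde\top_M(f))$ is a bijection in $\Sets$ for every $\kappa$-small $M$. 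Being a bijection, $f^M$ is both monic and epic; these are subobject identities (triviality of the kernel pair and fullness of the image) which, holding in every $\kappa$-small model, transfer back to $\EE$ by completeness. Thus $f$ is monic and epic in $\EE$, hence an isomorphism by balance (Lemma~\ref{ptop_nice}). So $\Phi$ reflects isomorphisms and, by Lemma~\ref{eq_cons}, is the desired conservative embedding.

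The only genuine obstacle is the completeness input used in the last paragraph; everything else---the smallness bookkeeping that keeps the indexing set a set, the componentwise verification that the product is a pretopos, and the identification of the germ functors with $\tilde\top$---is routine and already packaged in the cited lemmas. Conceptually, conservativity is just the statement that a morphism of the sheaf $\OO$ which is invertible on every stalk is invertible; I prefer to run it through models and completeness because that is exactly the tool the preceding sections make available.
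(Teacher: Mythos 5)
Your proposal is correct and follows essentially the same route as the paper: identify $\EE$ with $\Gamma_{\eq}(\OO_{\EE})$, map into the product of stalks, and use that each stalk is a diagram and hence local. The only difference is cosmetic --- the paper checks conservativity directly from the equivalence $\EE\simeq\Gamma_{\eq}\OO$ (stalkwise isomorphism of subobjects implies isomorphism), whereas you route it through the canonical models $\Hom(1,-)$, completeness for $\kappa$-small models, and balance; both verifications are sound.
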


\begin{proof}
Lemma \ref{struct_stalks} showed that the stalks of $\OO=\OO_{\EE}$ are local pretoposes. Thus $\prod_{\mu\in\MM} \OO_\mu$ is a product of local pretoposes. Lemma \ref{str_sheaf_secs} showed that $\EE$ is equivalent the equivariant global sections of $\OO_{\EE}$. Evaluating these sections at each stalk defines a pretopos functor $\EE\to\prod_{\mu\in\MM} \OO_\mu$. This is clearly conservative: if $\varphi^\mu\cong\psi^\mu$ for every $\mu\in\MM$ then the equivalence $\EE\simeq\Gamma_{\eq}\OO$ ensures that $\varphi\cong\psi$ in $\EE$.
\end{proof}

We collect the results of this section into a theorem:
\begin{thm}[Equivariant sheaf representation for pretoposes]\label{aff_schemes}
For every pretopos $\EE$ there is a topological groupoid $\MM=\MM_{\EE}$ and an equivariant sheaf of pretoposes $\OO=\OO_{\EE}$ over $\MM$ such that $\Gamma_{\eq}\OO\simeq\EE$ and, for each $\mu\in\MM_0$, the stalk $\OO_\mu$ is a local pretopos (cf. definition \ref{def_local}).
\end{thm}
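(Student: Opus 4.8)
The plan is to assemble the statement from constructions and lemmas already in hand, since this theorem is a synthesis rather than a genuinely new argument. First I would fix the data. A pretopos $\EE$ classifies a coherent theory $\bTT_\EE$ (page \pageref{class_ptop}), so the spectral construction of Chapter~1 applies verbatim: set $\MM=\MM_\EE=\MM(\EE,\kappa)$ to be the spectral groupoid (Definitions \ref{M0points}, \ref{M0opens}, \ref{M1points}), and let $\OO=\OO_\EE$ be the structure sheaf of Definition \ref{def_str_sheaf}. With these choices the three assertions of the theorem are precisely the conclusions of three earlier results, and the work is to see that they fit together and that $\OO$ carries the claimed algebraic structure.

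Next I would confirm that $\OO$ really is an \emph{equivariant sheaf of pretoposes}. The codomain fibration presents $\EE^{\2}$ as the indexed category $A\mapsto\EE\!/A$, where each slice $\EE\!/A$ is again a pretopos and each pullback functor $f^*$ is a pretopos functor (the discussion of slices and localization preceding Lemma \ref{localization_limits}). Since the pretopos axioms are finite-limit conditions together with pullback-stable colimits, and limits of stacks are computed pointwise while the relevant colimits are preserved, the strictification and the two equivalences $\Stk(\EE)\to\Cat(\Sh(\EE))\simeq\EqSh(\MM)$ of Lemma \ref{stack_to_sheaf} and Theorem \ref{eq_top_equiv} transport this fibrewise pretopos structure across. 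Thus $\OO$ is a pretopos object internal to $\EqSh(\MM)$, i.e. an equivariant sheaf of pretoposes.

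The global-sections claim is then immediate from Proposition \ref{str_sheaf_secs}: taking the empty context (the case $\varphi=\top$, $k=\emptyset$) gives $\Gamma_{\eq}\OO=\OO(V_\top)\simeq\EE\!/1\simeq\EE$. For the stalks I would invoke Lemma \ref{struct_stalks}, which identifies $\OO_\mu$ up to equivalence with the diagram $\DD(M_\mu)$ of the model $M_\mu$, and then Lemma \ref{local_stalks}, which shows that every such diagram is a local pretopos in the sense of Definition \ref{def_local}. Combining the two, $\OO_\mu\simeq\DD(M_\mu)$ is local, as required, completing the three bullets.

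The only real content—already discharged in the supporting lemmas—is verifying that the passage from the stack $\EE^{\2}$ over $\EE$ to the equivariant sheaf over $\MM$ disturbs neither the fibrewise pretopos structure nor the identification of stalks. I expect the main subtlety to lie in keeping the ``up to equivalence'' bookkeeping honest across the strictification of Lemma \ref{stack_to_sheaf}, since $\OO$ is genuinely a (strict) sheaf while we reason about it as though it were the pseudofunctorial stack $\EE^{\2}$; once that correspondence is granted, the theorem is a direct corollary of Propositions/Lemmas \ref{str_sheaf_secs}, \ref{struct_stalks} and \ref{local_stalks}.
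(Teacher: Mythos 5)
Your proposal is correct and matches the paper exactly: the paper introduces this theorem with the words ``We collect the results of this section into a theorem,'' so its proof is precisely the assembly of Definition \ref{def_str_sheaf}, Proposition \ref{str_sheaf_secs}, Lemma \ref{struct_stalks} and Lemma \ref{local_stalks} that you describe. Your added remark on transporting the fibrewise pretopos structure across strictification is a reasonable bit of diligence the paper leaves implicit.
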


\begin{defn}[Affine schemes]
The pair $\<\MM_{\EE},\OO_{\EE}\>$ is the \emph{affine (logical) scheme} associated with $\EE$, denoted $\Spec(\EE)$.
\end{defn}

In particular, we may specialize to the case of classical first-order logic by considering Boolean pretoposes.
\begin{cor}
For every classical first-order theory $\bTT$ there is a topological groupoid of (labelled) $\bTT$-models and isomorphism $\MM=\MM_{\bTT}$ and an equivariant sheaf of Boolean pretoposes $\OO=\OO_{\bTT}$ over $\MM$ such that $\Gamma_{\eq}\OO\simeq\EE_{\bTT}$. For each model $\mu\in\MM_0$, the stalk $\OO_\mu$ is a well-pointed pretopos, the classifying pretopos of the complete diagram of $\mu$.
\end{cor}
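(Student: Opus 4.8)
The plan is to derive this corollary by specializing Theorem \ref{aff_schemes} to the Boolean pretopos presenting $\bTT$ and then reading off the extra Boolean features from the lemmas of Section \ref{sec_bool_ptop}. First I would invoke the identification established in Section \ref{sec_bool_ptop}: a classical first-order theory $\bTT$ is presented by a Boolean pretopos $\EE_{\bTT}$, and since $\Sets$ is Boolean, every classical $\bTT$-model extends essentially uniquely to a pretopos functor $\EE_{\bTT}\to\Sets$. Consequently the object and arrow spaces of $\MM_{\EE_{\bTT}}$ (Definitions \ref{M0points}, \ref{M1points}) are exactly the labelled classical $\bTT$-models and their isomorphisms. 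Applying Theorem \ref{aff_schemes} to $\EE=\EE_{\bTT}$ then directly produces the groupoid $\MM=\MM_{\bTT}$, an equivariant structure sheaf $\OO=\OO_{\bTT}$ with $\Gamma_{\eq}\OO\simeq\EE_{\bTT}$, and local stalks.

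Next I would check that $\OO$ is a sheaf of \emph{Boolean} pretoposes. By Proposition \ref{str_sheaf_secs} every section category has the form $\OO(V_{\varphi(k)})\simeq\EE_{\bTT}/\varphi$, a slice of $\EE_{\bTT}$. The localization clause of the Boolean-closure lemma of Section \ref{sec_bool_ptop} observes that subobjects and complements in a slice are computed as in the ambient pretopos, so each $\EE_{\bTT}/\varphi$ is again Boolean; hence $\OO$ is fibered in Boolean pretoposes, as claimed.

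For the stalks, Lemma \ref{struct_stalks} identifies $\OO_\mu$ with the diagram $\DD(M_\mu)$, a filtered localization of $\EE_{\bTT}$ along the elements of $M_\mu$. The same Section \ref{sec_bool_ptop} lemma shows this localization is Boolean, and Lemma \ref{local_stalks} shows it is local; Lemma \ref{bool_stalks} then immediately yields that $\OO_\mu$ is two-valued and well-pointed. Finally, the proposition of Section \ref{sec_diagrams} presents $\DD(M_\mu)$ as the classifying pretopos of the diagram of $M_\mu$, and because $\bTT$ is classical this diagram is closed under negation, so it is precisely the complete (elementary) diagram of $\mu$. I do not anticipate a genuine obstacle, since the statement is an assembly of earlier results; the only point needing care is confirming that ``Boolean'' survives both slicing and filtered localization, but these are exactly the two clauses of the final Boolean-closure lemma of Section \ref{sec_bool_ptop}, so the bookkeeping is routine.
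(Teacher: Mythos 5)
Your proposal is correct and follows essentially the same route as the paper: identify $\EE_{\bTT}$ as a Boolean pretopos via Section \ref{sec_bool_ptop}, apply Theorem \ref{aff_schemes}, note that Booleanness survives slicing/localization so the sheaf and its stalks are Boolean, and conclude well-pointedness of the stalks from Lemma \ref{bool_stalks}. The paper's own proof is just a terser version of this same assembly, so your additional bookkeeping (e.g.\ the explicit appeal to the localization clause and the remark about the complete elementary diagram) is filling in details rather than taking a different path.
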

\begin{proof}
As discussed in Chapter 2, section \ref{sec_bool_ptop}, every classical theory has a classifying pretopos $\EE_{\bTT}$ which is Boolean and we apply the previous theorem. The sheaf of pretoposes $\OO_{\bTT}$ is Boolean because complements are preserved by slicing. The stalks are well-pointed by lemma \ref{bool_stalks}.
\end{proof}

\begin{cor}
Every Boolean pretopos $\BB$ embeds into
\begin{itemize}
\item a product of well-pointed pretoposes.
\item a power of $\Sets$
\end{itemize}
\end{cor}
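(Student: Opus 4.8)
The plan is to derive both embeddings from the subdirect product representation (Corollary \ref{sub_dir_prod}) together with the observation that, when $\BB$ is Boolean, the stalks of $\OO_\BB$ are not merely local but \emph{well-pointed}. So the statement really is a corollary: no new construction is needed, only an identification of the factors appearing in the subdirect representation.

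For the first item, I would apply Corollary \ref{sub_dir_prod} to obtain a conservative pretopos functor $\BB\to\prod_{\mu\in\MM_0}\OO_\mu$. By Lemma \ref{struct_stalks} each stalk $\OO_\mu$ is equivalent to the diagram $\DD(M_\mu)$, which by definition is a localization of $\BB$; since localizations of a Boolean pretopos are again Boolean (the localization clause of the lemma preceding \ref{bool_stalks}), each $\OO_\mu$ is Boolean. Each $\OO_\mu$ is also local by Theorem \ref{aff_schemes}, so Lemma \ref{bool_stalks} promotes ``local'' to ``well-pointed.'' This exhibits $\BB$ as a conservative subobject of a product of well-pointed pretoposes, proving the first bullet.

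For the second item I would push each well-pointed factor into $\Sets$. Because $\OO_\mu$ is local, Lemma \ref{local_functor} makes the global-sections functor $\Hom_{\OO_\mu}(1,-)\colon\OO_\mu\to\Sets$ a pretopos functor; because $\OO_\mu$ is well-pointed this functor is faithful, hence conservative by Lemma \ref{eq_cons}. Taking the product over $\mu$ gives a pretopos functor $\prod_\mu\OO_\mu\to\prod_\mu\Sets=\Sets^{\MM_0}$, which is again conservative since an isomorphism in a product is a componentwise isomorphism and each factor reflects isomorphisms. Composing with the embedding from the first item yields a conservative pretopos functor $\BB\to\Sets^{\MM_0}$, an embedding into a power of $\Sets$; unwinding the final remark of Lemma \ref{local_functor}, this composite is just $\varphi\mapsto(\varphi^{M_\mu})_\mu$, the product of all the labelled models, with conservativity amounting to completeness. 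The only point requiring care is the routine bookkeeping that Booleanness descends to the stalks through the localization lemma and that conservativity is preserved under products and composition; the conceptual crux — a well-pointed pretopos maps faithfully, hence conservatively, into $\Sets$ via $\Hom(1,-)$ — is already packaged in Lemmas \ref{local_functor} and \ref{eq_cons}, so there is no genuine obstacle.
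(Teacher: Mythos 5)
Your proposal is correct and follows essentially the same route as the paper: the subdirect product representation of Corollary \ref{sub_dir_prod} combined with Lemma \ref{bool_stalks} (local Boolean implies well-pointed) for the first item, and the faithful hence conservative functor $\Hom(1,-)$ into $\Sets$ on each well-pointed stalk for the second. The extra bookkeeping you supply (Booleanness descending to stalks via the localization lemma, conservativity of the product functor) is all sound and merely makes explicit what the paper leaves implicit.
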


\begin{proof}
The first claim follows corollary \ref{sub_dir_prod} together with the fact (lemma \ref{bool_stalks}) that any local, Boolean pretopos is well-pointed.

For the second claim, notice that the global sections $\Hom(1,-)$ in a well-pointed pretopos define a faithful (and hence conservative) functor into sets. Composing these functors with the previous claim yields the asserted embedding:
$$\xymatrix{
\BB \ar@{>->}[r] \ar@{>->}[rd] & \prod_{\mu\in\MM} \OO_{\BB,\mu} \ar@{>->}[d]^{\prod_\mu \Hom_{\OO_{\BB,\mu}}(1,-)}\\
& \prod_{\mu\in\MM} \Sets.
}$$
\end{proof}

\section{The category of logical schemes}

In the previous section we defined the affine scheme $\<\MM_{\EE},\OO_{\EE}\>$ associated with a pretopos $\EE$. This gave a representation of $\EE$ as the equivariant global sections of $\OO_{\EE}$. In this section we will show that this construction is functorial in $\EE$; pretopos functors and natural transformations lift to the structure sheaves, and these are recovered from their global sections. This guides the definition of the 2-category of logical schemes.

\begin{prop}\label{embed_prop}
\mbox{}\begin{itemize}
\item An interpretation $I:\EE\to\FF$ (contravariantly) induces a reduct functor $I_\flat:\MM_{\FF}\to\MM_{\EE}$.
\item $I$ also defines an internal pretopos functor $I^\sharp:\OO_{\EE}\to I_{\flat*}\OO_{\FF}$ such that $\Gamma_{\eq}(I^\sharp)\cong I$.
\item Every stalk of the transposed map $I_\flat^*\OO_{\EE}\to\OO_{\FF}$ is a conservative functor.
\item For any natural transformation $\xymatrix{\EE \rtwocell^{\2}_J{\tau} & \FF}$ there is an internal pretopos functor $\tau^*:J_{\flat*}\OO_{\FF}\to I_{\flat*}\OO_{\FF}$ (in the opposite direction) and a natural transformation
$$\xymatrix{
 \OO_{\EE}\ar[rr]^{I^\sharp} \ar[dr]_{J^\sharp} \rrlowertwocell<\omit>{<3>\ \ \tau^\sharp} && I_{\flat*}\OO_{\FF}\\
& J_{\flat*}\OO_{\FF} \ar[ru]_{{\tau^*}} &  \\
}$$
such that $\Gamma_{\eq}\tau^*\cong 1_{\FF}$ and $\Gamma_{\eq}\tau^\sharp\cong\tau$.
\end{itemize}
\end{prop}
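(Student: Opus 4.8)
The plan is to reduce every clause to the slice functors $I_A\colon\EE\!/A\to\FF\!/IA$ that $I$ induces, and then to read everything off at the level of stalks (using $\OO_{\EE,\mu}\simeq\DD(M_\mu)$, lemma \ref{struct_stalks}) and of equivariant global sections (using $\Gamma_{\eq}\OO_\EE\simeq\EE$, proposition \ref{str_sheaf_secs}). For the first clause I would define $I_\flat$ on points by reduct, with underlying model $N_\nu\circ I=I^*N_\nu$; a labelling of the basic sorts of $\FF$ extends canonically to every object of $\FF$, so evaluating at the objects $IA$ yields labellings of $(IA)^{N_\nu}=A^{I^*N_\nu}$, which can be reindexed into $\kappa$ since $\kappa\cdot\kappa=\kappa$. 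On isomorphisms $I_\flat$ whiskers, $\alpha\mapsto\alpha I$, so groupoid functoriality is inherited from functoriality of reduct. Continuity is the one thing to check: I would verify $I_\flat^{-1}V_{\varphi(k)}=V_{(I\varphi)(k)}$ (and likewise for the domain/codomain and $k\overset{x}{\mapsto}l$ opens of $\MM_1$), which is exactly the semantic identity $\varphi^{I^*N}=(I\varphi)^N$.

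For the second clause, $A\mapsto I_A$ is a (pseudo)natural, fibrewise pretopos morphism of the codomain stacks lying over $I$: each $I_A$ is a pretopos functor because $I$ preserves finite limits, images, sums and quotients, and the $I_A$ commute with pullback because $I$ does. Transporting along $\Stk(\EE)\to\Cat(\Sh(\EE))\simeq\EqSh(\MM_\EE)$ together with the geometric morphism that $I_\flat$ induces produces $I^\sharp\colon\OO_\EE\to I_{\flat*}\OO_\FF$ and its transpose $I_\flat^*\OO_\EE\to\OO_\FF$. To compute $\Gamma_{\eq}(I^\sharp)$ I would use $\Gamma_{\eq}\OO_\EE\simeq\EE$ together with $\Gamma_{\eq}(I_{\flat*}\OO_\FF)\cong\Gamma_{\eq}\OO_\FF\simeq\FF$ (global sections of a direct image are the global sections upstairs); tracing the equivalences, the equivariant section $\ulcorner A\urcorner$ is carried to $\ulcorner IA\urcorner$, so $\Gamma_{\eq}(I^\sharp)\cong I$.

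The third clause is where I expect the real work. By the stalk computation the transpose is, fibrewise, the functor $\DD(I^*N)\to\DD(N)$ sending a p-set $\<\sigma,a\>$ to $\<I\sigma,a\>$. Its defining feature is that realization is preserved verbatim, $\sigma^{I^*N}=(I\sigma)^N$ as subsets of $(IE)^N$, so it descends through the realization quotients to the conservative inclusion $\DS(I^*N)\hookrightarrow\DS(N)$, which yields conservativity at the level of \emph{definable sets}. The obstacle is to upgrade this to the full elementary diagram. By lemma \ref{eq_cons} it suffices to prove injectivity on subobjects, i.e. that $\<I\sigma,a\>\cong\<I\tau,b\>$ forces $\<\sigma,a\>\cong\<\tau,b\>$, and via the p-set subobject lemma this amounts to transferring a witnessing $\FF$-formula back to $\EE$. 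Controlling $\DD$ rather than merely $\DS$ here---reflecting isomorphisms of the diagram and not just of their realizations---is the crux of the proposition, and I would attack it using locality of the stalks (lemma \ref{local_stalks}) together with completeness for $\kappa$-small models.

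Finally, for natural transformations, $\tau\colon I\To J$ yields for each $N$ a homomorphism $I^*N\to J^*N$ (whisker $N$ against $\tau$), natural in $N$; by propositions \ref{spectral_closure} and \ref{iso_closure} homomorphisms are precisely specializations, so this packages into a $2$-cell $\tau_\flat\colon I_\flat\To J_\flat$ between the reduct functors. Direct image is $2$-functorial, so $\tau_\flat$ induces $\tau^*\colon J_{\flat*}\OO_\FF\to I_{\flat*}\OO_\FF$, and pasting against $I^\sharp$ and $J^\sharp$ produces the $2$-cell $\tau^\sharp$ filling the stated triangle. On global sections both pushforwards compute $\FF$ and $\tau^*$ acts as the identity there (equivariant global sections do not see the basepoint specialization), giving $\Gamma_{\eq}\tau^*\cong 1_\FF$; combined with $\Gamma_{\eq}I^\sharp\cong I$ and $\Gamma_{\eq}J^\sharp\cong J$ from the second clause, the induced $2$-cell $\Gamma_{\eq}\tau^\sharp\colon I\To J$ unwinds to $\tau$ itself.
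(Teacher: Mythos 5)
Your clauses (i), (ii) and (iv) track the paper's own proof closely: the reduct is defined by precomposition, with labellings inherited via $A^{I_\flat N}=(IA)^N$; continuity is checked on basic opens via $I_\flat^{-1}V_{\varphi(k)}=V_{I\varphi(k)}$; $I^\sharp$ is assembled section-wise from the slice functors $I_\varphi:\EE\!/\varphi\to\FF/I\varphi$ (the paper identifies the relatively equivariant sections of $I_{\flat*}\OO_{\FF}$ over $V_{\varphi(k)}$ with $\FF/I\varphi$); and both $\Gamma_{\eq}I^\sharp\cong I$ and $\Gamma_{\eq}\tau^\sharp\cong\tau$ fall out by setting $\varphi=1$. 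One caveat on (iv): the paper defines $\tau^*$ section-wise as pullback along $\tau_\varphi:I\varphi\to J\varphi$, whereas you derive it from a purported 2-cell $\tau_\flat:I_\flat\To J_\flat$. A homomorphism $I^*N\to J^*N$ is a specialization (a closure relation between points), not an arrow of the groupoid $\MM_{\EE}$, whose arrows are isomorphisms only; so your ``2-cell'' is not literally a transformation of groupoid homomorphisms, and the section-wise definition is the safer route.

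The genuine gap is clause (iii), and you have named it without closing it: ``I would attack it using locality of the stalks together with completeness'' is a plan, not a proof. What you do establish --- that the stalk map $\DD(I^*N)\to\DD(N)$, $\<\sigma,a\>\mapsto\<I\sigma,a\>$, preserves realizations verbatim and hence restricts to an inclusion $\DS(I^*N)\hookrightarrow\DS(N)$ --- controls only the conservative part of the quotient-conservative factorization; since $\DD(M)\to\DS(M)$ is a quotient, nothing about conservativity on $\DD$ follows. By lemma \ref{eq_cons} the required statement is injectivity on subobjects: if $\<I\sigma,a\>\leq\<I\tau,b\>$ in $\Sub_{\DD(N)}$, the witnessing formula $\gamma$ with $N\models\gamma(a,b)$ lives a priori only in $\FF$, and it must be replaced by one coming from $\EE$ and true in $I^*N$; locality of the stalks gives you the reflection only for subterminal objects, where the ordering is detected by realization in the model itself. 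The paper's own proof disposes of (iii) in one line by identifying the subobject ordering in the stalk with satisfaction of the corresponding sequent by the underlying labelled model and invoking $\varphi(b)^{I_\flat\nu}=(I\varphi)(b)^\nu$; your proposal is, to its credit, more careful about whether that identification is legitimate for non-subterminal subobjects of the diagram, but having raised exactly the right worry it must either resolve it or explicitly adopt the paper's identification --- as written, the crucial step of clause (iii) is missing.
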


\begin{proof}
Recall that the reduct of an $\FF$-model $N$ along $I$ is defined to be the composite $I_\flat N:\EE\to\FF\stackrel{N}{\to}\Sets$. The reduct of an isomorphism $\xymatrix{\FF \rtwocell^N_{N'}{\omit\vertiso} & **[r]\Sets}$ is defined in the same way. Since $A^{I_\flat N}=(IA)^N$, the reduct $I_\flat N$ inherits labellings from $N$, defining a functor $I_\flat:\MM_{\FF}\to\MM_{\EE}$.

Given a basic open set $V_{\varphi(k)}\subseteq \MM_{\EE}$, the inverse image along $I_\flat$ is $V_{I\varphi(k)}$, so $I_\flat$ is continuous on the space $\Ob(\MM_{\EE})$. The map $\Ar(\MM_{\FF})\to\Ar(\MM_{\EE})$ is continuous because open sets of morphisms are defined by labels, which are inherited. Hence $I_\flat:\MM_{\FF}\to\MM_{\EE}$ is a continuous functor and induces a geometric morphism
$\xymatrix{
\EqSh(\MM_{\FF}) \rrtwocell^{I_\flat^*}_{I_{\flat *}}{`\bot} && \EqSh(\MM_{\EE}).\\
}$

In order to define $I^\sharp$ we consider the relatively equivariant sections of $I_{\flat*}\OO_{\FF}(V_{\varphi(k)})$, These are equivalent to the sections of $\OO_{\FF}(V_{I\varphi(k)})$ and hence to $\FF/I\varphi$. $I^\sharp$ is then defined section-wise by obvious functors $I_\varphi:\EE\!/\varphi\to\FF/I\varphi$. In particular, setting $\varphi=1$ gives $\EE\!/1\simeq\EE$ and $\Gamma_{\eq} I^\sharp\cong I:\EE\to\FF$.

Recall that the stalk of $\OO_{\EE}$ at a labelled model $\mu$ is the diagram of $\mu$; its objects (modulo coproducts and quotients) are parameterized formulas of the form $\varphi(x,b)$ where $\varphi(x,y)$ is a formula in context $A\times B$ and $b$ is an element in $B^\mu$. Since pullbacks preserve fibers, the stalk of $\OO_{\EE}$ at $I_\flat(\nu)$ is the same as the stalk of $I_\flat^*\OO_{\EE}$ at $\nu$. Moreover, by the definition of the reduct $I_\flat(\nu)$ we have $B^{I_\flat(\nu)}=(IB)^\nu$, allowing us to regard the element $b\in B^{I_\flat(\nu)}$ as an element of $\nu$. Thus (at the level of stalks) the transposed map $I_\flat^*\OO_{\EE}\to\OO_{\FF}$ maps the diagram of the reduct $I_\flat(\nu)$ into the diagram of $\nu$ by sending $\varphi(x,b)\mapsto I\varphi(x,b)$. Since the reduct $I_\flat(\nu)$ satisfies a sequent $\varphi(x,b)\vdash\psi(x,b')$ just in case $\nu$ satisfies $I\varphi(x,b)\vdash I\psi(x,b')$, this map is obviously conservative on stalks.

Similarly, suppose $I,J:\EE\to\FF$ and $\tau:I\Rightarrow J$ is a natural transformation. Pullback along $\tau_\varphi$ induces a pretopos functor $\tau_\varphi^*:\FF/J\varphi\to\FF/I\varphi$ while $\tau^\sharp$ is induced by the naturality square for $E\in\EE\!/\varphi$:
$$\xymatrix{
IE \ar@/^3ex/[rr]^{\tau_E} \ar[d] \ar[r]_-{\tau_E^\sharp} & \tau_\varphi^*(JE) \ar[r] \ar[ld]& JE \ar[d]& 
\EE\!/\varphi \ar[rr]^{I/\varphi} \ar[dr]_{J/\varphi} \rrlowertwocell<\omit>{<3>\ \ \tau^\sharp_\varphi} && \FF/I\varphi\\
I\varphi \ar[rr]_{\tau_\varphi} && J\varphi 
&& \FF/J\varphi \ar[ru]_{{\tau_\varphi^*}} &  \\
}$$
Since $I_{\flat*}\OO_{\FF}(V_{\varphi(k)}\simeq\OO_{\FF}(V_{I\varphi(k)})\simeq\FF/I\varphi$ and $J_{\flat*}(V_{\varphi(k)})\simeq\FF/J\varphi$, this gives a section-wise description of the transformation $\tau^\sharp$ asserted in the proposition.

Taking global sections of the transformation amounts to setting $\varphi=1$ in the diagrams above. Since $I$ and $J$ preserve the terminal object, $\tau_1^*$ is the identity $1_{\FF}$ and the component of $\Gamma_{\eq}\tau^\sharp$ at $E\in\EE\!/1$ is just $\tau_E$:
$$\xymatrix{
IE \ar[r]^{\tau_E^\sharp=\tau_E} \ar[d] & JE \ar@{=}[r] \ar[ld]& JE \ar[d]& 
\EE \ar[rr]^{I} \ar[dr]_{J} \rrlowertwocell<\omit>{<3>\ \ \tau} && \FF\\
1_{\FF} \ar[rr]_{!} && 1_{\FF}
&& \FF \ar@<-1ex>@{=}[ru]_{1_{\FF}^*} &  \\
}$$
Therefore, modulo the canonical equivalences $\Gamma_{\eq}\OO_{\EE}\simeq\EE\!/1\cong\EE$ and $\Gamma_{\eq}\OO_{\FF}\simeq\FF$, $\Gamma_{\eq}\tau^\sharp$ is isomorphic to $\tau$.
\end{proof}

\begin{defn}\label{def_ax_space}
We adapt the following definitions from algebraic geometry (cf. Hartshorne \cite{hartshorne}):
\begin{itemize}
\item An \emph{axiomatized space} $\XX$ is a topological groupoid $X$ together with an sheaf of pretoposes $\OO_{\XX}\in\EqSh(X)$.
\item $\XX$ is \emph{locally axiomatized} if, for each $x\in X$, the stalk $(\OO_{\XX})_x$ is a local pretopos (i.e., satisfies the existence and disjunction properties cf. definition \ref{def_local}).
\item A \emph{morphism of axiomatized spaces} $\XX\to\YY$ is a pair $\<F_\flat,F^\sharp\>$ where $F_\flat:X\to Y$ is a continuous functor of topological groupoids and $F^\sharp:\OO_{\YY}\to F_{*}\OO_{\XX}$ in an internal pretopos functor in $\EqSh(Y)$.
\item A \emph{morphism of locally axiomatized spaces} is a morphism of axiomatized spaces such that the transposed map $F_\flat^*\OO_{\YY}\to\OO_{\XX}$ is conservative on every stalk.\footnote{Notice that the disjunction property implies that the subobjects of 1 in a local pretopos contain a unique maximal ideal. Conservativity on stalks amounts to the requirement that the maximal ideal in the stalk of $\OO_{\YY}$ at $F_\flat(x)$ maps into the maximal ideal of the stalk of $\OO_{\XX}$ at $x$.}
\item A morphism $F:\XX\stackrel{\sim}{\lto} \YY$ is an \emph{equivalence of axiomatized spaces} if $F_\flat$ and $F^\sharp$ are both full, faithful and essentially surjective.\footnote{The conditions on $F^\sharp$ should be interpreted internally. An internal functor $F:\CC\to\DD$ is (internally) full and faithful if the square below is a pullback, and essentially surjective if the displayed map on the right is a regular epimorphism:
$$\xymatrix@=0ex{\CC_1 \pbcorner \ar[dd] \ar[rr]^{F_1} &\left.\ \ \ \right.& \DD_1 \ar[dd] \\
&&&\left.\ \ \right.& \raisebox{-3ex}{$\CC_0\overtimes{D_0}\rm{Iso}(\DD_1)$} \ar@{->>}[rr] &\left.\ \ \ \right.&\DD_0\\
\CC_0\times\CC_0 \ar[rr]_{F_0\times F_0} && \DD_0\times\DD_0
}$$
}
\item A \emph{2-morphism of axiomatized spaces} $\xymatrix{\XX \rtwocell^F_G{\tau} & \YY}$ is a pair $\<\tau^*, \tau^\sharp\>$ where
\begin{itemize}
\item $\tau^*$ is an internal pretopos functor $G_{\flat*}\OO_{\FF}\to F_{\flat*}\OO_{\FF}$ (in the opposite direction) such that $\Gamma\tau^*\cong 1_{\FF}$.\footnote{Note that this condition is justified by the fact that $\Gamma_{\eq}^{\YY}\circ F_{*}\simeq \Gamma_{\eq}^{\XX}\simeq \Gamma_{\eq}^{\YY}\circ G_{*}$.}
\item $\tau^\sharp$ is a natural transformation
$$\xymatrix{
\OO_{\YY} \ar[rr]^{F^\sharp} \ar[dr]_{G_\sharp} \rrlowertwocell<\omit>{<3>\ \ \tau^\sharp} && F_{\flat*}\OO_{\XX}.\\
& G_{\flat*}\OO_{\XX} \ar[ru]_{{\tau^*}} & \\
}$$
\end{itemize}\end{itemize}\end{defn}

Theorem \ref{aff_schemes} states that the affine scheme $\Spec(\EE)=\<\MM_{\EE},\OO_{\EE}\>$ associated with a pretopos $\EE$ is a locally axiomatized space. Equivariant global sections provide a functor $\Gamma_{\eq}:\bf{AxSp}\to\Ptop$, and proposition \ref{embed_prop} guarantees that the composite $\Gamma\circ\Spec$ is equivalent to $1_{\Ptop}$. Now we will define a logical scheme to be a locally axiomatized space which is covered by affine pieces.

\begin{defn}\mbox{}\begin{itemize}
\item Suppose that $\XX$ is an axiomatized space and $U\subseteq X$ is a (non-full) subgroupoid (with the subspace topology). The associated \emph{axiomatized subspace} $\UU\subseteq\XX$ is defined by restricting $\OO_{\XX}$ to the object space $U_0$, equipped with the equivariant action inherited from $U_1\subseteq X_1$.
\item $\UU\subseteq\XX$ is an open subspace if $U_0\subset X_0$ and $U_1\subseteq X_1$ are open sets.
\item A family of open subgroupoids $\{U^i\subseteq X\}$ is an \emph{open cover} of $X$ if any $\alpha:x\to x'$ in $X_1$ has a factorization in $\bigcup_i U^i$:
$$\xymatrix{
x  =z_0 \ar@/^4ex/[rrr]^\alpha \ar[r]_-{\beta_1} & z_1 \ar[r]_-{\beta_2}&\ldots \ldots\ar[r]_-{\beta_n}&z_n = x',& \beta_j\in \bigcup_i U^i_1.\\
}$$
\end{itemize}\end{defn}

\begin{defn}
A \emph{logical scheme} is a locally axiomatized space $\XX=\<X,\OO_{\XX}\>$ for which there exists an open cover by affine subspaces $\UU^i\simeq\Spec(\EE^i)$.

This defines a full 2-subcategory $\LSch\subseteq\bf{AxSp}$.
\end{defn}

In order to lighten the notation when working with schemes we will write $\Gamma$ in place of $\Gamma_{\eq}$. We also drop the $(-)_\flat$ notation from definition \ref{def_ax_space} and use the same letter $F$ to denote a scheme morphism $\XX\to\YY$ and the underlying continuous groupoid homomorphism $X\to Y$.

Recall that the (sheaf) descent category for an open cover $J=\{U_i\subseteq X\}$ is defined as follows. Set $U=\coprod_i U_i$ and let $q$ denote the resulting map $U\to X$. An object of $\Desc(J)$ is a pair $\<E,\alpha\>$ where $E\in \EqSh(U)$ and $\alpha:p_1^*E\cong p_2^*E$ is an isomorphism over $U\overtimes{X}U$ (satisfying the usual cocycle conditions). We close with the following lemma, which allows us to represent structure sheaves using descent.

\begin{lemma}\label{eff_desc}
If $\XX$ is a scheme and $J=\{U_i\subseteq X\}$ is an open cover then the induced geometric morphism $j:\prod_i\EqSh(U_i)\to \EqSh(X)$ is an open surjection in the topos theoretic sense: the inverse image functor $q^*$ is faithful and its localic reflection is an open map.

In particular, $\EqSh(X)\simeq\Desc(J)$.
\end{lemma}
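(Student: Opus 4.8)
The plan is to recognize $j$ as an \emph{open surjection} of toposes and then to invoke the theorem of Joyal \& Tierney \cite{JT} that open surjections are of effective descent; the equivalence $\EqSh(X)\simeq\Desc(J)$ is then a formal consequence. Thus the real work is to verify the two conditions in the statement: faithfulness of $q^*$ (surjectivity) and openness of the localic reflection of $j$.

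First I would dispatch surjectivity by descending to the object spaces. Applying the factorization clause in the definition of an open cover to the identity arrows $1_x$ shows that every $x\in X_0$ is already an object of some $U_i$, so the \'etale map $q_0:\coprod_i U_i^0\to X_0$ is a surjective local homeomorphism --- an open surjection of spaces, and in particular $q_0^*$ is faithful. The forgetful functors $\EqSh(X)\to\Sh(X_0)$ and $\EqSh(U)\to\Sh(U_0)$ are faithful and fit into a commuting square with $q^*$ and $q_0^*$ (the underlying sheaf of $q^*E$ is $q_0^*$ of the underlying sheaf of $E$). A diagram chase then forces $q^*$ to be faithful on equivariant sheaves, so $j$ is a geometric surjection.

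Openness is the delicate step, and I expect it to be the main obstacle. Since $q_0$ is a local homeomorphism the morphism $\Sh(U_0)\to\Sh(X_0)$ is open, and one wants this to survive the passage to the groupoid (equivariant) toposes. The difficulty is that the subgroupoids $U_i$ are \emph{not full}: a general arrow $\alpha:x\to x'$ of $X$ need not lie in any single $U_i^1$, so $q_1:\coprod_i U_i^1\to X_1$ may fail to be surjective, and openness of $j$ cannot be read off from the object level alone. The factorization clause is exactly the repair --- it says $X_1$ is generated under composition by the open arrow-sets $U_i^1$ --- and this is precisely the hypothesis needed to guarantee that the simplicial kernel of $q$ recovers the groupoid $X$ and that the relevant Beck--Chevalley squares commute. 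Using pullback-stability of open surjections together with this generation condition, I would reduce openness of a basic equivariant section to openness of its image under the (open) object-space projection, concluding that $j$ is open.

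With $j$ an open surjection, the final clause is immediate: surjectivity makes $j^*$ comonadic (Beck's theorem, since $j^*$ reflects isomorphisms and preserves all finite limits), so $\EqSh(X)$ is equivalent to the category of coalgebras for the descent comonad $j^*j_*=q^*q_*$; openness supplies the Beck--Chevalley isomorphism $q^*q_*\cong (\pi_2)_*\pi_1^*$ for the projections $\pi_i:U\overtimes{X}U\to U$, which identifies these coalgebras with descent data along the kernel pair $U\overtimes{X}U$ --- that is, with the category $\Desc(J)$ defined above. Hence $\EqSh(X)\simeq\Desc(J)$.
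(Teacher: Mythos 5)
Your overall strategy is the paper's: exhibit $j$ as an open surjection and then quote Joyal--Tierney to get effective descent, whence $\EqSh(X)\simeq\Desc(J)$. The surjectivity half is fine and essentially coincides with the paper's argument (the paper checks faithfulness of $q^*$ stalkwise at a point $x$ lying in some $U_i$; your route through the faithful forgetful functors to $\Sh(X_0)$ and $\Sh(U_0)$ amounts to the same thing). The closing comonadicity/Beck--Chevalley paragraph is likewise an unobjectionable unwinding of what the citation to \cite{JT} already supplies.

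The genuine gap is the openness step, which you correctly flag as the crux but then do not carry out. ``Openness of the localic reflection'' is a statement about the frame of subterminal objects of $\EqSh(X)$, i.e.\ the frame of \emph{$X$-invariant} open subsets of $X_0$ --- not about the map of object spaces $q_0$, which is where your sketch tries to locate the argument. What is actually needed, and what the paper supplies, is a left adjoint $q_!$ to the restriction of $q^*$ to these frames; the candidate is $q_!\<W_i\>=\bigcup_i\overline{W_i}$, where $\overline{W}$ is the invariant saturation, the image of $W\overtimes{X_0}X_1\to X_1\to X_0$. For this to land in the frame one must know that saturation preserves openness, and that is exactly where the hypothesis that $\XX$ is a scheme enters: $X$ is an \emph{open} topological groupoid (domain and codomain $X_1\rightrightarrows X_0$ are open maps), which holds for affine spectra by construction of the basic opens of $\MM_1$ and is a local condition, hence holds for any scheme. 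Your proposal never identifies the localic reflection, never constructs $q_!$ or verifies the adjunction, and never states or uses openness of the groupoid; the appeal to ``pullback-stability of open surjections together with the generation condition'' does not substitute for these ingredients, since without openness of $X_1\rightrightarrows X_0$ the saturation $\overline{W}$ of an open set need not be open and the argument fails.
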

  
\begin{proof}
When an equivariant sheaf $\<E,\rho\>$ is described as an \'etale space, the inverse image $q^*\<E,\rho\>$ is defined by pulling $E$ back to $U_0$ and factoring $U_1$ through the pullback of $\rho$:
$$\xymatrix{
U_1\overtimes{U_0} q^*E \ar[r] \ar[rd]_{q^*\rho} & **[r] q^*(X_1\overtimes{X_0}E) \pbcorner \ar[rr] \ar[d] && X_1\overtimes{X_0}E \ar[d]_\rho \\
& q^*E \pbcorner \ar[rr] \ar[d] && E \ar[d]\\
& (U)_0 \ar[rr]_{q_0} && X_0
}$$

This is certainly faithful. If $g\not=h:E\to F$ are distinct maps in $\EqSh(X)$ there is a point $x$ such that the stalks disagree: $g_x\not=h_x$. Since $J$ is an open cover, $x$ belongs to one of the subgroupoids $U_i$; let $x_i$ denote the associated point in $U$. Since $q^*$ preserves stalks we have $(q^*g)_{x_i}\not=(q^*h)_{x_i}$ and therefore $q^*g\not=q^*h$.

The localic reflection of $\EqSh(X)$ is a topological space $S_X$ whose open sets are the $X$-invariant open sets $U\subseteq X_0$. For any set $P\subseteq S_X$ there is a smallest $X$-invariant set $\overline{P}$; we can define it as the image of the composite $P\overtimes{X_0}X_1 \to X_1 \to X_0$.

Because $\XX$ is a scheme, $X$ is an open topological groupoid, in the sense that the domain and codomain $X_1\rightrightarrows X_0$ are open maps. This is true for affine schemes (cf. definition \ref{M1points}) and it is a local condition. This means that if $V\subseteq X_0$ is open, then so is $\overline{V}$.

In order to show that the localic reflection is open it is enough to check that the restriction of $q^*$ to $\OO(S_X)$ has a left adjoint $q_!\dashv q^*$. We define $q_!$ by sending a family of $U_i$-equivariant sheaves $W=\<W_i\>$ to the union $\bigcup_i \overline{W_i}$. If $V$ is an $X$-equivariant open then $q^*V=\<V\cap (U_i)_0\>$, giving us the following sequence of equivalences:
$$\begin{array}{rcll}
W=\<W_i\> &\leq & \<V\cap (U_i)_0\>=q^*V& \\
\hline\hline
\forall\ i,\ W_i & \leq & V\\
\hline\hline
\forall\ i,\ \overline{W_i} & \leq & V \\
\hline\hline
q_!W=\bigcup_i\overline{W_i} & \leq & V\\
\end{array}$$

For the last statement of the lemma we appeal to a theorem of Joyal and Tierney \cite{JT}. They proved that open surjections are effective descent morphisms for sheaves, which means precisely that $\EqSh(X)\simeq \Desc(q)$.
\end{proof}

\section{Open subschemes and gluing}\label{sec_subsch}

In this section we will prove that schemes are local, in the sense that any open subspace of a scheme is again a scheme. Then we show that schemes which share an open subscheme $\XX\supseteq \UU\simeq \VV\subseteq \YY$ can be glued to give a pushout $\XX\overplus{\UU}\YY$. These will give us our first examples of non-affine schemes.

Recall that an affine scheme $\Spec(\EE)$ has a basis of open subsets $V_{\varphi(k)}$ where $\varphi(x_1,\ldots,x_n)\leq\Pi_i A_i$ is a formula in context and $k=(k_1,\ldots,k_n)\in\kappa^n$ is a sequence of parameters. This is the set of labelled $\EE$-models $\mu$ such that $\mu\models\varphi(k)$ (cf. definitions \ref{M0points} \& \ref{M0opens}). There is also an open set of morphisms $V_{k\mapsto k}\rightrightarrows V_{\varphi(k)}$, defining a subgroupoid $U_{\varphi(k)}$. We let $\VV_{\varphi(k)}$ denote the associated open subspace of $\Spec(\EE)$.

\begin{lemma}\label{slice_subsch}
\mbox{}
\begin{itemize}
\item If $\XX=\Spec(\EE)$ is affine then each open subspace $\VV_{\varphi(k)}$ is again affine, with $\VV_{\varphi(k)}\simeq\Spec(\EE\!/\varphi)$.
\item If $\XX$ is a scheme then any open subspace $\UU\subseteq\XX$ is again a scheme.
\end{itemize}\end{lemma}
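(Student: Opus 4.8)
The plan is to prove the two assertions in turn, deriving the second from the first.

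For the first assertion I would produce an explicit equivalence of schemes $\Spec(\EE\!/\varphi)\iso\VV_{\varphi(k)}$. Conceptually, passing from $\EE$ to the slice $\EE\!/\varphi$ adjoins a generic element of $\varphi$, and cutting $\Spec(\EE)$ down to $\VV_{\varphi(k)}$ is exactly the act of \emph{naming} that generic element by the parameter $k$: a labelled $\EE$-model $\mu$ lies in $V_{\varphi(k)}$ precisely when $\mu(k)\in\varphi^{M_\mu}$, i.e. when $\langle M_\mu,\mu(k)\rangle$ is a model of $\EE\!/\varphi$ (using the lemma that $\EE\!/\varphi$ classifies $\varphi$-elements). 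First I would match the underlying groupoids: the assignment $\mu\mapsto\langle M_\mu,\mu(k)\rangle$, with arrows the isomorphisms fixing the named element, identifies $\MM_{\EE\!/\varphi}$ with the open subgroupoid $U_{\varphi(k)}=(V_{\varphi(k)},V_{k\mapsto k})$. Continuity in both directions is read off the basic opens: by Lemma \ref{inclusion_lemma} every basic open below $V_{\varphi(k)}$ has the form $V_{\psi(k,l)}$ with $\psi(x,y)\vdash\varphi(x)$, and $V_{\psi(k,l)}$ corresponds to the basic open $V_{\bar\psi(l)}$ of $\MM_{\EE\!/\varphi}$, where $\bar\psi(y)=\psi(c,y)$ names $x$ by the generic constant $c$.

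Then I would match the structure sheaves on this basis. Proposition \ref{str_sheaf_secs} gives $\OO_{\EE}(V_{\psi(k,l)})\simeq\EE\!/\psi$, while on the other side $\OO_{\EE\!/\varphi}(V_{\bar\psi(l)})\simeq(\EE\!/\varphi)/\bar\psi$; since $\psi\leq\varphi$, transitivity of slicing yields a canonical equivalence $(\EE\!/\varphi)/\bar\psi\simeq\EE\!/\psi$, compatible with the restriction (pullback) maps along inclusions of basic opens. To see that the two equivariant actions agree I would compute stalks via Lemma \ref{struct_stalks}: at corresponding points both stalks are the diagram $\DD(M_\mu)$, on which isomorphisms act by $\langle\sigma,a\rangle\mapsto\langle\sigma,\alpha(a)\rangle$. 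Assembling the groupoid identification with the basis-level identification of structure sheaves gives an equivalence of axiomatized spaces, hence of schemes since $\Spec(\EE\!/\varphi)$ is affine. The delicate point here is bookkeeping the parameter $k$ coherently — verifying that ``name the generic element of $\varphi$ by $k$'' is natural in $\psi$ and genuinely intertwines the relatively-equivariant ($k$-invariant) sections on one side with the ordinary equivariant sections on the other. This is the relative form of the Butz--Moerdijk equivalence (Theorem \ref{eq_top_equiv}), and can alternatively be packaged as $\EqSh(U_{\varphi(k)})\simeq\Sh(\EE\!/\varphi)$ via the comparison lemma, using the definability theorem \ref{stab_thm} and Corollary \ref{ext_full} in their relative versions.

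For the second assertion, let $\UU\subseteq\XX$ be an open subspace of a scheme with affine open cover $\{\WW^i\simeq\Spec(\EE^i)\}$. It suffices to treat an open subspace of an \emph{affine} scheme, since then each $\UU\cap\WW^i$ is a scheme and the union of their affine covers is an affine open cover of $\UU$ (open subspaces compose and $\UU=\bigcup_i(\UU\cap\WW^i)$). So assume $\XX=\Spec(\EE)$. Cover the open set $U_0\subseteq X_0$ by basic opens $V_{\varphi(k)}\subseteq U_0$; each subspace $\VV_{\varphi(k)}$ is affine by the first part, and $\UU$ is locally axiomatized because its stalks coincide with those of $\OO_{\XX}$. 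It remains to verify that $\{U_{\varphi(k)}\}$ is a groupoid open cover of $\UU$, i.e. that every arrow of $\UU$ factors through these subgroupoids with all intermediate objects in $U_0$. Given $\alpha:\mu\to\nu$ in $\UU_1$ and a basic neighborhood $V_{\varphi(k)}\ni\mu$ inside $U_0$, I would use the reassignment lemma \ref{reassignment} to relabel $\nu$ to a model $\nu'$ with underlying model $M_\nu$, with $\nu'(k)=\alpha(\mu(k))$, and agreeing with $\nu$ off $k$. Then $\alpha:\mu\to\nu'$ lies in $U_{\varphi(k)}$ (it fixes $k$, and both endpoints satisfy $\varphi(k)$), while the relabelling $\mathrm{id}_{M_\nu}:\nu'\to\nu$ lies in a basic subgroupoid $U_{\psi(m)}\subseteq U_0$ for any $m$ disjoint from $k$ with $\nu\models\psi(m)$ and $V_{\psi(m)}\subseteq U_0$; composing gives the required two-step factorization inside $U_0$. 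The one genuinely delicate point in both parts is precisely this relative-equivariance/arrow-factorization bookkeeping: in the first part it is what singles out $\EE\!/\varphi$ (rather than some reduct) as the right answer, and in the second it is what prevents the covering path from leaving the open subspace. Everything else is an assembly of results already in hand — Proposition \ref{str_sheaf_secs}, the stalk computation \ref{struct_stalks}, the reassignment lemma \ref{reassignment}, and transitivity of slicing.
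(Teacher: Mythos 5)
Your proposal matches the paper's proof in both structure and substance: the first part is the same explicit equivalence $\mu\mapsto\langle M_\mu,\mu(k)\rangle$, justified by the inclusion lemma and the transitivity of slicing $(\EE\!/\varphi)/\psi\simeq\EE\!/\psi$ on basic opens, and the second part likewise covers the objects of $\UU$ by basic opens contained in $U_0$ and factors its arrows through relabellings each of which fixes a parameter. The only (immaterial) difference is that you use a two-step factorization $\mu\to\nu'\to\nu$ where the paper reuses the three-step factorization from Lemma \ref{struct_stalks}; both arguments are equally terse about checking that the intermediate \emph{arrows}, and not just the intermediate objects, remain inside the open set $U_1$.
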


\begin{proof}
For simplicity, suppose that $\varphi(x)$ is a unary formula; the same argument will apply to the general case. Thus we have a variable $x:A$ and a subobject $\varphi\leq A$. The open subset $V_{\varphi(k)}$ is the set of labelled $\EE$-models $\mu$ such that $\mu(k)\in A^\mu$ is defined and the underly model $M_\mu\models \varphi(\mu(k))$. On the other hand, a model of the slice category $\EE\!/\varphi$ is a pair $\<M,a\>$ where $M$ is an $\EE$-model, $a\in A^M$ and $M\models\varphi(a)$.

We define an equivalence $J:\VV_{\varphi(k)}\iso \Spec(\EE\!/\varphi)$ by sending a labelled $\EE$-model $\mu$ to the pair $\<\mu,\mu(k)\>$. Given any model $M$ and any $a\in A^M$ there is some labelling $\mu$ such that $\mu(k)=a$, so this map is essentially surjective. Similarly, an isomorphism of $\EE\!/\varphi$-models $\<M,a\>\iso \<M',a'\>$ is simply an isomorphism $\alpha:M\cong M'$ such that $\alpha(a)=a'$. Given labellings of these models with $\mu(k)=a$ and $\mu'(k)=a'$, this says precisely that $\alpha:k\mapsto k$. Thus $J$ is full and faithful.

Now note that $J$ is an open map. An open subset of $V_{\varphi(k)}$ has the form $V_{\psi(k,l)}$ where $\psi(x,y)\vdash \varphi(x)$ (cf. the inclusion lemma \ref{inclusion_lemma}). Now we associate the parameters $\<k,l\>$ (defined for $A$ and $B$ in $\EE$) with a new parameter $l_k$ defined for $\varphi\times B$ in $\EE\!/\varphi$. Since $\psi$ is a subobject of $\varphi\times B$, this defines an open subset $V_{\psi(l_k)}\subseteq\Spec(\EE\!/\varphi)$ and the equivalence $(\OO_{\EE})|_{V_{\varphi(k)}}\simeq\OO_{\EE\!/\varphi}$ then follows from
$$\OO_{\EE}(V_{\psi(k,l)})\simeq \EE\!/\psi\simeq (\EE\!/\varphi)/\psi\simeq \OO_{\EE\!/\varphi}(V_{\psi(k_l)}).$$

From this we can see that if $\XX$ is a scheme then so is any open subspace $\UU\subseteq\XX$. For any point $x\in\UU$ has an affine neighborhood $x\in\VV\subseteq\XX$ with $\VV\simeq\Spec(\FF)$. Since $\UU\cap\VV$ is again an open set, there is a basic open $V_{\varphi(k)}\subseteq\UU\cap\VV$ (where $\varphi$ is an $\FF$-formula). Then every point $x$ has an affine neighborhood  $x\in\VV_{\varphi(k)}\subseteq\UU$, so the objects of $\UU$ are covered by affine pieces.

Similarly, any isomorphism $\alpha:x\iso x'$ in $\UU$ can be factored as $\alpha=\beta_n\circ\ldots\circ\beta_1$ with $\beta_i:\mu_i\to\mu_{i+1}$ in $\VV_i\cong\Spec(\FF_i)$. Each intersection $\VV_i\cap\UU$ is again open and so contains a basic open neighborhood $\beta_i\in V_{k\mapsto l}$ (where the parameters $k$ and $l$ are associated with $\FF_i$).

Now, as in the proof of lemma \ref{struct_stalks}, we factor each $\beta_i$ into a trio of maps $\mu_i\to\nu_i\to\nu_i'\to\mu_{i+1}$ such that
$$k \stackrel{\beta_{i_1}}{\longmapsto} k\underset{\nu_i}{=} k' \stackrel{\beta_{i_2}}{\longmapsto} k\underset{\nu_i'}{=} l \stackrel{\beta_{i_3}}{\longmapsto} l.$$
Because $\VV_i\cap\UU$ is open, we may assume that this factorization lies inside $\UU$. Each $\beta_{i_j}$ preserves some parameter and therefore belongs to one of the affine subschemes $\VV_k$, $\VV_{k'}$ or $\VV_l$. Therefore the morphisms of $\UU$ are also covered by affine pieces and $\UU$ is a scheme.
\end{proof}

\begin{lemma}[Gluing Lemma]\label{gluing_lemma}
Logical schemes admit gluing along isomorphisms.
\end{lemma}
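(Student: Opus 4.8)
The plan is to imitate the gluing construction of algebraic geometry, building the pushout $\XX\overplus{\UU}\YY$ explicitly as a locally axiomatized space and then checking that it is covered by affine pieces. We are given schemes $\XX=\<X,\OO_{\XX}\>$ and $\YY=\<Y,\OO_{\YY}\>$ together with an equivalence of axiomatized spaces $\theta:\UU\iso\VV$ between open subspaces $\UU\subseteq\XX$ and $\VV\subseteq\YY$. First I would glue the underlying topological groupoids. Set $Z_0$ to be the pushout $X_0\amalg_{U_0}Y_0$ in $\Top$, formed along the homeomorphism $U_0\cong V_0$ underlying $\theta$; then $X_0$ and $Y_0$ embed in $Z_0$ as open subspaces whose union is $Z_0$ and whose intersection is $U_0\cong V_0$.

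The arrow space $Z_1$ requires more care, since an arrow joining a point of $X_0\setminus U_0$ to a point of $Y_0\setminus V_0$ can exist only as a composite threading through the shared part. I would therefore take $Z$ to be the pushout of topological groupoids $X\amalg_U Y$, whose arrows are generated by $X_1\amalg_{U_1}Y_1$ subject to the composition relations holding in $X$ and in $Y$; concretely every arrow of $Z$ is represented by a finite composable zigzag of arrows drawn from $X_1$ and $Y_1$. Continuity of domain, codomain, composition, inverse and identity follows because these operations agree on the overlap via $\theta$ and each of $X,Y$ is already a topological groupoid. The essential point is that $Z$ is an \emph{open} groupoid, i.e. the domain and codomain maps $Z_1\rightrightarrows Z_0$ are open: this holds on each of $X$ and $Y$ (openness is part of the affine data, cf. definition \ref{M1points}, and is a local condition), and it transfers to the glued groupoid. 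By construction $\{X,Y\}$ is an open cover of $Z$ in the sense required by the open-cover definition, since every arrow of $Z$ factors into pieces lying in $X_1$ or $Y_1$.

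To equip $Z$ with a structure sheaf I would invoke effective descent. The equivalence $\theta$ supplies an isomorphism $\OO_{\XX}|_{\UU}\cong\OO_{\VV}\cong\OO_{\YY}|_{\VV}$ of internal pretoposes over the overlap, which, together with the (trivially satisfied) cocycle condition for a two-element cover, constitutes an object of descent data for the cover $\{X,Y\}$ of $Z$. The argument of lemma \ref{eff_desc} uses only that the groupoid is open together with the Joyal--Tierney theorem that open surjections are of effective descent; applied to $Z$ it yields $\EqSh(Z)\simeq\Desc(\{X,Y\})$, so the descent datum is realized by an essentially unique equivariant sheaf of pretoposes $\OO_{\ZZ}$ restricting to $\OO_{\XX}$ on $X$ and to $\OO_{\YY}$ on $Y$. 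Every stalk of $\OO_{\ZZ}$ is a stalk of $\OO_{\XX}$ or of $\OO_{\YY}$, hence a local pretopos, so $\ZZ=\<Z,\OO_{\ZZ}\>$ is locally axiomatized. Since $\XX$ and $\YY$ are schemes they are covered by affine subspaces, and these remain open subspaces of $Z$ (an open subspace of an open subspace is open, and is again a scheme by lemma \ref{slice_subsch}); hence $\ZZ$ is covered by affine pieces and is a logical scheme.

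It remains to identify $\ZZ$, together with the evident open immersions $\XX\hookrightarrow\ZZ\hookleftarrow\YY$, as the pushout in $\LSch$. Given a scheme $\WW$ and morphisms $F:\XX\to\WW$, $G:\YY\to\WW$ which agree on $\UU\simeq\VV$ up to the specified $2$-cell, the continuous groupoid homomorphisms $F_\flat,G_\flat$ glue to a unique $H_\flat:Z\to W$ by the pushout property of $Z$, while the internal pretopos functors $F^\sharp,G^\sharp$ glue by descent to a unique $H^\sharp$, their agreement on the overlap being exactly the hypothesis. I expect the main obstacle to be the construction of $Z_1$ and the verification that $Z$ is an open topological groupoid: once openness is in hand, descent, local axiomatization, the affine cover and the universal property all run on the machinery already established. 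One must in particular check that gluing the arrow spaces introduces no pathology in the topology and that the open-cover factorization condition genuinely holds, so that the descent equivalence of lemma \ref{eff_desc} applies. The gluing of an arbitrary family $\{\XX_i\}$ along isomorphisms of open subschemes, subject to the usual triple-overlap cocycle conditions, follows by the same argument with additional bookkeeping.
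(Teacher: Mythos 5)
Your proposal is correct and follows essentially the same route as the paper: form the pushout of topological groupoids (objects as the quotient $(X_0+Y_0)/U_0$, arrows generated by composable zigzags from $X_1$ and $Y_1$, topologized so that the composition maps from the spaces of composable tuples are open), then obtain $\OO_{\ZZ}$ from the descent datum $\<\OO_{\XX},\OO_{\YY},\alpha\>$ for the cover $X+Y\to Z$ via the effective-descent lemma \ref{eff_desc}. Your additional explicit checks (that $Z$ is an open groupoid so the descent lemma applies, that stalks remain local, and that the affine covers of $\XX$ and $\YY$ persist as an affine cover of $\ZZ$) are points the paper leaves largely implicit, but the argument is the same.
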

\begin{proof}
First we note that the category of logical schemes has (disjoint) coproducts. Given schemes $\XX$ and $\YY$ the spectrum for the coproduct is just the disjoint sum of groupoids $X+Y$. Described as the total space of an \'etale map, the structure sheaf is also a disjoint sum:
$$\OO_{\XX+\YY}\cong\OO_{\XX}+\OO_{\YY}\lto X+Y.$$
Given a pair of maps $\XX\stackrel{F}{\lto}\ZZ\stackrel{G}{\lfrom}\YY$, the following diagram indicates the induced map $\<F,G\>^\sharp:\<F,G\>^*\OO_{\ZZ}\to\OO_{\XX+\YY}$:
$$\xymatrix{
\TT_{\ZZ} \ar[d] & \ar[l] \pbrcorner F^*\TT_{\ZZ}+G^*\TT_{\ZZ} \ar[d] \ar[r]^-{F^\sharp+G^\sharp} & \TT_{\XX}+\TT_{\YY} \ar[d]\\
\MM_{\ZZ} & \ar[l]_-{\<F,G\>} \MM_{\XX}+\MM_{\YY} \ar@{=}[r]^-\sim&\MM_{\XX+\YY}.\\
}$$
Infinite sums are handled in the same fashion. These coproducts constitute a special case of gluing along the empty subscheme.

Next suppose that $\XX$ and $\YY$ have a common open subscheme: $\UU\subseteq\XX$ and $\UU\subseteq\YY$. We want to describe the glued space $\ZZ=\XX\overplus{\UU}\YY$. The first step is to construct the pushout of groupoids $Z=X\overplus{U}Y$. Because the forgetful functor $\Top\to\Sets$ lifts limits and colimits, the underlying category of $Z$ (sans topology) is the ordinary pushout of groupoids.

The space of objects is just the ordinary quotient $Z_0=(X_0+Y_0)/U_0$. The arrows of $Z$ are generated (under composition) by those of $X$ and $Y$. When it is defined, composition is computed as in $X$ or $Y$. Otherwise composition is formal, subject to the following equivalence relation:
$$\left.\right.[\beta]\underset{Z}{\circ}\left[\gamma\underset{X}{\circ}\alpha\right]\sim \left[\beta\underset{Y}{\circ}\gamma\right]\underset{Z}{\circ} [\alpha]$$
$$\xymatrix{x \ar[rr]^{\alpha\in X} && u \ar[rr]^{\gamma\in U} && u' \ar[rr]^{\beta\in Y} && y}.$$

This formal composition defines a function from the space of $U$-composable pairs $X\underset{U}{*}Y$ (or composable $n$-tuples) into $Z_1$, as below. The topology on $Z_1$ is defined by requiring that all of these are open maps:
$$\xymatrix@R=1ex{
X_1 \ar[d]_{\cod} & \bullet \pbrcorner \ar[l] \ar[d] & \raisebox{-3ex}{$X\underset{U}{*}Y$} \pbrcorner \ar[d] \ar[l]  \ar[r]^-{\circ}  & Z_1\\
X_0 & U_0 \ar[l] \ar[dd] & \bullet \pbrcorner \ar[l] \ar[dd]\\\\
& Y_0 & Y_1 \ar[l]^{\dom}\\
}$$
For example, any two open neighborhoods $V_X\subseteq X_1$ and $V_Y\subseteq Y_1$ define a neighborhood $V_Y\circ V_X=\circ\left(V_Y\underset{U}{*} V_X\right)\subseteq Z_1$. If $\alpha\in V_X$ and $\beta\in V_Y$ are composable then $\beta\circ\alpha\in V_Y\circ V_X$.

It is not difficult to see that this is a pushout of continuous groupoids. Suppose that $F:X\to Z'$ and $G:Y\to Z'$ are continuous groupoid homomorphisms which agree on $U$. $Z$ is a pushout in groupoids, so this induces a unique functor $H:Z\to Z'$. It is enough to check that this comparison map is continuous.

Because $F$ and $G$ agree on $U$, composition in $Z'$ induces a continuous functor $F\circ G:X\underset{U}{*}Y\to Z'$ (and similarly for longer composable strings).  Given an open set $W\subseteq Z'$, the inverse image $H^{-1}(W)\subseteq Z$ is a union of the inverse images $F^{-1}(W)$, $G^{-1}(W)$, $(F\circ G)^{-1}(W)$, etc. Since these are all continuous $H^{-1}(W)$ is a union of open sets, and $H$ is continuous.

Now that we have the underlying groupoid $Z$ we can define the structure sheaf $\OO_{\ZZ}$. Notice that the isomorphism $\alpha:\OO_{\XX}|_{U}\cong \OO_{\YY}|_U$ defines a object of descent data $\<\OO_{\XX},\OO_{\YY},\alpha\>$ for the binary cover $X+Y\to Z$. Since an open cover is effective descent (lemma \ref{eff_desc}), this defines the sheaf $\OO_{\ZZ}$ up to isomorphism.

The stalks of $\OO_{\ZZ}$ are inherited from $\OO_{\XX}$ and $\OO_{\YY}$ while the equivariant action is a composite $\rho_{\XX}$ and $\rho_{\YY}$:
$$\xymatrix{
\gamma:& x \ar[r]^\alpha & u \ar[r]^\beta& y\\
\rho_\gamma:& \OO_{\mathcal{Z},x}\cong\OO_{\XX,x} \ar[r]^-{\rho_{\XX,\alpha}}& \OO_{\XX,u}\cong \OO_{\UU,u}\cong\OO_{\YY,u} \ar[r]^-{\rho_{\YY,\beta}}&\OO_{\YY,y}\cong\OO_{\mathcal{Z},y}.\\
}$$
This defines a new scheme $\mathcal{Z}=\XX\overplus{\UU}\YY$ which contains open subschemes $\XX$, $\YY$ and $\UU\cong\XX\cap\YY$.

More generally, suppose that we have a family of schemes $\XX_i$, a family of open subschemes $\UU_{ij}\subseteq \XX_i$ and isomorphisms $h_{ij}:U_{ij}\cong U_{ji}$ which satisfy the cocycle conditions: $h_{ij}=h_{ji}^{-1}\ \ \rm{and}\ \ h_{ij}\circ h_{jk}=h_{ik}$ (when the latter is defined). We can glue these to form a new scheme $\mathcal{Z}=\displaystyle\bigoplus_{U_{ij}} \XX_i$.

The definition of $\mathcal{Z}$ is entirely analogous to the binary case. The underlying groupoid $Z$ is a pushout in the category of groupoids, consisting of strings of composable arrows and topologized by open maps from the spaces of such tuples. The structure sheaf on $\ZZ$ is defined locally by descent from the cover $\coprod_i X_i\to Z$. More formally, this shows that the usual descent diagram has a colimit in $\LSch$ whenever each $\UU_{ij}\mono\XX_i$ is an open subscheme:
$$\xymatrix{
\displaystyle\coprod\UU_{ijk} \ar@<1.5ex>[r] \ar[r] \ar@<-1.5ex>[r] &
\displaystyle\coprod\UU_{ij} \ar@<1.5ex>@{>->}[r] \ar@<-1.5ex>@{>->}[r]&
\displaystyle\coprod\XX_i \ar[l] \ar@{->>}[r]&
\raisebox{-.75cm}{$\displaystyle\ZZ=\bigoplus_{\UU_{ij}} \XX_i$}.\\
}$$
\end{proof}

The gluing lemma provides our first examples of non-affine schemes. In fact, the coproduct $\XX=\Spec(\EE)+\Spec(\FF)$ is already non-affine. An equivariant section on $\XX$ is just a section of $\Spec(\EE)$ together with a section of $\Spec(\FF)$, so that $\Gamma\XX\simeq \EE\times\FF$. In general (the diagram of) an $(\EE\times\FF)$-model is not equivalent to the diagram of either an $\EE$-model or an $\FF$-model, which means that $\Spec(\Gamma\XX)\not\simeq \XX$. We will see in the next section that this is a general characterization of non-affine schemes.

\section{Limits of schemes}

In this section we will prove that the category of logical schemes is closed under finite limits, and that these can be computed from the colimits of pretoposes. This is analogous to the situation in algebraic geometry, where the affine line $A^1$ is (dual to) the ring of polynomials $\bCC[x]$ and the plane $A^1\times A^1$ corresponds to the \emph{co}product $\bCC[x,y]\cong\bCC[x]\overplus{}\bCC[y]$.

\begin{thm}
There is an (op-)adjunction
$\Ptop(\EE,\Gamma\XX)\simeq\LSch(\XX,\Spec(\EE))$
presenting pretoposes as a reflective (op-)subcategory of logical schemes
$$\xymatrix{
\Ptop^{\op} \rrtwocell_{\Spec}^{\Gamma}{`\bot} && \LSch.\\
}$$
\end{thm}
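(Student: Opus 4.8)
The plan is to establish the adjunction by exhibiting a natural equivalence between the two hom-categories, using the affine case developed in Proposition \ref{embed_prop} as the essential computational engine. The functors $\Spec$ and $\Gamma$ have already been set up: $\Spec$ sends a pretopos $\EE$ to its affine scheme $\<\MM_{\EE},\OO_{\EE}\>$, and $\Gamma=\Gamma_{\eq}$ takes equivariant global sections of the structure sheaf. First I would verify that both are genuine 2-functors on the relevant 2-categories; functoriality of $\Spec$ is Proposition \ref{embed_prop} (which produces $I_\flat$, $I^\sharp$, and the 2-cell data $\tau^*,\tau^\sharp$ from interpretations and natural transformations), while functoriality of $\Gamma$ is immediate since global sections of sheaf morphisms compose strictly. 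The content of the adjunction is the equivalence of hom-categories
$$\Ptop(\EE,\Gamma\XX)\;\simeq\;\LSch(\XX,\Spec(\EE)),$$
natural in both variables, and I would prove this directly rather than by checking triangle identities.

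The forward direction is the easy one. Given a scheme morphism $F=\<F_\flat,F^\sharp\>:\XX\to\Spec(\EE)$, applying the global sections functor to $F^\sharp:\OO_{\EE}\to F_{\flat*}\OO_{\XX}$ and using the canonical equivalence $\Gamma\OO_{\EE}\simeq\EE$ (Theorem \ref{aff_schemes}) together with $\Gamma_{\eq}^{\Spec(\EE)}\circ F_{\flat*}\simeq\Gamma_{\eq}^{\XX}=\Gamma\XX$ yields a pretopos functor $\EE\to\Gamma\XX$. On 2-cells one applies $\Gamma$ to the pair $\<\tau^*,\tau^\sharp\>$; the conditions $\Gamma\tau^*\cong 1$ and $\Gamma\tau^\sharp\cong\tau$ from Proposition \ref{embed_prop} show this recovers the original data, which is exactly what makes the assignment an equivalence on morphism-categories in the affine case.

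The reverse direction — building a scheme morphism from a pretopos functor $G:\EE\to\Gamma\XX$ — is where the real work lies, and it is the main obstacle. When $\XX=\Spec(\FF)$ is affine, $\Gamma\XX\simeq\FF$ and $G$ is just an interpretation $\EE\to\FF$, so Proposition \ref{embed_prop} directly manufactures the scheme morphism $\Spec(\FF)\to\Spec(\EE)$ in the correct direction, and the two constructions are mutually inverse up to coherent isomorphism. For a general scheme $\XX$ I would use the affine cover $\{\UU^i\simeq\Spec(\EE^i)\}$ guaranteed by the definition of $\LSch$. Restricting $G$ along each $\Gamma\XX\to\Gamma\UU^i\simeq\EE^i$ (induced by the open inclusions, as in Lemma \ref{slice_subsch}) produces interpretations $\EE\to\EE^i$, hence affine scheme morphisms $\UU^i\to\Spec(\EE)$ by the affine case. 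These must then be glued. The obstruction is precisely to check that these local morphisms agree on overlaps $\UU^i\cap\UU^j$ and satisfy the cocycle condition, so that descent applies. Here I would invoke effective descent for open covers (Lemma \ref{eff_desc}, $\EqSh(X)\simeq\Desc(J)$): the compatibility on overlaps reduces to the uniqueness clause of the affine equivalence applied to the intersection, and descent assembles the $F^\sharp_i$ into a single internal pretopos functor $F^\sharp:\OO_{\Spec(\EE)}\to F_{\flat*}\OO_{\XX}$. The continuous functor $F_\flat:X\to\MM_{\EE}$ is glued analogously using that $\Top\to\Sets$ creates the colimits involved (as in the gluing lemma).

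Finally I would check naturality and that the local-to-global construction is genuinely inverse to $\Gamma$. The key point for being inverse is that both the agreement-on-overlaps verification and the recovery of $G$ from the glued morphism come from the \emph{same} affine equivalence, so once the affine case is established the general case follows formally by descent. The reflectivity statement (that $\Ptop^{\op}$ sits as a reflective sub-2-category) is then the assertion that the counit $\Gamma\circ\Spec\simeq 1_{\Ptop}$, which is exactly the content of Theorem \ref{aff_schemes} and Proposition \ref{embed_prop}; I expect the bulk of the remaining effort to be bookkeeping of the coherence 2-cells rather than any new idea.
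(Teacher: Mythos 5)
Your proposal is correct and follows essentially the same route as the paper: reduce to the affine case (Proposition \ref{embed_prop} and Theorem \ref{aff_schemes}), restrict along an affine cover, check agreement on overlaps via affine neighborhoods of the intersections, and assemble by the gluing lemma and effective descent (Lemma \ref{eff_desc}). The only difference is packaging --- the paper constructs the unit $\eta_{\XX}:\XX\to\Spec(\Gamma\XX)$ and counit explicitly and then verifies the triangle identities by showing $\Gamma\eta$ and $\eta_{\Spec(\EE)}$ are equivalences, whereas you exhibit the hom-category equivalence directly; the gluing argument you describe for the reverse direction is exactly the paper's construction of $\eta$.
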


\begin{proof}
We will describe the adjunction in terms of its unit and counit. Because this is an ``adjunction on the right'', this amounts to a pair of natural transformations
$$\begin{array}{c}
\eta=\eta_{\XX}: \XX\lto\Spec(\Gamma\XX)\\
\epsilon=\epsilon_{\EE}:\EE\lto\Gamma(\Spec\ \EE).\\
\end{array}$$
The counit $\epsilon_{\EE}$ is the equivalence of categories computed in theorem \ref{aff_schemes}.

In order to describe the unit $\eta_{\XX}:\XX\to\Spec(\Gamma\XX)$ we fix affine cover $J=\{\UU_i\simeq\Spec(\FF_i)\}$. Taking global sections induces a family of pretopos functors and, dually, a family of scheme morphisms
$$\xymatrix@R=3ex{
**[r] \Gamma(\XX) \ar[rrr]^{J_i} &&& **[l] \Gamma(\UU_i)\simeq \FF_i\\
**[r] \UU_i\simeq \Spec(\FF_i) \ar[rrr]^-{J_i^*} &&& **[l] \Spec(\Gamma\XX).\\
}$$
Now we define $\eta=\eta_{\XX}$ by setting $\eta(x)=J^*_i(x)$ for $x\in\UU_i$. Similarly, we can define the action of $\eta$ on isomorphisms by setting $\eta(\alpha)=J^*_i(\alpha)$ when $\alpha\in\UU_i$ and extend to all of $\XX$ by composition.

To see that this is well-defined, suppose that we have an isomorphism $\alpha\in\UU_i\cap\UU_j$. The intersection is an open subscheme, so there is an affine neighborhood $\UU_{ij}\simeq\Spec(\FF_{ij})$ such that $\alpha\in\UU_{ij}\subseteq \UU_i\cap\UU_j$. The resulting composites $\UU_{ij}\rightrightarrows \Spec(\Gamma\XX)$ will agree on $\alpha$ (modulo a canonical isomorphism in $\Spec(\Gamma\XX)$) because they arise from isomorphic pretopos functors:
$$\xymatrix@C=2ex{
& \UU_{ij} \ar[rd] \ar[ld] &&&&& \FF_{ij} & \\
\UU_i \ar[rd] \ar[rdd]_{J_i} && \UU_j \ar[ld] \ar[ldd]^{J_j} &&& \FF_i \ar[ru] && \FF_j \ar[lu]\\
&\XX \ar@{-->}[d] &&&&& \Gamma\XX \ar[ru] \ar[lu] &\\
&\Spec(\Gamma\XX) &&&&& \Gamma(\Spec\ \Gamma\XX). \ar[u]_{\textstyle\vertsim}& \\
}$$

By considering a triple intersection we can show that these isomorphisms satisfy cocycle conditions, allowing us to invoke the gluing lemma \ref{gluing_lemma}. This says that $\XX$ is a colimit $\XX\simeq\colim_i\UU_i$. Since the maps $J_i^*$ agree (up to isomorphism) on their overlap, they induce a map $\eta:\XX\to\Spec(\Gamma\XX)$.

We also need a structure map $\eta^\sharp:\eta^*\OO_{\Gamma\XX}\to\OO_{\XX}$, which we construct via descent. Recall that $\EqSh(\XX)$ is equivalent to the category of descent sheaves $\Desc(J)$; via this equivalence, $\OO_{\XX}$ and $\eta^*\OO_{\Gamma\XX}$ correspond to the families $\<\OO_{\UU_i}\>$ and $\<J_i^*\OO_{\Gamma\XX}\>$, respectively. Both of these carry natural descent data and the maps $J_i^\sharp:J_i^*\OO_{\Gamma\XX}\to \OO_{\UU_i}$ commute with these isomorphisms (again because they are defined locally by isomorphic functors). Therefore the family $\<J_i^\sharp\>$ defines a map in $\Desc(J)$ and we let $\eta^\sharp$ equal the corresponding map in $\EqSh(\XX)$.

In order to see that $\eta$ and $\epsilon$ define a (2-)adjunction we must verify the triangle isomorphisms
$$\epsilon_\Gamma\circ\Gamma(\eta)\cong1_\Gamma\ \ \ \ \ \ \Spec(\epsilon)\circ\eta_{\Spec}\cong 1_{\Spec}.$$
Since $\epsilon$ is (pointwise) an equivalence of categories, it is enough to check that $\Gamma(\eta)$ and $\eta_{\Spec(\EE)}$ are also equivalences; this will guarantee the existence of the natural isomorphisms asserted above.

First we show that the the global sections $\Gamma(\eta)=\Gamma(\eta^\sharp)$ are pseudo-inverse to $\epsilon_\Gamma$. Given a global object $A\in\Gamma\XX$, let $B=\epsilon_\Gamma(A)$. This is a section over $\Spec(\Gamma\XX)$ sending a labelled $\Gamma\XX$-model $\mu$ to $L_\mu A$, the germ of $A$ at $\mu$.\footnote{$L_\mu A$ was denoted $\tilde{\top}(A)$ in section \ref{sec_diagrams}.}

For each $i$, this gives us a pair of local sections over $\UU_i$: $A_i=A|_{\UU_i}$ and $B_i=J_i(B)$. These families lift to a pair of objects in $\Desc(J)$ and we will show that these are isomorphic. Fixing a point $\nu\in\UU_i\subseteq\XX$, we calculate
$$\begin{array}{rcl}
(J_i^\sharp B)(\nu) & \cong & J_i^\sharp\big[(\epsilon_\Gamma A)(J_i^*\nu)\big]\\
&\cong & J_i^\sharp(L_{J_i^*\nu} A)\\
&\cong & L_\nu(J_i A)\\
\end{array}$$
Because $J_i$ is defined by global sections inclusion $\UU_i\subseteq\XX$ we have $J_i(A)\cong A_{\UU_i}$ and therefore $A_i\cong B_i$.

Appealing to the compatibility of the maps $J_i$ one easily check that these isomorphism commutes with the descent data yielding an isomorphism $\eta^\sharp(B)\cong A$. Therefore $\Gamma\eta$ is essentially surjective. A similar argument shows that $(\Gamma\eta)(\epsilon_\Gamma f)= f$ for any arrow $f\in\Gamma\XX$, so that $\Gamma\eta$ is full.

To see that $\Gamma\eta$ is faithful, first notice that the maps $\Gamma\XX\to\Gamma\UU_i$ are jointly faithful (because the maps $\UU_i\to\XX$ are jointly surjective). Therefore the induced map $\Spec(\prod_i\Gamma\UU_i)\to\Spec(\Gamma\XX)$ is again essentially surjective and gives rise to another faithful functor
$$\xymatrix{
\Gamma\Spec(\Gamma\XX) \ar@{>->}[rd]_{\Gamma\eta} \ar@{>->}[rr] && **[l] \Gamma\Spec(\prod_i \Gamma\UU_i)\simeq\prod_i\Gamma\UU_i.\\
&\Gamma\XX \ar[ur] &\\
}$$
Given the factorization above, $\Gamma\eta$ must also be faithful and, therefore, an equivalence of categories.

Finally we consider the map $\eta_{\Spec(\EE)}:\Spec(\EE)\to\Spec\ \Gamma(\Spec\ \EE)$. Given an $\EE$-model $\mu$, the underlying model of $\eta_{\Spec(\EE)}(\mu)$ is given by the composite of the localization $L_\mu:\Gamma(\Spec\ \EE)\to\Diag(\mu)$ with the canonical model $\Hom(1,-):\Diag(\mu) \lto \Sets$. Now we compose with the equivalence $\EE\simeq\Gamma(\Spec\ \EE)$ and note that the resulting functor $\EE\to \Sets$ is canonically isomorphic to the underlying model $M_\mu$. This shows that (the underlying groupoid homomorphism) $\eta_{\Spec(\EE)}$ is essentially surjective.

Similarly, any isomorphism $\alpha:\mu\iso \mu'$ induces an isomorphism of diagrams $\Diag(\alpha):\Diag(\mu)\cong\Diag(\mu')$ and this yields a factorization of $\alpha$ through $\Gamma(\Spec\ \EE)$:
$$\xymatrix@R=2ex{
&& \Diag(\mu) \ar[dd]|{\Diag(\alpha)} \ar[dr] & \\
\EE \ar[r]^-\sim \ar@/^10ex/[rrr]^{M_\mu} \ar@/_10ex/[rrr]_{M_{\mu'}} \ar@{}[rr]^{\raisebox{3ex}{$\vertiso$}} \ar@{}[rr]_{\raisebox{-5ex}{$\vertiso$}} \ar@{}[rr]|(.8){\mbox{$\vertiso$}} \ar@{}[rrr]|(.8){\mbox{$\vertiso$}}& \Gamma(\Spec\ \EE) \ar[ur] \ar[dr] && \Sets.\\
&& \Diag(\mu') \ar[ur] &\\
}$$

On the other hand,  for any $\mu\in\Spec(\EE)$ we have $\Diag(\mu)\cong\Diag(\eta(\mu))$, so an isomorphism $\eta(\mu)\cong\eta(\mu')$ factors through $\EE$ in the same way. Thus $\eta_{\Spec(\EE)}$ is fully faithful, and hence an equivalence of continuous groupoids.

At the level of structure sheaves we have a map $\eta^\sharp:\OO_{\Gamma(\Spec\ \EE)}\to \eta_*\OO_{\EE}$. By the previous argument, its global sections $I=\Gamma(\eta^\sharp)$ is an equivalence is an equivalence of categories. On any other basic open set $V_{\varphi(k)}\in\Spec\ \Gamma(\Spec\ \EE)$, the partial sections of $\eta^\sharp$ factor as sequence of equivalences
$$\OO_{\Gamma(\Spec\ \EE)}(V_{\varphi(k)})\simeq \Gamma(\Spec\ \EE)/\varphi\simeq \EE\!/I\varphi\simeq \OO_{\EE}(V_{I\varphi(k)})=\eta_*\OO_{\EE}(V_{\varphi(k)}).$$
Therefore $\eta_{\Spec(\EE)}$ is an equivalence of schemes, establishing the second triangle isomorphism and the adjunction
$$\Ptop(\EE,\Gamma\XX)\simeq\LSch(\XX,\Spec(\EE)).$$

\end{proof}

\begin{cor}
A logical scheme $\XX$ is affine if and only if the unit $\eta:\XX\to\Spec(\Gamma\XX)$ is an equivalence of schemes.
\end{cor}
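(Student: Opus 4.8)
The statement is the familiar characterization, in any reflective (2-)adjunction, of the objects that lie in the essential image of the right adjoint, and the plan is to read it off from the adjunction $\Gamma\dashv\Spec$ just established together with one fact already extracted inside the proof of the previous theorem: that $\eta_{\Spec(\EE)}$ is an equivalence of schemes for \emph{every} pretopos $\EE$. Recall also that the counit $\epsilon_\EE:\EE\to\Gamma(\Spec\,\EE)$ is (pointwise) an equivalence, which is what makes the op-subcategory reflective.

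The easy direction is immediate. If $\eta_\XX:\XX\to\Spec(\Gamma\XX)$ is an equivalence of schemes, then $\XX\simeq\Spec(\Gamma\XX)$; since $\Gamma\XX$ is a pretopos, $\Spec(\Gamma\XX)$ is affine by definition, and affineness is preserved under equivalence, so $\XX$ is affine.

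For the converse I would argue by transport along naturality of the unit. Suppose $\XX$ is affine, so there is an equivalence of schemes $\theta:\XX\iso\Spec(\EE)$ for some pretopos $\EE$. Naturality of $\eta$ in its argument yields a square, commuting up to canonical isomorphism, whose two vertical legs are $\eta_\XX$ and $\eta_{\Spec(\EE)}$ and whose lower horizontal leg is $\Spec(\Gamma\theta)$. The proof of the reflection theorem already showed that $\eta_{\Spec(\EE)}$ is an equivalence; and since $\theta$ is an equivalence and both $\Gamma$ and $\Spec$ are 2-functors, $\Gamma\theta$ and hence $\Spec(\Gamma\theta)$ are equivalences as well. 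By the 2-out-of-3 property for equivalences in a square that commutes up to isomorphism, $\eta_\XX$ is then an equivalence, as required.

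The only point requiring care---and the main, if modest, obstacle---is the bookkeeping at the level of the structure-sheaf component. An equivalence of schemes demands that both the groupoid map $\eta_\flat$ \emph{and} the internal functor $\eta^\sharp$ be (internally) full, faithful and essentially surjective, so I would verify that $\Gamma$ and $\Spec$ carry these properties of $\theta$ coherently through $\Spec(\Gamma\theta)$ on the $\sharp$-components, after which the same 2-out-of-3 argument applies verbatim to $\eta^\sharp$ as it does to $\eta_\flat$. Everything else is formal consequence of the reflective adjunction.
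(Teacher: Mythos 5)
Your proposal is correct and follows essentially the same route as the paper: the forward direction reads affineness off from $\XX\simeq\Spec(\Gamma\XX)$, and the converse transports the already-established equivalence $\eta_{\Spec(\EE)}$ to $\eta_{\XX}$ along $\XX\simeq\Spec(\EE)$. The paper states this transport more tersely (simply identifying $\eta_{\XX}$ with $\eta_{\Spec(\EE)}$ via the triangle isomorphism), whereas you make the naturality square and the 2-out-of-3 step explicit, which is a harmless elaboration rather than a different argument.
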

\begin{proof}
We have just verified a triangle isomorphism $\Spec(\epsilon_{\EE})\circ\eta_{\Spec(\EE)}\cong 1_{\Spec(\EE)}$. If $\XX\simeq\Spec(\EE)$ is affine then $\eta_{\Spec(\EE)}$ is the asserted equivalence. If $\XX$ is not affine then for any $\XX\not\simeq\Spec(\EE)$ for any $\EE$, so $\eta:\XX\to\Spec(\Gamma\XX)$ cannot be an equivalence.
\end{proof}

Now we will show that the category of logical schemes is closed under finite 2-limits (or weighted limits). We will not recall the general definition, instead referring the reader to \cite{lack}. Instead, we introduce a few examples which will suffice for our purposes.

For any ordinary limit (e.g., products, equalizers, etc.) defined in ordinary categories there is an associated pseudo-limit defined for 2-categories. For example, if $\KK$ is a 2-category then a (pseudo-)terminal object in $\KK$ is an object $1_{\KK}$ such that for any other $K\in\KK$ there is an essentially unique morphism $K\to 1_{\KK}$. More precisely, such a map always exists and any two such maps are associated by a unique 2-cell $\xymatrix{K \rtwocell{\omit \vertiso} & 1_{\KK}}$. Such a limit is unique up to equivalence in $\KK$.

Similarly, the (pseudo-)pullback
$$\xymatrix{
\raisebox{-3ex}{$K_1\overtimes{L} K_2$} \ar[r]^-{p_1} \ar[d]_{p_2} \drtwocell<\omit>{\omit \textstyle^\gamma\rotatebox{45}{$\cong$}} & K_1 \ar[d]^{k_1}\\
K_2 \ar[r]_{k_2} & L\\
}$$
is defined by the following universal property: for any maps $z_i: Z\to K_i$ with a natural isomorphism (2-cell) $\theta:k_1\circ z_1\cong k_2\circ z_2$ there is an essentially unique map $\overline{\theta}:Z\to K_1\overtimes{L} K_2$ such that $z_i\cong p_1\circ\overline{\theta}$ and $\theta=\gamma\cdot\overline{\theta}$. Henceforth we will drop the ``pseudo-'' prefix, and simply refer to such objects as limits in $\KK$.

Additionally, there are 2-limits which have no analogue in ordinary categories. Remember that $I$ is the ``walking arrow'' category with two objects and one non-identity arrow. A \emph{power} of an object $K$ by $I$ is an object $K^{\2}$ with a universal (oriented) 2-cell $\xymatrix{ K^{\2} \rtwocell & K}$. In $\Cat$ (or $\Ptop$) the power of $\EE$ by $I$ is the arrow category $\EE^{\2}$; this has two projections ($\dom$ and $\cod$) and any natural transformation $\xymatrix{\FF \rtwocell{\tau} & \EE}$ induces a map $\FF\to \EE^{\2}$ sending $F\mapsto \tau_F\in\Ar(\EE)$.

In ordinary categories, terminal objects and pullbacks suffice to construct all finite limits. Similarly, a theorem of Street \cite{street_limits} guarantees that these three constructions (terminal objects, pullbacks and powers of $I$) suffice to construct all finite limits. All of the above can be reversed to give definitions of 2-colimits, and the same considerations apply.

\begin{lemma}\label{2colimits}
The 2-category of pretoposes is closed under finite 2-colimits.
\end{lemma}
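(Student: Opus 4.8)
The plan is to reduce the statement to three basic colimits and then construct each by hand, leaning on the fact recorded on page \pageref{ptop_alg} that pretopos structure is \emph{quasi-algebraic}, i.e.\ cartesian-definable. By the dual of the theorem of Street \cite{street_limits} — the same result invoked just above for limits — every finite weighted colimit in a 2-category can be assembled from an initial object, pushouts, and copowers (tensors) by the walking arrow $\2$. So it is enough to exhibit these three constructions in $\Ptop$ and verify their universal properties up to equivalence; the general case then follows by Street's recipe.

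First I would build the initial object as the free pretopos on the empty signature: take the syntactic category $\bEE_{\varnothing}$ of the empty coherent theory and form its pretopos completion $\EE_{\varnothing}$ (lemma \ref{ptop_comp}). Concretely this is (a skeleton of) the category $\mathbf{Fin}$ of finite sets, whose objects are the finite coproducts $\coprod_n 1$. Bi-initiality is immediate, since any pretopos functor must preserve $0$, $1$, finite coproducts and quotients: there is an essentially unique functor $\EE_{\varnothing}\to\FF$ sending $n\mapsto\coprod_n 1_{\FF}$, and any two such are connected by a unique invertible 2-cell.

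For pushouts and copowers I would realize the colimit as the classifying pretopos of a combined theory. Given $\FF\xleftarrow{I}\EE\xrightarrow{G}\GG$, let $\bTT_{\EE},\bTT_{\FF},\bTT_{\GG}$ be the associated theories (page \pageref{class_ptop}); form the theory whose signature is the pushout of signatures over that of $\bTT_{\EE}$ and whose axioms are those of $\bTT_{\FF}$ and $\bTT_{\GG}$ together with the identifications forced by $I$ and $G$, and take its classifying pretopos. By the classification property $\bTT\Mod(\HH)\simeq\Ptop(\EE_{\bTT},\HH)$, a pretopos functor out of this object is exactly a compatible pair $\FF\to\HH$, $\GG\to\HH$ restricting (up to isomorphism) to a common functor on $\EE$, which is the 2-pushout universal property. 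The copower $\2\cdot\EE$ is built the same way from the theory of \emph{$\EE$-model homomorphisms}: two disjoint copies of the language of $\bTT_{\EE}$, a function symbol $h_A\colon A_{\mathrm{dom}}\to A_{\mathrm{cod}}$ for every object $A\in\EE$, and naturality axioms asserting that $h$ commutes with every operation and preserves every relation of $\EE$. Its models in $\HH$ are precisely the arrows of $\Ptop(\EE,\HH)$, so $\Ptop(\2\cdot\EE,\HH)\simeq\Ptop(\EE,\HH)^{\2}$, the defining property of the copower.

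The main obstacle is not the existence of the underlying objects — pretopos completion supplies those — but the 2-categorical bookkeeping. Because pretopos structure (finite limits, disjoint stable coproducts, effective stable quotients) is determined only up to canonical isomorphism, these colimits are genuinely \emph{pseudo}-colimits, and I must verify the universal properties as equivalences of hom-categories, natural up to coherent invertible 2-cells, rather than as bijections. This is exactly where quasi-algebraicity earns its keep: since the defining operations are adjoints and the required stability and effectivity conditions are cartesian (expressible with $\{=,\wedge\}$), they are preserved by the comparison interpretations and survive the completion, guaranteeing that each combined theory is again a genuine pretopos and that the structural comparison functors are pretopos functors. Feeding these three constructions through Street's assembly then produces all finite 2-colimits in $\Ptop$.
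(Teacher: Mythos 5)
Your proof follows essentially the same route as the paper's: reduce via Street's theorem to the initial object ($\Sets_f$), pushouts, and copowers by $\2$, and realize the latter two as classifying pretoposes of explicitly axiomatized theories (of compatible model pairs, and of model homomorphisms, respectively). The one point where the paper is more careful is the pushout: rather than forming a pushout of signatures over that of $\EE$, it keeps the languages of $\FF$ and $\GG$ disjoint and adjoins explicit mutually inverse function symbols $i_E, j_E$ witnessing an isomorphism of reducts, which is what makes the classified cocones commute up to isomorphism rather than strictly, as the 2-pushout universal property you state actually requires.
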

\begin{proof}
In fact, $\Ptop$ is closed under \emph{all} small 2-colimits (essentially because $\Ptop$ is monadic over $\Cat$), but we will not use that fact here. By the above, it is enough to check that $\Ptop$ has an initial object, pushouts and copowers by $I$.

The category of finite sets $\Sets_f$ is the initial object in $\Ptop$. Since a pretopos functor must preserve the terminal object and finite coproducts, any map $\Sets_f\to\EE$ is canonically determined (up to unique isomorphism) by $n\mapsto \overbrace{1_{\EE}+\ldots+1_{\EE}}^{n\rm{ times}}$.

Next consider the pushout of two pretopos functors $I:\EE\to\FF$ and $J:\EE\to\GG$. According to its universal property, a model of the pushout (i.e. a functor $\FF\overplus{\EE}\GG\to\SS$) is a pair of models $M\models\FF$ and $N\models\GG$ together with an isomorphism between their reducts $I^*M\cong J^*N$. This class is axiomatized by the following theory:
$$\begin{array}{rcl}
\LL_+&=&\LL_{\FF}+\LL_{\GG}+\{i_E: IE\to JE,\ j_E:JE\to IE\ |\ E\in\EE\}\\
\bTT_+&=& \bTT_{\FF}+\bTT_{\GG}+\{J(f)\circ i_D=i_{E}\circ I(f)\ |\ f:D\to E)\},\\
&&\hspace{1.4cm}+\{I(f)\circ j_D=j_{E}\circ J(f)\ |\ f:D\to E\},\\
&&\hspace{1.4cm}+\{ j_E\circ i_E=1_{IE},\ i_E\circ j_E=1_{JE}\ |\ E\in\EE\}\\
\end{array}$$
Therefore the classifying pretopos of $\bTT_+$ is a pushout in $\Ptop$.

The copower of $\EE$ by $I$, which we denote $\HH=\EE\To\EE$, is similarly axiomatizable. A model of the copower consists of a pair of $\EE$-models together with a homomorphism between them. To axiomatize this class, first duplicate each object (or arrow or equation between arrows) of $\EE$; every symbol $E\in\EE$ has two copies $E_0$ and $E_1$ in the copower. We also add in components of a transformation $h_E:E_0\to E_1$ along with equational axioms expressing its naturality. This leaves us with the following theory:
$$\begin{array}{rcl}
\LL_\To&=&(\LL_{\EE})_0+(\LL_{\EE})_1+\{h_E: E_0\to E_1\ |\ E\in\EE\}\\
\bTT_\To&=& (\bTT_{\EE})_0+(\bTT_{\EE})_1+\{h_E\circ f_0=f_1\circ h_D\ |\ f:D\to E\}.\\
\end{array}$$
According to its universal property, the classifying pretopos of $\bTT_{\To}$ must be equivalent to the copower $\EE\To\EE$.

More categorically, we can regard $\bTT_\To$ as a presentation of the product category $\bHH=\EE\times I$. This has finite limits, computed pointwise, and it comes equipped with two ``vertical'' functors $m_0,m_1:\EE\to\bHH$. These send $E$ to $\<E,0\>$ or $\<E,1\>$ and they induce a coherent topology on $\bHH$: the basic covers are vertical families which are covers in $\EE$.

This makes sense: the ``horizontal'' maps $h_E=\<1_E,I\>$ are not covers because homomorphisms are not generally surjective. However, the composition in $\bHH$ is computed pointwise, and this ensures that the horizontal maps satisfy a naturality condition. For each $\tau:D\to E$
$$\begin{array}{rcccl}
h_E\circ m_0(\tau)&=&\<1_E,I\>\circ\<\tau,\rm{id}_0\>\\
&=&\<\tau,I\>\\
&=&\<\tau,\id_1\>\circ\<1_D,I\> &=& \tau_1\circ h_D\\
\end{array}$$

This shows that $\bHH$ is a copower of $\EE$ in $\Coh$, the class of coherent categories. Because pretopos completion is a left adjoint, it follows immediately that the pretopos completion of $\bHH$ is the copower of $\EE$ by $I$ in $\Ptop$
$$\begin{array}{c}
\xymatrix{\EE \rtwocell^{M_1}_{M_2}{h} & \SS}\\\hline
\xymatrix{\bHH \ar[r]^{N} & \SS}\\\hline
\xymatrix{\HH=\Ptop(\bHH) \ar[r] & \SS}.\\
\end{array}$$

\end{proof}

\begin{thm}\label{scheme_limits}
The 2-category of logical schemes is closed under finite 2-limits, which are computed from colimits in the 2-category of pretoposes.
\end{thm}
\begin{proof}
As in the algebraic case, we exploit the fact that an adjunction on the right must send colimits to limits, so that $\lim_i \Spec(\EE_i)\simeq\Spec(\colim_i\EE_i)$:
$$\begin{array}{rcll}
\LSch\big(\ZZ,\lim_i(\Spec\ \EE_i)\big)&\cong& \lim_i \LSch(\ZZ,\Spec\ \EE_i) & \rm{(limit in }\Cat\rm{)}\\
&\cong & \lim_i \Ptop(\EE_i,\Gamma\ZZ)\\
&\cong & \Ptop(\colim_i\EE_i,\Gamma\ZZ)\\
&\cong & \LSch\big(\ZZ,\Spec(\colim_i\EE_i)\big).\\
\end{array}$$
This shows that finite 2-limits of affine schemes exist, and they are the affine schemes associated with colimits in $\Ptop$. This already shows that schemes have a terminal object: $\Spec(\Sets_f)$.

Now we construct the pullback of two scheme morphisms $\YY\to \XX$ and $\ZZ\to\YY$; the argument is essentially the same as that for the algebraic case. First, note that if $\YY\overtimes{\XX}\ZZ$ exists and $\UU\subseteq \YY$ then the pullback $\UU\overtimes{\XX}\ZZ$ is given by the inverse image $p_{\YY}^{-1}(\UU)$. In the diagram below, $\WW$ is supported over $\UU$, so the induced map $\WW\to\YY\overtimes{\XX}\ZZ$ must factor through $p_{\YY}^{-1}(\UU)$:
$$\xymatrix@C=3ex{
\WW \ar[rrrrd] \ar[rdd] \ar@{-->}[rrd] \ar@{-->}[rd]\\
& p_{\YY}^{-1}(\UU) \ar@{}[r]|{\textstyle\subseteq} \ar[d] & \raisebox{-3ex}{$\YY\overtimes{\XX}\ZZ$} \pbcorner \ar[rr] \ar[d] && \ZZ \ar[d]\\
& \UU \ar@{}[r]|{\textstyle\subseteq} & \YY \ar[rr] && \XX\\
}$$

Now suppose that $\YY\simeq\bigoplus_i \YY_i$ is an open cover and that each pullback $\YY_i\overtimes{\XX}\ZZ$ exists. Let $\VV_{ij}=p_{\YY_i}^{-1}(\YY_i\cap\YY_j)$. By the previous observation, $\VV_{ij}$ is a pullback and therefore uniqueness implies that there is a canonical equivalence $\VV_{ij}\simeq\VV_{ji}$. This means that we can glue the schemes $\YY_i\overtimes{\XX}\ZZ$ along the $\VV_{ij}$ in order to define a scheme $\bigoplus_i \left(\YY_i\overtimes{\XX}\ZZ\right)$.

Given any maps $F:\WW\to\YY$ and $G:\WW\to\ZZ$ which commute with the projections to $\XX$, we set $\WW_i=F^{-1}(\YY_i)$. This defines a family of maps $\WW_i\to \YY_i\overtimes{\XX}\ZZ$ and one easily checks that these agree on their overlaps. Gluing gives the existence of a map $\WW\to\bigoplus_i \left(\UU_i\overtimes{\XX}\ZZ\right)$ and the essential uniqueness of this map can be checked locally. This shows that $\YY\overtimes{\XX}\ZZ=\bigoplus_i \left(\UU_i\overtimes{\XX}\ZZ\right)$ is a pullback in schemes.

By the argument at the beginning of the theorem we know that affine schemes have pullbacks:
$$\Spec(\FF)\overtimes{\Spec(\EE)}\Spec(\GG)\simeq\Spec(\FF\overplus{\EE}\GG).$$
Now suppose that $\XX=\Spec(\EE)$ is affine and choose open covers $\YY\simeq\bigoplus_i\YY_i$ and $\ZZ\simeq\bigoplus_j\ZZ_j$. By the previous argument, applied twice, we have
$$\YY\overtimes{\XX}\ZZ\simeq \bigoplus_i \left(\YY_i\overtimes{\XX}\ZZ\right) \simeq \bigoplus_{i,j} \left(\YY_i\overtimes{\XX}\ZZ_j\right).$$

Finally, suppose that we have an open cover $\XX=\bigoplus_i\XX_i$; let $\YY_i$ and $\ZZ_i$ denote the inverse images in $\XX_i$ in $\YY$ and $\ZZ$. We have just shown that the pullback $\YY_i\overtimes{\XX_i}\ZZ_i$ exists; now we observe that $\YY_i\overtimes{\XX_i}\ZZ_i\simeq\YY_i\overtimes{\XX}\ZZ$:
$$\xymatrix{
\raisebox{-3ex}{$\YY_i\overtimes{\XX}\ZZ$} \pbcorner \ar[r] \ar[d] & \ZZ_i \pbcorner \ar[d] \ar[r] & \ZZ \ar[d] \\
\YY_i \ar[r] & \XX_i \ar[r] & \XX\\
}$$
By the same argument as above, it follows that $\LSch$ is closed under pullbacks:
$$\YY\overtimes{\XX}\ZZ\simeq\bigoplus_i \left(\YY_i \overtimes{\XX} \ZZ\right) \simeq\bigoplus_i \left(\YY_i\overtimes{\XX_i}\ZZ_i\right).$$

A similar argument shows that $\LSch$ has powers of $I$, which we denote $\XX^{\2}$. In $\Ptop$ we can push out a natural transformation along a slice $\EE\to\EE\!/A$. Locally, this defines restriction of a 2-cell $\xymatrix{\ZZ \rtwocell^F_G{\tau} & \XX}$ to an open subscheme $\UU\subseteq\XX$:
$$\xymatrix{
\EE \rrtwocell^{\2}_J{\theta} \ar[d] && \FF \ar[d] & \XX \rrtwocell<\omit>&& \ZZ \lltwocell^G_F{\omit\ \ \ \ \ \tau} \\
\EE\!/A \rrlowertwocell<\omit>{<3>\ \ \ \theta/A} \ar[rr] \ar[rd] && \FF/IA & \UU \ar[u] \rrlowertwocell<\omit>{<3>\ \ \ \tau|_{\UU}} && F^*\UU \ar[ll] \ar[dl] \ar[u]\\
& \FF/JA \ar[ur]_{\theta_A^*} &&& G^*\UU \ar[ul]\\
}$$

This is closely related to the fact that the copower of slices is a slice of the copower: $\big(\EE\!/A\To\EE\!/A\big)\simeq (\EE\To\EE)/A_1$. This is because an $\EE\!/A$-morphism $h:\<M_1,a_1\>\to\<M_2,a_2\>$ is, by definition, an $\EE$-morphism $M_1\to M_2$ such that $h(a_1)=a_2$.

Now suppose that $\XX^{\2}$ exists and that $\UU\subseteq\XX$ is an open subscheme. Then $\UU^{\2}$ exists, and it is given by the inverse image along the domain map $\xymatrix{\XX^{\2} \rtwocell^\dom_\cod & \XX}$. To see that this gives the correct universal property, suppose that we have a map $T:\ZZ\to \UU^{\2}\subseteq\XX^{\2}$. By the universal property of $\XX^{\2}$ this is equivalent to a 2-cell $\xymatrix{\ZZ \rtwocell^F_G{\tau} & \XX}$. Because $T$ factors through $\UU^{\2}=\dom^{-1}(\UU)$, we must have $F^*\UU=\dom(T)^*\UU \cong \ZZ$. It follows that the restriction of $\tau$ to $\UU$ is a 2-cell in $\LSch(\ZZ,\UU)$:
$$\begin{array}{c}
\xymatrix{\ZZ \ar[rr] && \dom^{-1}(\UU)=\UU^{\2}}\\\hline
\xymatrix{**[r] \ZZ\cong F^*\UU \rrlowertwocell<\omit>{<3>\ \ \ \tau|_{\UU}} \ar[rr] \ar[rd] && \UU.\\
& G^*\UU \ar[ur]\\}\\
\end{array}$$

On the other hand, if $\XX=\bigoplus_i \XX_i$ and $\XX_i^{\2}$ exists then so does $\XX^{\2}$, and it is given by $\bigoplus_i \XX_i^{\2}$. Here $\XX_i^{\2}$ is a matching family because, by the previous argument, $(\XX_i\cap\XX_j)^{\2}$ exists and is a subscheme of $\XX_i^{\2}$. One can check that if a family of scheme morphisms $\YY_i\to\XX_i^{\2}$ matches on $\YY=\bigoplus_i\YY_1$, then so do the induced 2-cells $\xymatrix{\YY_i\rtwocell & \XX_i}$. Thus any map $\YY\to\XX^{\2}$ corresponds to a unique 2-cell $\xymatrix{\YY \rtwocell & \XX}$, so $\XX^{\2}$ is a power in $\LSch$.

If $\XX=\Spec(\EE)$ is affine, then its power $\XX^{\2}$ is also affine, given by $\Spec(\EE\To\EE)$. If $\VV=\Spec(\FF)$ is affine then we have the sequence of equivalences below; the general case follows by gluing.
$$\begin{array}{c}
\xymatrix@=5ex{**[r] \VV \rrtwocell{\ \ \tau'} && **[l]\UU}\\\hline
\xymatrix@=5ex{**[r]\EE \rrtwocell{\ \ \ \Gamma\tau'} && **[l]\FF}\\\hline
\widetilde{\Gamma\tau'}:\xymatrix@=5ex{**[r]\EE\To\EE \ar[rr] && **[l]\FF}\\\hline
\widetilde{\tau'}:\xymatrix@=5ex{**[r]\VV \ar[rr] && **[l]\Spec(\EE\To\EE)}\\
\end{array}$$
Since any scheme has an affine cover $\XX\simeq\bigoplus_i \Spec(\EE_i)$, it also has a power $\XX^{\2}\simeq\bigoplus_i\Spec(\EE_i\To\EE_i)$.
\end{proof}

\renewcommand{\Mod}{\mathbf{Mod}}

\chapter{Applications}

The affine scheme $\Spec(\bTT)$ associated with a logical theory incorporates both semantic and syntactic components of the theory. As such, it is a nexus to study the connections between different branches of logic and other areas of mathematics. In this chapter we will describe a few of these connections.

The first section discusses a connection between schemes and topos theory. The structure sheaf $\OO_{\bTT}$ is an internal pretopos on the category of equivariant sheaves $\EqSh(\MM_{\bTT})$. The machinery of sheaves and sites can be relativized to this context, so that $\OO_{\bTT}$ is a site for the (internal) coherent topology. We show that this topos classifies $\bTT$-model homomorphisms. It follows that this can also be regarded as the (topos) exponential of $\Sh(\MM_{\bTT})$ by the Sierpinski topos $\Sets^{\2}$.

The next section describes another view of the structure sheaf, as a type-theoretic universe. From the results of chapters 1 and 3 we know that an equivariant morphism of sheaves $e:\ext{\varphi}\to\OO_{\bTT}$ can be classified (up to isomorphism) by an object $E\in\EE\!/\varphi$. We will define an auxilliary sheaf $\El(\OO_{\bTT})\to\OO_{\bTT}$ which allows us to recover $E$ from $e$ via pullback:
$$\xymatrix{
\ext{E} \pbcorner \ar[d] \ar[rr] && \El(\OO_{\bTT}) \ar[d]\\
\ext{\varphi} \ar[rr] && \OO_{\bTT}.\\
}$$
This allows us to think of $\OO_{\bTT}$ as a universe of \emph{definably} or \emph{representably} small sets. Formally, we show that $\OO_{\bTT}$ is a coherent universe, a pretopos relativization of Streicher's notion of a universe in a topos \cite{streicher}.

In the third section we demonstrate a tight connection between our logical schemes and a recently defined ``isotropy group'' \cite{isotropy} which is present in any topos. This allows us to interpret the isotropy group as a logical construction. Using this, we compute the stalk of the isotropy group at a model $M$ and show that its elements can be regarded as parameter-definable automorphisms of $M$.

In the last section we discuss Makkai \& Reyes' conceptual completeness theorem \cite{MakkaiReyes} and reframe it as a theorem about schemes. The original theorem says that if an interpretation $I:\bTT\to\bTT'$ induces an equivalence $I^*:\bf{Mod}(\bTT')\iso\bf{Mod}(\bTT)$ under reducts, then $I$ itself was already an equivalence (at the level of syntactic pretoposes). The corresponding statement for schemes is trivial: if $\<I^\flat,I_\sharp\>:\Spec(\bTT')\to\Spec(\bTT)$ is an equivalence of schemes, then the global sections $\Gamma I_\sharp$ defines an equivalence $\bTT\simeq\bTT'$. However, we can unwind the Makkai \& Reyes proof to provide insight into the spectral groupoid $\MM_{\bTT}$. The resulting ``Galois theory'' relates logical properties of $I$ to a mixture of topological and algebraic (i.e., groupoid-theoretic) properties of $I^\flat$.

\section[$\OO_{\bTT}$ as a site]{Structure sheaf as site}

The formal definitions of sites and sheaves can be described in geometric logic, and can therefore be interpreted internally in any topos (see, e.g., \cite{elephant}, C2.4). We have already noted that the structure sheaf $\OO_{\EE}$ is a pretopos (and hence a site) internally in $\Sh(\EE)\simeq\EqSh(\MM_{\EE})$. In this section we will discuss the topos of internal sheaves $\Sh_{\EE}(\OO_{\EE})$. In particular, we will show that this category of internal sheaves is equivalent to the topos exponential of $\Sh(\EE)$ by the Sierpinski topos $\Sets^{\2}$.

Before proceeding we introduce the necessary terminology and notation. In this section we let $\SS=\Sets$, although the same arguments can be relativized to any base topos. We let $I$ denote the poset $\{0\leq 1\}$; we will variously regard $I$ as a category (with one non-identity morphism), a finite-limit category (where $1$ is terminal and $0$ is a subobject of $1$) and a regular category (where the non-identity morphism is not a cover. We will use standard notation $\bf{C}\times \bf{D}$ or $\bf{D}^\bf{C}$ for products and exponentials (i.e., functor categories) in $\Cat$.

In particular, the \emph{Sierpinski topos} is the functor category $\SS^{\2}$, also known as the arrow category of $\SS$. Its objects are functions in $\SS$ and its morphisms are commutative squares. The Sierpinski topos classifies subobjects $p\leq 1$. Indeed, a geometric morphism $f:\ZZ\to\SS^{\2}$ is equivalent to a left-exact functor $f_0:\2\to\ZZ$. Since $f_0$ must preserve the terminal object 1, it is completely determined by the image $f_0(0)$. Similarly, $f_0$ must preserve subobjects, so the image $f_0(0)=p$ is subterminal in $\ZZ$, and $\2$ has no other finite limits.

Let $\Top$ denote the category of (Grothendieck) toposes and geometric morphisms. We will be concerned with products and exponentials in $\Top$ (which will exist for those cases we are concerned with, cf. \cite{elephant} C4); in order to distinguish these from product and functor categories we use the following notation:
$$\begin{array}{c}
\XX\otimes\YY\to \ZZ\\\hline
\XX\to\exp(\YY,\ZZ).
\end{array}$$

As a final piece of notation, recall section 3.5 that $\EE\To\EE$ denotes the copower of $\EE$ in $\Ptop$, which is to say that a model of $\EE\To\EE$ is the same as a homomorphism of $\EE$-models:
$$\begin{array}{c}
(\EE\To\EE)\lto\SS\\\hline
\xymatrix{\EE \rtwocell & \SS}.
\end{array}$$
From lemma \ref{2colimits} we have a syntactic description of $\HH=(\EE\To\EE)$ as the pretopos completion of the product category $\EE\times \2$. In particular, $\HH$ contains two objects $A_0$ and $A_1$ for each object $A\in\EE$, and these are connected by a distinguished morphism $k_A:A_0\to A_1$

\begin{thm}
The following toposes are equivalent:
\begin{enumerate}
\item The category of internal (coherent) sheaves on $\OO_{\EE}$.
\item The category of (coherent) sheaves on $\EE\To\EE$.
\item The exponential (in $\Top$) of $\Sh(\EE)$ by the Sierpinski topos.
\end{enumerate}
$$\Sh_{\EE}(\OO_{\EE})\simeq\Sh(\EE\To\EE)\simeq\exp(\SS^{\2},\Sh(\EE))$$
\end{thm}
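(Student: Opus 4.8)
I would prove the two equivalences separately, pivoting through the middle term $\Sh(\EE\To\EE)$, and treat each as an instance of machinery already developed in the thesis. The equivalence $\Sh_{\EE}(\OO_{\EE})\simeq\Sh(\EE\To\EE)$ is really a relativized Grothendieck comparison statement, while the equivalence $\Sh(\EE\To\EE)\simeq\exp(\SS^{\2},\Sh(\EE))$ is a classifying-topos computation. The guiding observation is that $\OO_{\EE}$ is an \emph{internal} pretopos in $\EqSh(\MM_{\EE})\simeq\Sh(\EE)$ whose equivariant global sections recover $\EE$ (Theorem \ref{str_sheaf_secs}), and whose copower-theoretic avatar is exactly $\HH=\EE\To\EE$ (Lemma \ref{2colimits}).

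\textbf{Step 1: $\Sh(\EE\To\EE)\simeq\exp(\SS^{\2},\Sh(\EE))$.} I would first pin down the right-hand side via universal properties of classifying toposes. Since $\Sh(\EE)$ is the classifying topos for $\EE$-models and the Sierpinski topos $\SS^{\2}$ classifies subterminal objects (equivalently, maps into $\2$ as a finite-limit category, as noted just above the statement), the exponential $\exp(\SS^{\2},\Sh(\EE))$ classifies the appropriate ``$\SS^{\2}$-indexed'' families of $\EE$-models. Concretely, for any topos $\ZZ$, geometric morphisms $\ZZ\to\exp(\SS^{\2},\Sh(\EE))$ correspond to geometric morphisms $\ZZ\otimes\SS^{\2}\to\Sh(\EE)$, i.e.\ to $\EE$-models in the slice-like topos $\ZZ\otimes\SS^{\2}$; unwinding, an $\EE$-model internal to $\SS^{\2}$ is precisely a morphism of $\EE$-models, i.e.\ an $(\EE\To\EE)$-model by the defining property of the copower recalled just before the theorem. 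By the uniqueness of classifying toposes this forces $\Sh(\EE\To\EE)\simeq\exp(\SS^{\2},\Sh(\EE))$. The one point requiring care is that the exponential actually exists in $\Top$; I would cite \cite{elephant} C4 for this (the Sierpinski topos is exponentiable, being the topos of sheaves on a finite site), as the statement's phrasing already presupposes.

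\textbf{Step 2: $\Sh_{\EE}(\OO_{\EE})\simeq\Sh(\EE\To\EE)$.} Here I would argue that forming internal sheaves on the internal site $\OO_{\EE}\in\Sh(\EE)$ is the same as externalizing. The internal pretopos $\OO_{\EE}$ corresponds, under $\Sh(\EE)\simeq\EqSh(\MM)$ and the 2-Yoneda description of the structure sheaf (Proposition \ref{str_sheaf_secs}), to the codomain fibration / slice assignment $A\mapsto\EE\!/A$; its ``total category'' or Grothendieck construction over $\EE$ is exactly the category $\HH=\EE\To\EE$ whose syntactic presentation was given in Lemma \ref{2colimits} (two copies $A_0,A_1$ of each object with connecting maps $k_A$). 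The coherent topology on $\OO_{\EE}$, interpreted internally and then externalized, matches the coherent topology on $\EE\To\EE$ described there (vertical covers that are covers in $\EE$). I would therefore invoke the standard fact that internal sheaves on an internal site, when the ambient topos is itself a sheaf topos $\Sh(\CC)$, agree with external sheaves on the total site (a relativized Grothendieck comparison lemma, cf.\ the use of \cite{SGL} and the comparison lemma in Theorem \ref{eq_top_equiv}). Checking that the two coherent topologies correspond is the technical heart: one must verify that a covering family internal to $\OO_{\EE}$ over a model-fiber unwinds to a coherent cover in $\HH$, which reduces to the factorization of coherent covers into extensive and regular parts already discussed in the stacks section.

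\textbf{Main obstacle.} The genuinely delicate step is Step 2, specifically the bookkeeping that identifies the \emph{internalized} coherent topology on $\OO_{\EE}$ with the \emph{external} coherent topology on $\EE\To\EE$, and the verification that internal-sheafification commutes with the externalization functor. Because $\OO_{\EE}$ lives as an equivariant sheaf of pretoposes over a non-Hausdorff groupoid $\MM$, I expect the careful part to be checking that the equivariant structure does not introduce extra descent conditions beyond those captured by $\HH$ --- essentially re-running the stalk computation of Lemma \ref{struct_stalks} at the level of sites rather than objects. I would handle this by working up to equivalence as if $\OO_{\EE}$ were the stack $\EE^{\2}$ over $\EE$ (as sanctioned by the remark following Definition \ref{def_str_sheaf}), which makes the total category manifestly $\HH$ and reduces the topology comparison to the coherent-cover factorization already in hand.
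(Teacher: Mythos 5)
Your Step 1 is essentially the paper's argument: the paper first proves $\ZZ\otimes\SS^{\2}\simeq\ZZ^{\2}$ and then chains the exponential adjunction with the power/copower adjunction in $\Ptop$ to conclude $\exp(\SS^{\2},\Sh(\EE))\simeq\Sh(\EE\To\EE)$, exactly as you propose. The problem is in Step 2, where your argument rests on a false identification. The Grothendieck construction (total category) of the pseudofunctor $A\mapsto\EE\!/A$ is the arrow category $\EE^{\2}$ with its codomain projection --- its objects are arrows of $\EE$. The copower $\HH=\EE\To\EE$ is a different category: by Lemma \ref{2colimits} it is the pretopos completion of the product $\EE\times\2$, whose objects are two tagged copies $A_0,A_1$ of each object of $\EE$ joined by the maps $k_A$. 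These are not the same category, and a pretopos functor $\EE^{\2}\to\Sets$ is not the same datum as a pretopos functor $\EE\To\EE\to\Sets$ (the latter is a homomorphism of $\EE$-models; the former is not obviously so). So even granting the relativized comparison lemma you invoke, it delivers $\Sh_{\EE}(\OO_{\EE})\simeq\Sh(\EE^{\2},J)$ for the combined base-plus-fiber topology $J$, and you are still left with the nontrivial task of comparing $(\EE^{\2},J)$ with the coherent site $\EE\To\EE$ --- which is most of the content of the theorem. Your ``main obstacle'' paragraph correctly senses that the topology bookkeeping is where the work lives, but the misidentification of the total category hides an additional site comparison that your outline does not supply.

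For contrast, the paper's proof of Step 2 avoids externalization entirely. It observes that both $\Sh_{\EE}(\OO_{\EE})$ and $\Sh(\EE\To\EE)$ are coherent toposes (the former by an internal version of Deligne's theorem, composed with external Deligne for $\Sh(\EE)$), hence both have enough points, so it suffices to exhibit an equivalence between their categories of points. A point of $\Sh_{\EE}(\OO_{\EE})$ is a point $M$ of $\Sh(\EE)$ together with a point of $\Sh(M^*\OO_{\EE})$; by the stalk computation of Lemma \ref{struct_stalks}, $M^*\OO_{\EE}\simeq\Diag(M)$, whose models are homomorphisms $M\to N$ --- precisely the points of $\Sh(\EE\To\EE)$. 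The paper then checks that morphisms of points match on both sides. If you want to salvage your externalization route, you would need to either (i) directly verify that $J$-continuous flat functors on $\EE^{\2}$ correspond to $\EE$-model homomorphisms, or (ii) produce a dense (in the comparison-lemma sense) functor between $(\EE^{\2},J)$ and the coherent site $\EE\To\EE$; neither is immediate, and the points argument is considerably cleaner.
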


\begin{lemma}
The product in $\Top$ of a topos $\FF$ with the Sierpinski topos is given by the functor category $\FF^{\2}$:
$$\FF\otimes\SS^{\2}\simeq\FF^{\2}.$$
\end{lemma}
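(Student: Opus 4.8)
The plan is to establish the equivalence $\FF\otimes\SS^{\2}\simeq\FF^{\2}$ by exhibiting $\FF^{\2}$ as a product in $\Top$, which amounts to verifying the appropriate universal property. Recall that the product $\FF\otimes\SS^{\2}$ is characterized by geometric morphisms $\ZZ\to\FF\otimes\SS^{\2}$ corresponding to pairs of geometric morphisms $\<\ZZ\to\FF,\ \ZZ\to\SS^{\2}\>$. Since the Sierpinski topos $\SS^{\2}$ classifies subterminal objects, a map $\ZZ\to\SS^{\2}$ is the same as a choice of subobject $p\leq 1_{\ZZ}$. So the concrete task is to show that geometric morphisms $\ZZ\to\FF^{\2}$ are naturally equivalent to pairs consisting of a geometric morphism $\ZZ\to\FF$ together with a subterminal object of $\ZZ$.

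First I would recall the standard fact (see \cite{elephant}, B3.2 or the comprehensive treatment of Artin gluing) that $\FF^{\2}$ is itself an instance of Artin gluing: for a left-exact comonad or, more simply, for the identity functor on $\FF$, the arrow category $\FF^{\2}$ is the gluing of $\FF$ along the identity geometric morphism. In particular $\FF^{\2}$ sits in an essential geometric morphism over $\FF$ and comes equipped with a canonical subterminal object (the image of $0\in\2$ under the structure map), namely the ``open'' part of the gluing. The key structural observation is that $\FF^{\2}\simeq\Sh(\FF)$-internal presheaves on $\2$, and that the inclusion $\{1\}\hookrightarrow\2$ and the subobject $0\leq 1$ give exactly the data of a geometric morphism to $\FF$ and a subterminal object.

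The main steps, in order, are: (i) identify the canonical geometric morphism $\pi:\FF^{\2}\to\FF$ (evaluation at the terminal object $1\in\2$, whose inverse image is the ``constant'' or diagonal functor, which is left exact) and the canonical subterminal $u\leq 1$ in $\FF^{\2}$ corresponding to the object $0\rightarrowtail 1$ of $\2$; (ii) given any topos $\ZZ$ with a geometric morphism $g:\ZZ\to\FF$ and a subterminal $p\leq 1_{\ZZ}$, construct a geometric morphism $\hat{g}:\ZZ\to\FF^{\2}$ whose inverse image sends a functor $\2\to\FF$, i.e. an arrow $X_0\to X_1$ in $\FF$, to the pullback/gluing of $g^*X_1$ along $g^*X_0$ using $p$ — concretely $\hat g^*(X_0\to X_1)$ is the object of $\ZZ$ obtained by restricting $g^*X_0$ over $p$ and $g^*X_1$ over $1$; (iii) verify this assignment is left exact (products and the relevant equalizers are computed componentwise and pullback-stably, so left-exactness reduces to left-exactness of $g^*$ together with the finite-limit structure of $\2$) and that it recovers $g$ by composing with $\pi$ and recovers $p$ as the pullback of $u$; and (iv) check naturality and essential uniqueness, so that $\hat{(-)}$ is pseudo-inverse to the pairing $\<\pi\circ(-),\ (-)^*u\>$.

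The hard part will be step (iii), verifying left-exactness and the coherence of the gluing construction at the level of \emph{inverse image functors} rather than just on points, and confirming that the correspondence is an equivalence of categories of geometric morphisms (not merely a bijection on objects) — that is, that $2$-cells match up correctly. I expect to lean on the established theory of Artin gluing, which already packages $\FF^{\2}$ as a glued topos and supplies exactly the universal property relating maps into the glued topos to maps into $\FF$ together with a subterminal/open-subtopos datum; citing \cite{elephant} C4 (where the existence of the relevant products and exponentials in $\Top$ is asserted in the paper's own setup) should let me avoid reconstructing the gluing machinery from scratch. Once the universal property of $\FF^{\2}$ is matched against the defining universal property of $\FF\otimes\SS^{\2}$ — using that $\SS^{\2}$ classifies subterminals — the asserted equivalence follows by the uniqueness of products up to equivalence in $\Top$.
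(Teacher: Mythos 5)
Your proposal is correct and follows the same overall strategy as the paper: both arguments verify that $\FF^{\2}$ has the universal property of the product by showing that geometric morphisms $\ZZ\to\FF^{\2}$ correspond to pairs consisting of a geometric morphism $\ZZ\to\FF$ and a subterminal $p\leq 1_{\ZZ}$, using that $\SS^{\2}$ classifies subterminals. Where you differ is in how the inverse image $\hat{g}^*$ is produced and shown to be left exact: the paper builds it by hand, first computing $P$ on the representables $y0\mapsto p$, $y1\mapsto 1_{\ZZ}$, then extending to a general arrow $\pi:F\to A$ by the explicit pushout of $f^*F\from p\times f^*F\to p\times f^*A$, and defers the verification that this really is the inverse image of a geometric morphism to a hom-tensor adjunction \`a la \cite{SGL}; you instead package $\FF^{\2}$ as the Artin gluing of $\FF$ along the identity and import the gluing theorem's universal property wholesale. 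Your route arguably discharges the left-exactness check (the genuinely delicate point, since pushouts do not preserve finite limits in general --- only these gluings along a subterminal do) more cleanly than the paper's sketch, at the cost of invoking heavier citations; the paper itself acknowledges that the slickest proof is via Diaconescu's theorem (\cite{elephant}, Cor.\ 3.2.12), which subsumes both. One small caution: in your concrete formula you place $g^*X_0$ over $p$ and $g^*X_1$ over $1$, which is the opposite assignment from the paper's (there the domain of the arrow sits over $1$ and the codomain is cut down by $p$); this is purely a matter of orientation conventions for $\2$ and will be forced on you when you check in step (iii) that $\pi\circ\hat{g}\cong g$ and $\hat{g}^*u\cong p$, but as written the sanity check on $y0=(\emptyset\to 1)$ would return $1$ rather than $p$, so fix the indexing before relying on the formula.
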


\begin{proof}[Sketch]
A more general proof, following from Diaconescu's theorem, can be found in Johnstone \cite{elephant}, Cor. 3.2.12.

We will show that both toposes have the same classifying property. First of all, notice that $\FF^{\2}$ has projections to both $\FF$ and $\SS^{\2}$:
$$\begin{array}{c}
\gamma_{\FF}:\xymatrix{\FF^{\2} \rtwocell^{\id}_{\dom}{`\bot} & \FF},\\
\Gamma^{\2}:\xymatrix{\FF^{\2} \rtwocell^{\Delta^{\2}}_{\Gamma^{\2}}{`\bot} & \SS^{\2}},\\
\end{array}$$
Given $P:\ZZ\to\FF^{\2}$ we can recover $f:\ZZ\to\FF$ and $p\leq 1_{\ZZ}$ by composing with these two projections (and applying the classifying property of $\SS^{\2}$).

On the other hand, given $p$ and $f$ we wish to construct an extension $P_f$ as below
$$\xymatrix{
\SS^{\2} \ar[r]^{\Delta^{\2}} \ar@{-->}[rd]|{P} & \FF^{\2} \ar@{-->}[d]^{P_f}\\
\2 \ar[r]_{p\leq 1} \ar[u]^y & \ZZ.\\
}
$$
In order to construct $P_f$ it helps to first understand $P$.

The representable functors $y0$ and $y1$ correspond to the unique functions $\emptyset\to 1$ and $1\to 1$, respectively; $P$ sends these to $p$ and to $1_{\ZZ}$. The unique function $!_2:2\to 1$ is the coequalizer of two projections $y0\rightrightarrows y1$; it follows that $P(!_2)$ is given by two copies of $1_{\ZZ}$ glued together along $p$. Similarly, $P(!_n)$ consists of $n$ copies of $1_{\ZZ}$ glued at $p$. $P$ sends any other function $\pi:F\to A$ to $A$-many disjoint pieces, each consisting of $F(a)$-many copies of $1_{\ZZ}$ glued along $p$.

The construction of $P_f$ is formally similar. The inverse image of a map $\pi:F\to A$ is covered by $p\times f^*A$ and $f^*F$, which we think of as $A$-many copies of $p$ together with $F$-many copies of $1_{\ZZ}$. The image of $\pi$ is then given as a pushout
$$\xymatrix{
p\times f^*F \ar[r]^{p\times f^*\pi} \ar[d]_{p_2} & p\times f^*A \ar[d] \\
f^*F \ar[r] & P_f^*(\pi).
}$$
When $F$ and $A$ are discrete (i.e., in the image of $\Delta$) this is the same as our description of $P$, so the extension diagram above commutes. One should also check that $P_f^*$ does, in fact, define the inverse image of a geometric morphism. This can be established by constructing a hom-tensor adjunction as in \cite{SGL}, VII.7-9. 
\end{proof}

\begin{lemma}\label{left_equiv}
The exponential topos $\exp(\SS^{\2},\Sh(\EE))$ classifies homomorphisms of $\EE$-models and is therefore equivalent to the topos of sheaves on the copower $\EE\To\EE$.
\end{lemma}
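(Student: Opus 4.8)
I want to show that the exponential topos $\exp(\SS^{\2},\Sh(\EE))$ classifies homomorphisms of $\EE$-models, and hence is equivalent to $\Sh(\EE\To\EE)$. The second equivalence is essentially a restatement: by lemma \ref{2colimits} the copower $\EE\To\EE$ is the classifying pretopos whose models are exactly homomorphisms of $\EE$-models, so $\Sh(\EE\To\EE)$ is by definition the classifying topos for this theory. Thus the real content is to identify the exponential with this classifier.

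**The plan.** The strategy is to compute the classifying property of $\exp(\SS^{\2},\Sh(\EE))$ directly from the exponential adjunction in $\Top$, reducing it to the product computation of the previous lemma. For any topos $\ZZ$ the exponential adjunction gives
$$
\Top\big(\ZZ,\exp(\SS^{\2},\Sh(\EE))\big)\ \simeq\ \Top\big(\ZZ\otimes\SS^{\2},\Sh(\EE)\big).
$$
First I would apply the product lemma just proved: since $\ZZ\otimes\SS^{\2}\simeq\ZZ^{\2}$, the right-hand side becomes $\Top(\ZZ^{\2},\Sh(\EE))$. Now $\Sh(\EE)$ is the classifying topos for $\EE$-models (it is the coherent topos on the pretopos $\EE$), so a geometric morphism $\ZZ^{\2}\to\Sh(\EE)$ is the same as an $\EE$-model in the functor topos $\ZZ^{\2}$, i.e. an $\EE$-model internal to $\Sh(\ZZ)^{\2}$. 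The key observation is that an $\EE$-model in an arrow category $\mathcal{F}^{\2}$ is the same datum as a morphism of $\EE$-models in $\mathcal{F}$: a finite-limit-and-coherent-structure-preserving functor $\EE\to\mathcal{F}^{\2}$ amounts to a pair of such functors $\EE\to\mathcal{F}$ (evaluation at $0$ and at $1$) together with a natural transformation between them, which is precisely an $\EE$-model homomorphism.

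**Assembling the equivalence.** Stringing these together, I get
$$
\Top\big(\ZZ,\exp(\SS^{\2},\Sh(\EE))\big)\ \simeq\ \big\{\,\EE\text{-model homomorphisms in }\Sh(\ZZ)\,\big\}\ \simeq\ \Top\big(\ZZ,\Sh(\EE\To\EE)\big),
$$
where the last step uses that $\EE\To\EE$ classifies exactly homomorphisms of $\EE$-models (lemma \ref{2colimits}). Since this chain of natural equivalences holds for every $\ZZ$, the 2-Yoneda lemma for $\Top$ yields the asserted equivalence $\exp(\SS^{\2},\Sh(\EE))\simeq\Sh(\EE\To\EE)$. I would be careful to note naturality in $\ZZ$ at each step so that Yoneda applies.

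**The main obstacle.** The delicate point is the claim that an $\EE$-model in $\mathcal{F}^{\2}$ is the same as an $\EE$-model homomorphism in $\mathcal{F}$. This requires checking that the coherent (pretopos) structure is computed pointwise in the arrow category $\mathcal{F}^{\2}$, so that a coherent functor into $\mathcal{F}^{\2}$ is exactly a pair of coherent functors into $\mathcal{F}$ linked by a natural transformation; one must verify that limits, images, and finite joins in $\mathcal{F}^{\2}$ are formed objectwise and that naturality squares impose no more than the homomorphism condition. I also need the existence of the exponential $\exp(\SS^{\2},\Sh(\EE))$ in $\Top$, which is guaranteed because the Sierpinski topos is (exponentiable, in fact) a coherent topos with a finitely presented classifying theory (cf.\ \cite{elephant}, C4). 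Everything else is formal manipulation of the adjunctions, so the essential work is confined to this objectwise-structure verification.
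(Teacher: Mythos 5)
Your proposal is correct and follows essentially the same route as the paper: exponential adjunction, the product lemma $\ZZ\otimes\SS^{\2}\simeq\ZZ^{\2}$, then the identification of pretopos functors $\EE\to\ZZ^{\2}$ with $\EE$-model homomorphisms via the power/copower universal properties, and finally Yoneda. The ``objectwise structure'' verification you flag is exactly the content of $\ZZ^{\2}$ being the power of $\ZZ$ by $\2$ in $\Ptop$, which the paper invokes directly.
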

\begin{proof}
According to the defining adjunction of the exponential together with the last lemma we have equivalences, natural in $\ZZ$,
$$\begin{array}{c}
\ZZ\lto\exp(\SS^{\2},\Sh(\EE))\\\hline
\ZZ\otimes\SS^{\2}\lto \Sh(\EE)\\\hline
\ZZ^{\2}\lto\Sh(\EE).\\
\end{array}$$

The latter geometric morphism is completely determined by a pretopos functor $\EE\to\ZZ^{\2}$. But $\ZZ^{\2}$ is a power object in $\Ptop$ and, by the universal properties of powers and copowers we have another sequence of equivalences
$$\begin{array}{c}
\EE\lto \ZZ^{\2}\\\hline
\xymatrix{\EE \rtwocell & \ZZ}\\\hline
(\EE\To\EE)\lto \ZZ.
\end{array}$$
The last pretopos functor induces a unique geometric morphism ${\ZZ\to\Sh(\EE\To\EE)}$. By the uniqueness of classifying toposes, we conclude that 
$$\exp(\SS^{\2},\Sh(\EE))\simeq\Sh(\EE\To\EE).$$
\end{proof}

\begin{prop}\label{right_equiv}
The topos of internal sheaves on $\OO_{\EE}$ is equivalent to the category of sheaves on $\EE\To\EE$:
$$\Sh_{\MM_{\EE}}(\OO_{\EE})\simeq\Sh(\EE\To\EE).$$
\end{prop}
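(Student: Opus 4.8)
The plan is to prove the equivalence by showing that $\Sh_{\MM_\EE}(\OO_\EE)$ and $\Sh(\EE\To\EE)$ are both classifying toposes for one and the same geometric theory --- that of a homomorphism of $\EE$-models --- and then to invoke the uniqueness of classifying toposes. By Lemma \ref{left_equiv} (together with the preceding lemmas on the Sierpinski product) the right-hand side $\Sh(\EE\To\EE)$ already has exactly this classifying property, since a model of $\EE\To\EE$ is a homomorphism of $\EE$-models (Lemma \ref{2colimits}). So it suffices to identify the generalized points of $\Sh_{\MM_\EE}(\OO_\EE)$, naturally in an arbitrary topos $\ZZ$, with homomorphisms of $\EE$-models in $\ZZ$.

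First I would set up the relativized picture. Since $\OO_\EE$ is, by construction (definition \ref{def_str_sheaf}), an internal pretopos in $\Sh(\EE)\simeq\EqSh(\MM_\EE)$ carrying the internalized coherent topology, the structure map $\Sh_{\MM_\EE}(\OO_\EE)\to\Sh(\EE)$ is the classifying $\Sh(\EE)$-topos for internal coherent models of $\OO_\EE$. Concretely, by the relativized form of Diaconescu's theorem, a geometric morphism $\ZZ\to\Sh_{\MM_\EE}(\OO_\EE)$ decomposes (via composition with the structure map) into a geometric morphism $f:\ZZ\to\Sh(\EE)$ --- equivalently an $\EE$-model $M$ in $\ZZ$ --- together with an internal coherent model of the base-changed site $f^*\OO_\EE$ in $\ZZ$.

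The conceptual heart of the argument is the identification of this fiber datum. I would show that the base change $f^*\OO_\EE$ is, internally, the diagram $\DD(M)$ of the model $M$, generalizing the stalk computation of Lemma \ref{struct_stalks} from ordinary points $\mu\in\MM_\EE$ (where $f^*\OO_\EE$ is literally the stalk $\OO_{\EE,\mu}\simeq\DD(M_\mu)$) to arbitrary generalized points. The key is that $\OO_\EE$ presents the codomain fibration $A\mapsto\EE\!/A$, whose base change along $M$ reindexes the slices $\EE\!/\varphi$ by \emph{elements} rather than by parameterized formulas --- precisely the comparison functor shown to be an equivalence in Lemma \ref{struct_stalks}. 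Granting this, the relativized form of Lemma \ref{diag_models} (that $\DD(M)$ classifies $\EE$-models under $M$) identifies an internal model of $f^*\OO_\EE\simeq\DD(M)$ in $\ZZ$ with a homomorphism $h:M\to N$ of $\EE$-models in $\ZZ$. Assembling the two pieces, a geometric morphism $\ZZ\to\Sh_{\MM_\EE}(\OO_\EE)$ is the same thing as a pair consisting of an $\EE$-model $M$ and a homomorphism $h:M\to N$, naturally in $\ZZ$; this is the theory classified by $\Sh(\EE\To\EE)$, and 2-Yoneda yields the asserted equivalence.

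The main obstacle I anticipate is making the base-change identification $f^*\OO_\EE\simeq\DD(M)$ precise at the level of generalized points, and checking that the internalized coherent topology on $\OO_\EE$ restricts along $f$ to the coherent topology defining $\DD(M)$-models. At ordinary points this is exactly Lemma \ref{struct_stalks}, but the relativized statement requires that the construction of $\OO_\EE$ (sheafification of the codomain stack, then transport across $\Sh(\EE)\simeq\EqSh(\MM)$, cf.\ Theorem \ref{eq_top_equiv}) commutes with inverse image functors --- a pullback-stability property of the codomain fibration. An alternative route that avoids relativized Diaconescu is to externalize $\OO_\EE$ via the Grothendieck construction, identify its total site with the arrow category $\EE^{\2}$ equipped with the combined (base $\times$ fiberwise-coherent) topology, and then compare this site directly with $\EE\times\2$ --- whose pretopos completion is $\EE\To\EE$ by Lemma \ref{2colimits} --- using the Comparison Lemma. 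However, matching the two topologies under this comparison looks at least as delicate as the base-change identification, so I would pursue the classification argument as the primary strategy and fall back on the externalization only to cross-check.
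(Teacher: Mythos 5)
Your proposal is correct in outline, but it concludes the equivalence by a genuinely different mechanism than the paper. The paper restricts attention to ordinary points $\SS\to\Sh_{\MM_{\EE}}(\OO_{\EE})$: it invokes Deligne's theorem (externally for $\Sh(\EE\To\EE)$, and an internal version composed with external points for $\Sh_{\MM_{\EE}}(\OO_{\EE})$) to argue that both toposes have enough points, computes that a point of $\Sh_{\MM_{\EE}}(\OO_{\EE})$ is a pair $\<M,f\>$ with $f$ a point of $\Sh(M^*\OO_{\EE})\simeq\Sh(\Diag(M))$, i.e.\ a homomorphism out of $M$, and then spends most of its effort hand-matching the \emph{morphisms} of points on both sides. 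You instead establish the full classifying property naturally in an arbitrary topos $\ZZ$, via relativized Diaconescu for the internal site $\OO_{\EE}$ and a base-change identification $f^*\OO_{\EE}\simeq\DD(M)$, and then appeal to uniqueness of classifying toposes. Your route buys two things: it avoids Deligne entirely (and thereby the somewhat delicate claim that agreement on $\Sets$-points suffices to conclude an equivalence of toposes), and the 2-cells are handled automatically by the naturality in $\ZZ$ rather than by the explicit whiskering computations that occupy the second half of the paper's proof. The price is exactly the obstacle you flag: Lemma \ref{struct_stalks} only computes $M^*\OO_{\EE}$ for ordinary points, so you owe a proof that the construction of $\OO_{\EE}$ (sheafified codomain stack, transported across $\Sh(\EE)\simeq\EqSh(\MM_{\EE})$) is stable under inverse image along arbitrary $f:\ZZ\to\Sh(\EE)$, and that the internal coherent topology restricts correctly; this should follow from $f^*$ preserving the filtered colimit presentation $\DD(M)\simeq\colim_{a\in\int M}\EE\!/A$ together with pullback-stability of the codomain fibration, but it is a real step that the paper sidesteps by working only at $\Sets$-points. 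With that step supplied, your argument is complete and, if anything, more robust than the one in the text.
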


\begin{proof}

Let $\GG=\Sh_{\MM_{\EE}}(\OO_{\EE})$. Recall the construction of $\OO_{\EE}$: we first sheafified the codomain fibration $\EE^{\2}\to\EE$ and then transported it across the equivalence $\Sh(\EE)\simeq\EqSh(\MM_{\EE})$. Because equivalent sites produce equivalent categories of internal sheaves, the topos $\GG$ is insensitive to both operations; therefore we need not distinguish between $\OO_{\EE}$ and $\EE^{\2}$.

Since $\Sh(\EE\To\EE)$ is a coherent topos, Delingne's theorem ensures that it has enough points. Moreover, an internal version of Deligne's theorem ensures that $\GG$ also has enough points. Since $\OO_{\EE}$ is a coherent site internally it has enough ``internal points'' $\Sh(\EE)\to\HH$. $\Sh(\EE)$ is (externally) coherent, so it has enough ordinary points, and we may compose these facts.

Given distinct, parallel morphisms $f\not=g$ in $\HH$ first find $H:\Sh(\EE)\to\HH$ such that $H^*f\not=H^*g$. Then find $M:\SS\to\Sh(\EE)$ such that $M^*(H^*f)\not= M^*(H^*g)$. The resulting composite $H\circ M$ is a point $\SS\to\HH$ which separates $f$ and $g$. Therefore $\GG$ also has enough points and we can verify that $\GG\simeq\Sh(\EE\To\EE)$by checking an equivalence between their categories of points:
$$\Top(\SS,\Sh(\EE\To\EE))\simeq\Top(\SS,\GG).$$

We begin by calculating the points $g:\SS\to\GG$. Such a point can be represented as a pair $g=\<M,f\>$ where $M$ is a point of $\Sh(\EE)$ and $f$ is a point in the (external) sheaf topos $\Sh(M^*\OO_{\EE})$.

Given $g$, we can recover $M$ by composing with the internal global sections morphism $\GG\to\Sh(\EE)$. From the results of section 2.4 we know that the stalk of $\OO_{\EE}$ at $M$ is the diagram of $M$: $M^*\OO_{\EE}\simeq \Diag(M)$. Since a model of the diagram of $M$ consists of another model $N$ together with a homomorphism $f:M\to N$, the points of $\GG$ are the same as those of $\Sh(\EE\To\EE)$.

We must also check the morphisms between points of $\GG$ and show that these agree with the morphisms of points in $\Sh(\EE\To\EE)$. First suppose that we have a morphism of points in $\Sh(\EE\To\EE)$; this amounts to a natural transformation $\xymatrix{**[l](\EE\To\EE) \rtwocell^h_{h'}{\eta} & \SS}$; here $h$ and $h'$ correspond to $\EE$-model homomorphisms $M\to N$ and $M'\to N'$, respectively.

If we consider the components of $\eta$ at $A_0$ (resp. $A_1$) for each object $A\in\EE$, these define a model homomorphism $\eta_0:M\to M'$ (resp. $\eta_1:N\to N'$). Naturality of $\eta$ along the distinguished morphisms $k_A:A_0\to A_1$ shows that these homomorphisms commute with $h$ and $h'$, so that a morphism of points is $\Sh(\EE\To\EE)$ is equivalent to a commutative square in $\bf{Mod}(\EE)$:

$$\xymatrix{
h(A_0) \ar[r]^{\eta_{0A}} \ar[d]_{h(k_A)} & h'(A_0) \ar[d]^{h'(k_A)} && M \ar[r]^{\eta_0} \ar[d]_{h} & M' \ar[d]^{h'}\\
h(A_1) \ar[r]_{\eta_{1A}} & h'(A_1) && N \ar[r]_{\eta_1} & N'.
}$$

Now consider a transformation $\xymatrix{\SS \rtwocell^g_{g'}{\gamma} & \GG}$. As we did for the points, we can characterize such a transformation in terms a component at $\Sh(\EE)$ together with a component at (the inverse image of) the structure sheaf.

As above, the first of these is defined by whiskering with the internal global sections geometric morphism. This yields a homomorphism $\gamma_0$ between the first components of $g=\<M,f\>$ and $g'=\<M',f'\>$:
$$\xymatrix{\SS \rrtwocell^g_{g'}{\gamma} \ar@/^1cm/[rrr]^M \ar@/_1cm/[rrr]_{M'} && \GG \ar[r] & \Sh(\EE)}$$

This homomorphism induces a pretopos functor between the diagram of $M$ and the diagram of $M'$ and which, by abuse of notation, we also call $\gamma_0$. This functor mediates a natural transformation $\overline{\gamma}$ between the second components of $g$ and $g'$:
$$\xymatrix{
\Diag(M) \ar[rr]^{\Diag(\gamma_0)} \ar[rd]_f & \ar@{}[d]|{\To} \ar@{}[d]|(.35){\overline{\gamma}} & \Diag(M') \ar[ld]^{f'}\\
& \SS &.\\
}$$
The original transformation $\gamma$ is completely determined by the pair $\<\gamma_0,\overline{\gamma}\>$.
 
Recall that an object of $\Diag(M)$ has the form $\varphi(x,b)$, where $\varphi\mono A\times B$ is a formula in $\EE$ and $b\in B^M$ is a parameter in $M$. Since $\gamma_0$ (regarded as a pretopos functor) acts by sending $\varphi(x,b)\mapsto\varphi(x,\gamma_0(b))$, it has no effect on unparameterized formulas. Therefore it commutes with the canonical inclusions $\EE\to\Diag(M)$ and $\EE\to\Diag(M')$. If we let $N$ and $N'$ denote the (codomain) models associated with $f$ and $f'$, then composing with $\overline{\gamma}$ yields a second model homomorphism $\gamma_1:N\to N'$
$$\xymatrix@R=3ex{
& \Diag(M) \ar[dd] \ar[rd]^f &\\
\EE \ar[rd] \ar[ru] \ar@/^1.5cm/[rr]^N \ar@/_1.5cm/[rr]_{N'} & \ar@{}[r]|{\overline{\gamma}} \ar@{}[r]|(.35){\Downarrow}& \SS\\
& \Diag(M') \ar[ru]_{f'} &\\
}$$

The naturality of $\overline{\gamma}$ ensures that this homomorphism defines a commutative square. To see this, consider the component of $\overline{\gamma}$ at the singleton formula $x=a$ (for some element $a\in A^M$). This has an obvious inclusion $(x=a)\mono A$ and, by naturality, this must commute with $\overline{\gamma}_A=\gamma_{1A}$:
$$\xymatrix{
\Diag(M): \ar[d]_{\gamma_0} & x=a \ar@{|->}[d] \ar@{|->}[r] && (x=f(a))^N \ar[d]_{\overline{\gamma}_{x=a}} \ar@{>->}[r] & A^N \ar[d]^{\gamma_{1A}=\overline{\gamma}_A} \\
\Diag(M'): & x=\gamma_0(a) \ar@{|->}[r] && (x=f'(\gamma_0(a)))^{N'} \ar@{>->}[r] & A^{N'}\\
}$$
This shows that the component of $\gamma_1$ at $A$ applied to the unique element ${x=f(a)}$ in $N$ is equal to the unique element $x=f'(\gamma_0(a))$ in $N'$; in other words, $\gamma_1\circ f=f'\circ \gamma_0$.

Therefore $\Top(\SS,\GG)\simeq\Top(\SS,\Sh(\EE\To\EE)$, completing the proof.
\end{proof}

Combining lemmas \ref{left_equiv} and \ref{right_equiv} we have proved the following theorem:
\begin{thm*}
The following toposes are equivalent:
\begin{enumerate}
\item The category of internal sheaves on $\OO_{\EE}$.
\item The category of sheaves on $\EE\To\EE$.
\item The exponential (in $\Top$) of $\Sh(\EE)$ by the Sierpinski topos.
\end{enumerate}
$$\Sh_{\EE}(\OO_{\EE})\simeq\Sh(\EE\To\EE)\simeq\exp(\SS^{\2},\Sh(\EE))$$
\end{thm*}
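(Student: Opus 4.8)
The plan is to obtain this theorem as the immediate conjunction of the two results that precede it, Lemma~\ref{left_equiv} and Proposition~\ref{right_equiv}, since all the substantive work has already been carried out in establishing those two equivalences. The theorem itself requires nothing more than transitivity of equivalence of categories, once I observe that the two lemmas share the common vertex $\Sh(\EE\To\EE)$.

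First I would invoke Proposition~\ref{right_equiv} to identify items (1) and (2), giving $\Sh_{\MM_{\EE}}(\OO_{\EE})\simeq\Sh(\EE\To\EE)$. This is the deeper of the two inputs: it relies on showing that both toposes have enough points (by the external and an internal form of Deligne's theorem), and then on matching their categories of points. A point of the internal sheaf topos is a pair $\<M,f\>$ with $M$ a point of $\Sh(\EE)$ and $f$ a model of the stalk $M^*\OO_{\EE}\simeq\Diag(M)$, i.e.\ a homomorphism $M\to N$; morphisms of points then correspond to commutative squares in $\mathbf{Mod}(\EE)$, which is exactly the data of a morphism of points in $\Sh(\EE\To\EE)$. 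Next I would invoke Lemma~\ref{left_equiv} to identify items (2) and (3), giving $\Sh(\EE\To\EE)\simeq\exp(\SS^{\2},\Sh(\EE))$; this rests on the exponential adjunction together with the computation $\FF\otimes\SS^{\2}\simeq\FF^{\2}$ of the preceding lemma and the universal properties of powers and copowers in $\Ptop$.

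Finally, because the two middle terms literally coincide, the equivalences compose to yield the chain $\Sh_{\EE}(\OO_{\EE})\simeq\Sh(\EE\To\EE)\simeq\exp(\SS^{\2},\Sh(\EE))$, establishing the mutual equivalence of all three toposes. There is no genuine obstacle at this final stage; the only point worth checking is that the notation $\Sh_{\EE}(\OO_{\EE})$ in item (1) denotes the same internal sheaf topos as $\Sh_{\MM_{\EE}}(\OO_{\EE})$ in Proposition~\ref{right_equiv}, which holds since $\Sh(\EE)\simeq\EqSh(\MM_{\EE})$ and the category of internal sheaves is insensitive to this equivalence of (internal) sites.
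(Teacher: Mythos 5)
Your proposal is correct and matches the paper exactly: the paper itself introduces this theorem with the words ``Combining lemmas \ref{left_equiv} and \ref{right_equiv} we have proved the following theorem,'' so the intended proof is precisely the composition of those two equivalences along the common vertex $\Sh(\EE\To\EE)$. Your additional remark that $\Sh_{\EE}(\OO_{\EE})$ and $\Sh_{\MM_{\EE}}(\OO_{\EE})$ agree because internal sheaves are insensitive to the equivalence $\Sh(\EE)\simeq\EqSh(\MM_{\EE})$ is also the justification the paper gives at the start of the proof of Proposition \ref{right_equiv}.
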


\section{Structure sheaf as universe}

In this section we investigate a connection between logical schemes and type theory, specifically the notion of a type-theoretic universe. We will show that the structure sheaf $\OO_{\EE}$ can be regarded as a universe of ``representably small'' morphisms in the topos of equivariant sheaves $\EqSh(\MM_{\EE})$. In particular, this will show that we can conservatively extend any coherent theory to include a universe of small sets.

The category theorist tends to think of arrows, rather than objects, as small; a universe in $\EE$ is a subclass of arrows $\UU\subseteq \rm{Ar}(\EE)$ satisfying some natural closure and generation principles. Intuitively, a map is small when each of its fibers is; consequently, an object is small just in case the terminal projection $E\to 1$ belongs to $\UU$. From this characterization we can immediately recognize one of the defining properties of a universe: it should be closed under pullback. After all, any fiber of a pulled back map is a fiber of the original map, so if a given map has small fibers so will all of its pullbacks. Notice that this automatically closes small maps under isomorphism.

Another important requirement is that small maps should be closed under composition. In $\Sets$, this corresponds to the assumption that a small coproduct of small sets is again small. This is essentially the axiom of replacement: when $A$ is a set and $\{F_a\ |\ a\in A\}$ is a (disjoint) family of sets, then the union $F=\bigcup_{a\in A} F_a$ is again a set. More generally, this translates to closure under the type-theoretic operation of dependent sums.

Finite sets provide a motivating example for these issues. Consider the following map in $\Sets$:
$$K=\{(k,n)\in \NN\times\NN\ |\ k<n\} \stackrel{p_2}{\lto} \NN.$$
This projection has the nice property that, for every $m\in\NN$, the fiber of $K$ over $m$ contains exactly $m$ elements. More generally, any function $F\to A$ whose fibers are finite is realized as a pullback of $K$:
$$\xymatrix{
F \pbcorner \ar[rr] \ar[d] && K \ar[d]\\
A \ar[rr]_{a\mapsto \|F(a)\|} && \NN.
}$$
We say that $K\to \NN$ is \emph{generic} for maps with finite fibers.

The third axiom of small maps says that any categorical universe should contain such a generic display map. Any class of small maps should contain a map $\rm{El}(U)\to U$ such that every small $s:E'\to E$ arises as the pullback against some $f:E\to U$:
$$\xymatrix{
E' \pbcorner \ar[r] \ar[d]_{\forall\ \rm{small}\ s} & \rm{El}(U) \ar[d]\\
E \ar[r]_{\exists\ f} & U.\\
}$$
Note that in general neither $U$ nor $\El(U)$ will themselves by small objects.

\begin{defn}
A \emph{coherent universe} in a category $\EE$ is a class of maps $\UU\subseteq \rm{Ar}(\EE)$ such that:
\begin{itemize}
\item $\UU$ is closed under pullbacks.
\item $\UU$ is closed under composition/dependent sums.
\item There is a an object $U$ and a morphism $\pi_{\UU}:\El(U)\to U$ in $\UU$ such that every small map is a pullback of $\pi_{\UU}$.
\end{itemize}\end{defn}

This is a generalization of Streicher's definition of a universe in a topos (cf. \cite{streicher}), from which we have removed two conditions which are inappropriate to the context of pretoposes.

The first says that all monos should be small maps. This is intuitively reasonable, since the fibers of a mono are either empty or singletons. However, this is not as straightforward as it seems. We can reformulate the example of finite sets in any pretopos containing a (parameterized) natural numbers object. This amounts to a weak intuitionistic set theory, and in these cases the finite sets may not be closed under subobjects. This means that we can have a small object which contains non-small subobjects, in the sense that they do not arise as pullbacks of $K$.

Something similar occurs in our context, where smallness will correspond to definability. As already observed in Chapter 2, not every equivariant subobject of a definable sheaf is definable; a further compactness condition is required. Our notion of smallness incorporates, in some sense, smallness of definition in addition to smallness of fibers and so closure under monomorphisms is an unreasonable requirement in this setting.

Additionally, the definition of a universe in a topos usually requires small maps to be closed under dependent products as well as sums. As we are working with pretoposes, which do not model dependent products, it is reasonable to omit this requirement as well.

\begin{defn}
A map of equivariant sheaves $f:E'\to E\in\EqSh(\MM)$ is \emph{definably small} if the pullback along any map from a definable sheaf $\ext{A}\to \FF$ (cf. page \pageref{def_sheaf}) is a definable map:
$$\xymatrix{
[\![\ F\extr  \pbcorner \ar[d]_{\ext{\tau}} \ar[r] & E' \ar[d]^{f}\\ 
\ext{A} \ar[r] & E.
}$$
\end{defn}
These maps are also called \emph{representably small}, because the equivalence $\EqSh(\MM)\simeq\Sh(\EE)$ carries each definable sheaf $\ext{E}$ to the corresponding representable functor $yE$.

By stacking pullback squares either horizontally or vertically, the two pullbacks lemma immediately verifies both the first and second requirements for a universe:
$$\begin{array}{p{7cm}p{5cm}}
\raisebox{-1cm}{$\xymatrix{
 q^*(p^*E')\cong (pq)^*E' \pbcorner \ar@/^3ex/[rr] \ar[r] \ar[d]_{\rm{definable}} & p^*E' \pbcorner \ar[d] \ar[r] & E' \ar[d]^{\rm{small}}\\
\ext{A} \ar[r]_q & D \ar[r]_p & E,\\
}$} &
\xymatrix{
\ext{F} \pbcorner \ar[d] \ar@/_3ex/[dd]_{\rm{definable}} \ar[r] & E'' \ar[d]^{\rm{small}}\\
\ext{B} \pbcorner \ar[d] \ar[r] & E' \ar[d]^{\rm{small}}\\
\ext{A} \ar[r] & E.\\
}
\end{array}$$
The most interesting aspect of this universe is the generic map.

Let $L$ denote the ``walking section'', the category with three non-identity arrows
$$\xymatrix{
0 \ar@<1ex>[r]^p \ar@(dl,ul)^r & \ar@<1ex>[l]^s 1, & r=s\circ p,& p\circ s=\id_1.
}$$
The inclusion of $\2$ into $L$ (as $p$) induces a functor $\EE^L\to\EE^{\2}$. This makes $\EE^L$ into a stack over $\EE$, so we sheafify it and transport it across the equivalence $\Sh(\EE)\simeq\EqSh(\MM_{\EE})$ just as we did with for the structure sheaf. This gives our desired generic map.

\begin{thm}
Let $\pi_{\EE}:\El(\OO_{\EE})\to\OO_{\EE}$ denote the map of equivariant $\MM_{\EE}$-sheaves which arises as the sheafification and transport of the canonical functor $\EE^L\to\EE^{\2}$. This map is generic for definably small maps in $\EqSh(\MM_{\EE})$.
\end{thm}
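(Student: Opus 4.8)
The plan is to reduce the genericity statement to two ingredients: a local classification of definably small maps over a definable base, and a gluing argument that assembles the local data into a single classifying map. Throughout I work across the equivalence $\EqSh(\MM_{\EE})\simeq\Sh(\EE)$, under which each definable sheaf $\ext{\varphi}$ corresponds to the representable $y\varphi$ and, by Proposition~\ref{str_sheaf_secs} together with the $2$-Yoneda lemma (Proposition~\ref{2yoneda}), equivariant maps $\ext{\varphi}\to\OO_{\EE}$ correspond to objects of $\EE\!/\varphi$, that is, to morphisms $\sigma\colon\psi\to\varphi$ in $\EE$. The structural fact I need about the two sheaves is their description on stalks. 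By Lemma~\ref{struct_stalks} the stalk of $\OO_{\EE}$ at $\mu$ is the diagram $\DD(M_{\mu})$, whose objects are definable sets in $M_{\mu}$; unwinding $\EE^{L}\to\EE^{\2}$ shows that the stalk of $\El(\OO_{\EE})$ at $\mu$ consists of pairs $\<D,a\>$ of a definable set $D$ together with an element $a\in D$, with $\pi_{\EE}$ forgetting the chosen element. Indeed an object of $(\EE^{L})(A)$ exhibits $A$ as a retract of some $E_{0}$ via $p\colon E_{0}\to A$ and $s\colon A\to E_{0}$, which over $\mu$ realizes to the definable set $(p^{\mu})^{-1}(a)$ equipped with the distinguished element $s^{\mu}(a)$.

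First I would treat the case of a definable base $E=\ext{\varphi}$. If $s\colon E'\to\ext{\varphi}$ is definably small then, pulling $s$ back along the identity of $\ext{\varphi}$ (a map with definable domain), we conclude that $E'$ is itself definable, $E'\cong\ext{\psi}$, and by Corollary~\ref{ext_full} that $s=\ext{\sigma}$ for some $\sigma\colon\psi\to\varphi$ in $\EE$. This $\sigma$ determines a classifying map $\name{\sigma}\colon\ext{\varphi}\to\OO_{\EE}$ sending $\<\mu,b\>$ to the p-set $\<\sigma,b\>$, i.e.\ the definable set $(\sigma^{\mu})^{-1}(b)$. I would then verify that $\name{\sigma}^{*}\El(\OO_{\EE})\cong\ext{\psi}$ over $\ext{\varphi}$. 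On stalks this is immediate, since the fiber of $\pi_{\EE}$ over the definable set $(\sigma^{\mu})^{-1}(b)$ is that set itself, which is exactly the fiber of $\ext{\psi}$ over $\<\mu,b\>$. The remaining work is to promote this fiberwise bijection to a homeomorphism of \'etale spaces, which I would do by matching basic opens: a basic section of $\name{\sigma}^{*}\El(\OO_{\EE})$ over $V_{\gamma(k)}$ corresponds to a map $\tau\colon\gamma\to\varphi$ with $\gamma\cong\tau^{*}\psi$, yielding a factorization $V_{\gamma(k)}\to\ext{\psi}\to\ext{\varphi}$ through $\ext{\sigma}$, and conversely.

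Next I would globalize. For arbitrary $E$, present it as a colimit $E\simeq\colim_{i}\ext{\varphi_{i}}$ of the definable sheaves covering it, with covering sections $s_{i}\colon\ext{\varphi_{i}}\to E$. Pulling a definably small $s\colon E'\to E$ back along each $s_{i}$ gives definably small maps over a definable base, classified by maps $f_{i}\colon\ext{\varphi_{i}}\to\OO_{\EE}$ with $s_{i}^{*}E'\cong f_{i}^{*}\El(\OO_{\EE})$. Because the $s_{i}^{*}E'$ all arise as pullbacks of the single map $s$, they satisfy the coherence condition that pulling $f_{j}$ back along a transition map recovers $f_{i}$; hence the $f_{i}$ agree on overlaps and, respecting the equivariant action, knit together into a single $f\colon E\to\OO_{\EE}$. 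The pasting lemma for pullbacks then gives $E'\cong f^{*}\El(\OO_{\EE})$. The converse, that every pullback of $\pi_{\EE}$ is definably small, is the easy direction: given $f\colon E\to\OO_{\EE}$ and any $\ext{A}\to E$, the composite classifies some $\sigma\colon\psi\to A$, so the local case identifies the pullback of $\El(\OO_{\EE})$ with the definable sheaf $\ext{\psi}$, and two applications of the pasting lemma show the pullback of $E'\to E$ along $\ext{A}\to E$ is again $\ext{\psi}$, hence definable.

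The main obstacle is the globalization step: I must check that the locally defined classifying maps $f_{i}$ are genuinely compatible as morphisms of equivariant sheaves, not merely stalkwise, so that the gluing produces a bona fide map $f\colon E\to\OO_{\EE}$ in $\EqSh(\MM_{\EE})$ rather than only a family of germs, and that the isomorphisms $s_{i}^{*}E'\cong f_{i}^{*}\El(\OO_{\EE})$ descend along the cover. This is where the coherence of the pullback presentation and the equivariance of both $\OO_{\EE}$ and $\El(\OO_{\EE})$ must be used in tandem. The fiberwise computation and the homeomorphism check of the local case are routine by comparison.
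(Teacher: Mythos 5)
Your proposal is correct and follows essentially the same two-stage strategy as the paper: first classify a definably small map over a definable base $\ext{\varphi}$ by an object of $\EE\!/\varphi$ (equivalently a morphism $\sigma:\psi\to\varphi$), then glue the resulting local classifying maps along a presentation of the base as a colimit of definable sheaves, checking compatibility on overlaps exactly where you flag the main obstacle. The only cosmetic difference is in the local step, where the paper verifies the pullback identity $\EE\!/B\cong\EE\!/A\times_{\EE^{\2}}\EE^{L}$ strictly at the level of stacks over $\EE$ before sheafifying and transporting, whereas you compute stalkwise and then match basic open sections; both verifications are sound.
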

\begin{proof}
First we show that every definable map $\ext{\tau}:\ext{B}\to\ext{A}$ arises as a pullback of $\El(\OO_{\EE})$. From proposition \ref{str_sheaf_secs} we know that there is an equivalence between partial sections of the structure sheaf and objects of the slice category
$$\OO_{\EE}(V_{A(k)})\simeq\EE\!/A.$$

Using this we can associate $\tau$ with a partial equivariant section $t:V_{A(k)}\to\OO_{\EE}$. According to lemma \ref{equiv_ext}, this has a unique extension to an equivariant map ${\overline{t}:\ext{A}\to\OO_{\EE}}$. Now consider the pullback
$$\xymatrix{
P \pbcorner \ar[r] \ar[d] & \El(\OO_{\EE}) \ar[d]^{\pi_{\EE}}\\
\ext{A} \ar[r]_{\overline{t}} & \OO_{\EE}.\\
}$$
We claim that the left-hand map is isomorphic to $\ext{\tau}$.

Both strictification and transport preserve pullbacks, so it will be enough to show that the following diagram of functors is a (strict) pullback in stacks over $\EE$:
$$\xymatrix{
\EE\!/B \pbcorner \ar[d]_{\EE\!/\tau} \ar[r] & \EE^L \ar[d]\\
\EE\!/A \ar[r]_{\overline{\tau}} & \EE^{\2}\\
}$$
Here $\EE\!/A$, e.g. is the (representable) stack fibered over its domain as in proposition \ref{2yoneda}.

Now suppose that we have an object $\epsilon\in\EE\!/A$ and a section $\<p,t\>\in\EE^\sigma$. To say that these agree over $\EE^{\2}$ means that $\overline{\tau}(\epsilon)\cong\epsilon^*(\tau)\cong p$, and $t$ is a section of this pullback:
$$\xymatrix{
\epsilon^*B \pbcorner \ar[d]^{\overline{\tau}(\epsilon)} \ar[rr] && B \ar[d]^{\tau}\\
E \ar[rr]_{\epsilon} \ar@/^2ex/[u]^t && A\\
}$$
But this data is precisely equivalent to a map $\beta:E\to B$ such that $\beta\circ\tau=\epsilon$. Since $\EE\!/-$ acts by precomposition, this means that $\EE\!/B$ is the pullback of $\EE^\sigma$ along $\hat{\tau}$.

Now suppose that some equivariant sheaf map $f:D\to E$ is definably small. We must define a map $\name{f}:E\to\OO_{\EE}$ such that we have an isomorphism $\name{f}^*(\EE^L)\cong D$ over $E$. We define $\name{f}$ locally by giving the image of any basic open section in $E$. For such a partial section $e:V_{A(k)}\to E$ there is a unique equivariant extension $\ol{e}:\ext{A}\to E$ and, because $f$ is definably small, its pullback is a definable map $\ext{\beta}$. This definable map induces another a partial section $\name{\beta}$ in the structure sheaf and this section will be the image of $e$ under $\name{f}$.

We can see that $\name{f}$ is well-defined by checking that its definition agrees whenever two sections $b$ over $V_{B(j)}$ and $c$ over $V_{C(k)}$ overlap, as in the diagram below. That overlap will contain a subsection over a smaller basic open neighborhood $V_{A(i,j,k)}$, and these lift to equivariant maps $\ext{A}\to\ext{B}$ and $\ext{A}\to\ext{C}$. Because the sections $b$ and $c$ overlap, these canonical lifts will commute:
$$\xymatrix{
\ext{A} \ar@{-->}[rr] \ar@{-->}[rd] && \ext{C} \ar@{->}[rd]^{\overline{c}} &\\
& \ext{B} \ar@{->}[rr]_(.3){\overline{b}} && E \\
V_{A(i,j,k)} \ar[uu] \ar@{>->}[rr]|(.52)\hole \ar@{>->}[rd] && V_{C(j)} \ar[uu]|\hole \ar[ur]^{c} \\
& V_{B(k)} \ar@/_5ex/[uurr]_(.3){b} \ar[uu] \\
}$$

Now suppose that $d$ is a lift of $e$ along the map $f$ (i.e., $f\circ d=e$) as in the diagram below. This induces a equivariant lift $\overline{d}:\ext{A}\to D$ such that $f\circ\overline{d}=\overline{e}$, and hence a section of the pullback $\ext{\beta}$. $$\xymatrix{
\ext{B} \ar[dd]^{\ext{\beta}} \ar[rr] && D \ar[dd]_f \ar@{-->}[rr] && \El(\OO_{\EE}) \ar[dd]\\
&&& V_{A(k)} \ar[ld]_e \ar[ul]^{d} \ar[dr]_{\name{\beta}} \ar[ur]^{\name{\<\beta,\delta\>}}\\
\ext{A} \ar[rr]_{\ol{e}} \ar@/^3ex/@{-->}[uu]^{\ext{\delta}} \ar[uurr]_{\ol{d}}&& E \ar[rr] &&\OO_{\EE}\\
}$$
The pair $\<\beta,\delta\>$ defines a partial section of $\El(\OO_{\EE})$ over $V_{A(k)}$ and an argument like the one above shows that these agree on their overlaps. Therefore they patch together to give a map $D\to\EE^L$. The projection $\EE^L\to\EE^{\2}$ sends $\name{\<\beta,\delta\>}$ to $\name{\beta}$, so the map commutes over $\EE^{\2}$. Moreover, the fact that $\delta$ is uniquely determined by the canonical lift $\ol{d}$ implies that the right-hand square is a pullback.

This demonstrates that the projection $\El(\OO_{\EE})\to\OO_{\EE}$ is generic for definably small maps in $\EqSh(\MM)$ and completes the proof.
\end{proof}

The results of this section show that we can regard the map $\El(\OO_{\EE})\to\OO_{\EE}$ as a universe for an interpretation of (a weak form of) dependent type theory which involves dependent sums but not dependent products. More specifically, when $\EE$ is locally Cartesian closed, this can act as a universe for all of dependent type theory \cite{repre_models}. Following the results of \cite{AST} we can also use this as the basis for a model of algebraic set theory.

\section{Isotropy}

In this section we discuss a connection between our logical schemes and recent developments in topos theory. Motivated by constructions in semigroups, Funk, Hofstra and Steinberg have recently discovered a canonical group object internal to any topos \cite{isotropy}. Here we give a logical interpretation of this isotropy group, showing that it is closely related to the structure sheaf of our logical schemes. This provides a new perspective on this construction, and also provides an easy calculation of the stalks of the group. As a corollary of our earlier results, we also give an external description of the isotropy group relating it to the Sierpinski topos.

If $\FF$ is a topos, then so is each slice category $\FF\!/A$, and any morphism $f:B\to A$ in $\FF$ induces a geometric morphism $\overline{f}:\FF\!/B\to\FF\!/A$ whose inverse image is given by pullback along $f$. In particular, every slice topos has a canonical geometric morphism $\pi_A:\FF\!/A\to\FF$. 

Funk, Hofstra and Steinberg define an \emph{isotropy functor} ${\ZZ:\FF^{\op}\to\bf{Grp}}$ which sends each object $A\in\FF$ to the group of natural automorphisms of $\pi_A$ (i.e., of the inverse image $\pi_A^*$):
$$\ZZ(A)\cong\left\{\alpha\ \bigg|\  \xymatrix{**[l]\FF\!/A \rtwocell{\omit \vertiso\ \alpha} & \FF}\right\}$$
We sometimes denote such an automorphism by writing $\alpha:\FF\!/A\auto\FF$. The action of a morphism $f:B\to A$ on $\ZZ$ is given by composition with $\overline{f}$.

As they demonstrate, $\ZZ$ preserves colimits (i.e., sends colimits in $\FF$ to limits in $\bf{Grp}$) from which it follows that $\ZZ$ is representable: there is an internal group $Z\in\bf{Grp}(\FF)$ such that $\ZZ\cong\Hom_{\FF}(-,Z)$. To define $Z$ from $\ZZ$, represent $\FF$ as the topos of sheaves on a site $(\bf{C},\JJ)$. The composite $\bf{C}^{\op}\stackrel{y}{\lto}\FF^{\op}\stackrel{\ZZ}{\lto}\bf{Grp}$ is a sheaf of groups and (naturally, for any object $\displaystyle F\cong\colim_{i\in\int F} yC_i$)
$$\begin{array}{rclcl}
\ZZ(F)&\cong&\ZZ(\colim_i yC_i)\\
&\cong& \lim_i \ZZ(yC_i)\\
&\cong& \lim_i Z(C_i)\\
&\cong& \lim_i \Hom_{\FF}(yC_i,Z)\\
&\cong& \Hom_{\FF}(\colim_i yC_i,Z) & \cong & \Hom_{\FF}(F,Z).\\
\end{array}$$

\begin{defn}
The \emph{isotropy group} of a topos $\FF$ is the internal group $Z\in\bf{Grp}(\FF)$ which represents the isotropy functor (i.e., $\ZZ\cong\Hom_{\FF}(-,Z)$).
\end{defn}

\begin{lemma}
When $(\bf{C},\JJ)$ is a subcanonical site closed under products then for any $A\in\bf{C}$ the Yoneda embedding induces a canonical isomorphism
$$\ZZ(yA)=\Aut(\pi_{yA}^*)\cong\Aut(A^\times).$$
\end{lemma}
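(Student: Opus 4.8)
The plan is to compute both sides of the claimed isomorphism directly and exhibit a natural comparison map. The key observation is that the isotropy functor at a representable object $yA$ classifies natural automorphisms of the inverse image $\pi_{yA}^*:\FF\to\FF\!/yA$, and that on a subcanonical site this inverse image is essentially the functor $A^\times$ studied earlier in the slice-category discussion (the right adjoint to the forgetful functor $\EE\!/A\to\EE$, sending $B\mapsto B\times A\to A$). So the isomorphism $\ZZ(yA)\cong\Aut(A^\times)$ should be a matter of transporting the definition of isotropy across the equivalence $\FF\!/yA\simeq\Sh((\bf{C},\JJ)\!/A)$ and recognizing $\pi_{yA}^*$ as $A^\times$ under Yoneda.

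First I would unwind the definition of $\ZZ(yA)$: an element is a natural isomorphism $\alpha:\pi_{yA}^*\To\pi_{yA}^*$ filling the 2-cell $\xymatrix{\FF\!/yA \rtwocell{\omit \vertiso\ \alpha} & \FF}$. Since $(\bf{C},\JJ)$ is subcanonical, the Yoneda embedding $y:\bf{C}\to\FF$ is full and faithful and lands in sheaves, so each representable $yA$ behaves like the object $A$ itself. The inverse image $\pi_{yA}^*$ is pullback along the terminal projection $yA\to 1$, which sends a sheaf $F$ to $F\times yA\to yA$; restricted to representables this is exactly the functor $B\mapsto (B\times A)\to A$, i.e.\ $A^\times$ after identifying $\bf{C}$ with its image under $y$ and using that $\bf{C}$ is closed under products (so $B\times A$ lands in $\bf{C}$). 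The main structural step is thus to show that a natural automorphism of $\pi_{yA}^*$ is determined by, and freely determines, a natural automorphism of its restriction $A^\times$ to representables.

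Next I would establish this bijection between $\Aut(\pi_{yA}^*)$ and $\Aut(A^\times)$ in both directions. Restriction along $y$ gives a map $\Aut(\pi_{yA}^*)\to\Aut(A^\times)$. For the reverse, I would use that every sheaf $F$ is a colimit of representables, $F\cong\colim_{i\in\int F} yC_i$, together with the fact that $\pi_{yA}^*$ preserves colimits (being a left adjoint inverse image). An automorphism of $A^\times$ on representables therefore extends uniquely by the universal property of colimits to an automorphism of $\pi_{yA}^*$ on all sheaves, and naturality is inherited from naturality on the generating representables. This extension-by-colimits argument mirrors precisely the representability computation for $\ZZ$ already given in the excerpt, where $\ZZ(\colim_i yC_i)\cong\lim_i\ZZ(yC_i)$; here the same bookkeeping shows that the two restriction/extension maps are mutually inverse and natural in $A$.

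The hard part will be the compatibility of the product structure on $\bf{C}$ with the pullback description of $\pi_{yA}^*$: I must check that the canonical isomorphism $y(B\times A)\cong yB\times yA$ (valid since $y$ preserves the products that exist in $\bf{C}$, these being limits and $y$ being continuous on a subcanonical site) really does identify the restriction of $\pi_{yA}^*$ with $A^\times$ \emph{as functors into the slice}, not merely pointwise. Concretely, I would need to verify that the action of $A^\times$ on a morphism $f:B\to B'$ agrees with the action of $\pi_{yA}^*$ on $yf$, so that restriction lands in $\Aut(A^\times)$ and not just in a set of pointwise-invertible transformations. Once this functorial identification is in place, the colimit extension argument is routine, and the naturality of the whole isomorphism in $A$ follows from the naturality of Yoneda and of the pullback functors $\overline{f}$.
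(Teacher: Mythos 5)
Your proposal is correct and follows essentially the same route as the paper: identify the restriction of $\pi_{yA}^*$ along the Yoneda embedding with $A^\times$ (the paper packages this as the factorization $\pi_{yA}^*\circ y\cong y_A\circ A^\times$ through the sliced Yoneda embedding and the equivalence $\Sh(\mathbf{C}/A)\simeq\Sh(\mathbf{C})/yA$), use full faithfulness to descend automorphisms, and extend back by writing every sheaf as a colimit of representables. The functoriality check you flag as the hard part is exactly what the paper's factorization isomorphism supplies.
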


\begin{proof}
Here $A^\times:\bf{C}\to\bf{C}/A$ is the functor sending each object $C\in\bf{C}$ to the second projection $C\times A\to A$. The slice category $\bf{C}/A$ inherits a topology from $\bf{C}$ and (because $J$ is subcanonical) there is an equivalence $\Sh(\bf{C}/A)\simeq\Sh(\bf{C})/yA$ (cf. \cite{elephant}, C2.2.17).

Let $y_A$ denote the sliced Yoneda embedding $\bf{C}/A\to\Sh(\bf{C}/A)\simeq\Sh(\bf{C})/yA$. Since both $A^\times$ and $\pi_{yA}^*$ act by taking products (with $A$ and $yA$, respectively) there is a factorization $\pi_{yA}^*\circ y\cong y_A\circ A^\times$:
$$\pi_{yA}^*(yC)= \raisebox{.7cm}{\xymatrix{yC\times yA\ar[d]\\yA\\}} \cong \raisebox{.7cm}{\xymatrix{y(C\times A)\ar[d]\\yA\\}}= \ y_A\left(\raisebox{.7cm}{\xymatrix{C\times A\ar[d]\\A\\}} \right)=y_A(A^\times(C)).$$

Now suppose $\alpha$ is a natural automorphism of $\pi_{yA}^*$. Since $\pi_{yA}^*$ factors through $A^\times$ while $y_A$ is full and faithful, the components of $\alpha$ descend uniquely to an automorphism of $A^\times$:
$$\xymatrix{
\Sh(\EE) \ar[rr] \ar[rr]|{\rotatebox{270}{$\curvearrowleft$}}^(.45){\alpha}_(.7){\pi_{yA}^*} && **[r] \Sh(\EE\!/A)\simeq\Sh(\EE)/yA \\\\
\EE \ar[uu]^y \ar[rr] \ar[rr]|{\rotatebox{270}{$\curvearrowleft$}}^(.43){\alpha_0}_(.7){A^\times} && **[r] \EE\!/A \ar[uu]_y\\
}$$
Thus restriction along $y$ defines a map $\Aut(\pi_{yA}^*)\to\Aut(A^\times)$; since the group action in either case is defined by composition, this is obviously a group homomorphism.

Now we must show that the map $\alpha\mapsto\alpha_0$ is invertible. To see this, fix a sheaf $E\in\Sh(\bf{C})$ and represent it as a colimit $E\cong\colim_j yB_j$. If $\beta$ is a natural automorphism of $A^\times$ this gives us a family of automorphisms $\beta_j:B_j\times A\cong B_j\times A$. By naturality these isos commute with the colimit presentation $E$, inducing an automorphism $\overline{\beta_E}:E\times yA\cong E\times yA$:
$$\xymatrix{
y(B_j\times A) \ar[r] \ar[d]_{y\beta_j} & y(B_{j'}\times A) \ar[d]_{y\beta_{j'}} \ar[r] & E\times yA \ar@{-->}[d]^{\overline{\beta_E}}\\
y(B_j\times A) \ar[r] & y(B_{j'}\times A) \ar[r] & E\times yA.\\
}$$
Using the fact that $\overline{\beta_E}$ is uniquely determined from $\beta$, it is easy to show that these two maps are mutually inverse.
\end{proof}

\begin{cor}\label{isot_desc}
If $\EE$ is a pretopos, the isotropy group of $\Sh(\EE)$ can be defined directly from $\EE$:
$$Z(A)\cong \Aut(A^\times)=\left\{\alpha\ \bigg|\ \xymatrix{\EE \rtwocell^{A^\times}_{A^\times}{\omit \vertiso\ \alpha} & **[r] \EE\!/A}\right\}$$
\end{cor}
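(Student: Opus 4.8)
The plan is to obtain the corollary as a direct instance of the preceding lemma, applied to the site $(\EE,\JJ_c)$, where $\JJ_c$ is the coherent topology and $\Sh(\EE)=\Sh(\EE,\JJ_c)$ is thereby presented over $\EE$ itself. Two hypotheses must be checked. First, that $\EE$ is closed under products: this is immediate, since a pretopos has all finite limits, so in particular binary products with a fixed object $A$ exist and the functor $A^\times\colon\EE\to\EE\!/A$ is defined. Second, that $\JJ_c$ is subcanonical, i.e.\ that every representable $yA$ is a $\JJ_c$-sheaf. This was already observed in the discussion of stacks (where we noted that any representable is a sheaf, hence a stack, for the coherent topology); it follows from the fact that the finite jointly epimorphic families generating $\JJ_c$ are effective-epimorphic and stable under pullback in a coherent category, so that the sheaf condition for $yA$ reduces to the universal property of these stable effective colimits.

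With the hypotheses in place, the preceding lemma supplies, for each $A\in\EE$, a canonical group isomorphism $\ZZ(yA)\cong\Aut(A^\times)$, natural in $A$. It then remains only to identify $\ZZ(yA)$ with the value $Z(A)$ of the representing internal group. But this is exactly the content of the construction of $Z$ from $\ZZ$ recalled just before the corollary: writing $\FF=\Sh(\EE)$ as sheaves on the site $\EE$, the representing group is by definition the sheaf of groups obtained as the composite $\EE^{\op}\xrightarrow{y}\FF^{\op}\xrightarrow{\ZZ}\Grp$, so that $Z(A)=\ZZ(yA)$ on the nose. Composing the two identifications yields $Z(A)\cong\Aut(A^\times)$, and since both are natural in $A$ this is an isomorphism of sheaves of groups $Z\cong\Aut\bigl((-)^\times\bigr)$, which is the asserted formula.

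I expect no genuine obstacle here: the only mildly technical point is the verification that $\JJ_c$ is subcanonical, and even that is standard and was flagged earlier. The essential conceptual content is simply that a pretopos can serve as its own site of definition, so that the abstractly defined isotropy functor, evaluated at representables, is computed by the syntactically transparent automorphism groups $\Aut(A^\times)$ of the product functors; the proof is then a matter of matching the hypotheses of the lemma to $(\EE,\JJ_c)$ and unwinding the definition of the representing group.
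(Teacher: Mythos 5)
Your proposal is correct and follows essentially the same route as the paper: the corollary is obtained by applying the preceding lemma to the subcanonical, product-closed site $(\EE,\JJ_c)$ and identifying $Z(A)$ with $\ZZ(yA)$ via the construction of the representing group recalled just before the statement. The paper's proof adds only a supplementary remark recasting the result as the statement that $Z$ is the group of fibered automorphisms of the canonical functor $T:\Delta\EE\to\EE^{\2}$, which is used later but is not needed for the isomorphism itself.
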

\begin{proof}
This follows immediately from the lemma, given that the coherent topology is subcanonical and pretoposes are closed under products.

As with any sheaf, we may represent $Z$ as a fibration over $\EE$ and its description in this context is particularly nice. On one hand, we have the codomain fibration $\EE^{\2}$, whose fiber over $A$ is exactly the slice category $\EE/A$. On the other, the constant fibration $\Delta\EE\cong \EE\times\EE$ has $\EE$ for each fiber. There is also a canonical fibered functor $T:\Delta\EE\to\EE^{\2}$, the transpose of the identity functor $\EE\to\Gamma\EE^{\2}\cong\EE$. The component of $T$ at $A$ is exactly $A^\times$, so the corollary says precisely that (the Grothendieck construction applied to) $Z$ is the group of (fibered) natural automorphisms of $T$:
$$Z\cong\Aut_{\EE}(T:\Delta \EE\auto \EE^{\2}).$$
\end{proof}

\begin{defn}
Fix an $\EE$-model $M$. We say that an automorphism ${\alpha:M\cong M}$ is \emph{(parameter-)definable} if for every (basic) sort $B$ there is an object $A_B$, an element $a\in A_B^M$ and a formula $\sigma(y,y',x)$ (where $x:A$ and $y,y':B$) such that
$$\alpha(b)=b'\Iff M\models\sigma(b,b',a).$$
\end{defn}

\begin{defn}\label{Mdef}
We say that a family of formulas $\{\sigma_B(y,y',x)\}$ (where $x:A$ is fixed and $y,y':B$, range over all (basic) sorts $B$) is an \emph{$A$-definable automorphism} if for every model $M$ and every element $a\in A^M$ the parameterized formulas $\sigma_B(y,y',a)$ yield a parameter-definable automorphism.

Given a model $M$, we say that a family of parameterized formulas $\{\sigma_B(y,y',a_B)\}$ in $\Diag(M)$ is an \emph{$M$-definable automorphism} if for every homomorphism $h:M\to N$ parameterized formulas $\sigma_B(y,y',h(a_B))$ yield a parameter-definable automorphism of $N$.
\end{defn}

\begin{lemma}
The family $\{\sigma_B(y,y',x)\}$ is an $A$-definable automorphism just in case $\EE$ proves the following sequents:
$$\begin{array}{ccl}
\underset{x,y}{\vdash} \exists y'.\sigma_B(y,y',x) & \sigma_B(y,y',x)\wedge\sigma_B(y,y'',x)\underset{x,y,y',y''}{\vdash} y'=y''\\[3ex]
\underset{x,y'}{\vdash}\exists y.\sigma_B(y,y',x) & \sigma_B(y,y'',x)\wedge\sigma_B(y',y'',x)\underset{x,y,y',y''}{\vdash} y=y'\\[3ex]
\sigma_B(y,y',x)\wedge R(y)\underset{x,y,y'}{\vdash} R(y') & \sigma_B(y,y',x) \underset{x,y,y'}{\vdash}\sigma_C(f(y),f(y'),x)\\ 
\end{array}$$
\end{lemma}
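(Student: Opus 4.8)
The plan is to read the six sequents as a coherent axiomatization of the assertion ``for every value of the parameter $x$, the relation $\sigma_B(y,y',x)$ is the graph of an automorphism,'' and then to bridge semantic validity with provability using completeness for $\kappa$-small models (established in chapter 1). Concretely, the first two sequents say $\sigma_B$ is total and single-valued (the graph of a function); the next two say that function is surjective and injective (a bijection); (5) says it preserves relations; and (6) says it commutes with function symbols. So the substance of the lemma is that these local, parameter-by-parameter conditions are jointly equivalent, \emph{over all models}, to $\{\sigma_B\}$ being an $A$-definable automorphism in the sense of definition \ref{Mdef}.

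For the ``only if'' direction I would assume $\{\sigma_B\}$ is an $A$-definable automorphism and argue soundness. By definition \ref{Mdef}, in every model $M$ and for every $a\in A^M$ the relations $\sigma_B^M(-,-,a)$ form the graph of an automorphism $\alpha_a$ of $M$; each of the six sequents is then a direct restatement of one of the defining properties of $\alpha_a$ (totality, single-valuedness, surjectivity, injectivity, preservation of relations, commutation with the interpretations of function symbols), so each holds in $M$. Since this holds in every model, in particular every $\kappa$-small model, completeness for $\kappa$-small models yields provability of each sequent in $\EE$.

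The ``if'' direction is where the real work lies. Assuming the six sequents are provable, hence valid in every $M$, I would fix $M$ and $a\in A^M$ and reconstruct the automorphism: the first pair produces a function $\alpha_B := \sigma_B^M(-,-,a):|M|^B\to|M|^B$ on each sort; the second pair makes each $\alpha_B$ a bijection; (6), applied to every function symbol, shows the family $\{\alpha_B\}$ commutes with the interpretation of every morphism of $\EE$, so that it is natural and assembles into a natural endomorphism $\alpha:M\Rightarrow M$; and (5) shows $\alpha$ preserves relations, i.e.\ is an $\LL$-structure homomorphism. \textbf{The main obstacle} is to upgrade this homomorphism to an \emph{automorphism}, which amounts to showing that $\alpha$ also reflects relations, equivalently that each $\alpha_B$ restricts to a \emph{bijection} (not merely an inclusion) of $R^M$ for every subobject $R\mono B$. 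Here I would exploit the pretopos doctrine, in which every relation $R$ is itself a sort: applying the bijectivity sequents to the sort $R$ gives a bijection $\alpha_R$ of $R^M$, while (6) applied to the inclusion mono $R\mono B$ forces $\alpha_B|_{R^M}=\alpha_R$, so that $\alpha_B(R^M)=\alpha_R(R^M)=R^M$. Thus $\alpha$ is componentwise bijective on every object, hence a natural isomorphism, hence an automorphism; since $M$ and $a$ were arbitrary, $\{\sigma_B\}$ is an $A$-definable automorphism.

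I expect the only genuinely nontrivial logical input to be completeness for $\kappa$-small models (for the ``only if'' direction), and the only genuinely nontrivial categorical point to be the reflection argument above, which is exactly why the surjectivity and injectivity sequents together with the naturality sequent (6) cannot be dropped even though, in a purely relational reading, forward preservation (5) might superficially look sufficient on its own.
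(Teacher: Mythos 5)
Your proof is correct and follows the same skeleton as the paper's: soundness in every model for the ``only if'' direction, completeness for $\kappa$-small models to pass from validity back to provability, and a direct reading of the six sequents as totality, single-valuedness, surjectivity, injectivity, preservation of relations, and naturality. The paper's own proof is three sentences long and simply asserts that the third line of sequents ensures ``that the bijection is a model homomorphism and hence an automorphism.'' What you add --- and what the paper elides --- is precisely the observation that a bijective $\LL$-structure homomorphism need not be an automorphism: sequent (5) gives only forward preservation $\alpha_B(R^M)\subseteq R^M$, not reflection. Your resolution (every subobject $R\mono B$ is itself an object of the pretopos, so the four bijectivity sequents apply to $\sigma_R$, and sequent (6) at the inclusion $i:R\mono B$ identifies $\alpha_B|_{R^M}$ with the bijection $\alpha_R$, whence $\alpha_B(R^M)=R^M$ and $\alpha^{-1}$ is again a homomorphism) is exactly the right argument, and it is licensed by the paper's own conventions, under which every object of $\EE$ is a sort, every subobject a relation and every arrow a function symbol. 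Be aware, though, that your argument genuinely depends on that reading: if the family $\{\sigma_B\}$ and the sequents were required only for a generating set of sorts and their atomic relations and functions, the statement would be false. For instance, in the single-sorted theory of a bijection $s$ with an axiom $R(y)\vdash R(s(y))$, the formula $\sigma(y,y'):\equiv(s(y)=y')$ satisfies all six sequents yet fails to define an automorphism of the model $(\mathbb{Z},\mathbb{N},n\mapsto n+1)$, since $R$ is preserved but not reflected; under that weaker reading one would have to add the reflected sequent $\sigma_B(y,y',x)\wedge R(y')\vdash R(y)$. So your proof is not merely an alternative route: it supplies a step the paper's proof needs and does not make explicit.
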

\begin{proof}
This is immediate from completeness. The sequents on the first line specify functionality $y\mapsto y'$; the second line specifies invertibility. The third line (where I have simplified by assuming that $R$ and $f$ are unary) says that the family of maps defined by $\sigma_B$ respects the basic functions and relations, ensuring that the bijection is a model homomorphism and hence an automorphism.
\end{proof}

A good example to keep in mind is conjugation of groups. The classifying pretopos $\EE_{\Grp}$ contains an object $U$ which represents the underlying set; for any group $G:\EE_{\Grp}\to\Sets$ we have $U^G=|G|$. Because the theory of groups is single-sorted, $U$ is the only basic sort. Conjugation is a $U$-definable automorphism with a defining formula
$$\sigma_U(y,y',x)\Iff y'=xyx^{-1}.$$
The next lemma shows that this exactly corresponds to a natural automorphism $\EE_{\Grp}\auto\EE_{\Grp}/U$.

\begin{lemma}\label{def_aut}
For each $A\in\EE$, the isotropy group $Z(A)$ is isomorphic to the family of $A$-definable automorphisms in $\EE$.
\end{lemma}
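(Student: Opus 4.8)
The plan is to invoke Corollary~\ref{isot_desc}, which identifies $Z(A)$ with $\Aut(A^\times)$, the group of natural automorphisms of the pretopos functor $A^\times\colon\EE\to\EE\!/A$ sending $B\mapsto(\pi_A\colon B\times A\to A)$. First I would unpack a single component. Since $\alpha_B$ is an automorphism of $B\times A$ in the slice $\EE\!/A$, it lies over $A$, so the condition $\pi_A\circ\alpha_B=\pi_A$ forces $\alpha_B=\<g_B,\pi_A\>$ for a unique arrow $g_B\colon B\times A\to B$; because the forgetful functor $\EE\!/A\to\EE$ creates isomorphisms, invertibility of $\alpha_B$ in the slice is the same as $\<g_B,\pi_A\>$ being an isomorphism in $\EE$. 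This fixes the candidate correspondence: to $\alpha$ I associate, for each basic sort $B$, the provably functional relation $\sigma_B(y,y',x)$ that is the graph of $g_B$ (so $\sigma_B(y,y',x)$ asserts $g_B(y,x)=y'$); conversely the totality and single-valuedness sequents say a family $\{\sigma_B\}$ defines arrows $g_B$, and I set $\alpha_B=\<g_B,\pi_A\>$.

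Next I would verify that $\{\sigma_B\}$ satisfies exactly the six sequents of the preceding lemma. Totality and single-valuedness are automatic, as $\sigma_B$ is the graph of a morphism. The surjectivity and injectivity sequents come from the inverse of $\<g_B,\pi_A\>$, which is again of the form $\<g_B',\pi_A\>$ and whose graph supplies the required witnesses. The two remaining sequents are precisely naturality of $\alpha$ with respect to structure maps: naturality along a subobject inclusion $i\colon R\mono B$ gives $g_B\circ(i\times\id_A)=i\circ g_R$, i.e.\ $g_B$ carries $R\times A$ into $R$, which is $\sigma_B(y,y',x)\wedge R(y)\vdash R(y')$; naturality along a function symbol $f\colon B\to C$ gives $g_C\circ(f\times\id_A)=f\circ g_B$, i.e.\ $\sigma_B(y,y',x)\vdash\sigma_C(f(y),f(y'),x)$.

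The main work — and the step I expect to be the real obstacle — is to show that restriction to basic sorts is a \emph{bijection} between $\Aut(A^\times)$ and the $A$-definable automorphisms, since a natural transformation has a component at every object of $\EE$ whereas an $A$-definable automorphism is only a family indexed by basic sorts. For uniqueness I would use that, by the pretopos completion (Lemma~\ref{ptop_comp}), every object of $\EE$ is a subquotient of a finite coproduct of products of basic sorts; naturality with respect to projections, coproduct injections, subobject inclusions and quotient maps then forces $\alpha_B$ to be determined by its values on basic sorts. For existence I would build the extension along the same presentation: products via their components $\<g_{A_i}\circ(\pi_i\times\id_A)\>_i$, coproducts componentwise, subobjects by restriction, and quotients by passing $g_B$ through the quotient map. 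Well-definedness of this extension is where the homomorphism sequents earn their keep: preservation of the basic relations together with commutation with the basic functions and equality propagate, by induction on the coherent connectives $\{\bot,\top,=,\wedge,\vee,\exists\}$, to preservation of every definable subobject and every definable equivalence relation (the standard fact that a homomorphism preserves all positively definable structure, here transported to $\EE$ via completeness for $\kappa$-small models). Hence each restriction exists and each descended quotient map exists, and independence of the chosen presentation follows from uniqueness applied on overlaps.

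Finally I would check that the correspondence respects the group structure. Vertical composition $\alpha\circ\beta$ has first components $(y,x)\mapsto g^\alpha_B(g^\beta_B(y,x),x)$, whose graph is $\exists y''.\,\sigma^\beta_B(y,y'',x)\wedge\sigma^\alpha_B(y'',y',x)$, matching the composite of the associated definable automorphisms, while the identity natural transformation corresponds to the family $\sigma_B\equiv(y=y')$. Combined with the bijection of the previous paragraph, this yields the asserted group isomorphism $Z(A)\cong\{A\text{-definable automorphisms}\}$.
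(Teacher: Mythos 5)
Your proposal is correct and follows essentially the same route as the paper: identify $Z(A)$ with $\Aut(A^\times)$, read each component as a provably functional relation $\sigma_B(y,y',x)$ over $A$, match the six sequents to functionality, invertibility and naturality, and then show that naturality forces a unique extension from the basic sorts to all of $\EE$ via the subquotient-of-coproducts-of-products presentation. You are somewhat more explicit than the paper about the components having the form $\<g_B,\pi_A\>$, about well-definedness of the extension, and about compatibility with the group structure, but these are elaborations of the same argument rather than a different one.
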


\begin{proof}
Recall that $\EE\!/A$, regarded as a logical theory, represents the original theory $\EE$ extended by a single constant $\bf{c}_a:A$. Given a model $\<M,a\>$ of the extended theory and a natural automorphism $\alpha:\EE\auto\EE/\!A$, composition induces a model automorphism $M\underset{a}{\cdot}\alpha:M\cong M$
$$\xymatrix{
\EE\!/A \ar[rrd]^{\<M,a\>} \\
\EE \ar[u] \ar[u]|{\curvearrowleft}^(.3){\alpha} \ar[rr] \ar@{}[rr]|(.45){\rotatebox{270}{\footnotesize$\curvearrowleft$}}_(.6){M\underset{a}{\cdot}\alpha} && \Sets\\
}$$

The fact that each component $(M\underset{a}{\cdot}\alpha)_B:B^M\cong B^M$ is the image of a map in $\EE\!/A$ means that there is a formula $\sigma_B(y,y',x)$ (where $x:A$ and $y,y':B$) such that
$$(M\underset{a}{\cdot}\alpha)_B(b)=b' \Iff M\models\sigma_B(b,b',a).$$
This is precisely an $\EE$-definable automorphism depending on a parameter $x:A$.

On the other hand, suppose that we have a family of formulas $\{\sigma_B(y,y',x)\}$ ranging over the (basic) sorts of $\EE$ and which define an $\EE$-definable automorphism (parameterized by variable $x:A$). By naturality, we are forced to set
$$\sigma_{B\times C}(\<y,z\>,\<y',z'\>,x)=\sigma_B(y,y',x)\wedge\sigma_{C}(z,z',x),$$
$$\xymatrix{
B\times A \ar[d]_{\sigma_B} & B\times C\times A \ar[r] \ar[l] \ar[d]|{\exists ! \sigma_{B\times C}=\sigma_B\wedge\sigma_C} & C\times A \ar[d]^{\sigma_C}\\
B & B\times C \ar[l] \ar[r] & C\\
}$$
In much the same way, there is a unique extension of $\sigma$ to any pullback, coproduct or quotient.

Because these maps are assumed to define an $\EE$-model automorphism they respect basic relations and this allows us to define a restriction $\sigma_R$ for any basic relation $R \leq B$ (which, for simplicty, we take to be unary) 
$$\sigma_R(y,y',x)=\sigma_B(y,y',x)\wedge R(y)\wedge R(y').$$
Similarly, respect for basic functions $f(y)=z$ allows us to preserve naturality:
$$\xymatrix{
R\times A \ar@{>->}[r] \ar@{-->}[d]_{\sigma_R} & B\times A \ar[d]^{\sigma_{B}} & & B\times A \ar[r]^-{f\times A} \ar[d]_{\sigma_{B}} & C\times A \ar[d]^{\sigma_C}\\
R \ar@{>->}[r] & B & & B \ar[r]_f & C.\\
}$$
This shows that any $A$-definable automorphism can be extended to a natural automorphism $\EE\auto\EE\!/A$. Using the uniqueness of the extension of $\sigma$ from basic sorts to products, coproducts, etc., one easily shows that these constructions are mutually inverse.
\end{proof}

In order to connect the isotropy group with our structure sheaves, we use the description of $Z$ in terms of fibrations given in corollary \ref{isot_desc}. We can strictify this map and transport it across the equivalence $\Sh(\EE)\simeq\EqSh(\MM_{\EE})$, just as we did for the structure sheaf in chapter 3, section 1. However, the constant fibration $\Delta\EE=\EE\times\EE$ is already a sheaf (over $\EE$), so it is uneffected by strictification. Transporting it across the equivalence sends it to another constant sheaf over $\MM$ (also denoted $\Delta\EE$). Alternatively, we can think of $\tau$ as the transpose of the identity functor $\EE\to\EE\simeq\Gamma\OO_{\EE}$ under the adjunction $\Delta\dashv\Gamma$.
$$\xymatrix{
&& \ol{\EE^{\2}} \ar[d]\\
\Delta\EE \ar[rr]^{T} \ar[rru]^{\ol{T}} \ar[rd]_{p_2} && \EE^{\2} \ar[ld]^{\cod} &\ar@{|->}[r] \ar@{}@<3ex>[r]^{\EqSh(\MM)\to\Sh(\EE)}&& \Delta\EE \ar[rr]^\tau \ar[rd] && \OO_{\EE} \ar[ld]\\
&\EE&&&&&\MM_{\EE}&\\
}$$

\begin{prop}
The equivalence $\Sh(\EE)\simeq\EqSh(\MM_{\EE})$ identifies the isotropy group $Z$ with the group of automorphisms of $\tau$.
\end{prop}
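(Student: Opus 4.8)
The plan is to transport the isotropy group $Z$ across the equivalence $\Sh(\EE)\simeq\EqSh(\MM_{\EE})$ and identify it, fiber by fiber, with the automorphisms of the natural transformation $\tau$. By corollary \ref{isot_desc} I already have a concrete description of $Z$ as the group of fibered natural automorphisms of the transpose map $T:\Delta\EE\to\EE^{\2}$ inside $\Sh(\EE)$. The equivalence $\Sh(\EE)\simeq\EqSh(\MM_{\EE})$ carries the codomain fibration (strictified) to the structure sheaf $\OO_{\EE}$ and, because the constant fibration $\Delta\EE$ is already a sheaf, carries $T$ to the transposed transformation $\tau:\Delta\EE\to\OO_{\EE}$ displayed just before the proposition. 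Since any equivalence of categories induces an isomorphism on automorphism groups of any morphism, the core of the argument is simply that $\Aut(T)\cong\Aut(\tau)$ follows formally from the fact that the equivalence identifies $T$ with $\tau$.

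More carefully, I would proceed in three steps. First, unwind the definition of $Z$ as an internal group in $\Sh(\EE)$: by the representability result recalled in the excerpt, $Z$ is the sheaf whose value at each $A$ is $\Aut(A^\times)$, and corollary \ref{isot_desc} repackages this as $\Aut_{\EE}(T:\Delta\EE\auto\EE^{\2})$, the group of fibered natural automorphisms of the canonical comparison functor. Second, invoke the construction of $\OO_{\EE}$ from definition \ref{def_str_sheaf}: $\OO_{\EE}$ is obtained from the stack $\EE^{\2}$ by strictification $\widehat{(-)}$ followed by transport along $\EqSh(\MM)\simeq\Sh(\EE)$ (theorem \ref{eq_top_equiv}), and both operations are equivalences (lemma \ref{stack_to_sheaf}). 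Third, observe that $\Delta\EE$ is already a (constant) sheaf, hence fixed by strictification and sent to the constant sheaf on $\MM_{\EE}$, while $T$ is sent precisely to $\tau$ (this is exactly the content of the two-panel diagram preceding the proposition, where $\ol{T}$ denotes the strictified comparison). Composing, the equivalence of categories sends the morphism $T$ to the morphism $\tau$, so it restricts to a group isomorphism $\Aut(T)\cong\Aut(\tau)$.

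The one point requiring genuine care — and the main obstacle — is verifying that the \emph{internal} automorphism group is preserved, not merely the external hom-set of endomorphisms. That is, I must confirm that the equivalence $\EqSh(\MM_{\EE})\simeq\Sh(\EE)$ is an equivalence of toposes (or at least of the ambient category in which $\tau$ and $T$ live as internal morphisms), so that the internal group object $\uAut(\tau)$ and $\uAut(T)$ correspond, and not just their global sections. Since the equivalence is the one furnished by theorem \ref{eq_top_equiv}, which is an equivalence of categories of (equivariant) sheaves respecting all the relevant finite-limit structure, the internal automorphism object — built from pullbacks and the internal hom as in the footnote to definition \ref{def_ax_space} — is preserved. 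I would make this precise by noting that $\uAut(-)$ of a morphism is defined purely by finite limits together with the requirement of invertibility, all of which are stable under any equivalence of the underlying toposes. With that observation in place, the identification of $Z\cong\uAut(\tau)$ is immediate.

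Beyond this, the only bookkeeping is to check that the \emph{group structure} matches: in both settings the multiplication is given by vertical composition of natural transformations, and since the transport functor is (pseudo-)functorial it carries composition to composition, so the bijection $\Aut(T)\cong\Aut(\tau)$ is automatically a group homomorphism. I expect no surprises here, as this mirrors the final sentence of the proof of the preceding lemma, where the group action ``defined by composition'' is preserved by restriction along a fully faithful functor.
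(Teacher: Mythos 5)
Your proposal is correct and follows essentially the same route as the paper: both start from the identification $Z\cong\Aut(T)$ of corollary \ref{isot_desc}, pass through the strictification $\ol{T}$ (a pointwise equivalence, so automorphisms lift uniquely), and then transport across $\Sh(\EE)\simeq\EqSh(\MM_{\EE})$ to obtain $\Aut(T)\cong\Aut(\ol{T})\cong\Aut(\tau)$. Your added care about the internal (rather than merely external) automorphism group being preserved is a slightly more explicit version of the paper's parenthetical ``modulo the Grothendieck construction, applied to $Z$,'' not a different argument.
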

\begin{proof}
We already know that $Z$ is equivalent to the (internal) group of fibered automorphisms of $T$. Strictification is a pointwise equivalence of categories, so any fibered automorphism of $T$ lifts uniquely to $\ol{T}$. Similarly, the equivalence $\Sh(\EE)\simeq\EqSh(\MM_{\EE})$ preserves natural automorphisms of (strict) internal categories, leaving us with the following isomorphisms (modulo the Grothendieck construction, applied to $Z$)
$$Z\cong\Aut(T)\cong\Aut\left(\ol{T}\right)\cong\Aut(\tau).$$
\end{proof}

This shows that the isotropy group can be defined directly from our structure sheaf. This also provides an immediate calculation of the stalks of $Z$.

\begin{cor}
Given an $\EE$-model $M$, the stalk $Z_M$ is the group of $M$-definable automorphisms from definition \ref{Mdef}.
\end{cor}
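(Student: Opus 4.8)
The plan is to combine the preceding proposition, which identifies the isotropy group $Z$ with $\Aut(\tau)$ for the canonical map $\tau\colon\Delta\EE\to\OO_{\EE}$ of equivariant $\MM_{\EE}$-sheaves, with the stalk computation of Lemma \ref{struct_stalks}. First I would take the stalk of both sides at a labelled model $\mu$ with $M_\mu=M$. The constant sheaf $\Delta\EE$ has stalk $\EE$, while Lemma \ref{struct_stalks} gives $(\OO_{\EE})_\mu\simeq\DD(M)$; moreover, unwinding the definition of $\tau$ (the transpose of the identity, whose component over a basic open $V_{\varphi(k)}$ is the localization $\varphi^\times\colon\EE\to\EE\!/\varphi$) shows that the induced map on stalks $\tau_\mu\colon\EE\to\DD(M)$ is precisely the canonical interpretation $\tilde\top$ from Definition \ref{def_diagram}, since $\DD(M)\simeq\colim_{\mu\models\varphi(k)}\EE\!/\varphi$ is the filtered colimit of exactly these slices (cf. definitions \ref{def_localization} and \ref{def_diagram}). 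Thus the corollary reduces to the isomorphism $Z_M\cong\Aut(\tilde\top\colon\EE\to\DD(M))$ together with an identification of this automorphism group with the $M$-definable automorphisms.

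For that identification I would rerun the argument of Lemma \ref{def_aut} essentially verbatim, replacing the slice $\EE\!/A$ by the diagram $\DD(M)$ and the generic constant $\bf{c}_a$ by the constants of $\Diag(M)$. A natural automorphism $\alpha$ of $\tilde\top$ assigns to each basic sort $B$ a component $\alpha_B\colon\tilde{B}\to\tilde{B}$ in $\DD(M)$; since every arrow of $\DD(M)$ is represented by a parameterized formula, $\alpha_B$ is given by some $\sigma_B(y,y',a_B)$ with parameters $a_B$ drawn from $M$. Because $\DD(M)$ classifies homomorphisms $h\colon M\to N$ (Lemma \ref{diag_models}), composing $\alpha$ with the model of $\DD(M)$ attached to $h$ yields an automorphism of $N$ defined by $\sigma_B(y,y',h(a_B))$; conversely, naturality and two-sided invertibility of $\alpha$ amount exactly to the demand that this data define an automorphism of $N$ for \emph{every} such $h$. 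This is precisely the content of Definition \ref{Mdef}, and the unique extension of $\sigma$ from basic sorts to products, coproducts, quotients, relations and functions is forced by naturality just as in Lemma \ref{def_aut}, so the two groups coincide.

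The main obstacle is the first reduction: justifying that taking the stalk commutes with the automorphism-group construction, i.e. that $(\Aut\tau)_M\cong\Aut(\tau_M)$. The stalk at $M$ is the inverse image along the point $M\colon\Sets\to\EqSh(\MM_{\EE})$, which is left exact; the conditions defining a natural automorphism of $\tau$ (a component in $\OO_{\EE}$ at each object, the naturality squares, and invertibility) are all finite-limit conditions, and each such automorphism is determined by its components at the basic sorts. The delicate point is that an $M$-definable automorphism may use a distinct parameter $a_B$ for each of the (possibly infinitely many) basic sorts, so it need not be the germ of a single relatively equivariant section over one neighborhood $V_{\varphi(k)}$. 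I would resolve this by exploiting the presentation $\DD(M)\simeq\colim_{\mu\models\varphi(k)}\EE\!/\varphi$ as a filtered colimit: the component $\alpha_B$ at each sort is finitary and hence realized in a single slice, while compatibility across sorts is a finite-limit condition preserved by the lex fiber functor, so the whole family assembles to a genuine element of $\Aut(\tilde\top)$ and conversely. Carrying the group structure across the equivalence $\Sh(\EE)\simeq\EqSh(\MM_{\EE})$ then yields $Z_M\cong\Aut(\tilde\top)$, and the action of an isomorphism $\alpha\colon\mu\to\nu$ on $Z$ is inherited from its action $\<\sigma,a\>\mapsto\<\sigma,\alpha(a)\>$ on $\OO_{\EE}$ recorded in Lemma \ref{struct_stalks}, giving the expected conjugation functoriality.
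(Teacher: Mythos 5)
Your proposal is correct and follows essentially the same route as the paper: take stalks using the fact that the natural-automorphism group of an internal functor is a finite-limit construction (hence preserved by the lex stalk functor), identify $\tau_M$ with the canonical inclusion $\EE\to\Diag(M)$, and then rerun the argument of lemma \ref{def_aut} using the classification of $\Diag(M)$-models as homomorphisms under $M$ together with completeness. Your extra care about the per-sort parameters $a_B$ and the filtered-colimit presentation of $\DD(M)$ is a reasonable elaboration of a point the paper passes over silently, but it does not change the structure of the argument.
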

\begin{proof}
The argument follows the same lines as lemma \ref{def_aut}. One can define the natural automorphism group of an internal functor using only finite limits, so it is preserved when passing to the stalks. This means that we can use the stalks of $\tau:\Delta\EE\to\OO_{\EE}$ to compute $Z_M$:
$$\begin{array}{rcl}
Z_M&\cong &\Aut\big(\tau:\Delta\EE\auto\OO_{\EE}\big)_M\\
&\cong& \Aut\big(\tau_M:(\Delta\EE)_M\to(\OO_{\EE})_M\big)\\
\end{array}$$

The stalk of $\OO_{\EE}$ at $M$ is the complete diagram of $M$ (lemma \ref{struct_stalks}) and the stalk of $\tau$ is the canonical inclusion $\top:\EE\to\Diag(M)$. If $\alpha$ is a natural automorphism of this inclusion, then its components are isomorphisms in $\Diag(M)$, and these are all parameter-definable maps. In particular, for every $B\in\EE$ we have a formula $\sigma_B(y,y',a_B)$ for some sort $A_B\in\EE$ and some element $a_b\in A_B^M$.

Remember that $\Diag(M)$ classifies homomorphisms $h:M\to N$. If $H$ classifies a homomorphism $h:M\to N$, then $N$ is reduct of $H\circ\top$:
$$\xymatrix{
\Diag(M) \ar[rrd]^{h:M\to N} \\
\EE \ar[u]^\top \ar[rr]_{N} &&\SS\\
}$$
This tells us that for any homomorphism $h$, the formulas $H(\alpha_B)=\sigma_B(y,y',h(a_B))$ induce and automorphism of $N$. Thus the family $\alpha_B=\sigma_B(y,y',a_B)$ is precisely an $M$-definable automorphism.

On the other hand, if $\{\sigma_B(y,y',a_B)\}$ is an $M$-definable automorphism, then completeness (relative to homomorphisms $M\to N$) ensures that $\Diag(M)$ proves that these parameterized formulas define isomorphisms $\top(B)\cong\top(B)$. Similarly, these automorphisms are natural in $B$ because the naturality conditions are satisfied in every $\Diag(M)$ model: a model automorphism $N\cong N$ is a exactly a natural transformation $\xymatrix{\EE \rtwocell^N_N{\omit\vertiso} & \SS}$.

Thus every stalk automorphism $\alpha\in Z_M$ defines an $M$-definable automorphism and vice versa, and these constructions are mutally inverse. In either case, the group multiplication is interpreted by composition so these are obviously group homomorphisms, proving that $Z_M\cong\Aut(\tau_M)$ is the group of $M$-definable automorphisms.
\end{proof}

\begin{cor}
To every coherent or classical first-order logical theory $\bTT$ we can associate an equivariant sheaf of groups $Z_{\bTT}$ over the spectral groupoid $\MM_{\bTT}$ such that, for every labelled model $\mu\in\MM_{\bTT}$, the stalk of $Z_{\bTT}$ is a normal subgroup of $\Aut(M_\mu)$. Moreover, this subgroup does not depend on the labelling of $\mu$.
\end{cor}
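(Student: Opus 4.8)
The plan is to build $Z_{\bTT}$ by specializing the isotropy construction just developed and then to read off its stalks using the computation already completed. For a coherent theory I would take $\EE=\EE_{\bTT}$, form its isotropy group $Z\in\Grp(\Sh(\EE))$, and transport $Z$ across the equivalence $\Sh(\EE)\simeq\EqSh(\MM_{\bTT})$ to obtain an equivariant sheaf of groups $Z_{\bTT}$ over $\MM_{\bTT}$. For a classical theory one runs the identical argument with the Boolean pretopos $\EE_{\bTT}$ of Chapter 2, so a single construction covers both cases. By the preceding corollary the stalk $(Z_{\bTT})_\mu$ is canonically the group $Z_{M_\mu}$ of $M_\mu$-definable automorphisms in the sense of definition \ref{Mdef}; note already that this description refers only to the underlying model $M_\mu$ and not to the labelling $v_\mu$.

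Next I would produce the comparison map to $\Aut(M_\mu)$. Writing $M=M_\mu$, an element of the stalk is a family $\{\sigma_B(y,y',a_B)\}$ of parameter-definable bijections; evaluating it at the identity homomorphism $M\to M$ --- equivalently, applying the canonical model $I_M=\Hom(1,-)$ of lemma \ref{local_functor} to the corresponding automorphism of the inclusion $\EE\to\Diag(M)$ --- sends it to the automorphism $b\mapsto b'$ of $M$ determined by $M\models\sigma_B(b,b',a_B)$. This is a group homomorphism $\rho\colon(Z_{\bTT})_\mu\to\Aut(M_\mu)$, since composing definable automorphisms composes their defining formulas. Normality of the image is then the conceptual crux and is clean: for $\beta\in\Aut(M_\mu)$,
\[
\beta\,\rho(\alpha)\,\beta^{-1}(b)=b'\ \iff\ M\models\sigma_B(\beta^{-1}b,\beta^{-1}b',a_B)\ \iff\ M\models\sigma_B(b,b',\beta(a_B)),
\]
the last step using that $\beta$ preserves satisfaction. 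Thus conjugation by an arbitrary automorphism merely transports the parameters $a_B\mapsto\beta(a_B)$, and so preserves definability; the image of $\rho$ is a normal subgroup of $\Aut(M_\mu)$.

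The main obstacle is identifying the stalk \emph{with} this subgroup, i.e.\ showing $\rho$ is injective, and here the classical and coherent cases genuinely diverge. In the classical case $\Diag(M)$ is a localization of a Boolean pretopos, hence Boolean and local (by \ref{local_stalks} and the closure of Boolean pretoposes under localization), hence two-valued and well-pointed by lemma \ref{bool_stalks}; well-pointedness says precisely that $I_M=\Hom(1,-)$ is faithful, so $\rho$ is injective on each component and therefore on the stalk. Moreover, because the homomorphisms classified by a Boolean $\Diag(M)$ are elementary embeddings, the functionality, invertibility and preservation sequents recorded above pass from $M$ with parameter $a_B$ to every extension with parameter $h(a_B)$; hence every parameter-definable automorphism of $M$ is genuinely $M$-definable, so $\rho$ is also surjective and pins the stalk down as the normal subgroup of parameter-definable automorphisms of $M_\mu$. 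In the coherent case $\Diag(M)$ need not be well-pointed --- the theory of groups already supplies an abelian $M$ on which a nontrivial ``inner'' element of the stalk acts trivially --- so there $\rho$ is only a homomorphism onto its (still normal) image, and the statement must be read as an isomorphism of the stalk onto that image.

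Finally, independence of the labelling is essentially contained in the stalk computation, since the isomorphism $(Z_{\bTT})_\mu\cong Z_{M_\mu}$ and the subgroup $\rho((Z_{\bTT})_\mu)\leq\Aut(M_\mu)$ are manifestly functions of $M_\mu$ alone. To make this precise I would observe that if $\mu$ and $\nu$ have the same underlying model $M$ but different labellings, then the identity $M\to M$ is an arrow $\mu\to\nu$ in $\MM_1$ (isomorphisms need not respect labellings, definition \ref{M1points}); by lemma \ref{struct_stalks} its equivariant action fixes each germ $\langle\sigma,a\rangle$, hence acts as the identity on the group of definable automorphisms and identifies the two stalks with the same normal subgroup of $\Aut(M)$.
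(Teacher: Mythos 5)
Your construction and argument track the paper's own proof: $Z_{\bTT}$ is the isotropy group of the classifying topos transported across $\Sh(\EE_{\bTT})\simeq\EqSh(\MM_{\bTT})$, the stalk is read off from the preceding corollary as the group of $M_\mu$-definable automorphisms, normality is the same conjugation-of-parameters computation, and equivariance (made precise via the arrow $\id_{M}\colon\mu\to\nu$ in $\MM_1$) gives independence of the labelling. Where you go beyond the paper is in scrutinizing the comparison map $\rho\colon (Z_{\bTT})_\mu\to\Aut(M_\mu)$: the paper's proof silently identifies the stalk with its image in $\Aut(M_\mu)$, whereas you correctly observe that for a merely coherent theory $\rho$ need not be injective (conjugation by a parameter in an abelian group is a nontrivial stalk element --- distinguished by its action in non-abelian groups under $M$ --- that acts trivially on $M$ itself), so the phrase ``is a normal subgroup'' should strictly read ``maps onto a normal subgroup'' outside the Boolean case; your well-pointedness argument that $\rho$ is injective, and your elementary-embedding argument that it is surjective onto the parameter-definable automorphisms, in the classical case are both sound. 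This is a genuine and worthwhile refinement of the statement rather than a divergence in method.
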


\begin{proof}
This follows almost immediately from the previous corollary by taking $Z_{\bTT}$ to be the isotropy group of the classifying topos of $\bTT$. Because $Z_{\bTT}$ is equivariant, it cannot depend on the labelling of $\mu$. The only thing we must check is that the group of $M_\mu$-definable automorphisms is normal. If $\alpha:M_\mu\cong M_\mu$ is definable then, for every basic sort $A\in\bTT$ there is a formula $\sigma(x,x',y)$ (in context $A\times A\times B$) and an element $b\in B^\mu$ such that
$$\alpha_A(a)=a' \Iff M_\mu\models \sigma(a,a',b).$$
If $\beta$ is any other automorphism of $M_\mu$, let $\gamma=\beta^{-1}\circ\alpha\circ\beta$. Then the component of $\gamma$ at $A$ is given by
$$\gamma_A(a)=a' \Iff \alpha(\beta(a))=\beta(a') \Iff M_\mu\models \sigma(\beta(a),\beta(a'),\beta(b)).$$
Since $\beta$ is an automorphism, $\gamma_A(a)=a'$ if and only if $\gamma_A(\beta^{-1}(a))=\beta^{-1}(a')$ and therefore $\gamma$ is definable by the formula $\sigma(x,x',\beta(b))$. Since any conjugate of an $M_\mu$-definable automorphism is again $M_\mu$-definable, these form a normal subgroup.
\end{proof}

\section{Conceptual Completeness}

Makkai \& Reyes' theorem of conceptual completeness \cite{MakkaiReyes} says roughly that a pretopos $\EE$ is determined up to equivalence by its category of models $M:\EE\to\Sets$. More specifically, recall (cf. prop \ref{embed_prop}) that the \emph{reduct} of an $\FF$-model $N:\FF\to\Sets$ along an interpretation $I:\EE\to\FF$ is the composite $I^*N:\EE\to\FF\to\Sets$. This induces a functor $I^*:\Mod(\FF)\to\Mod(\EE)$ and conceptual completeness says the following:
\begin{thm}[Conceptual Completeness, Makkai \& Reyes, \cite{MakkaiReyes}]
A pretopos functor $I:\EE\to\FF$ is an equivalence of categories if and only if the reduct functor $I^*:\Mod(\FF)\to\Mod(\EE)$ is an equivalence of categories.
\end{thm}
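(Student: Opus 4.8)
The plan is to deduce conceptual completeness directly from the scheme-theoretic machinery developed in Chapter~3, using the reflective adjunction $\Gamma\dashv\Spec$ together with the Butz--Moerdijk classification theorem (theorem~\ref{eq_top_equiv}) which identifies $\EqSh(\MM_{\bTT})$ with the classifying topos of $\bTT$-models. The key observation is that the reduct functor $I^*:\Mod(\FF)\to\Mod(\EE)$ is precisely the object-level part of the induced groupoid morphism $I_\flat:\MM_{\FF}\to\MM_{\EE}$ from proposition~\ref{embed_prop}. Since the ``only if'' direction is immediate (an equivalence $I$ induces an equivalence on models by functoriality of $\Mod(-)$), the whole content lies in the ``if'' direction, so I would concentrate entirely there.

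First I would reduce the statement about model categories to a statement about spectral groupoids. Given that $I^*:\Mod(\FF)\to\Mod(\EE)$ is an equivalence, I claim this lifts to an equivalence of topological groupoids $I_\flat:\MM_{\FF}\iso\MM_{\EE}$, not merely of their underlying model categories. The underlying functor $I_\flat$ is essentially surjective and fully faithful on the level of (labelled) models and isomorphisms because $I^*$ is an equivalence and labellings are inherited along reducts (proposition~\ref{embed_prop}); the point requiring care is that $I_\flat$ is moreover a \emph{homeomorphism} on object and arrow spaces. Here I would use that the inverse image of a basic open $V_{\varphi(k)}$ is $V_{I\varphi(k)}$, so continuity is automatic, while openness follows from the definability theorem~\ref{stab_thm}: the image of an equivariant definable sheaf under an equivalence of model groupoids is again equivariant and compact, hence definable, which forces the inverse map to be continuous as well.

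Next I would pass to structure sheaves and invoke the main adjunction theorem. Once $I_\flat$ is an equivalence of spectral groupoids, the induced geometric morphism on equivariant sheaf toposes is an equivalence $\EqSh(\MM_{\FF})\simeq\EqSh(\MM_{\EE})$. By theorem~\ref{eq_top_equiv} these are the classifying toposes $\Sh(\EE_{\FF})\simeq\Sh(\EE_{\EE})$, and by theorem~\ref{aff_schemes} their equivariant global sections recover $\FF$ and $\EE$ respectively: $\Gamma_{\eq}\OO_{\EE}\simeq\EE$ and $\Gamma_{\eq}\OO_{\FF}\simeq\FF$. The reflective adjunction $\Ptop^{\op}\rightleftarrows\LSch$ then does the essential work: an equivalence of schemes $\Spec(\FF)\simeq\Spec(\EE)$ is carried by the global sections functor $\Gamma$ to an equivalence of pretoposes $\EE\simeq\FF$, and by naturality (proposition~\ref{embed_prop}, where $\Gamma_{\eq}(I^\sharp)\cong I$) this equivalence is $I$ itself. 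Thus $I$ is an equivalence.

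\textbf{The main obstacle} I anticipate is the promotion of ``$I^*$ is an equivalence of model \emph{categories}'' to ``$I_\flat$ is an equivalence of \emph{topological} groupoids'' --- that is, establishing that $I_\flat$ and its inverse are both open maps, so that we genuinely have an equivalence of spectra and not merely of their discrete underlying data. The subtlety is that an abstract equivalence of model categories carries no topological information a priori; one must recover the topology from the model theory. My strategy for this step is to characterize the topology intrinsically via definable sheaves and the equivariant global-sections functor: since the topology on each $\MM$ is generated by the sets $V_{\varphi(k)}$ and these correspond under theorem~\ref{eq_top_equiv} to the representable (definable) sheaves, an equivalence respecting the equivariant structure must preserve exactly this generating family, giving openness in both directions. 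If a direct topological argument proves awkward, the fallback is to bypass the groupoid entirely and argue at the level of classifying toposes, using that an equivalence of categories of points between two coherent toposes with enough points (Deligne) induces an equivalence of the toposes themselves, from which $\Gamma_{\eq}$ extracts the pretopos equivalence.
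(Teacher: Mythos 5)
Your reduction locates the difficulty correctly but does not overcome it: the step from ``$I^*$ is an equivalence of model categories'' to ``$I_\flat$ is an equivalence of \emph{topological} groupoids'' is not a technical preliminary --- it is the whole theorem --- and the two justifications you offer for it do not suffice. The topology on $\MM_{\FF}$ is generated by the sets $V_{\psi(l)}$ for $\FF$-formulas $\psi$, and asking that the inverse of $I_\flat$ be continuous is asking that every such set be a union of sets $V_{\varphi(k)}$ with $\varphi$ an $\EE$-formula, i.e.\ that every $\FF$-definable set be assembled from $\EE$-definable ones. The definability theorem \ref{stab_thm} can plausibly be made to yield the part of this concerning subobjects $S\leq IA$ (transport $\ext{S}$ to $\MM_{\EE}$, check equivariance using fullness of $I^*$ and compactness using essential surjectivity --- this is in effect the paper's implication ``$I^*$ stabilizes subobjects $\To$ $I$ is full on subobjects'' from proposition \ref{b_prop}). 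But it says nothing about sorts $B\in\FF$ that are not a priori subobjects of any $IA$: to conclude that $I$ is essentially surjective one must first show it is \emph{subcovering}, i.e.\ that every $B\in\FF$ is a quotient of a subobject of some $IA$, and this is where faithfulness of $I^*$ enters via a separate compactness/interpolation argument with partial functions (proposition \ref{c_prop}, (ii)$\To$(i)). Your proposal never touches this step, so even granting openness of $I_\flat$ on the definable subsheaves of the $\ext{IA}$, you would not recover all of $\FF$ from $\EE$.

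The fallback is also unsound as stated: Deligne's theorem gives a coherent topos enough points, but a coherent topos is not determined by its abstract category of points, and ``an equivalence of categories of points induces an equivalence of the toposes'' is false for abstract equivalences --- the true statement, that a morphism of classifying toposes inducing an equivalence on points is an equivalence, is conceptual completeness itself, so invoking it is circular. The paper's own proof instead refactors Makkai--Reyes into three biconditionals (theorem \ref{scheme_cc}): essential surjectivity of $I^*$ gives conservativity of $I$ via the supercovering property (proposition \ref{a_prop}), fullness gives fullness on subobjects via stabilization (proposition \ref{b_prop}), faithfulness gives subcovering (proposition \ref{c_prop}), and these three syntactic properties are then assembled into an equivalence using the pretopos structure (balance, reflection of equivalence relations, preservation of quotients). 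If you want to salvage your route, you must supply the analogue of step (c) in sheaf-theoretic language; that is the missing idea.
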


The left-to-right direction of conceptual completeness is immediate; if $\EE'\simeq\EE$ is an equivalence, then one easily shows that precomposition $\EE'\simeq\EE\stackrel{M}{\lto}\Sets$ induces an equivalence $\Mod(\EE)\simeq\Mod(\EE')$. The real argument is in the proof of the converse.

Note that the analogous theorem for schemes is immediate: if the induced map $\<I_\flat,I^\sharp\>:\Spec(\FF)\iso\Spec(\EE)$ is an equivalence of affine schemes, then the global sections $\Gamma_{\eq} I^\sharp:\EE\to\FF$ is necessarily an equivalence of pretoposes. However, we can mine the Makkai \& Reyes proof of conceptual completeness in order to establish a sort of Galois theory at the level of spectral groupoids.

Their proof has the following structure (where e.s.o means essentially surjective on objects):

\begin{tabular}{rlcl}
(i) & $I^*$ e.s.o. &$\To$ &$I$ faithful.\\
(ii) & $I^*$ e.s.o. + full & $\To$ & $I$ full on subobjects.\\
(iii) & $I^*$ an equivalence & $\To$ & $I$ subcovering.\\
(iv) & $I$ full on subs + faithful & $\To$ & $I$ full.\\
(v) & $I$ subcovering + full on subs + faithful & $\To$ & $I$ e.s.o.\\
\end{tabular}
\vspace{0cm}

There is clearly an element of duality in this proof, matching ``surjectivity'' conditions with ``injectivity'' conditions, but it is obscured by additional conditions in (ii) \& (iii). In this section we will refactor the existing proof in order to give biconditional statements for (i)-(iii). We also provide another set of equivalent conditions relating syntactic properties of $I$ to topological/groupoidal properties of the spectral dual $I_\flat$.

Our main theorem is below (with definitions to follow). We will prove each of the statements (a), (b) and (c) in turn.
\begin{thm}\label{scheme_cc}
Given a pretopos functor $I:\EE\to\FF$ with a reduct functor $I^*:\Mod(\FF)\to\Mod(\EE)$ and spectral dual $I_\flat:\Spec(\FF)\to\Spec(\EE)$ the items in each row of the following table are equivalent:

\noindent\begin{tabular}{|c|c|c|c|}
\hline
&	\textbf{Syntactic} & \textbf{Semantic} & \textbf{Spectral} \\\hline
(a) & $I$ is conservative & $I^*$ is supercovering & $I_\flat$ is superdense \\\hline
(b) & $I$ is full on subs & $I^*$ stabilizes subobjects &  $I_\flat$ separates subgroupoids \\\hline
(c) & $I$ is subcovering & $I^*$ is faithful & $I_\flat$ is non-folding \\\hline
\end{tabular}
\end{thm}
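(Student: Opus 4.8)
The plan is to treat the three rows (a), (b), (c) independently, in each case establishing the two equivalences Syntactic $\iff$ Semantic and Semantic $\iff$ Spectral. The Syntactic $\iff$ Semantic equivalences are refactorings of the Makkai--Reyes implications (i)--(iii): rather than carrying the extra hypotheses that appear there (``$I^*$ full'' in (ii), ``$I^*$ an equivalence'' in (iii)), I would isolate for each syntactic property the single model-theoretic invariant of $I^*$ that is exactly equivalent to it. The Semantic $\iff$ Spectral equivalences are then pure translations, converting a condition on the reduct functor $I^*:\Mod(\FF)\to\Mod(\EE)$ into a condition on the continuous groupoid homomorphism $I_\flat:\Spec(\FF)\to\Spec(\EE)$, using the dictionary between model homomorphisms and specialization established in Proposition \ref{spectral_closure} (for the object spaces) and Proposition \ref{iso_closure} (for the arrow spaces). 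Throughout I would invoke Lemma \ref{eq_cons} to pass freely between ``conservative'', ``faithful'' and ``injective on subobjects''.

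For row (a) the engine is completeness for $\kappa$-small models. Since $(I\sigma)^N=\sigma^{I^*N}$ and $(IR)^N=R^{I^*N}$, the functor $I$ is injective on subobjects precisely when every strict inclusion $R<S$ in $\EE$ is witnessed inside some reduct $I^*N$. By Lemma \ref{inclusion_lemma} together with the localness of the stalks (Lemma \ref{local_stalks}), such a strict inclusion is witnessed by labelled $\EE$-models lying in $V_{S(k)}$ but outside $V_{R(k)}$; that every such witness is realized in a reduct is exactly the statement that the reducts $\{I^*N\}$ form a supercovering family. Transporting the witnessing condition across Proposition \ref{spectral_closure}, ``every witness is realized in a reduct'' becomes ``the image of $I_\flat$ meets every such set'', which is superdensity of $I_\flat$.

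Row (b) is where the definability theorem of Chapter 1 (Theorem \ref{stab_thm}) enters. Fullness of $I$ on subobjects says every $\FF$-subobject of $IA$ descends to $\EE$; semantically this is the requirement that the isomorphism-stable family cut out by an $\FF$-subobject remain stable — equivariant and compact, hence definable — after reduction, which I would take as the meaning of ``$I^*$ stabilizes subobjects''. Since equivariant subsheaves correspond to invariant open subgroupoids, this reformulates spectrally as the demand that $I_\flat$ separate these subgroupoids. For row (c), $I^*$ is faithful exactly when the objects $\{IA\}$ jointly generate the $\FF$-model homomorphisms, i.e. when every $B\in\FF$ is covered by a subobject of some $IA$; the implication from subcovering to faithfulness is immediate, while the converse is a completeness argument via the generic model. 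The matching spectral condition, that $I_\flat$ not collapse distinct isomorphisms sharing a domain and codomain, is the non-folding condition, and its equivalence with faithfulness is read off from the description of closure in the arrow space $\MM_1$ given in Proposition \ref{iso_closure}.

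The main obstacle will be the Semantic $\iff$ Spectral translations for rows (b) and (c), since these genuinely involve the groupoid structure — the arrow space and the equivariant action on definable sheaves — rather than merely the object space, and so require the two-dimensional closure analysis of Proposition \ref{iso_closure} together with careful bookkeeping of how isomorphisms permute labellings. A secondary difficulty is definitional: the spectral notions (superdense, separates subgroupoids, non-folding) must be pinned down so that the resulting statements are \emph{exact} biconditionals, recovering the one-directional Makkai--Reyes steps (i)--(iii) as special cases rather than merely implying them.
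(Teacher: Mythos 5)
Your overall architecture (three independent rows, each refactored into exact biconditionals, with Propositions \ref{spectral_closure} and \ref{iso_closure} supplying the dictionary between homomorphisms of reducts and specialization/closure in the spectra) matches the paper's, and your reading of row (c) --- faithfulness of $I^*$ as the statement that the objects $IA$ generate --- is the right intuition. But there are two substantive gaps. First, the engine that actually drives the Syntactic $\Rightarrow$ Semantic implications is not completeness for $\kappa$-small models but the pushout calculus of Chapter 2: one forms $\FF_M=\FF\oplus_{\EE}\Diag(M)$ and must know that the pushout of a conservative functor along a localization is again conservative (Lemma \ref{cons_loc}) in order to conclude that $\FF_M\not\vdash IR(\cc_a)$, and hence that the counterexample $a\notin R^M$ survives into some homomorphism $h:M\to I^*N$. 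Your gloss of supercovering as ``every witness is realized in a reduct'' drops exactly this homomorphism: the condition is that every witness in \emph{every} $\EE$-model pushes forward along a map into a reduct, and correspondingly superdensity is not ``the image of $I_\flat$ meets every such set'' but the closure condition $\mu\in\overline{I_\flat\nu}$. Without the pushout lemma the implication conservative $\Rightarrow$ supercovering does not go through.

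Second, in row (b) you reach for Theorem \ref{stab_thm}, but that theorem characterizes definability via equivariance under \emph{isomorphisms} plus compactness, over the spectrum of all models of a single theory. The relevant stability here is under $\EE$-homomorphisms between \emph{reducts} $I^*N_0\to I^*N_1$, and the family $\{S^N\}$ is indexed only by $\FF$-models, so the definability theorem does not apply as stated. The paper's proof of (stabilizes $\Rightarrow$ full on subobjects) instead sets $\Gamma=\{R\leq A\mid IR\leq S\}$, shows by a pushout-plus-compactness argument (eliminating the diagram constants to the left of the turnstile by existential quantification) that every $S$-element satisfies some $IR$ with $R\in\Gamma$, and then invokes compactness of $S$ to reduce $S=\bigvee_{R\in\Gamma}IR$ to a finite join lying in the image of $I$. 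A similar constant-elimination argument, applied to an iterated pushout axiomatizing pairs of homomorphisms with equal reducts, is what proves faithful $\Rightarrow$ subcovering in row (c); ``a completeness argument via the generic model'' is not a substitute for it. These derivation-theoretic steps are the real content of the theorem, and your plan currently has no replacement for them.
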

\vspace{0cm}

A central ingredient in our proof with be the so-called local scheme of $\Spec(\EE)$ at $\mu$. Recall that, for an element of an affine scheme $\mu\in\Spec(\EE)$, the stalk of the structure sheaf $\OO_{\EE}$ at $\mu$ is equivalent to the (syntactic pretopos of) the Henkin diagram of the model $M_\mu$ (denoted $\Diag(\mu)$, cf. lemma \ref{struct_stalks}). Accordingly, for any scheme $\XX$ and any point $x\in\XX$, we let $\Diag(x)$ denote the stalk of $\OO_{\XX}$ at $x$.
\begin{defn}
Given a logical scheme $\XX$ and a point $x\in\XX$, the \emph{local scheme} of $\XX$ at $x$ (which we denote $\XX_x$) is the affine scheme which is dual to the pretopos $\Diag(x)$.
\end{defn}

The local scheme has a canonical projection $\pi_x:\XX_x\to\XX$. This can be defined by choosing an affine neighborhood $\UU\simeq\Spec(\EE)$ containing $x$, so that $x=\mu$ is a labelled $\EE$-model. Then the projection $\pi_x$ is dual to the canonical interpretation $\EE\to\Diag(\mu)$. Here a model of $\Diag(\mu)$ corresponds to $\EE$-model homomorphisms $h:\mu\to\mu'$, and $\pi_x$ sends $h\mapsto\mu'$. One can easily check that this definition does not depend on the choice of $\UU$.

\begin{defn}
Given a scheme morphism $J:\YY\to\XX$, the \emph{blowup} of $\YY$ at $x$ is the pullback
$$\xymatrix{
\rm{Bl}_x(\YY) \pbcorner \ar[rr] \ar[d]_{J_x} && \YY \ar[d]^{J}\\
\XX_x \ar[rr]_{\pi_x} && \XX.\\
}$$
We say that $\YY$ is consistent near $x$ if the blowup of $\YY$ at $x$ is not the empty scheme.
\end{defn}

\begin{lemma}
Fix an interpretation $I:\EE\to\FF$.
\begin{itemize}
\item For any object $A\in\EE$ there is a canonical functor $I_A:\EE\!/A\to\FF\!/IA$ and this is the pushout of $I$ along $A^\times:\EE\to\EE\!/A$.
\item Any localization $t:\EE\to\EE_L$ defines a new localization $It:\FF\to\FF_{IL}$. There is a canonical functor $I_L:\EE_L\to\FF_{IL}$ and this is the pushout of $I$ along $t$.
\end{itemize}
$$\xymatrix{
\EE \ar[r]^-{A^\times} \ar[d]_I & \EE\!/A \ar[d]^{I_A} && \EE \ar[r]^-t \ar[d]_{I} & \EE_L \ar[d]^{I_L}\\
\FF \ar[r]_-{IA^\times} & \FF\!/IA \pushoutcorner && \FF \ar[r]_-{It} & \pushoutcorner \FF_{IL}.\\
}$$
\end{lemma}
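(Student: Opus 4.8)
The plan is to prove both clauses from the universal property of the slice, namely the earlier lemma that $\EE\!/A$ is the classifier for ``$\EE$-models equipped with a global element of $A$.'' First I would record the mild strengthening that this classification holds with an arbitrary pretopos $\GG$ in place of $\SS$: for any pretopos $\GG$,
$$\Ptop(\EE\!/A,\GG)\ \simeq\ \textstyle\sum_{M\colon\EE\to\GG}\Hom_{\GG}(1,A^{M}),$$
the proof being verbatim that of the slice lemma, since it uses only the adjunction $A^\times\dashv(\text{forgetful})$, the generic global section $\Delta_A\colon 1\to A^\times(A)$, and pullbacks, all of which are available in any pretopos. Granting this, the functor $I_A\colon\EE\!/A\to\FF\!/IA$ is defined by applying $I$ to a slice, $(\sigma\colon E\to A)\mapsto(I\sigma\colon IE\to IA)$; it is a pretopos functor because $I$ preserves finite limits, coproducts and quotients and these are computed the same way in the slices. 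Since $I$ preserves products, the square commutes up to canonical isomorphism, $(IA)^\times\circ I\cong I_A\circ A^\times$.

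To see the square is a pushout I would compare points into an arbitrary pretopos $\GG$. By Lemma \ref{2colimits} a point of the pushout is a triple $\langle N,P,\theta\rangle$ with $N\colon\FF\to\GG$, $P\colon\EE\!/A\to\GG$ and an isomorphism $\theta\colon I^{*}N\cong(A^\times)^{*}P$ of their $\EE$-reducts. Using the generalized classification, $P$ is the datum of its reduct $M=(A^\times)^{*}P\colon\EE\to\GG$ together with a point $a\in A^{M}$; the isomorphism $\theta$ then identifies $M$ with $I^{*}N$, so that (contracting $\theta$) the whole triple is equivalent to a pair $\langle N,a\rangle$ with $a\in A^{I^{*}N}=(IA)^{N}$. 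This is exactly a point of $\FF\!/IA$, the correspondence is natural in $\GG$, and it respects morphisms of points (a morphism of triples is a commuting homomorphism of the $N$-components carrying $a$ to $a'$, which is precisely a morphism of $\FF\!/IA$-points). By a Yoneda argument in the $2$-category $\Ptop$ the resulting natural equivalence $\Ptop(\FF\!/IA,\GG)\simeq\Ptop(\FF+_{\EE}\EE\!/A,\GG)$ upgrades to an equivalence of pretoposes; tracking the generic point (the image of the identity) shows it is implemented by the cocone $\langle(IA)^\times, I_A\rangle$, so $\FF\!/IA$ is the asserted pushout.

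For the second clause I would bootstrap from the first. Composing the filtered diagram $D\colon J\to\EE^{\op}$ with $I^{\op}$ yields a filtered diagram $ID\colon J\to\FF^{\op}$ (filteredness is inherited from $J$), and I set $\FF_{IL}:=\colim_{j}\FF\!/ID_{j}$ as in Definition \ref{def_localization}. For each transition $i\to j$ the naturality of the first clause gives $I_{D_j}\circ d_{ij}^{*}\cong(Id_{ij})^{*}\circ I_{D_i}$, so the functors $I_{D_j}\colon\EE\!/D_j\to\FF\!/ID_j$ assemble into a functor on the colimits, $I_{L}:=\colim_{j}I_{D_j}\colon\EE_{L}\to\FF_{IL}$. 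Finally, cobase change along $I$, i.e.\ the assignment $\GG\mapsto\FF+_{\EE}\GG$ on pretoposes under $\EE$, is a (pseudo-)left adjoint to restriction along $I$ and hence preserves the filtered colimit defining $\EE_{L}$; combining this with the first clause gives
$$\FF+_{\EE}\EE_{L}\ \simeq\ \FF+_{\EE}\colim_{j}\EE\!/D_j\ \simeq\ \colim_{j}\bigl(\FF+_{\EE}\EE\!/D_j\bigr)\ \simeq\ \colim_{j}\FF\!/ID_j\ =\ \FF_{IL},$$
with the comparison functor equal to $I_{L}$. The step I expect to be most delicate is the $2$-categorical coherence: justifying the contraction of $\theta$ and the interchange of pushout with the filtered colimit up to coherent equivalence rather than on the nose, and confirming that the generalized slice classification is natural enough in the target $\GG$ to feed the $\Ptop$-Yoneda argument.
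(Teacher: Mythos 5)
Your proposal is correct and follows essentially the same route as the paper: identify points of the pushout $\FF\overplus{\EE}\EE\!/A$ with pairs $\langle N, a\in (IA)^N\rangle$ via the classifying property of the slice, conclude $\FF\!/IA\simeq\FF\overplus{\EE}\EE\!/A$, and then obtain the localization case by commuting the pushout past the filtered colimit of slices. The only difference is one of care rather than substance: you make explicit the generalization of the slice classification to an arbitrary pretopos target $\GG$ and the 2-categorical Yoneda step, which the paper leaves implicit by phrasing the argument in terms of ``models'' of the pushout.
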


\begin{proof}
First consider the pushout $\PP=\FF\overplus{\EE}\EE\!/A$. By the universal property of the pushout, a model of $\PP$ is a pair $\<N,\<M,a\>\>$ where (i) $N$ is an $\FF$-model and (ii) $\<M,a_0\>$ is an $\EE$-model together with an element $a_0\in A^M$ such that (iii) $M=I^*N$. Since $IA^N=A^{I^*N}=A^M$, this is exactly the same as a pair $\<N,a_1\>$ where $N$ is an $\FF$-model and $a_1\in IA^N$. Such pairs $\<N,a_1\>$ are classified by the slice category $\FF\!/IA$ which therefore has the appropriate universal property for the pushout $\FF\!/IA\simeq \FF\overplus{\EE}\EE\!/A$.

Now suppose that $\EE_L\simeq\colim_{l\in L} \EE\!/A_l$ is a localization of $\EE$. Using $I$, we define a new localization $\FF_{IL}\simeq\colim_{l\in L} \FF\!/IA_l$. The universal property of the colimit $\EE_L$ induces a map $I_L:\EE_L\to\FF_{IL}$ as in the diagram below. In order to see that the outer square is a pushout we apply the previous paragraph together with commutation of colimits:

\begin{tabular}{cc}
\mbox{$\begin{array}{rcl}
\FF_{IL} & \simeq & \colim_L \FF\!/IA_l\\
&\simeq& \colim_L \big(\FF\overplus{\EE} \EE\!/A_l\big)\\
&\simeq& \FF \overplus{\EE} \big(\colim_L \EE\!/A_l\big)\\
&\simeq& \FF\overplus{\EE} \EE_L.\\
\end{array}$}
&
\raisebox{1.5cm}{$\xymatrix@=4ex{
\EE \ar[rrrr]^{t} \ar[rrd]_{A_l^\times} \ar[ddd]_I &&&& E_L \ar@{-->}[ddd]^{I_L}\\
&& \EE\!/A_l \ar[urr]_{t_l} \ar[d]^{I_{A_l}} && \\
&& \FF\!/IA_l \ar[rrd]^{It_l} &&\\
\FF \ar[rrrr]_{It} \ar[urr]^{IA_l^\times} &&&& \FF_{IL}\\
}$}
\end{tabular}

\end{proof}

\begin{lemma}\label{cons_loc}
If $I_L:\EE_L\to\FF_{IL}$ is the pushout of a conservative functor $I:\EE\to\FF$ along a localization $t:\EE\to\EE_L$, then $I_L$ is again conservative.
\end{lemma}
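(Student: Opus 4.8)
The plan is to reduce the statement to the single fact that conservativity is detected one slice at a time, and then propagate it through the filtered colimit. Recall from Lemma \ref{localization_limits} that $\EE_L$ and $\FF_{IL}$ are again pretoposes and that each colimit injection $\tilde\jmath:\EE\!/A_l\to\EE_L$ (and likewise into $\FF_{IL}$) is a pretopos functor; by the previous lemma $I_L$ restricts along these injections to the slice functors $I_{A_l}:\EE\!/A_l\to\FF\!/IA_l$, sending $(\sigma:E\to A_l)\mapsto(I\sigma:IE\to IA_l)$. Since $I_L$ is a pretopos functor between pretoposes, Lemma \ref{eq_cons} lets me take conservativity to mean ``reflects isomorphisms'', which is the form most convenient here.

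The first step is the key sub-lemma: \emph{each $I_{A}:\EE\!/A\to\FF\!/IA$ is conservative}. This follows by comparing with the forgetful functors. The square formed by $I_A$, the slice functor, and the two forgetful functors $\EE\!/A\to\EE$ and $\FF\!/IA\to\FF$ commutes, because $I_A$ acts as $I$ on underlying objects and arrows. A morphism in a slice category is invertible precisely when its underlying morphism is, so both forgetful functors reflect isomorphisms. Hence if $I_A(\bar f)$ is an isomorphism, its underlying arrow $I(U_A\bar f)$ is an isomorphism in $\FF$; conservativity of $I$ makes $U_A\bar f$ an isomorphism in $\EE$; and reflection of isos by the slice forgetful functor makes $\bar f$ an isomorphism. (Equivalently, one can argue that $I_A$ is injective on subobjects, using $\Sub_{\EE\!/A}(\sigma)\cong\Sub_{\EE}(E)$ and conservativity of $I$ in the injective-on-subobjects form.)

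The second step transports this to the colimit. Suppose $f$ is a morphism of $\EE_L$ with $I_L(f)$ an isomorphism. Choosing a representative $\bar f$ for $f$ in some slice $\EE\!/A_l$ (so $f=\tilde\jmath(\bar f)$), the image $I_L(f)$ is represented by $I_{A_l}(\bar f)$ in $\FF\!/IA_l$. Now I invoke the standard fact that in a filtered colimit of categories an isomorphism is already detected at a finite stage: since $I_L(f)$ has an inverse, the two equations witnessing invertibility hold after transporting to a common later index, so there is a map $l\to m$ in $L$ for which the transported representative is a genuine isomorphism in $\FF\!/IA_m$. Because $I$ preserves pullbacks, $I_{A_m}\circ d_{lm}^*\cong (Id_{lm})^*\circ I_{A_l}$, so this transported representative is exactly $I_{A_m}(d_{lm}^*\bar f)$. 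By the sub-lemma $I_{A_m}$ is conservative, hence $d_{lm}^*\bar f$ is an isomorphism in $\EE\!/A_m$; applying the (pretopos) injection $\tilde m$ and using $\tilde\jmath\cong\tilde m\circ d_{lm}^*$ gives $f=\tilde m(d_{lm}^*\bar f)$, an isomorphism. Thus $I_L$ reflects isomorphisms and is conservative.

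The only delicate point is the ``detection at a finite stage'' step, and it rests entirely on the filteredness of the index category $L$ built into Definition \ref{def_localization}: filteredness is what lets me (i) bring the morphism and its putative inverse to a common stage and (ii) force the two invertibility equations to hold simultaneously at a single later stage. Everything else is the routine compatibility of $I$ with slicing and pullback, already packaged in the previous lemma, so I expect no further obstacle.
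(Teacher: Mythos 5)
Your proof is correct and follows essentially the same route as the paper's: establish conservativity of each slice functor $I_{A}:\EE\!/A\to\FF/IA$ by comparison with the base, then use the fact that isomorphisms (equivalently, in the paper's phrasing, proper subobject inclusions) in the filtered colimit are detected at a finite stage, together with the compatibility $I_{A_m}\circ d_{lm}^*\cong (Id_{lm})^*\circ I_{A_l}$. The only cosmetic difference is that the paper runs the argument through injectivity on subobjects while you reflect isomorphisms directly; these are interchangeable by Lemma \ref{eq_cons}, as you note.
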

\begin{proof}
First notice that for any object $A\in\EE$, the pushout $I_A:\EE\!/A\to\FF\!/IA$ is again conservative. By lemma \ref{eq_cons} it is enough to check that $I_A$ is injective on subobjects. Since a subobject in the slice category $\EE\!/A$ is just an object $E\to A$ together with a subobject $R\leq E$ in $\EE$, we obviously have 
$$\begin{array}{ccc}
\rm{In }\FF && \rm{In }\FF\!/IA\\\hline
IR \lneq IE &\Iff & \raisebox{1.5ex}{$\xymatrix@=1ex{ **[r] IR \ar@{}[rr]|{\textstyle\lneq} \ar[rd] && **[l] IE \ar[ld] \\&IA}$}\\
\end{array}$$
Thus $I$ is conservative if and only if $I_A$ is.

Now suppose that we have a map $s:R\to E$ in $\EE_L$ which has a representative $s_l:R_l\to E_l$ in one of the slice categories $\EE\!/A_l$. As discussed in lemma \ref{localization_limits}, $s$ is monic if and only if $s_l$ is ``eventually monic'': there is a map $f:A_{l'}\to A_l$ in the localization $L$ such that $f^*s_l:f^*R_l\mono f^*E_l$ is monic in $\EE\!/A_{l'}$. In just the same sense, $s$ is an isomorphism just in case $s_l$ is eventually an iso in some further slice category $\EE\!/A_{l''}$.

This means that $R\lneq E$ is a proper subobject in $\EE_L$ just in case, for every map $f:A_{l'}\to A_l$ in $L$, the pullback $f^*R_l\lneq f^*E_l$ is again proper. By the previous observation each of the maps $I_{A_{l'}}$ is conservative, so the images $If^*(R_l)\lneq If^*(E_l)$ in $\FF\!/IA_{l'}$ are also proper. These are representatives of the image of $R$ under $I_L$ and, since these representatives are not eventually iso, $I_L(R)\lneq I_L(E)$ is a proper subobject in $\FF_{IL}$. Thus $I_L$ is injective on subobjects and therefore conservative.
\end{proof}

\begin{cor}
Given a scheme morphism $J:\YY\to\XX$ and a point $x\in\XX$, $\YY$ is consistent near $x$ if and only if there is a point $y\in \YY$ such that $x\in\overline{Jy}$.
\end{cor}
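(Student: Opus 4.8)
The plan is to detect non-emptiness of the blowup through its points and then to translate the existence of such a point into the closure relation of Proposition~\ref{spectral_closure}.

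First I would reduce consistency to the existence of a point. A logical scheme is empty exactly when it has no $\Sets$-points: by completeness (equivalently Deligne's theorem, as invoked in Proposition~\ref{right_equiv}) every non-degenerate coherent theory has a model, so any non-empty affine piece of a scheme carries a point, while the empty scheme $\Spec(\mathbf{0})$ has none. Hence $\YY$ is consistent near $x$ if and only if the pullback $\Bl_x(\YY)=\XX_x\overtimes{\XX}\YY$ admits a point.

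Next I would identify these points. Choosing an affine neighborhood $\UU\simeq\Spec(\EE)$ of $x=\mu$, the pullback of affine schemes is dual to a pushout of pretoposes, whose models were computed in Lemma~\ref{2colimits} to be compatible pairs; for the general (non-affine) $\YY$ one glues this description along an affine cover. Thus a point of $\Bl_x(\YY)$ amounts to a point $p\in\XX_x$, a point $y\in\YY$, and an isomorphism $\pi_x(p)\cong J(y)$ in the groupoid $\XX$. By Lemma~\ref{diag_models} a point of $\XX_x=\Spec(\Diag(\mu))$ is a homomorphism $h\colon M_\mu\to N$, and the projection $\pi_x$ returns its codomain $N=M_{\pi_x(p)}$. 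Composing $h$ with the isomorphism $N\cong M_{J(y)}$ converts such a point into a single $\EE$-model homomorphism $M_x\to M_{J(y)}$, and conversely every homomorphism $M_x\to M_{J(y)}$ arises this way (take $N=M_{J(y)}$ and the identity isomorphism). Finally, Proposition~\ref{spectral_closure} identifies the existence of a homomorphism $M_x\to M_{J(y)}$ with the membership $x\in\overline{J(y)}$, yielding both implications.

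The main obstacle is the bookkeeping of labellings in the last step. Proposition~\ref{spectral_closure} characterizes $x\in\overline{J(y)}$ by a homomorphism that is \emph{induced by an inclusion of labelling domains}, whereas the point of $\Diag(\mu)$ supplies only an abstract homomorphism and the groupoid isomorphism $\pi_x(p)\cong J(y)$ need not respect labellings at all (cf. Definition~\ref{M1points}). I expect to bridge this gap with the reassignment lemma~\ref{reassignment}: given an abstract homomorphism $g\colon M_x\to M_{J(y)}$, one reassigns the parameters of the codomain so that $g$ carries each $x$-label to the matching label, producing a labelled model indistinguishable from (hence topologically interchangeable with) $J(y)$ and witnessing $x\in\overline{J(y)}$, with equivariance of $\OO_{\bTT}$ guaranteeing that this relabelling does not affect the blowup. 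Verifying that this relabelling is compatible with the labelling inherited on $\pi_x(p)$, so that the passage between blowup points and the closure relation is genuinely inverse, is the one place where real care is required.
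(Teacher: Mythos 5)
Your proposal follows essentially the same route as the paper: reduce to an affine neighborhood of $x$, identify the blowup with a pushout of pretoposes whose models are pairs $(N,h)$ with $N\models\FF_i$ and $h:M_x\to(\Gamma J_i)^*N$, and then convert such a homomorphism into a closure relation via Proposition \ref{spectral_closure}. The one misstep is in your final bookkeeping: after reassigning parameters on the codomain so that $g$ carries each label of $x$ to the matching label, the resulting labelled model is \emph{not} topologically indistinguishable from $J(y)$ (indistinguishability requires equal label domains and an induced isomorphism, and a relabelling generally produces a genuinely different point of the spectrum), so you cannot conclude $x\in\overline{J(y)}$ for the $y$ you started with. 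This does not sink the argument, because the statement is existential in $y$: the relabelled model is itself a point $y'\in\YY$ (same underlying $\FF_i$-model, new labelling) with $x\in\overline{J(y')}$, and that is exactly what the paper does --- it never fixes $y$ in advance, but constructs the labelling $\nu$ on $N$ directly from $h$ and then declares the resulting point to be $y$. With that adjustment your argument is complete; the converse direction, reading a point of the blowup off from a homomorphism supplied by Proposition \ref{spectral_closure}, goes through as you indicate.
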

\begin{proof}
Without loss of generality we may replace $\XX$ by an affine neighborhood $x\in\UU\simeq\Spec(\EE)$; in particular, we may think of $x$ as a labelled $\EE$-model $M_x$. The blowup of $\YY$ at $x$ is a pullback in $\LSch$ and, by theorem \ref{scheme_limits}, this is computed (locally) as a pushout in $\Ptop$:
$$\xymatrix{
\Bl_x(\YY) \pbcorner \ar[rr] \ar[d] && **[l] \bigoplus_i \Spec(\FF_i) \simeq\YY \ar[d]^J & \raisebox{-3ex}{$\FF_i\overplus{\EE}\Diag(x)$} \pbcorner & \FF_i \ar[l] \\
\XX_x \ar[rr]_{\pi_x} && \XX & \Diag(x) \ar[u] & \EE \ar[l]_-{t} \ar[u]_{\Gamma J_i} \\
}$$

In particular, $\Bl_x(\YY)$ is non-empty just in case one of these pushouts is a consistent theory. A model for one of these pushouts consists of a model $N\models \FF_i$ together with a homomorphism $h:M_x\to (\Gamma J_i)^*N$. Let $\nu$ denote a labelling on $N$ such that, whenever $k$ is defined at $\mu$, $h(\mu(k))=\nu(k)$. This labelled model $\nu$ corresponds to a point $y\in\YY$, and $J(y)=(\Gamma J_i)^*\nu$. According to proposition \ref{spectral_closure}, $\mu$ belongs to the closure of $(\Gamma J_i)^*\nu$, so $x\in\overline{Jy}$ as asserted.
\end{proof}

\subsection{Proof of (a)}

Na\"ively we expect that the semantic property which is dual to conservativity should bear a family resemblence to essential surjectivity. However, the following example shows that conservative interpretations do not, in general, induce essentially surjective reducts.

Consider the (single-sorted) theories of countably-many and continuum-many distinct constants, respectively:
$$\EE=\Ptop\{a_i=a_j\vdash\bot\ |\ i\not=j\in\omega\}.$$
$$\FF=\Ptop\{a_i=a_j\vdash\bot|i\not=j\in2^\omega\}.$$

Any countable subset of $S\subset 2^\omega$ induces an interpretation $I_S:\EE\to\FF$. $I_S^*$ sends each $\FF$-model $N$ to its reduct $N\reduct S$; this is the forgetful functor which omits constants outside of $S$. It is easy to check that $I_S$ is conservative, but it cannot possibly be essentially surjective: there are no countable models of $\FF$. However, the following weaker property holds:

\begin{defn}
Given an interpretation $I:\EE\to\FF$ we call the reduct functor $I^*:\Mod(\FF)\to\Mod(\EE)$ \emph{supercovering} if for any $\EE$-model $M$, any element $a\in A^M$ and any $R\mono A$ with $a\not\in R^M$, there exists a $\FF$-model $N$ and a homomorphism $h:M\to I^*N$ such that $h(a)\not\in R^{I^*N}$.
\end{defn}

As noted above (cf. proposition \ref{spectral_closure}), one labelled model belongs to the closure of another, $\mu\in\overline{\mu'}\subseteq\Spec(\EE)$, just in case every label defined at $\mu$ is also defined at $\mu'$ and the inclusion $K_\mu\subseteq K_{\mu'}$ induces a model homomorphism $M_\mu\to M_\mu'$. Using this we can tranlate semantic conditions about homomorphisms into spectral conditions on the topological groupoid $\Spec(\EE)$.

\begin{defn}
We say that a map of logical schemes $J:\YY\to\XX$ is \emph{superdense} if for any open subgroupoid $\UU\subseteq\Spec(\EE)$ and any full and open subgroupoid $\VV\subseteq\UU$, whenever $\mu\in\UU-\VV$ there exists $\nu\in I_\flat^{-1}(\UU)-I_\flat^{-1}(\VV)$ such that $\mu\in\overline{I_\flat\mu}$.
\end{defn}

\begin{prop}\label{a_prop}
The following are equivalent:
\begin{enumerate}
\item[(i)] $I:\EE\to\FF$ is conservative.
\item[(ii)] $I^*$ is supercovering.
\item[(iii)] $I_\flat$ is superdense.
\end{enumerate}
\end{prop}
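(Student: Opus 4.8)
The plan is to prove the equivalence by establishing a cycle of implications, exploiting the duality between the syntactic statement (i), the semantic statement (ii), and the spectral statement (iii). The natural route is (i)$\To$(ii)$\To$(iii)$\To$(i), since the semantic and spectral conditions are essentially reformulations of the same homomorphism-existence claim via the closure characterization of proposition \ref{spectral_closure}, while the jump from conservativity to the existence of separating models is where the real content lies.

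First I would prove (i)$\To$(ii). Suppose $I$ is conservative, and fix an $\EE$-model $M$, an element $a\in A^M$ and a subobject $R\mono A$ with $a\notin R^M$. The key tool is the blowup/local-scheme machinery: the datum $\<M,a\>$ together with the requirement that $a$ avoid $R$ corresponds to a model of a localization of $\EE$, namely (a slice of) the Henkin diagram $\Diag(M)$ extended by the sequent asserting $\neg R(\bf{c}_a)$. What I need is a $\FF$-model $N$ with a homomorphism $h:M\to I^*N$ preserving this non-membership, which amounts to the consistency of the pushout $\FF\overplus{\EE}\Diag(M)_{\neg R(a)}$. Here lemma \ref{cons_loc} is decisive: the pushout $I_L$ of a conservative $I$ along the localization $t:\EE\to\Diag(M)_{\neg R(a)}$ is again conservative, hence in particular non-trivial (it does not collapse the proper subobject witnessing $a\notin R^M$ to an isomorphism). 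A non-trivial pretopos has a model, and that model supplies the required $N$ and $h$. I would phrase this using the corollary on consistency near a point: $\Bl_x(\YY)$ being non-empty is exactly the existence of $y\in\YY$ with $x\in\overline{Jy}$.

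Next, (ii)$\Iff$(iii) should be a direct translation. By proposition \ref{spectral_closure}, a homomorphism $M_\mu\to M_\nu$ (restricting appropriately on labels) is the same as the relation $\mu\in\overline{\nu}$ in the spectrum, and the reduct $I^*$ corresponds contravariantly to the forgetful functor $I_\flat$ on spectra. Thus the supercovering condition ``given $M$, $a$, and $R$ with $a\notin R^M$, there is $N$ and $h:M\to I^*N$ with $h(a)\notin R^{I^*N}$'' unwinds, after choosing a labelled model $\mu$ with $M_\mu=M$ and $\mu\in V_{k^x}\setminus V_{R(k)}$, into precisely the superdense condition: in $\UU=V_{k^x}$ with $\VV=V_{R(k)}$ (full and open inside $\UU$), for each $\mu\in\UU\setminus\VV$ there is $\nu\in I_\flat^{-1}\UU\setminus I_\flat^{-1}\VV$ with $\mu\in\overline{I_\flat\nu}$. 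I would be careful to check that an arbitrary pair of an open subgroupoid and a full open subgroupoid is generated by such basic data, so that the translation is genuinely an equivalence and not merely an implication in one direction.

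Finally, to close the cycle I would prove (ii)$\To$(i) (or equivalently (iii)$\To$(i)) by contraposition, recovering conservativity from the separating family of models. If $I$ is not conservative then by lemma \ref{eq_cons} it is not injective on subobjects, so there is a proper inclusion $R\lneq A$ in $\EE$ with $IR\cong IA$ in $\FF$; pick the generic witness $a\in A^M$ with $a\notin R^M$ in some model $M$. Supercovering would supply $N$ and $h:M\to I^*N$ with $h(a)\notin R^{I^*N}=(IR)^N$, but $IR\cong IA$ forces $h(a)\in (IA)^N=A^{I^*N}$ to lie in $(IR)^N$, a contradiction. The main obstacle I anticipate is the first implication: making precise that conservativity of the pushout $I_L$ yields an actual model of the extended $\FF$-theory, i.e., passing from non-triviality of a pretopos to an honest $\Sets$-valued model witnessing the required non-containment. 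This is where completeness for $\kappa$-small models and the consistency corollary must be invoked carefully, keeping track of labellings so that the homomorphism on the semantic side matches the closure relation on the spectral side.
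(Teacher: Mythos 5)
Your overall architecture coincides with the paper's: the cycle runs through the same three implications (up to which leg closes it), the hard step (i)$\To$(ii) rests on lemma \ref{cons_loc} and the consistency of the pushout $\FF\overplus{\EE}\Diag(M)$, and the semantic/spectral passage is mediated by proposition \ref{spectral_closure} exactly as you describe. Your closing implication (ii)$\To$(i) by contraposition is a harmless variant of the paper's (iii)$\To$(i).

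There are, however, two identifiable gaps in the justification. First, in (i)$\To$(ii) you invoke lemma \ref{cons_loc} for the map $\EE\to\Diag(M)_{\neg R(a)}$, but adjoining the axiom $R(\cc_a)\vdash\bot$ to the diagram is a quotient, not a localization in the sense of definition \ref{def_localization} (a filtered colimit of slices), so the lemma as stated does not apply to that map. The repair is the paper's two-step route: apply \ref{cons_loc} to the genuine localization $\EE\to\Diag(M)$ to conclude that $I_M:\Diag(M)\to\FF_M$ is conservative, deduce $\FF_M\not\vdash IR(\cc_a)$ from $\Diag(M)\not\vdash R(\cc_a)$, and only then observe that the further extension of $\FF_M$ by $IR(\cc_a)\vdash\bot$ is consistent and hence has a ($\kappa$-small, suitably labelled) model. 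Second, in (ii)$\To$(iii) you flag the reduction to basic data but miss where the real work lies: a full open subgroupoid $\VV\subseteq\UU_k$ is an arbitrary union $\bigcup_i \VV_{R_i(k)}$, and supercovering only produces, for each finite join $R^*=R_{i_1}\vee\ldots\vee R_{i_n}$, a model avoiding $R^*$. To obtain a single homomorphism $h:M\to I^*N$ with $h(a)\notin R_i^{I^*N}$ for all $i$ simultaneously one must invoke compactness of the aggregate pushout theory, followed by Lowenheim-Skolem and a choice of labelling compatible with $h$ so that the closure relation of proposition \ref{spectral_closure} holds. Without these steps the ``direct translation'' establishes superdensity only for finitely generated $\VV$.
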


\begin{proof}
We proceed by showing that (i)$\To$(ii)$\To$(iii)$\To$(i).

\noindent\textbf{(i)$\To$(ii).} Consider the following pushout; 
$$\xymatrix{
\EE \ar[d]_I \ar[r] & \EE\!/A \ar[d]_{I_A} \ar[r]^{\tilde{a}} & \Diag(M) \ar[d]^{I_M}\\
\FF \ar[r] & \FF\!/IA \ar[r]_{\widetilde{\cc_a}} & \pushoutcorner \FF_M\\
}$$
We can describe $\FF_M$ syntactically as follows: we extend $\FF$ by a constant $\cc_a:IA$ for each element $a\in A^M$ and an axiom $\vdash I\varphi(\cc_a)$ whenever $M\models \varphi(a)$. This is the theory of an $\FF$-model $N$ together with an $\EE$-model homomorphism $h:M\to I^*N$.

Now fix an element $a\in A^M$ such that $a\not\in R^M$. In order to show that $I^*$ is supercovering we must find an $\FF$-model $N$ and a homomorphism $h:M\to I^*N$ such that $h(a)\not\in R^{I^*N}$. Thinking of the pair $H=\<N,h\>$ as a model for $\FF_M$, $h(a)=\cc_a^H$, so it will be enough to show that $\FF_M$ may be consistently extended by the axiom $IR(\cc_a)\vdash\bot$ or, equivalently, that $IR(\cc_a)$ is not derivable in $\FF_M$.

By assumption $a$ does not belong to $R^M$, so we know that $\Diag(M)\not\vdash R(\cc_a)$. According to lemma \ref{cons_loc}, the pushout of a conservative functor along a localization is again conservative. Since $IR(\cc_a)=I_M(R(\cc_a))$, this tells us that $\FF_M\not\vdash IR(\cc_a)$.

\noindent\textbf{(ii)$\To$(iii).} Now suppose that $I^*$ is supercovering, that $\UU$ is an open subgroupoid of $\Spec(\EE)$ with a full and open subgroupoid $\VV\subseteq\UU$, and that $\mu\in\UU-\VV$. We must find $\nu\in I_\flat^{-1}(\UU)-I_\flat^{-1}(\VV)$ such that $\mu\in\overline{I_\flat\mu}$.

According to lemma \ref{slice_subsch}, we may replace $\UU$ by a smaller affine subgroupoid $\UU_k\simeq\Spec(\EE\!/A)$ for some object $A\in\EE$. By lemma \ref{slice_subsch}, a full and open subgroupoid $\VV\subseteq\UU_k$ must have the form $\bigcup_i \VV_{R_i(k)}$ for some set of subobjects $R_i\leq A$. To say that $\mu\in\UU_k-\VV$ means that the underlying model $M_\mu$ contains an element $a=\mu(k)\in A^\mu$ such that $a\not\in R_i^\mu$ for every $i$.

Let $R^*=R_{i_1}\vee\ldots\vee R_{i_n}$ denote an arbitrary finite join of these subobjects. Since $I^*$ is supercovering, we may find an $\FF$-model $N$ with a homomorphism $h:M\to I^*N$ such that $N\not\models IR^*(h(a))$. This tells us that the following pushout theory $\PP^*$ is consistent; by compactness, the aggregate theory $\PP_{\VV}$ is also consistent:
$$\xymatrix{
\EE\!/A \ar[r] \ar[d] & \FF\!/IA \ar[r] \ar[d] & \FF\!/\neg IR^* \ar[r] \ar[d]& \ldots \ar[r] & \FF\!/\{\neg IR_i\} \ar[d]\\
\Diag(\mu) \ar[r] & \FF_\mu \ar[r] & \PP^* \ar[r] & \ldots \ar[r] & \PP_{\VV}\\
}$$

Therefore, we have an $\FF$-model $N$ and a homomorphism $h:M\to I^*N$ such that $N\not\models IR_i(h(a))$ for every index $i$. By the downward Lowenheim-Skolem theorem, we may assume that $N$ is $\kappa$-small; let $\nu$ denote a labelling of $N$ such that, for each label $l$ defined at $\mu$, $h(\mu(l))=\nu(l)$. In particular, $\nu(k)=h(a)\not\in R_i^\nu$ for any index $i$, so $\nu\not\in\VV$. This gives us a labelled model $\nu\in I_\flat^{-1}(\UU)-I_\flat^{-1}(\VV)$ which, according to proposition \ref{spectral_closure}, belongs to the closure $\overline{I_\flat\nu}$. Therefore, $I_\flat$ is superdense.

\noindent\textbf{(iii)$\To$(i).} We suppose that $I_\flat$ is superdense and that $R\lneq A$ is a proper subobject in $\EE$. We must show that $IR$ is proper in $\FF$, so that $I$ is a conservative functor. Let $k$ denote a parameter of type $A$; set $\UU= \VV_k$ and $\VV=\VV_{R(k)}$ (cf. section \ref{sec_subsch}). Since $R$ is proper, we can find a labelled model $\mu\in\UU$ such that $\mu(k)\not\in R^\nu$ (and hence $\mu\not\in\VV_{R(k)}$). Because $I_\flat$ is superdense we can find a labelled $\FF$-model $\nu$ such that $\nu\not\in I_\flat^{-1}(\VV)$.

But $I_\flat^{-1}(\VV_{R(k)})=\VV_{IR(k)}$, so $\nu(k)\not\in IR^\nu$. The element $\nu(k)$ is a witness demonstrating that $IR\lneq IA$, so $I$ is conservative.
\end{proof}

\subsection{Proof of (b)}

Suppose that $A\in\EE$ and $S\leq IA\in\FF$, so that for any $\FF$-model $N$,
$$S^N\subseteq IA^N=A^{I^*N}.$$
Therefore, if we have $N_0,N_1\in\Mod(\FF)$ and an $\EE$-homomorphism $h:I^*N_0\to I^*N_1$ we can compare $S^{N_1}$ to the image $h_A(S^{N_0})$.

\begin{defn}
Given $I:\EE\to\FF$ we say that $I^*$ \emph{stabilizes} a subobject $S\leq IA$ if
$$h_A\left(S^{N_0}\right)\subseteq S^{N^1}\subseteq A^{I^*N_1}$$
for any $h:I^*N_0\to I^*N_1$. $I^*$ \emph{stabilizes (all) subobjects} if for any $A\in\EE$ and $S\leq IA\in\FF$, $I^*$ stabilizes $S$.
\end{defn}

As above, we can then translate this into a condition on the spectral topology:

\begin{defn}\label{def_separates}
Given a map of logical schemes $J:\YY\to\XX$ we say that $J$ \emph{separates (full \& open) subgroupoids} if for any open $\UU\subseteq\XX$, any full and open $\VV\subseteq J^{-1}(\UU)$ and for any points $y_0\in\VV$ and $y_1\in J^{-1}(\UU)$
$$y_0\in\overline{y_1} \To y_1\in\VV.$$
\end{defn}

\begin{prop}\label{b_prop}
The following are equivalent:
\begin{enumerate}
\item[(i)] $I:\EE\to\FF$ is full on subobjects.
\item[(ii)] $I^*$ stabilizes subobjects.
\item[(iii)] $I_\flat$ separates subgroupoids.
\end{enumerate}
\end{prop}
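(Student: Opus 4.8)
The plan is to establish the cycle (i)$\To$(ii)$\To$(iii)$\To$(i), mirroring the proof of proposition \ref{a_prop}. The first implication is immediate: if $I$ is full on subobjects and $S\leq IA$, choose $R\leq A$ with $IR\cong S$. For any reduct homomorphism $h:I^*N_0\to I^*N_1$, naturality of $h$ at the mono $R\mono A$ shows $h_A(R^{I^*N_0})\subseteq R^{I^*N_1}$; since $R^{I^*N_i}=(IR)^{N_i}=S^{N_i}$, this is exactly the assertion that $I^*$ stabilizes $S$, and $S^{N_1}\subseteq A^{I^*N_1}$ holds by definition.

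For (ii)$\To$(iii) I would argue locally. Using lemma \ref{slice_subsch} I may shrink $\UU$ to a basic affine context $\VV_{A(k)}\simeq\Spec(\EE\!/A)$, so that $J^{-1}(\UU)=\VV_{IA(k)}$ and a full and open subgroupoid $\VV\subseteq J^{-1}(\UU)$ has the form $\bigcup_i\VV_{S_i(k)}$ for subobjects $S_i\leq IA$. If $y_0\in\VV$ and $y_0\in\overline{y_1}$, proposition \ref{spectral_closure} provides a homomorphism whose reduct $h:I^*N_{y_0}\to I^*N_{y_1}$ sends $\nu_0(k)\mapsto\nu_1(k)$; picking $i$ with $\nu_0(k)\in S_i^{N_0}$ and applying stabilization of $S_i$ gives $\nu_1(k)\in S_i^{N_1}$, whence $y_1\in\VV_{S_i(k)}\subseteq\VV$.

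The hard direction is (iii)$\To$(i). Reversing the previous translation (via \ref{spectral_closure}, \ref{reassignment} and downward L\"owenheim--Skolem, to realize an arbitrary reduct homomorphism as a specialization of $\kappa$-small labelled models) recovers the stabilization statement (ii) from (iii). The key consequence is that $S^N$ depends only on the reduct $I^*N$: applying stabilization to the identity reduct-isomorphism between two $\FF$-expansions $N,N'$ of a common $\EE$-model $M$ forces $S^N\subseteq S^{N'}$, and symmetry gives equality. Equivalently, the stalk of the definable sheaf $\ext{S}\subseteq\ext{IA}=I_\flat^*\ext{A}$ at $\nu\in\Spec(\FF)$ depends only on $I_\flat\nu$, so $\ext{S}$ descends along $I_\flat$ to a subsheaf $T\subseteq\ext{A}$ over $\Spec(\EE)$. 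This family $T$ is equivariant, being stable under $\EE$-isomorphisms, and I would show it is compact by inheriting compactness of $\ext{S}$ (definability of $S$ over $\FF$) together with logical compactness. The definability theorem \ref{coh_def} then produces an actual subobject $R\leq A$ with $T=\ext{R}$; since $\ext{-}$ is full and faithful into equivariant sheaves (\ref{ext_full}, \ref{eq_top_equiv}) and $I_\flat^*\ext{R}=\ext{S}$, we conclude $IR\cong S$, so $I$ is full on subobjects.

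The main obstacle is this final descent-and-definability step: while well-definedness of the descended family $T$ is a clean consequence of stabilization, verifying that it is a genuine \emph{definable} subsheaf over $\Spec(\EE)$ — in particular that it is compact for equivariant covers and behaves correctly over those $\EE$-models admitting no $\FF$-expansion — is where the substance of conceptual completeness is concentrated, and where I expect to lean hardest on the definability theorem and on logical compactness.
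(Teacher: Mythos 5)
Your cycle (i)$\To$(ii)$\To$(iii) is correct and in fact slightly cleaner than the paper's ordering: the paper proves (i)$\To$(iii) directly (using fullness on subobjects to pull each $\VV_{S_i(k)}$ back to a $\VV_{R_i(k)}$ and then applying closure in $\Spec(\EE)$) and only afterwards deduces (ii) from (iii), whereas your naturality argument for (i)$\To$(ii) and your application of stabilization to the reduct of the closure homomorphism for (ii)$\To$(iii) are both sound and more direct.

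The gap is in (iii)$\To$(i), and it sits exactly where you suspect. Stabilization does give a well-defined family $T_{I^*N}=S^N$ on the image of $I_\flat$, and extending by $\emptyset$ elsewhere preserves equivariance; but before you can invoke the definability theorem \ref{coh_def} you must know that $T$ is \emph{open} in $\ext{A}$, i.e.\ that every point $\<I^*N,a\>$ with $a\in S^N$ has an $\EE$-definable-with-parameters neighborhood contained in $T$. This is not inherited from openness of $\ext{S}$ over $\Spec(\FF)$, because $I_\flat$ is not an open map: the image of a basic open $V_{\psi(l)}$ consists of those $\EE$-models admitting an $\FF$-expansion satisfying $\psi(l)$, which need not be open in $\Spec(\EE)$. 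Establishing this openness is precisely the substance of the paper's (ii)$\To$(i) argument: form the pushout theory $\FF_{I^*N}$ classifying pairs $\<N',h:I^*N\to I^*N'\>$, use stabilization plus completeness to conclude $\FF_{I^*N}\vdash S(\cc_a)$, use compactness of the derivation to isolate finitely many diagram axioms $I\varphi_i(\cc_a,\cc_{b_i})$, and existentially quantify the parameters to obtain an $\EE$-formula $\epsilon\leq A$ with $N\models I\epsilon(a)$ and $I\epsilon\leq S$. Only then does one have $S=\bigvee_{R\in\Gamma}IR$, after which compactness of $S$ in $\FF$ (your ``logical compactness'') yields a finite join and hence $R$ with $IR\cong S$. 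Your write-up names this obstacle but does not cross it; as it stands, the descent-and-definability step assumes the conclusion rather than proving it.
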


\begin{proof}
We proceed by showing that (i)$\To$(iii)$\To$(ii)$\To$(i).

\noindent\textbf{(i)$\To$(iii).} Suppose that, as in definition \ref{def_separates} above, we have an open subgroupoid $\UU\subseteq\Spec(\EE)$, a full and open subgroupoid $\VV\subseteq I_\flat^{-1}(\UU)$ and two labelled models $\nu_0\in\VV$ and $\nu_1\in I_\flat^{-1}(\UU)$. We must show that $\nu_1\in\VV$ as well.

As in the previous proposition we can replace $\UU$ by a smaller affine neighborhood $\UU_k\simeq\Spec(\EE\!/A)$ for some object $A\in\EE$. The inverse image $I_\flat^{-1}(\UU_k)$ is equivalent to $\Spec(\FF\!/IA)$ and a full and open subgroupoid of this must have the form $\VV=\bigcup_i \VV_{S_i(k)}$ for some collection of subobjects $S\leq IA$.

This means that for some index $i$, $\nu_0(k)\in S_i^{\nu_0}$. By assumption $I$ is full on subobjects, so $S_i\cong IR_i$ and $I_\flat(\nu_0)\in \VV_{R_i(k)}$. Since $I_\flat(\nu_0)$ belongs to the closure $\overline{I_\flat(\nu_1)}$ it follows that $I_\flat(\nu_1)\in\VV_{R_i(k)}$ as well. But then
$$\nu_1\in I_\flat^{-1}(\VV_{R_i(k)})=\VV_{S_i(k)}\subseteq \VV.$$
This demonstrates that $I_\flat$ separates subgroupoids.

\noindent\textbf{(iii)$\To$(ii).} Fix an arbitrary subobject $S\leq IA$ and and suppose that $N_0$ and $N_1$ are $\FF$-models, $h:I^*N_0\to I_*N_1$ and that $a\in S^{N_0}$. We must show that $h(a)\in S^{N_1}$.

By the downward Lowenheim-Skolem theorem, we may find $\kappa$-small submodels $N_0'\subseteq N_0$ and $N_1'\subseteq N_1$ such that $a\in N_0'$ and $h\upharpoonright I^*N_0'$ factors through $I^*N_1'$:
$$\xymatrix@R=3ex{
I^*N_0 \ar[r]^h & I^*N_1\\
I^*N_0' \ar@{}[u]|{\vertsub} \ar@{-->}[r] & I^*N_1' \ar@{}[u]|{\vertsub}\\
}$$

Now extend these submodels to labelled models $\nu_0$ and $\nu_1$ in such a way that $h(\nu_0(k))=\nu_1(k)$ whenever $k$ is defined at $I_\flat(\nu_0)$. This ensures that $I_\flat(\nu_0)\in\overline{I_\flat(\nu_1)}$. Now $\nu_1$ belongs to the open subgroupoid $I_\flat^{-1}(\UU_k)$ and $\nu_0$ belongs to the full and open subgroupoid $\VV_{S(k)}$ (since $\nu_0(k)=a\in S^{\nu_0}$.

Now we may apply the assumption that $I_\flat$ separates subgroupoids to conclude that $\nu_1\in \VV_{S(k)}$. Therefore $$\nu_1(k)=h(\nu_0(k))=h(a)\in S^{\nu_1}$$
or, in other words, $h(a)\in S^{N_1'}\subseteq S^{N_1}$. Thus any $\EE$-model homomorphism $h:I^*N_0\to I^*N_1$ between reducts stabilizes an arbitrary subobject $S\leq IA$, so $I^*$ stabilizes all subobjects.

\noindent\textbf{(ii)$\To$(i).} Fix an arbitrary subobject $S\leq IA$ and let
$$\Gamma=\{R\leq A\ |\ IR\leq S\}.$$
We will show that for every $\FF$-model $N$ and every element $a\in S^N$ there is a some formula $R\in\Gamma$ such that $a\in IR^N$. This means that we can represent $S$ as a join\footnote{To be more precise, as $\FF$ does not have infinite joins, we should think of this as a statement about representable functors in $\Sh(\FF)$.}
$$S=\bigvee_{R\in\Gamma} IR.$$
Since $S$ is compact, this reduces to a finite join, whence $S\cong I(R_1\vee\ldots\vee R_n)$ belongs to the essential image of $I$.

We are left to show that ever $S$-element satisfies some formula in $\Gamma$, so fix an $\FF$-model $N$ and an element $a\in S^{N}$. Consider the pushout
$$\xymatrix{
\EE \ar[r]^{A^\times} \ar[d]^{\2}&\EE\!/A \ar[r]^-{a} \ar[d] & \Diag(I^*N) \ar[d] \\
\FF \ar[r]_{IA^\times} & \FF\!/IA \ar[r]_-{\cc_a} & \FF_{I^*N}\\
}$$
This is the theory of pairs $H=\<N',h\>$ where $N'$ is another $\FF$-model and $h$ is an $\EE$-model homomorphism $I^*N\to I^*N'$. By assumption $I^*$ stabilizes $S$, so we must have $h(a)=\cc_a^H\in S^{N'}$. It follows by completeness, that is provable: $\FF_{I^*N}\vdash S(\cc_a)$.

We have the following syntactic description of $\FF_{I^*N}$: we extend $\FF$ by a constant $\cc_b:IB$ for each element $b\in IB^N$ and an axiom $\vdash I\varphi(\cc_b)$ whenever $M\models \varphi(b)$. Given this description, compactness implies that the derivation $\FF_{I^*N}\vdash S(\cc_a)$ can only involve finitely many axioms $\{I\varphi_i(\cc_a,\cc_{b_i})\}$ coming from $\Diag(I^*N)$.\footnote{Without loss of generality we have weakened each of these axioms to include the constant $\cc_a$.} We might say that, aside from these axioms, the remainder of the derivation takes place in $\FF$.

Formally, this means that after extending $\FF$ by the constants $\cc_a$ and $\cc_{b_i}$ the following sequent is provable 
$$\bigwedge_i I\varphi_i(\cc_a,\cc_{b_i}) \vdash S(\cc_a).$$
This is a (closed) sequent in the slice category $\FF\!/(IA\times IB)$. Moreover, the constants $\cc_{b_i}$ only appear on the left of the turnstile, so we are free to replace them by an existential quantifier.

If we let $y=\<y_i\>$ and $\epsilon(x)=\exists y. \bigwedge_i \varphi_i(x,y_i)$ this gives us the following deduction:
$$\begin{array}{cc}
\vdash S(\cc_a) & \rm{in }\FF_{I^*N}\\\cline{1-1}
\exists b_i\in B_i^{I^*N}\rm{ and } \varphi_i\leq A\times B_i & \\
\bigwedge_i I\varphi_i(\cc_a,\cc_{b_i})\vdash S(\cc_a) & \rm{in }\FF\!/(IA\times IB)\\\cline{1-1}
\exists \epsilon\leq IA\\
I\epsilon(\cc_a)\vdash S(\cc_a) & \rm{in }\FF\!/A\\\cline{1-1}
\exists \epsilon\leq IA\\
I\epsilon(x)\underset{x:A}{\vdash} S(x)& \rm{in }\FF.
\end{array}$$

On one hand, the last sequent tells us that $\epsilon\in\Gamma$. On the other hand, $b=\<b_i\>$ gives us an existential witness to the fact that $N\models I\epsilon(a)$. This shows that every $S$-element satisfies some formula in $\Gamma$, completing the proof.
\end{proof}

\subsection{Proof of (c)}

Recall from definition \ref{ptop_morph_props} that a pretopos functor $I:\EE\to\FF$ is \emph{subcovering} if for every object $B\in\FF$ there is an object $A\in\EE$, a subobject $S\leq IA$ and an epimorphism (necessarily regular) $\sigma:S\epi B$. To say that $I$ is subcovering means, roughly, that $\EE$ and $\FF$ have the same basic sorts; more precisely, the basic sorts of $\FF$ are definable (in $\FF$) from the images of the basic sorts of $\EE$.

Classically, an $\FF$-model homomorphism $f:N\to N'$ consists of a family of functions $f_B:B^N\to B^{N'}$ ranging over the \emph{basic} sorts of $\FF$ which preserve the basic functions and relations. In particular, two homomorphisms are equal just in case they agree on the basic sorts. From this it is obvious that when $\EE$ and $\FF$ share the same basic sorts, the reduct functor $I^*:\Mod(\FF)\to\Mod(\EE)$ must be faithful. The same holds when $I$ is subcovering.

Below we will show that the converse is also true: if $I^*$ is faithful then $I$ must be subcovering. We must also translate this into a spectral condition. We have already seen that statements about model homomorphisms can be recast into statements about the closure of points: an inclusion $\mu_0\in\overline{\mu_1}$ induces a canonical model homomorphism between their underlying models $M_0\to M_1$. However, it is not immediately obvious how to talk about parallel homomorphisms $M_0 \rightrightarrows M_1$.

The trick is to replace $M_0$ by an isomorphism $M_0\cong M_0'$ and the parallel arrows by a \emph{non}commuting diagram
$$\xymatrix{
M_0 \ar[d] \ar@{=}[r]^{\sim} \ar@{}[rd]|{\displaystyle\times} & M_0' \ar[d]\\
M_1 \ar@{=}[r] & M_1\\
}$$
According to proposition \ref{iso_closure} this can be rephrased in terms of isomorphisms in the spectral groupoid.

%

\begin{defn}\label{def_nonfold}
Suppose that $J:\YY\to\XX$ is a morphism of schemes. We say that $J$ is \emph{non-folding} if for any two isomorphisms $\alpha:\nu_0\cong\nu_0'$ and $\beta:\nu_1\cong\nu_1'$ such that $\nu_0\in\overline{\nu_1}$ and $\nu_0'\in\overline{\nu_1'}$
$$J(\alpha)\in\overline{J(\beta)}\Iff \alpha\in\overline{\beta} .$$
\end{defn}
Of course continuous functions preserve closure, so $\alpha\in\overline{\beta}$ automatically implies that $J(\alpha)\in\overline{J(\beta)}$. To see that a map is non-folding it suffices to check the right-to-left direction.

\begin{prop}\label{c_prop}
The following are equivalent:
\begin{enumerate}
\item[(i)] $I:\EE\to\FF$ is subcovering.
\item[(ii)] $I^*$ faithful.
\item[(iii)] $I_\flat$ is non-folding.
\end{enumerate}
\end{prop}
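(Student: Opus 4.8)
The plan is to prove the cycle (i)$\To$(ii)$\To$(iii)$\To$(i), mirroring the structure of Propositions \ref{a_prop} and \ref{b_prop}. The two directions of (i)$\To$(ii) are essentially the discussion preceding the statement, so I would record them quickly. If $I$ is subcovering, then every basic sort $B\in\FF$ admits a cover $\sigma:S\epi IA$ with $S\leq IA$ and $A\in\EE$. Two parallel $\FF$-model homomorphisms $f,g:N\rightrightarrows N'$ are equal as soon as they agree on each such definable cover, and their values on $S^N\subseteq (IA)^N=A^{I^*N}$ are determined by $I^*f$ and $I^*g$; hence $I^*f=I^*g$ forces $f=g$, so $I^*$ is faithful.

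For (ii)$\To$(iii), the key maneuver is the one flagged in the paragraph before Definition \ref{def_nonfold}: encoding a pair of parallel homomorphisms as a noncommuting square of isomorphisms, which Proposition \ref{iso_closure} then translates into a closure statement about arrows in the spectral groupoid. Concretely, I would suppose $I_\flat$ is not non-folding, take isomorphisms $\alpha:\nu_0\cong\nu_0'$, $\beta:\nu_1\cong\nu_1'$ with $\nu_0\in\overline{\nu_1}$, $\nu_0'\in\overline{\nu_1'}$ and $J(\alpha)\in\overline{J(\beta)}$ but $\alpha\notin\overline{\beta}$. By Proposition \ref{iso_closure} the closures $\nu_0\in\overline{\nu_1}$ and $\nu_0'\in\overline{\nu_1'}$ supply canonical homomorphisms $h_0:M_{\nu_0}\to M_{\nu_1}$ and $h_1:M_{\nu_0'}\to M_{\nu_1'}$, and the condition on $J(\alpha),J(\beta)$ says the reducts of the two composites $\beta\circ h_0$ and $h_1\circ\alpha$ (as maps $M_{\nu_0}\to M_{\nu_1'}$) agree after applying $I^*$, while $\alpha\notin\overline{\beta}$ says they disagree on the nose. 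This exhibits two distinct $\FF$-homomorphisms with equal reducts, contradicting faithfulness of $I^*$. I would need to be careful about the bookkeeping of underlying models versus labellings and about passing to $\kappa$-small representatives (via downward L\"owenheim–Skolem, as in the proofs of (a) and (b)) so that the spectral points actually exist.

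The main work, and the step I expect to be the principal obstacle, is (iii)$\To$(i): deducing that $I$ is subcovering from the purely groupoidal non-folding condition. Here I would argue contrapositively. If $I$ is not subcovering, there is an object $B\in\FF$ which is \emph{not} a subquotient of any $I A$; I want to manufacture witnesses $\alpha,\beta$ violating non-folding. The idea is that failure of subcovering means the $B$-component of a homomorphism can be altered independently of its behavior on all sorts in the image of $I$, so one can build a model $N$ together with two automorphisms (or an automorphism and the identity) that are identical on every $IA$ but differ on $B$. Realizing this will require a syntactic argument: using the diagram/localization machinery of Chapter 2 to produce a consistent theory asserting the existence of such an ``extra'' automorphism of $B$ fixing all definable-over-$I$ data. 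I would try to reduce to the case $B$ basic and construct, via a Henkin-style diagram extension, a labelled model $\nu_1$ and an isomorphism $\beta$ whose reduct $I_\flat(\beta)$ lies in the closure of $I_\flat(\mathrm{id})$ while $\beta$ itself does not — exactly the non-folding failure. The delicate point is ensuring the topological closure relations in $\Spec(\FF)$ come out right, which again routes through Proposition \ref{iso_closure} and the characterization of closure in Proposition \ref{spectral_closure}; verifying the cocycle/commutativity square of that proposition for the constructed data is where the real care is needed.
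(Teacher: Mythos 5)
Your cycle runs (i)$\To$(ii)$\To$(iii)$\To$(i) where the paper runs (i)$\To$(iii)$\To$(ii)$\To$(i). The first two legs are fine: (i)$\To$(ii) is the observation already made in the text before definition \ref{def_nonfold}, and your (ii)$\To$(iii) is a clean contrapositive of the paper's (iii)$\To$(ii) --- given $J(\alpha)\in\overline{J(\beta)}$ but $\alpha\notin\overline{\beta}$, proposition \ref{iso_closure} hands you $\beta\circ h_0\neq h_1\circ\alpha$ with $I^*(\beta\circ h_0)=I^*(h_1\circ\alpha)$, contradicting faithfulness. That works.

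The gap is in (iii)$\To$(i). Your contrapositive requires producing, from the failure of subcovering at some $B\in\FF$, a pair of distinct $\FF$-homomorphisms with equal reducts (taking $\alpha,\beta$ to be identities, a non-folding violation is \emph{exactly} such a pair). So your (iii)$\To$(i) contains, as a sub-lemma, ``not subcovering $\To$ $I^*$ not faithful,'' which is the contrapositive of the paper's hard direction (ii)$\To$(i) --- and your sketch asserts the existence of the ``extra automorphism fixing all definable-over-$I$ data'' rather than proving it. The assertion is not obviously true and is the entire content of the theorem: to show the relevant theory (the iterated pushout $\TT$ classifying pairs $g_1,g_2:N\rightrightarrows N'$ with $I^*g_1=I^*g_2$, extended by $\cc_b^1=\cc_b^2\vdash\bot$) is consistent, you must rule out a derivation of $\cc_b^1=\cc_b^2$, and the only available tool is the compactness/interpolation argument that extracts a provably many-one relation $\epsilon\leq IA\times B$ from any such derivation and then covers $B$ by finitely many partial functions --- i.e., exactly the paper's proof that faithfulness forces subcovering. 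A ``Henkin-style diagram extension'' does not bypass this: consistency of the extended theory is equivalent to $B$ failing to be covered by such partial functions, not to $B$ failing to be a subquotient \emph{a priori}, and closing that circle is the syntactic work. The cleanest repair is to replace your (iii)$\To$(i) by the paper's (ii)$\To$(i) together with (iii)$\To$(ii) (the reverse of your second leg, which proposition \ref{iso_closure} gives you just as easily); as written, the cycle does not close.
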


\begin{proof}
We proceed by showing that (i)$\To$(iii)$\To$(ii)$\To$(i).

\noindent\textbf{(i)$\To$(iii).} Suppose that $\alpha:\nu_0\cong\nu_0'$ and $\beta:\nu_1\cong\nu_1'$ are labelled $\FF$-models as in definition \ref{def_nonfold}. Let $h,h'$ denote the $\FF$-model homomorphisms induced by the inclusions $\nu_0\in\overline{\nu_1}$ and $\nu_0'\in\overline{\nu_1'}$. According to proposition \ref{iso_closure}, $\alpha\in\overline{\beta}$ if and only if the following diagram commutes, where $N_i$ (resp. $N_i'$) is the underlying model of $\nu_i$ (resp. $\nu_i'$)
$$\xymatrix{
N_1 \ar[rr]^\beta && N_1'\\
N_0 \ar[u]^{h} \ar[rr]_\alpha && N_0' \ar[u]_{h'}.\\
}$$

By assumption $I$ is subcovering so, for each object $B\in\FF$ there is an object $A\in\EE$ and a subquotient $IA\geq S \stackrel{q}{\epi}$. Since $q$ is an epimorphism we have
$$\big(\beta_B\circ h_B=h'_B\circ\alpha_B\big)\Iff\big(\beta_B\circ h_B\circ q^{N_0}=h'_B\circ\alpha_B\circ q^{N_0}\big).$$
As displayed in the following diagram, the naturality of $h$, $h'$, $\alpha$ and $\beta$ ensures that this is equivalent to the equation $q^{N_1'}\circ \beta_S\circ h_S=q^{N_1'}\circ h'_S\circ \alpha_S$:
$$\xymatrix{
S^{N_1} \ar@{->>}[rd]_{q^{N_1}} \ar[rrr]^{\beta_S} &&&S^{N_1} \ar@{->>}[ld]^{q^{N_1'}}\\
& B^{N_1} \ar[r]^{\beta_B} &B^{N_1'}& \\
& B^{N_0} \ar[r]_{\alpha_B} \ar[u]^{h_B} & B^{N_0'} \ar[u]_{h'_B} &\\
S^{N_0} \ar[uuu]^{h_S} \ar@{->>}[ur]^{q^{N_0}} \ar[rrr]_{\alpha_S} &&& S^{N_0'} \ar[uuu]_{h'_S} \ar@{->>}[ul]_{q^{N_0'}}\\
}$$
Clearly this can hold just in case $\beta_S\circ h_S=\circ h'_S\circ \alpha_S$.

Now suppose that $I_\flat(\alpha)\in\overline{I_\flat(\beta)}$. This tells us that the following diagram of $\EE$-models commutes:
$$\xymatrix{
I^*N_1 \ar[rr]^{I^*\beta} && I^*N_1'\\
I^*N_0 \ar[rr]_{I^*\alpha} \ar[u]^{I^*h} && I^*N_0' \ar[u]_{I^*h'}\\
}$$
In particular, $I^*\beta_A\circ I^*h_A=I^*h'_A\circ I^*\alpha_A$.

Since $S$ is a subobject of $IA$ and $IA^{N_0}=A^{I^*N_0}$, it follows that $\beta_S\circ h_S=\circ h'_S\circ \alpha_S$. Hence $\alpha$ belongs to the closure of $\beta$ and $I_\flat$ is non-folding.

\noindent\textbf{(iii)$\To$(ii).} We must show that when $I_\flat$ is non-folding the reduct functor $I^*:\Mod(\FF)\to\Mod(\EE)$ is faithful, so suppose that we have distinct $\FF$-model homomorphisms $g',h':N_0'\rightrightarrows N_1'$. By the downward Lowenheim-Skolem theorem we can find a $\kappa$-small submodel $N_0\subseteq N_0'$ such that $g'\upharpoonright N_0\not= h'\upharpoonright N_0$. Similarly, we can find a $\kappa$-small submodel $N_1\subseteq N_1'$ such that both of these restrictions factor through $N_1$; call the resulting factorizations $g$ and $h$:
$$\xymatrix@R=3ex{
N_0' \ar@<1ex>[rr]^(.6){g'} \ar@<-1ex>[rr]_(.6){h'} && N_1'\\
N_0 \ar@{}[u]|{\vertsub} \ar@<1ex>[rr]^(.4){g} \ar@<-1ex>[rr]_(.4){h} && N_1' \ar@{}[u]|{\vertsub}\\
}$$

Fix a labelled model $\nu_1$ whose underlying model is $N_1$. In addition, choose two labellings $\nu_0$ and $\nu_0'$ on $N_0$ such that whenever $k$ or $l$ is defined we have $g(\nu_0(k))=\nu_1(k)$ or $h(\nu_0'(l))=\nu_1(l)$. This ensures that $\nu_0,\nu_0'\in\overline{\nu_1}$. Moreover, the identity on $N_0$ induces an isomorphism of labelled models $\alpha:\nu_0\cong\nu_0'$; the fact that $g\not=h$ tells us that $\alpha$ does not belong to the closure of the identity on $\nu_1$.

Now we can apply the assumption that $I_\flat$ is non-folding; this ensures that $I_\flat(\alpha)\not\in\overline{I_\flat(1_{\nu_1})}$. The inclusion $I_\flat(\nu_0)\in\overline{I_\flat\nu_1}$ is induced by the reduct morphism $I^*g:I^*N_0\to I^*N_1$ and similarly for $\nu_0'$, while the underlying map of $I_\flat(\alpha)$ is the identity on $I^*N_0$, so this tells us that the following diagram does not commute in $\Mod(\EE)$
$$\xymatrix{
N_1 \ar@{=}[rr]^{1_{\nu_1}} & \ar@{}[d]|{\textstyle\times} & N_1\\
N_0 \ar[u]^{I^*g} \ar@{=}[rr]_{\alpha} && N_0 \ar[u]_{I^*h}\\
}$$
Since $g$ and $h$ are restrictions of $g'$ and $h'$ it follows that $I^*g'\not=I^*h'$, so $I^*$ must be faithful.

\noindent\textbf{(ii)$\To$(i).} The proof is formally similar to the last argument in part (b). First fix an object $B\in\FF$. Given an object $A\in\EE$, a partial function $IA\part B$ is a two-place relation $\sigma\leq IA\times B$ which is provably many-one in $\FF$:
$$\sigma(x,y)\wedge\sigma(x,y')\underset{\underset{x:IA,}{y,y':B}}{\vdash} y=y'.$$
Categorically speaking, $\varphi$ is a partial function just in case the composite $\varphi\mono IA\times B\to IA$ is monic.

Now let
$$\Sigma=\{\sigma\leq IA\times B\ |\ A\in\EE, \sigma:IA\part B\}.$$
We will show that for every $\FF$-model $N$ and every element $b\in B^N$ there is some partial function $\sigma\in\Sigma$ and an element $a\in IA^N$ such that $N\models \sigma(a,b)$. It will follow that we can express $B$ as a join
$$B(y)\cong \bigvee_{\sigma\in\Sigma} \exists x.\sigma(x,y).$$
By compactness we can reduce this to a finite subcover. If we set $S(x)=\exists y.\sigma(x,y)$, this allows us to represent $B$ as a subquotient of $\EE$:
$$\xymatrix@R=3ex{
I(A_1+\ldots A_n)\\
S_1+\ldots+S_n \ar@{->>}[r] \ar@{}[u]|{\vertleq} &B.\\
}$$

It remains to show that every $B$-element is the image of an $IA$ element under some partial function, so fix an $\FF$-model $N$. The first step is to axiomatize the following data: a second $\FF$-model $N'$ together with a pair of homomorphisms $g_1,g_2:N\rightrightarrows N'$ such $I^*g_1=I^*g_2$. This theory can be presented as the following iterated pushout:
$$\xymatrix{
\EE \ar[r] \ar[d]_I & \Diag(I^*N) \ar[d] & \\
\FF \ar[r] & \pushoutcorner \FF_{I^*N} \ar[r] \ar[d] & \Diag^1(N) \ar[d]\\
& \Diag^2(N) \ar[r] & \pushoutcorner \TT\\
}$$

$\TT$ contains two copies of $\Diag(N)$ which we distinguish with superscripts; each element $b\in B^N$ defines two constants $\cc^1_b,\cc^2_b:B$ and every parameterized formula $\varphi(x,b)\in\Diag(N)$ defines two objects $\varphi^1(x,\cc_b^1)$ and $\varphi^2(x,\cc_b^2)$. Whenever $N\models\varphi(b)$ we attach two axioms $\{\vdash \varphi^1(\cc_b^1), \vdash\varphi^2(\cc_b^2)\}$. Given a model $G\models \TT$ we can recover the functions $g_1$ and $g_2$ by setting $g_i(b)=(\cc_b^i)^H$. Finally, we ensure that $I^*g_1=I^*g_2$ by adding an axiom $\vdash \cc_a^1=\cc_a^2$ for every element $a\in IA^N$; to simplify notation we will treat these two as a single constant $\cc_a$.

By assumption $I^*$ is faithful, so $I^*g_1=I^*g_2$ implies that $g_1=g_2$. By completeness this must be provable in $\TT$: for a fixed element $b\in B$, $\TT\vdash \cc_{b}^1=\cc_{b}^2$. This derivation must involve only a finite number of the axioms mentioned in the last paragraph, say
$$\{\vdash \varphi^1_i(\cc_{a_i},\cc^1_{d_i}), \vdash \psi^2_j(\cc_{a_j},\cc^2_{d_j})\}.$$

Now we unify these formulas. First, we may throw in some irrelevant assumptions in order to assume that the two sets of formulas $\{\varphi_i(z_i)\}$ and $\{\psi_j(z_j)\}$ are identical. Next, let $A=\prod_i A_i\times\prod_j A_j$ and $D=\prod_i D_i\times \prod_j D_j$ and weaken all of these formulas to a common context $A\times B\times D$.

The remainder of the derivation can proceed in the slice category $\FF\!/(IA\times B^2\times D^2)$. As in part (b), we first conjoin the formulas and eliminate constants to the left of the turnstile. We then replace the remaining closed sequent in $\FF\!/(IA\times B^2)$ by an open sequent in $\FF$ (where the formulas $\varphi^1=\varphi^2$ are identified, though $\cc_{b}^1$ and $\cc_b^2$ are not). Letting $\epsilon(x,y)=\exists z.\bigwedge_i \varphi_i(x,y,z)$, this leaves us with the following series of equivalent statements

$$\begin{array}{cc}
\vdash \cc_b^1=\cc_b^2 & \rm{in }\TT\\\cline{1-1}
\exists d_i\in D_i^{I^*N}\rm{ and } \varphi_i\leq IA\times B\times D_i&\\
\Big(\bigwedge_i \varphi_i(\cc_a,\cc^1_b,\cc^1_{d_i})\Big)\wedge\Big(\bigwedge_i \varphi_i(\cc_a,\cc^2_b,\cc^2_{d_i})\Big)\vdash \cc_b^1=\cc_b^2
& \rm{in }\FF\!/(IA\times B^2\times D^2)\\\cline{1-1}
\exists \epsilon\leq IA\times B\\
\epsilon(\cc_a,\cc_b^1)\wedge\epsilon(\cc_a,\cc_{b}^2)\vdash \cc_b^1=\cc_b^2& \rm{in }\FF\!/(IA\times B^2)\\\cline{1-1}
\exists \epsilon\leq IA\times B\\
\epsilon(x,y_1)\wedge\epsilon(x,y_2)\underset{\underset{x:A}{y_1,y_2:B}}{\vdash} y_1=y_2& \rm{in }\FF.
\end{array}$$

On one hand, the last sequent tells us that $\epsilon$ defines a partial function and so belongs to $\Sigma$. On the other hand, the element $d=\<d_i\>$ gives us an existential witness to the fact that $N\models I\epsilon(a,b)$. This shows that every $B$-element is the image of an $IA$-element under a definable partial function, completing the proof.
\end{proof}

\subsection{Conceptual Completeness}

With the model-theoretic characterizations of these syntactic properties, we are in a good position to complete the proof of conceptual completeness.

\begin{prop}
Suppose that $I:\EE\to\FF$ is a pretopos functor.
\begin{itemize}
\item If $I$ is conservative and full on subobjects, then $I$ is full.
\item If $I$ is conservative, full on subobjects and subcovering, then $I$ is essentially surjective.
\end{itemize}
\end{prop}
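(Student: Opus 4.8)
The plan is to reduce each part to the pretopos-theoretic characterizations of fullness and essential surjectivity developed in Chapter~2, using the logical content of the hypotheses rather than re-deriving anything from semantics. The two implications to be proved are precisely the steps labelled (iv) and (v) in the Makkai \& Reyes schema, so the strategy is to supply direct categorical arguments for them.

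For the first claim, suppose $I$ is conservative and full on subobjects, and let $g:IA\to IB$ be an arbitrary morphism in $\FF$. First I would pull back the graph: the graph of $g$ is a subobject $\Gamma_g\leq IA\times IB\cong I(A\times B)$, using that $I$ preserves the product. Since $I$ is full on subobjects, there is a subobject $R\leq A\times B$ with $IR\cong\Gamma_g$. The key point is then to show that $R$ is itself the graph of a morphism $f:A\to B$ in $\EE$; equivalently, that the composite $R\mono A\times B\to A$ is an isomorphism. But $I$ applied to this composite is isomorphic to $\Gamma_g\mono I(A\times B)\to IA$, which is an isomorphism because $\Gamma_g$ is a graph. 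Since $I$ is conservative (reflects isomorphisms by definition~\ref{ptop_morph_props}), the composite $R\to A$ is already an isomorphism in $\EE$, so $R$ defines a morphism $f:A\to B$. Finally, conservativity (hence faithfulness, via lemma~\ref{eq_cons}) together with $I(\Gamma_f)\cong\Gamma_g$ forces $If=g$, so $I$ is full.

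For the second claim, assume in addition that $I$ is subcovering, and fix an arbitrary object $B\in\FF$. By subcovering there is an object $A\in\EE$, a subobject $S\leq IA$, and a regular epimorphism $q:S\epi B$. Since $I$ is full on subobjects, $S\cong IR$ for some $R\leq A$ in $\EE$, so I may regard $q$ as a regular epi $IR\epi B$. The morphism $q$ is the coequalizer of its kernel pair $K\rightrightarrows IR$, and $K$ is a subobject of $IR\times IR\cong I(R\times R)$; applying full-on-subobjects once more, $K\cong IE$ for an equivalence relation $E\leq R\times R$ in $\EE$ (here I would invoke that conservativity reflects the reflexivity, symmetry and transitivity conditions, exactly as in the proof of proposition~\ref{qc_ortho}, so that $E$ is genuinely an equivalence relation in $\EE$). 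Since $\EE$ is a pretopos it has the quotient $R/E$, and by the first part $I$ is full, so $I$ preserves this quotient: $I(R/E)\cong IR/IE\cong B$. Hence $B$ lies in the essential image of $I$, and $I$ is essentially surjective.

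The main obstacle is the step in the second claim that upgrades the kernel pair into a genuine $\EE$-internal equivalence relation whose quotient $I$ preserves. Getting $K\cong IE$ from full-on-subobjects is routine, but verifying that $E$ really satisfies the reflexivity/symmetry/transitivity conditions in $\EE$ (not merely after applying $I$) requires conservativity to reflect the relevant isomorphisms, and then one must check that $I$ sends the $\EE$-quotient $R/E$ to the $\FF$-quotient $IR/K\cong B$. The latter uses that $I$, being now known to be full, faithful and full on subobjects, preserves regular epis and the coequalizer presentation. I expect the clean way to organize this is to mirror the diagonal-filler argument of proposition~\ref{qc_ortho}, where precisely this ``conservative functors reflect equivalence relations and quotients'' bookkeeping is carried out, so most of the work can be cited rather than repeated.
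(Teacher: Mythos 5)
Your proof is correct and follows essentially the same route as the paper: lift the graph via fullness on subobjects and use conservativity to reflect the isomorphism witnessing functionality, then for essential surjectivity lift the subobject and kernel pair, reflect the equivalence relation as in proposition~\ref{qc_ortho}, and take the quotient. The only cosmetic difference is that you invoke fullness of $I$ to get preservation of the quotient $R/E$, whereas this is automatic for any pretopos functor (quotients of equivalence relations are part of the preserved structure), which is all the paper uses.
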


\begin{proof}
For the first claim, suppose that $f:IA\to IB$ is a map in $\FF$. We can represent this as a graph $\Gamma_f\leq I(A\times B)$. Since $I$ is full on subobjects, there is a preimage $R\leq A\times B$ such that $IR\cong \Gamma_f$. To say that $f$ is functional means that the composite $\Gamma_f\mono I(A\times B)\to IA$ is an isomorphism. $I$ is conservative, so it reflects this isomorphism, giving us an isomorphism $R\mono A\times B \to A$. That means that $R=\Gamma_{\overline{f}}$ is a graph in $\EE$, and $I(\overline{f})=f$.

For the second, take any object $B\in \FF$. Since $I$ is subcovering, this can be represented as a subquotient
$$\xymatrix{
IA & \\
S \ar@{}[u]|{\vertleq} \ar@{->>}[r] & B\\
}$$
Since $I$ is full on subobjects, $S\cong IR$ lies in the image of $I$. Now form the kernel pair $K\rightrightarrows IR \epi B$. $K$ is a subobject of $I(R\times R)$, so $K\cong IL$ also belongs to the image of $I$. As discussed in the proof of proposition \ref{qc_ortho}, conservative functors reflect equivalence relations, so the preimage $L\rightrightarrows R$ is an equivalence relation in $\EE$. Since pretopos functors preserve quotients, $B\cong I(R/L)$.
\end{proof}

\begin{thm}
A pretopos functor $I:\EE\to\FF$ is an equivalence of categories if and only if the reduct functor $I^*:\Mod(\FF)\to\Mod(\EE)$ is an equivalence of categories.
\end{thm}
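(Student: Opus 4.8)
The plan is to prove the nontrivial (right-to-left) direction by harvesting the three biconditionals of Theorem \ref{scheme_cc} and then feeding the resulting syntactic properties into the preceding proposition. The forward direction is already handled by the remark following the statement of conceptual completeness: if $I$ is an equivalence then precomposition along a pseudo-inverse gives a pseudo-inverse to $I^*$, so $I^*$ is an equivalence as well. So I would assume that $I^*:\Mod(\FF)\to\Mod(\EE)$ is an equivalence of categories, hence full, faithful and essentially surjective on objects, and extract from this exactly the three semantic hypotheses appearing in the middle column of Theorem \ref{scheme_cc}.

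That $I^*$ is faithful is immediate, giving (by part (c), Proposition \ref{c_prop}) that $I$ is subcovering. For supercovering I would use essential surjectivity: given an $\EE$-model $M$, an element $a\in A^M$ and a subobject $R\mono A$ with $a\not\in R^M$, choose an $\FF$-model $N$ together with an isomorphism $h:M\iso I^*N$; since $h$ is an isomorphism of $\EE$-models it reflects membership in definable sets, so $h(a)\not\in R^{I^*N}$. Thus $I^*$ is supercovering and, by part (a), Proposition \ref{a_prop}, $I$ is conservative (equivalently, by Lemma \ref{eq_cons}, faithful and injective on subobjects). Finally, to see that $I^*$ stabilizes subobjects I would invoke fullness of $I^*$: any homomorphism $h:I^*N_0\to I^*N_1$ lifts to some $g:N_0\to N_1$ with $I^*g=h$, and because $g$ is an $\FF$-homomorphism its component at $IA$ carries $S^{N_0}$ into $S^{N_1}$ for every $S\leq IA$; identifying $IA^{N_i}$ with $A^{I^*N_i}$, this says precisely $h_A(S^{N_0})\subseteq S^{N_1}$. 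By part (b), Proposition \ref{b_prop}, $I$ is full on subobjects.

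Having established that $I$ is conservative, full on subobjects and subcovering, I would then apply the preceding proposition verbatim: conservativity together with fullness on subobjects yields that $I$ is full, and conservativity, fullness on subobjects and subcovering together yield that $I$ is essentially surjective on objects. Combined with faithfulness (from conservativity via Lemma \ref{eq_cons}), this shows that $I$ is full, faithful and essentially surjective, hence an equivalence of categories.

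The genuine content of the argument lives in the already-proved Propositions \ref{a_prop}, \ref{b_prop}, \ref{c_prop} and in the preceding proposition; the theorem itself is an assembly step, and the spectral column of Theorem \ref{scheme_cc} is not needed. The one place requiring care—and the closest thing to an obstacle—is checking that an equivalence $I^*$ really does supply the three semantic conditions in the precise forms demanded by Theorem \ref{scheme_cc}. In particular, the stabilization of subobjects is the step that genuinely consumes fullness of $I^*$, not merely its essential surjectivity, so I would be careful to lift the homomorphism $h$ through $I^*$ before invoking naturality; this is exactly the point at which the Makkai--Reyes implication ``$I^*$ essentially surjective and full $\To I$ full on subobjects'' is repackaged into our biconditional.
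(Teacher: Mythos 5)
Your proposal is correct and follows essentially the same route as the paper: extract supercovering from essential surjectivity of $I^*$, stabilization of subobjects from fullness of $I^*$ (by lifting $h$ through $I^*$ and using naturality), and subcovering from faithfulness of $I^*$, then assemble via the preceding proposition. The details you give at each step, including the isomorphism $h:M\iso I^*N$ for supercovering and the lift $\overline{h}$ for stabilization, match the paper's argument.
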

\begin{proof}
This follows immediately from the previous proposition, together with theorem \ref{scheme_cc}. The right-to-left direction is trivial.

As for the converse, suppose $I^*$ is essentially surjective. Then it is certainly supercovering: for any $\EE$-model $M$ there is an isomorphism $h:M\iso I^*N$ and if $a\not\in R^M$ then $h(a)\not\in R^{I^*N}$. Therefore $I$ must be conservative (and hence faithful by lemma \ref{eq_cons}.

If $I^*$ is full then any morphism $h:I^*N_0\to I^*N_1$ has a lift $\overline{h}:N_0\to N_1$ with $I^*\overline{h}=h$. For any $S\leq IA\in\FF$, this means
$$h_A(S^{N_0})=(I^*\overline{h})_A(S^{N_0})=\overline{h}_{IA}(S^{N_0})=\overline{h}_S(S^{N_0})\subseteq S^{N_1}.$$
Thus $I^*$ stabilizes subobjects and $I$ must be full on subobjects.

When $I^*$ is faithful $I$ is subcovering, and we have just seen that a functor which is conservative, full on subobjects and subcovering is an equivalence of categories.
\end{proof}

\bibliographystyle{plain}
\bibliography{bibfile}

\end{document}